\DeclareMathOperator{\Hom}{\mathscr{H}\text{\kern -3pt {\calligra\large om}}}
\newtheorem{thm}{Theorem}[section]
\newtheorem{prop}[thm]{Proposition}
\newtheorem{cor}[thm]{Corollary}
\newtheorem{lemma}[thm]{Lemma}
\newtheorem{hyp}[thm]{Hypothesis}
\theoremstyle{definition}
\newtheorem{rmk}[thm]{Remark}
\newtheorem{defi}[thm]{Definition}
\newcommand{\beqn}{\begin{eqnarray*}}
\newcommand{\eeqn}{\end{eqnarray*}}
\newcommand{\beqa}{\begin{eqnarray}}
\newcommand{\eeqa}{\end{eqnarray}}
\DeclareMathOperator{\N}{\mathbb{N}}
\DeclareMathOperator{\Q}{\mathbb{Q}}
\DeclareMathOperator{\R}{\mathbb{R}}
\DeclareMathOperator{\Z}{\mathbb{Z}}
\DeclareMathOperator{\V}{\mathbb{V}}
\DeclareMathOperator{\G}{\mathbb{G}}
\DeclareMathOperator{\C}{\mathbb{C}}
\DeclareMathOperator{\A}{\mathbb{A}}
\newcommand{\Homo}{{\rm Hom}}
\DeclareMathOperator{\GL}{\mathrm{GL}}
\newcommand{\cO}{\mathcal{O}}
\newcommand{\SO}{\mathrm{SO}}
\newcommand{\cP}{\mathcal{P}}
\newcommand{\dP}{\mathfrak{p}}
\newcommand{\cX}{\mathcal{X}}
\newcommand{\dX}{\mathfrak{X}}
\newcommand{\cD}{\mathcal{D}}
\newcommand{\cN}{\mathfrak{n}}
\newcommand{\cC}{\mathfrak{c}}
\newcommand{\dH}{\mathfrak{H}}
\newcommand{\cG}{\mathcal{G}}
\newcommand{\cI}{\mathcal{I}}
\newcommand{\Spec}{\mathrm{Spec}}
\newcommand{\Spf}{\mathrm{Spf}}
\newcommand{\cA}{\mathcal{A}}
\newcommand{\cM}{\mathfrak{m}}
\newcommand{\cF}{\mathcal{F}}
\newcommand{\cY}{\mathfrak{Y}}
\newcommand{\cR}{\mathcal{R}}
\newcommand{\cH}{\mathcal{H}}
\newcommand{\cW}{\mathcal{W}}
\newcommand{\dW}{\mathfrak{W}}
\newcommand{\Sym}{\mathrm{Sym}}
\newcommand{\Pic}{\mathrm{Pic}}
\newcommand{\M}{\mathrm{M}}
\newcommand\figcaption{\def\@captype{figure}\caption}
\newcommand\tabcaption{\def\@captype{table}\caption}
\title[Triple product $p$-adic $L$-functions]{Triple product $p$-adic $L$-functions for Shimura curves over totally real number fields}
\author{Daniel Barrera-Salazar \and Santiago Molina}
\newcommand{\Addresses}{{
		\bigskip
		\footnotesize
		
		Daniel Barrera-Salazar; Universidad de Santiago de Chile\\ Alameda 3363, Estación Central, Santiago, Chil\par\nopagebreak
		\texttt{danielbarreras@hotmail.com}
				
		\medskip
		Santiago Molina; Universitat Polit\`{e}cnica de Catalunya\\Campus Nord, Calle Jordi Girona, 1-3, 08034 Barcelona, Spain\par\nopagebreak
		\texttt{santiago-molina@upc.edu}
		
}}
\date{\today}
\begin{document}
	
	\begin{abstract} Let $F$ be a totally real number field. Using a recent geometric approach developed by Andreatta and Iovita we construct several variables $p$-adic families of finite slope quaternionic automorphic forms over $F$. It is achieved by interpolating the modular sheaves defined over some auxiliary unitary Shimura curves.  
	
	Secondly, we attach $p$-adic $L$-functions to triples of ordinary $p$-adic families of quaternionic automorphic eigenforms. This is done by relating trilinear periods to some trilinear products over unitary Shimura curves which can be interpolated adapting the work of Liu-Zhang-Zhang to our families. 
	\end{abstract}
	
  \maketitle
  	
\tableofcontents

\section{Introduction}	







\subsection{Arithmetic, $p$-adic $L$-functions and $p$-adic families}

 Several recent works with important arithmetic applications  uses in a crucial way $p$-adic $L$-functions (see  for example \cite{DR17}, \cite{BDR15}, \cite{SU14}, \cite{JSW15}). 
In contrast with its complex counterpart the theory of $p$-adic $L$-functions is far from being well established. 
 Thus depending of the context, their construction is performed with the available technology. This work is devoted to the construction of the so called \emph{triple product $p$-adic $L$-functions}.

 During the nineties Kato obtained deep results on the Birch and Swinnerton-Dyer conjecture in rank $0$ for twists of elliptic curves over $\Q$ by Dirichlet characters. More recently, Bertolini-Darmon-Rotger in \cite{BDR15} and Darmon-Rotger in \cite{DR17} developed analogous methods to treat twists by certain Artin representations of dimension 2 and 4. In \cite{LLZ16}, \cite{KLZ17}, \cite{LZ} and \cite{BSV2}  these methods were extended in different directions, as for example bounding certain Selmer groups and treating finite slope settings. We could say that in these situations a prominent role is played by the \emph{unbalanced} $p$-adic $L$-function attached to a triple of $p$-adic families of modular forms. Such $p$-adic $L$-functions were constructed in \cite{harris-tilouine01}, \cite{DR14}, \cite{hsieh18} in the ordinary case and in \cite{AI17} for Coleman Families. In other hand \emph{balanced} triple $p$-adic $L$-functions had been constructed in \cite{hsieh18} and in \cite{greenberg-seveso}.

This work grew up from the aim to generalize the methods mentioned above to totally real number fields. In the present paper we furnish the $p$-adic $L$-functions that come into play in the ordinary setting. Our main results are:  
\begin{itemize}
	\item[(i)] the construction of several variables $p$-adic families of finite slope quaternionic automorphic forms for over totally real number fields;
	\item[(ii)] the construction of triple product $p$-adic $L$-functions for ordinary families obtained in (i).
\end{itemize}

To achieve (i) we use a geometrical approach (developed in \cite{AI17}) using the theory of overconvergent modular forms in the generality needed. For (ii) we naturally work in a hybrid situation: unbalanced in a single component of the weight and balanced in the rest of the components. Our strategy mix the two approaches used in \cite{AI17} and \cite{greenberg-seveso} and crucially adapts to our situation ideas and constructions performed in \cite{liu-zhang-zhang17}.

\subsection{Main results} Let $F$ be a totally real number field of degree $d= [F:\Q]$. We denote by $\Sigma_F$ the set of real embeddings and fix $\tau_0 \in \Sigma_F$.  Let $p> 2$ be a prime number and let $\mathfrak{p}_0$ the prime over $p$ associated to $\tau_0$ under a fixed embedding $\iota_p:\bar\Q\hookrightarrow\C_p$. \emph{We suppose $F$ is unramified at $p$ and the inertia degree of $\mathfrak{p}_0$ over $p$ is $1$.}

Let $B$ be a quaternion algebra over $F$ split at $\tau_0$ and any prime over $p$, and ramified at any $\tau \in \Sigma_F\setminus \{\tau_0\}$. As already mentioned, one of our goals is the construction of $p$-adic families of automorphic forms on $(B\otimes\A)^\times$ using geometrical tools. One of the main obstructions to perform this task working directly on Shimura curves of $B$ is the lack of an adequate moduli problem. To remedy this issue we work with $D= B\otimes_F E$ for some CM extension $E$ of $F$ on which each prime of $F$ over $p$ splits. Let $X$ be the unitary Shimura curve attached to $D$ of level prime to $p$ and $\mathrm{disc}(B)$ (see \S\ref{ss:unitary Shimura curves}) and  by $\pi: A\rightarrow X$ its universal abelian variety (of dimension $4d$).  For each $\underline{k}\in \mathbb{N}[\Sigma_F]$ a precise piece of the sheaf of invariant differentials of $A$ produces a modular sheaf $\omega^{\underline{k}}$ that gives rise to \emph{modular  forms for $D$}.

The works devoted to develop the theory of $p$-adic families for unitary Shimura curves (\cite{brasca13}, \cite{kassaei04}, \cite{ding17}) construct essentially $1$-dimensional $p$-adic families. In fact those works only treat the $p$-adic variation of powers of the invertible subsheaf of $\omega$ attached to $\tau_0$. Nevertheless, for applications it would be better to have more variation, thus our first task is to provide a  more general theory of $p$-adic families in this context. The complexity to perform this was reflected for example in the fact that the rank of $\omega^{\underline{k}}$ growths with $\underline{k}$. Nowadays we have enough technology to perform this task.

The \emph{weight space for $D$}  is the $d$-dimensional adic space, denoted $\mathcal{W}$, attached to the complete group algebra $\Z_p[[(\cO_{F}\otimes\Z_p)^\times]]$. The \emph{weight space for $B$} is the $d+1$-dimensional adic space attached to $\Z_p[[(\cO_{F}\otimes\Z_p)^\times\times\Z_p^{\times}]]$ and is denoted $\mathcal{W}^G$  in the text. For each $n> 0$ we consider certain open subspaces $\mathcal{W}_n\subset \mathcal{W}$  and $\mathcal{W}_n^G\subset\mathcal{W}^G$ (see \S\ref{ss:weight space} and \S\ref{ss:weight space for G}).
  
 Let $\cX$ be denote the adic analytic space attached to $X$. For $r> 0$ we denote by $\cX_{r}$ the strict neighborhood of the $\mathfrak{p}_0$-ordinary locus of $\cX$ where the universal abelian variety has a $\mathfrak{p}_0$-canonical subgroup of order $\leq r$. We have (see \S\ref{ss:overconvergent modular sheaves}, \S\ref{ss:eigenvarieties} and proposition \ref{propclassty}):

\begin{thm}\label{t:main thm 1} Let $n \leq r$. 
 \begin{itemize}
\item[(i)] there exist a sheaf of Banach modules $\cF_n$ over $\cX_{r}\times \mathcal{W}_n$ such that for each classical weight $\underline{k} \in \mathbb{N}[\Sigma_F]$ the map $(\mathrm{id}, \underline{k}): \cX_{r} \rightarrow \cX_{r}\times \mathcal{W}_n$ induces a natural embedding $\omega^{\underline{k}}|_{\cX_{r}} \subset (\mathrm{id}, \underline{k})^{\ast}(\cF_n)$ of Banach sheaves over $\cX_{r}$.
\item[(ii)] There exists an adic space $\mathcal{E}_n$ equidimensional of dimension $d+1$ and endowed with a locally free and without torsion map $w: \mathcal{E}_n\rightarrow \mathcal{W}_n^G$. Moreover, $\mathcal{E}_n$ parametrizes systems of Hecke  eigenvalues appearing in the space of automorphic forms for $B$.
\end{itemize}
\end{thm}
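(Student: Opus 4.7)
The plan is to adapt the Andreatta--Iovita geometric construction of overconvergent modular sheaves to the present quaternionic setting, using the unitary Shimura curve $X$ attached to $D = B \otimes_F E$ as the natural moduli-theoretic substitute for the Shimura curve of $B$, since the latter carries no good moduli problem.

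For part (i) I would proceed as follows. On the strict neighborhood $\cX_r$ of the $\mathfrak{p}_0$-ordinary locus, the universal abelian variety $\pi: A \to X$ admits a $\mathfrak{p}_0$-canonical subgroup $H_n$ of order $p^n$ for every $n \leq r$, which is precisely the role of the hypothesis $n\le r$. Decomposing the sheaf of invariant differentials $\omega_{A/\cX_r}$ under the $\cO_E \otimes \Z_p$-action produces direct summands indexed by the embeddings of $F$ in $\C_p$, split further by the two embeddings of $E$ above each such. The Hodge--Tate map applied to $H_n$ trivializes the $\mathfrak{p}_0$-component modulo $p^n$; for each prime $\mathfrak{p} \neq \mathfrak{p}_0$ above $p$, the splitting of $\mathfrak{p}$ in $E$ makes one of the two corresponding summands ordinary, and duality transfers an analogous partial trivialization to the other. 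Combining these I would obtain a torsor $\mathcal{T}_n$ over $\cX_r$ under a suitable formal torus $T_n$, and define $\cF_n$ as the sheaf on $\cX_r \times \cW_n$ whose sections are functions on $\mathcal{T}_n$ transforming under $T_n$ according to the universal character $\kappa^{\mathrm{un}} \in \cW_n$. For a classical weight $\underline{k}$, the specialization $(\mathrm{id},\underline{k})^*\cF_n$ computes a locally analytic induced module into which the algebraic representation defining $\omega^{\underline{k}}$ embeds, yielding the desired inclusion.

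For part (ii) I would invoke the eigenvariety machine of Coleman, Buzzard and Urban. The module of global sections $M_n := H^0(\cX_r \times \cW_n, \cF_n)$ is a Banach module over $\cO(\cW_n)$ satisfying Buzzard's property (Pr); it carries an action of the spherical Hecke algebra away from $p$ together with a compact operator $U_{\mathfrak{p}_0}$, compactness being a consequence of the fact that $U_{\mathfrak{p}_0}$ improves the degree of overconvergence. Enlarging the weight space to $\cW_n^G$ of dimension $d+1$ accounts for the additional variable corresponding to the central character. The Fredholm series of $U_{\mathfrak{p}_0}$ on $M_n$ cuts out a spectral variety over $\cW_n^G$, from which the machine produces an equidimensional adic space $\mathcal{E}_n$ of dimension $d+1$ equipped with a finite, locally free and torsion-free weight map $w: \mathcal{E}_n \to \cW_n^G$. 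The Hecke-eigensystem description at classical points then follows by combining the classicality criterion of Proposition~\ref{propclassty} with the Jacquet--Langlands correspondence between $B$ and $D$.

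The hard part will be the construction in (i). Because the rank of $\omega^{\underline{k}}$ grows with $\underline{k}$, one cannot interpolate merely by taking powers of an invertible sheaf as in the earlier one-variable treatments \cite{brasca13,kassaei04,ding17}; the torsor $\mathcal{T}_n$ must be built simultaneously in all $d$ directions of $\cW_n$, including those above primes $\mathfrak{p}\neq\mathfrak{p}_0$ where no canonical subgroup is naturally available. Controlling this construction and verifying that its specializations recover the algebraic sheaves $\omega^{\underline{k}}$ requires a careful analysis of the Hodge--Tate exact sequence of $H_n$ and its compatibility with the $\cO_D$-action, which constitutes the technical heart of the argument.
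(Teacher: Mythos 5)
Your overall strategy is the right one, but the central mechanism of part (i) is not actually supplied, and the object you propose in its place would not suffice. A torsor $\mathcal{T}_n$ under a formal torus $T_n$, with $\cF_n$ defined as the $\kappa^{\mathrm{un}}$-isotypic functions on it, can only interpolate powers of \emph{line} bundles: its classical specializations are invertible sheaves. But for $\tau\neq\tau_0$ the summand $\omega_\tau$ has rank $2$ and the classical sheaf involves $\Sym^{k_\tau}\omega_\tau$, whose rank grows with $k_\tau$ --- exactly the obstruction you flag at the end, and it is not resolved by ``building the torsor in all $d$ directions.'' What the paper does instead (\S\ref{ss:formal vector bundles}--\S\ref{ss:overconvergent modular sheaves}) is apply the Andreatta--Iovita formalism of \emph{formal vector bundles with marked sections} to $\Omega=\Omega_0\oplus\bigoplus_{\dP\neq\dP_0}\omega_\dP$: at $\dP_0$ the marked section is $\mathrm{dlog}$ of a trivialization of $D_n=\cG_0[\dP_0^n]/C_n$; at $\dP\neq\dP_0$ the group $\cG_\dP$ is \emph{\'etale} (not ``one summand ordinary with duality transferring a trivialization to the other''), so adding a point of order $p^n$ to the moduli problem and applying $\mathrm{dlog}$ marks a single section $s_{\dP,\tau}$ of each rank-two $\omega_{\dP,\tau}$. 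The resulting $\cF_n$ is an infinite-rank Banach sheaf whose fibers are spaces of homogeneous locally analytic functions on $\cO^{\tau_0\times}\times\cO^{\tau_0}$ (Lemma \ref{l:locdesc}, \S\ref{ss:overconvergent modular forms a la katz}), and $\Sym^{k_\tau}\omega_\tau$ embeds into the classical specialization as the algebraic vectors of this locally analytic induction. Without this (or an equivalent) construction, part (i) is not proved.

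In part (ii) there is a second genuine gap: $U_{\dP_0}$ alone is \emph{not} compact on the family module. Locally that module is of the form $A_i\hat\otimes D_n^{{\bf k}_n^{\tau_0}}(\cO^{\tau_0},\cdot)$, and $U_{\dP_0}$ improves overconvergence only in the first factor; a compact operator tensored with the identity of an infinite-dimensional Banach space is not compact. Compactness in the remaining directions comes from the operators $U_\dP$ for $\dP\neq\dP_0$, which improve the radius of local analyticity of the distributions, so the eigenvariety machine must be run with $U_p=\prod_{\dP\mid p}U_\dP$ (Proposition \ref{Upcomp}). Moreover, the module you feed into the machine consists of forms for $G'$, not for $B$: the passage to automorphic forms for $B$ is not effected by Jacquet--Langlands but by the explicit descent of Corollary \ref{c:automorphic forms as sections}, namely taking $\Delta$-invariants of sections over the distinguished components of the curves $X^{\cC}$ and summing over $\cC\in\Pic(\cO_F)$, together with the $d+1$-dimensional weight space $\cW_n^G$ mapping to $\cW_n$; this descent and its compatibility with the Hecke action also need to be carried out before the machine applies.
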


The construction of the sheaves $\cF_n$ is carried out using a slight modification of the machinery of \emph{formal vector bundles with marked sections} introduced in \cite{AI17}. Then we exploit the description of automorphic forms for $B$ in terms of modular forms for $D$. Thus using the sheaves $\cF_n$ (more precisely the dual of them) we produce the module of \emph{$p$-adic families of locally analytic overconvergent automorphic forms on $(B\otimes\A_F)^\times$}. Such module is projective and the usual Hecke operator $U_p$ acts compactly on it. Using the theory developed in \cite[appendix B]{AIP-halo} we obtain the eigenvariety $\mathcal{E}_n$.

Now we explain our result on triple product $p$-adic $L$-functions. We say that a triple $(\underline{k}_1,\underline{k}_2, \underline{k}_3) \in \Z[\Sigma_F]^3$ is \emph{unbalanced at $\tau_0$ with dominant weight $\underline{k}_3$} if  $k_{3, \tau_0}\geq k_{1, \tau_0}+k_{2, \tau_0}$, $k_{1, \tau}+k_{2, \tau}+k_{3, \tau}$ is even for each $\tau \in \Sigma_F$ and $(k_{1, \tau}, k_{2, \tau}, k_{3, \tau})$ is balanced for each $\tau \neq \tau_0$ (see definition \ref{d:unbalanced weights}). The \emph{interpolation region} for our $p$-adic $L$-functions is the set $S_{3}$ of triples $((\underline{k}_1, \nu_1), (\underline{k}_2, \nu_2), (\underline{k}_3, \nu_3)) \in (\Z[\Sigma_F]\times \Z)^3$ such that  i) $(\underline{k}_1,\underline{k}_2, \underline{k}_3)$ is unbalanced at $\tau_0$, ii) $k_{\tau, i}> 0$ for $i=1, 2, 3$ and $\tau \in \Sigma_F$ and iii) $\nu_3= \nu_1+ \nu_2$.

Let $\mu_1, \mu_2, \mu_3$ be three ordinary eigenfamilies of automorphic forms on $(B\otimes\A_F)^\times$ for $B$ and we denote by $\Lambda_1$, $\Lambda_2$ and $\Lambda_3$ the rings over which they are defined respectively. Let  $x, y, z$ be a triple of classical points corresponding to $((\underline{k}_1, \nu_1), (\underline{k}_2, \nu_2), (\underline{k}_3, \nu_3)) \in S_3$. We denote by $\pi_x$ the automorphic representation of $(B\otimes\A_F)^\times$ generated by the automorphic form obtained from the specialization of $\mu_1$ at $x$, and $\Pi_x$ the corresponding cuspidal automorphic representation of $\mathrm{GL}_2(\A_F)$. Moreover, we denote by $\alpha_{x}^{\dP}$ and $\beta_{x}^{\dP}$ the roots of the Hecke polynomial at $\mathfrak{p}$. In the same way we obtain $\pi_y$, $\Pi_y$, $\alpha_{y}^{\dP}$, $\beta_{y}^{\dP}$, $\pi_z$, $\Pi_z$, $\alpha_{z}^{\dP}$, $\beta_{z}^{\dP}$. We denote by $\mu_z^\circ$ the newform of $\pi_z$. 

We have (see lemma \ref{l:construction} and theorem \ref{t:interpolation} for more details):
\begin{thm}\label{t:main theorem 2} There exists $\mathcal{L}_p(\mu_1,\mu_2,\mu_3)\in \Lambda_1\hat{\otimes}\Lambda_2\hat{\otimes}\mathrm{Frac}(\Lambda_3)$ such that for each classical point $(x, y, z)$ corresponding to a triple $((\underline{k}_1, \nu_1), (\underline{k}_2, \nu_2), (\underline{k}_3, \nu_3)) \in S_3$ we have:
$$\mathcal{L}_p(\mu_1,\mu_2,\mu_3)(x, y, z)= K\cdot\left(\prod_{\dP\mid p}\frac{\mathcal{E}_\dP(x,y,z)}{\mathcal{E}_{\dP,1}(z)}\right)\cdot\frac{L\left(\frac{1-\nu_1-\nu_2-\nu_3}{2},\Pi_{x}\otimes\Pi_{y}\otimes\Pi_{z}\right)^{\frac{1}{2}}}{\langle\mu_{z}^\circ,\mu_{z}^\circ\rangle},$$
where  $K$ is a non-zero constant depending of $(x, y, z)$, $\mathcal{E}_\dP(x,y,z)=$
\[
\left\{\begin{array}{lc}
\mbox{\small$(1-\beta_{x}^{\dP}\beta_{y}^{\dP}\alpha_{z}^{\dP}\varpi_{\dP}^{-\underline{m}_{\dP}-\underline{2}})(1-\alpha_{x}^{\dP}\beta_{y}^{\dP}\beta_{z}^{\dP}\varpi_{\dP}^{-\underline{m}_{\dP}-\underline{2}})(1-\beta_{x}^{\dP}\alpha_{y}^{\dP}\beta_{z}^{\dP}\varpi_{\dP}^{-\underline{m}_{\dP}-\underline{2}})(1-\beta_{x}^{\dP}\beta_{y}^{\dP}\beta_{z}^{\dP}\varpi_{\dP}^{-\underline{m}_{\dP}-\underline{2}})$},&\dP\neq\dP_0\\
\mbox{\small$(1-\alpha_{x}^{\dP_0}\alpha_{y}^{\dP_0}\beta_{z}^{\dP_0}p^{1-m_{0}})(1-\alpha_{x}^{\dP_0}\beta_{y}^{\dP_0}\beta_{z}^{\dP_0}p^{1-m_{0}})(1-\beta_{x}^{\dP_0}\alpha_{y}^{\dP_0}\beta_{z}^{\dP_0}p^{1-m_{0}})(1-\beta_{x}^{\dP_0}\beta_{y}^{\dP_0}\beta_{z}^{\dP_0}p^{1-m_{0}})$},&\dP=\dP_0
\end{array}\right., 
\]
\[
\mathcal{E}_{\dP,1}(z):=\left\{\begin{array}{lc} 
(1- (\beta_z^{\dP})^2\varpi_{\dP}^{-\underline{k}_{3,\dP}-\underline{2}})\cdot (1- (\beta_z^{\dP})^2\varpi_{\dP}^{-\underline{k}_{3,\dP}-\underline{1}}),&\dP\neq \dP_0,\\
(1- (\beta_z^{\dP_0})^{2}p^{-k_{3,\tau_0}})\cdot (1- (\beta_z^{\dP_0})^{2}p^{1-k_{3,\tau_0}}),&\dP= \dP_0,
\end{array}\right. 
\]
$m_{0}=\frac{k_{1,\tau_0}+k_{2,\tau_0}+k_{3,\tau_0}}{2}\geq 0$, $\underline{m}_{\dP}=\frac{\underline{k}_{1,\dP}+\underline{k}_{2,\dP}+\underline{k}_{3,\dP}}{2}= \left(\frac{k_{1,\tau}+k_{2,\tau}+k_{3,\tau}}{2}\right)_{\tau \sim\dP}$ and $\tau \sim\dP$ means real embeddings $\tau$ corresponding to embeddings $F_\dP\hookrightarrow\C_p$ through $\iota_p$.
\end{thm}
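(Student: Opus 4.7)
The plan is to construct $\mathcal L_p(\mu_1,\mu_2,\mu_3)$ as a $p$-adic trilinear period obtained by $p$-adically interpolating a geometric trilinear pairing on the unitary Shimura curve $\cX$, and then to identify its value at each classical point $(x,y,z)\in S_3$ with the square root of the central $L$-value via Ichino's formula and the Jacquet--Langlands transfer from $B$ to $\mathrm{GL}_2(\A_F)$.

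First, for a classical triple $(\underline{k}_1,\underline{k}_2,\underline{k}_3)$ that is unbalanced at $\tau_0$ with dominant weight $\underline{k}_3$, I would build an algebraic $D^\times$-equivariant trilinear pairing on $\omega^{\underline{k}_1}\otimes\omega^{\underline{k}_2}\otimes\omega^{\underline{k}_3}$ over $\cX$, valued in a line bundle that can be integrated along $\cX$. At each embedding $\tau\ne\tau_0$ the three local weights are balanced, and the pairing is the standard $\mathrm{SL}_2$-invariant trilinear contraction of symmetric tensors; at $\tau_0$, where the third weight dominates, the surplus $k_{3,\tau_0}-k_{1,\tau_0}-k_{2,\tau_0}$ is absorbed by iterating the $\tau_0$-component of the algebraic Gauss--Manin/Atkin--Serre operator on the third factor. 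Integrating this pairing against classical forms over $\cX$ yields, via the Harris--Kudla decomposition and Ichino's formula, the square root of $L(\tfrac{1-\nu_1-\nu_2-\nu_3}{2},\Pi_x\otimes\Pi_y\otimes\Pi_z)$ divided by $\langle\mu_z^\circ,\mu_z^\circ\rangle$, times a product of local zeta integrals.

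I would then $p$-adically interpolate this pairing to the level of the overconvergent sheaves $\cF_n$ of Theorem \ref{t:main thm 1}. At embeddings $\tau\ne\tau_0$ the balanced contraction extends by $p$-adic continuity, using the description of $\cF_n$ via formal vector bundles with marked sections. In the $\tau_0$-direction the algebraic Atkin--Serre operator is replaced, as in \cite{AI17} and \cite{greenberg-seveso}, by the $p$-adic Maass--Shimura operator; here the hypothesis that $\dP_0$ has inertia degree $1$ becomes essential, as it reduces the local problem to the classical one-variable situation. Applying the Hida ordinary projectors to the first two families $\mu_1,\mu_2$ and the standard $p$-depletion in the third variable yields an element of $\Lambda_1\hat\otimes\Lambda_2\hat\otimes\mathrm{Frac}(\Lambda_3)$, which I would define to be $\mathcal L_p(\mu_1,\mu_2,\mu_3)$. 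The denominator $\mathcal E_{\dP,1}(z)$ appears precisely because $\mu_3$ enters via its $p$-depletion rather than its $p$-stabilisation, which is also the reason the construction lives in $\mathrm{Frac}(\Lambda_3)$ rather than $\Lambda_3$.

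The interpolation formula at each classical $(x,y,z)\in S_3$ then follows from an explicit local computation of the $p$-adic zeta integrals produced at each $\dP\mid p$: at $\dP\ne\dP_0$, where the local weights are balanced, the ordinary test vectors give the four factors in the top line of $\mathcal E_\dP(x,y,z)$, following the calculations of \cite{hsieh18}; at $\dP=\dP_0$, the Maass--Shimura shift on the dominant factor replaces $\varpi_{\dP}^{-\underline m-\underline 2}$ by $p^{1-m_0}$, producing the bottom line. The archimedean, ramified and bad-reduction contributions are absorbed into the constant $K$. The main obstacle is the construction of the $p$-adic trilinear pairing at the level of $\cF_n$ and $\omega^{\underline{k}_3}$: one must simultaneously control the formal-vector-bundle structure of \cite{AI17}, the $p$-adic Maass--Shimura operator at $\tau_0$, and the Hida projectors, and verify that the resulting pairing varies rigid-analytically over $\cW_n^G\times\cW_n^G\times\cW_n^G$ and descends compatibly from $D$ to $B$. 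This is where the adaptation of \cite{liu-zhang-zhang17} to our hybrid unbalanced/balanced setting over $F$ requires the most work.
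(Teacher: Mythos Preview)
Your overall architecture---interpolate a geometric trilinear product on the unitary Shimura curve, use Ichino's formula for the arithmetic content, and compute local Euler factors at primes above $p$---matches the paper. But you have the roles of the three families reversed at several key points, and this is not cosmetic: it affects whether the construction works at all.

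In the unbalanced situation with dominant weight $\underline{k}_3$, the paper (Theorem~\ref{tripleonsheaves}) builds a map $t:\omega^{\underline{k}_1}\times\omega^{\underline{k}_2}\to\omega^{\underline{k}_3}$ by applying the Gauss--Manin iterates $\nabla^j$ to the \emph{first two} factors (cf.\ formula~\eqref{tripleD}), not to the dominant one. Concretely, $\nabla^{m_{3,\tau_0}}\mu_1$ has $\tau_0$-weight $k_{1,\tau_0}+2m_{3,\tau_0}=k_{3,\tau_0}-k_{2,\tau_0}$, so multiplying by $\mu_2$ lands in weight $k_{3,\tau_0}$. Accordingly, the $p$-adic construction $p$-depletes $\mu_1$ (so that $\Theta^{{\bf m}_{3,\tau_0}}\mu_1^{[p]}$ makes sense via Theorem~\ref{t: gauss-manin p-adic powers}), multiplies by $\mu_2$, applies $e_{\rm ord}$ to the product, and then pairs against $\mu_3$. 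The third family is the one assumed ordinary from the outset; it is never depleted. Your proposal depletes $\mu_3$ and applies the Maass--Shimura operator to it, which neither matches the classical trilinear map nor produces a form of the right weight to pair with $\mu_1\mu_2$.

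Consequently your explanation of $\mathcal{E}_{\dP,1}(z)$ and of the denominator $\mathrm{Frac}(\Lambda_3)$ is also off. The factor $\mathcal{E}_{\dP,1}(z)$ does not arise from depleting $\mu_3$; it comes from Proposition~\ref{eulerfactors}, namely from the ratio $\langle v_{\alpha_3},v_{\beta_3}^*\rangle/\langle v_0^3,v_0^3\rangle$ comparing the Petersson pairing of the $p$-stabilized test vector with that of the spherical newvector for $\pi_z$. The need for $\mathrm{Frac}(\Lambda_3)$ comes from Lemma~\ref{l:construction}: one divides by $\langle\mu_z^*,\mu_z\rangle$ to project the ordinary part of $t(\mu_1,\mu_2)$ onto the line spanned by $\mu_3$ inside the space of oldforms, and this Petersson norm varies in the $\Lambda_3$-direction. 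Once you swap the roles so that $\mu_3$ is the ordinary target and $\mu_1$ is the depleted, differentiated input, the rest of your outline (including the local computations at $\dP\neq\dP_0$ via the balanced pairing and at $\dP_0$ via Proposition~\ref{eulerfactors}) aligns with the paper's proof of Theorem~\ref{t:interpolation}.
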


For the precise shape of the constant $K$ see the end of the proof of theorem \ref{t:interpolation}. The starting point of our construction is a result of Harris-Kudla and Ichino relating the central value $L\left(\frac{1-\nu_1-\nu_2-\nu_3}{2},\Pi_{x}\otimes\Pi_{y}\otimes\Pi_{z}\right)$ in terms of certain trilinear period integrals defined in terms of automorphic forms of $\pi_1$, $\pi_2$ and $\pi_3$. These trilinear periods can be described in terms of some trilinear products (see \ref{IntProp}) which have a geometric interpretation in terms of trilinear products of sections of modular sheaves over unitary Shimura curves (see \ref{tripleonsheaves}). Adapting to our situation the $p$-adic interpolation of the integral powers of the Gauss-Manin connexion in \cite{liu-zhang-zhang17}, and inspired by ideas from \cite{AI17} and \cite{greenberg-seveso} we perform a $p$-adic interpolations of the linear periods over the unitary Shimura curves. This is enough to perform the construction of the $p$-adic $L$-functions.

\subsection{About our hypothesis} The condition $[F_{\mathfrak{p}_0}: \Q_p]= 1$ is used to avoid subtleties about the weight space. But it interesting to remark that several parts of the paper can be performed without any condition. 

The ordinarity condition allow us to follows the approach of \cite{liu-zhang-zhang17} about the interpolation of integral power of the Gauss-Manin connection. We believe that following \cite{AI17} it is possible to remove this hypothesis.

\subsubsection*{Acknowledgements.}
The authors are supported in part by DGICYT Grant MTM2015-63829-P.
This project has received funding from the European Research Council
(ERC) under the European Union's Horizon 2020 research and innovation
programme (grant agreement No. 682152).

\part{Background}

\section{Basic notations}
Let $\mathbb{A}$ be adeles of $\Q$ and $\A_f$ the finite adeles. Let $F$ be a totally real field of degree $d= [F:\Q]$, $\mathcal{O}_{F}$ its ring of integers and $\Sigma_F$ the set of real embeddings of $F$. In all this paper we fix an embedding  $\tau_0 \in \Sigma_F$. We denote by $\underline{1}\in \Z[\Sigma_F]$ the element with each coordinate equals to $1$. For $x\in F^\times$ and $\underline{k}\in \Z[\Sigma_F]$ we put $x^{\underline{k}}= \prod_{\tau\in\Sigma_F}\tau(x)^{k_{\tau}}$.

We fix a prime number $p> 2$, denote by $\Sigma_p$ the set of the embeddings of $F$ in $\overline{\Q}_p$ and we fix an embedding $\iota_p:\bar\Q\hookrightarrow\C_p$. For each prime of $\mathfrak{p} \mid p$ let $F_{\mathfrak{p}}$ be the completion of $F$ at $\mathfrak{p}$, $\Sigma_{\mathfrak{p}}$ the set of its embeddings  in $\C_p$, $\mathcal{O}_{\mathfrak{p}}$ its ring of integers, $\kappa_{\mathfrak{p}}$ its residue field, $q_{\mathfrak{p}}= \sharp\kappa_{\mathfrak{p}}$ and $e_{\mathfrak{p}}$ the ramification index. We also fix uniformizers $\varpi_{\mathfrak{p}} \in \mathcal{O}_{\mathfrak{p}}$. Using the embedding $\iota_p$ we identify $\Sigma_F$ with $\Sigma_p:=\bigcup_{\mathfrak{p} \mid p}\Sigma_{\mathfrak{p}}$ in the natural way. Moreover we will use the notation $\cO:= \cO_{F}\otimes \Z_p$ which naturally decompose as $\cO= \prod_{\dP}\cO_\dP$.

In all this text we denote by $\mathfrak{p}_0$ the prime corresponding to $\iota_p$ and $\tau_0$ and we put $F_0:= F_{\mathfrak{p}_0}$, $\cO_0:= \cO_{\mathfrak{p}_0}$ and $\Sigma_0:= \Sigma_{\mathfrak{p}_0}$. Moreover, we suppose the following hypothesis:
\begin{hyp} \label{hypothesis 1} $[F_0: \Q_p]= 1$ and $F$ unramified at $p$. 
\end{hyp}
We denote $\cO^{\tau_0}:=\prod_{\mathfrak{p}\neq \mathfrak{p}_0}\cO_{\mathfrak{p}}$. Thus we have the following decomposition $\cO= \cO_0\times \cO^{\tau_0}= \Z_p\times \cO^{\tau_0}$.

 We also fix a quaternion algebra over $F$ denoted by $B$ such that:
\begin{itemize}
	\item[(i)] split at $\tau_0$ and at each $\mathfrak{p}\mid p$,
	\item[(ii)] is ramified at each $\tau \in \Sigma_F\setminus \{\tau_0\}$.
\end{itemize}

For $\tau \in \Sigma_F$ we put $B_\tau:= B\otimes_{F, \tau}\R$ and fix an identification $B_{\tau_0}\cong \mathrm{M}_2(\R)$ and let $B_{\tau_0}^+\subset B_{\tau_0}^\times$ be the elements of positive norm. Moreover for each $\tau \in \Sigma_F\smallsetminus\{\tau_0\}$ we fix an isomorphism $B\otimes_{\tau}\C\cong \mathrm{M}_2(\C)$  and denote by $\iota_\tau:B_{\tau}^\times\hookrightarrow \GL_2(\C)$ the embedding obtained. We denote by $\mathrm{disc}(B)$ the discriminant of $B$.

 Now we introduce the main reductive groups over $\Q$ used in this text. Firstly, we put $G:={\rm Res}_{\mathbb{Q}}^F B^\times$ and we denote by $G(\Q)^+$ the subgroup of elements of $G(\Q)$ such that its image in $B_{\tau_0}$ is contained in $B_{\tau_0}^+$. Let $\det:G\rightarrow{\rm Res}_{\Q}^F\G_{m,F}$ be the reduced norm and let $G^*=G\times_{{\rm Res}_{\Q}^F\G_{m,F}}\G_{m,\Q}\subseteq G$.

We choose from now on $\lambda \in \Q$ such that $\lambda < 0$ and $p$ split in $\Q(\sqrt{\lambda})$. Let $E:= F(\sqrt{\lambda})$ and denote $z \mapsto \overline{z}$ the not-trivial automorphism of $E/F$. For each $\tau \in \Sigma_F$ let $\tilde\tau:E\rightarrow\C$ be the embedding above $\tau$ such that $\tilde\tau(\sqrt{\lambda})=\sqrt{\lambda}$.

We denote $D:= B\otimes_{F}E$ which is a quaternion algebra over $E$ and $D \rightarrow D$ to the involution defined by $l= b\otimes z \mapsto \overline{l}:= \bar b\otimes \overline{z}$ where $\bar b$ is the canonical involution of $B$. We fix $\delta \in D^\times$ such that $\overline{\delta}= -\delta$\footnote{Our $\delta$ corresponds to the product $\alpha\delta$ with the notation of \cite{carayol86}} and define a new involution on $D$ by $l\mapsto l^{\ast}:= \delta^{-1}\overline{l}\delta$.  We denote by $V$ to the underlying $\Q$-vector space of $D$ endowed with the natural left action of $D$.  
We have a symplectic bilinear form on $V$: 
$$\Theta: V \times V \rightarrow \Q, \ \ \  (v,w) \mapsto \mathrm{Tr}_{E/\Q}(\mathrm{Tr}_{D/E}(v\delta w^{\ast})).$$

Let $G_D:={\rm Res}_{\Q}^{E}D^\times$ and $G'$  be the reductive group over $\Q$ such that for each $\Q$-algebra $R$ we have:
$$G'(R)= \left\lbrace  \mathrm{D-linear \ symplectic \ similitudes \ of } \ (V\otimes_{\Q}R, \Theta\otimes_{\Q}R) \right\rbrace. $$
If $d\in G'\subseteq D^\times$, as $\Theta(vd, wd)=\mu\Theta(v, w)$ with $\mu\in \Q$, we have
	$\mathrm{Tr}_{E/\Q}(\mathrm{Tr}_{D/E}(vd\delta d^\ast w^{\ast}))=\mathrm{Tr}_{E/\Q}( d\bar d\mathrm{Tr}_{D/E}(v \delta w^{\ast}))=\mathrm{Tr}_{E/\Q}(\mu\mathrm{Tr}_{D/E}(v \delta w^{\ast}))$ then $\mu=d\bar d$. Thus, $d=be\in B^\times E^\times\subset D^\times$ with $\det(b)e\bar e=\mu$. 
Denote $T_E:={\rm Res}_{\Q}^E\G_{m.E}/{\rm Res}_{\Q}^F\G_{m,F}$. The above computation show that we have the exact sequence
\begin{equation}\label{eqdescGG'}
0\longrightarrow G^*\longrightarrow G'\stackrel{\pi_T}{\longrightarrow} T_E\longrightarrow 0;\qquad \pi_T(be)=[e].
\end{equation}

\begin{rmk} If there exists a embedding $\varphi:E\hookrightarrow B$ then $D\simeq\M_2(E)$ and $T_E$ acts on $G^\ast$ given by conjugation of $\varphi(e)$. In this situation, we have an isomorphism $G'=G^*\rtimes T_E$ given by $be\mapsto (b\varphi(e),e)$.
\end{rmk}




Let $A\subset B$ be a $\cO_F$-order, and let $A_D:=A\otimes_{\cO_F}\cO_E$.
We introduce a way to cut certain modules endowed with an action of $A_D$. We denote $\psi:\Q(\sqrt{\lambda})\longrightarrow D$ given by $z\longmapsto1\otimes z$ and fix an extension $R/\cO_E$  such that $A\otimes_{\cO_F}R=\M_2(R)$. For any $R$-module $M$ endowed with a linear action of $A_D$, we define 
	$$M^{+}:=\left\{v\in M:\;\psi(e)\ast v=ev,\mbox{ for all }e\in \Z(\sqrt{\lambda})\right\},$$
	$$M^{-}:=\left\{v\in M:\;\psi(e)\ast v=\bar ev,\mbox{ for all } e\in \Z(\sqrt{\lambda})\right\}.$$
	 Each $M^{\pm}$ is equipped with an action of $A\otimes_{\Z}R=\M_2(R)\otimes_{\Z}\cO_F$ and we put $M^{\pm,1}:=(\begin{smallmatrix}1&0\\0&0\end{smallmatrix})M^{\pm}$ and $M^{\pm,2}:=(\begin{smallmatrix}0&0\\0&1\end{smallmatrix})M^{\pm}.$  
	Note that both are isomorphic $R\otimes_{\Z}\cO_F$-modules through the matrix $(\begin{smallmatrix}0&1\\1&0\end{smallmatrix})$. Moreover, by construction we have:
	 $$M\supseteq M^+\oplus M^-= M^{+,1}\oplus M^{-,1}\oplus M^{+,2}\oplus M^{-,2},$$
	 and the inclusion is an equality if ${\rm disc}(\Q(\sqrt{\lambda}))\in R^\times$.


\section{Automorphic forms for $G$}\label{s:Automorphic forms for G} In this section we recall some facts about quaternionic automorphic forms over $F$. Moreover, we introduce some algebraic and analytic operations (triple products) and we recall the Ichino's formula which relate these operations to central $L$-values of certain complex $L$-functions. One the main goals of the main body of this paper is to $p$-adically deform these algebraic operations. 
		
\subsection{Quaternionic Automorphic Forms}\label{AutQuaFrm} We start introducing some notations about local representations. Fix $k>1$ and $\nu$ integers such that $k\equiv \nu ({\rm mod}\;2)$. 
	
	On the one hand we write $\cD(k,\nu)$ for the $(\cG_{\tau_0},O(2))$-module of discrete series of weight $k$ and central character $a\mapsto a^{\nu}$. It is the sub vector space 	of  $C^\infty(\GL_2(\R)^+,\C)$ generated by the holomorphic element $f_k$ defined by: $$f_k\left(\begin{smallmatrix}a&b\\c&d\end{smallmatrix}\right)= (ad-bc)^{\frac{\nu+k}{2}}(ci+d)^{-k}$$
	Let $R,L$ be the Shimura-Mass operators defined in \cite[Proposition 2.2.5]{Bump}, then $f_k$ is also characterized by the relations
	\begin{equation}\label{relhol}
	(\begin{smallmatrix}\cos t &\sin t\\-\sin t&\cos t\end{smallmatrix})f_k=e^{kit}f_k, \qquad af_k=a^{\nu}f_k, \qquad Lf_k=0.
	\end{equation}
	
	On the other hand let $\cP(k,\nu)=\Sym^{k}(\C^2)\otimes\C[\frac{\nu-k}{2}]$ be the space of homogeneous polynomials of degree $k$ endowed with the natural action of $\mathrm{GL}_2(\C)$ i.e. if $\gamma= (\begin{smallmatrix}a&b\\c&d\end{smallmatrix})\in \mathrm{GL}_2(\C)$ and  $P(x,y)\in\cP(k,\nu)$ then we put:
	$$ \gamma P(x, y):=\det(\gamma)^{(\nu-k)/2}P(ax+cy,bx+dy).$$
	For each $\tau \in \Sigma_{F}\smallsetminus \{\tau_0\}$ we write $\cP_\tau(k,\nu)$ to denote the $\C$-vector space $\cP(k,\nu)$ endowed with the action of $B_{\tau}^{\times}$ given through the embedding $\iota_\tau$. Remark that we have an isomorphism:	 
	\begin{equation}\label{dual identification polynomials}
\cP_\tau(k,-\nu)^\vee\simeq \cP_\tau(k,\nu), \ \ \mu\longmapsto P_\mu(X,Y)=\mu\left(\left|\begin{smallmatrix}X&Y\\x&y\end{smallmatrix}\right|^{ k}\right)
	\end{equation}

	We denote by $\cA(\C)$ the $\C$-vector space of functions $f:G(\A)\longrightarrow\C$ such that:
	\begin{itemize}
		\item[(i)] There exists an open compact subgroup $U\subseteq G(\A_f)$ such that $f(g U)=f(g)$, for all $g\in G(\A)$.
		
		\item[(ii)] $f\mid_{B_{\tau}^\times}\in C^\infty(\GL_2(\R),\C)$, here we use the fixed identification $B_{\tau}^\times\simeq\GL_2(\R)$ .
		
		\item[(iii)] We assume that any $f\in\cA(\C)$ is $O(2)$-finite i.e. its right translates by elements of $O(2)\subset B_{\tau}^\times$ span a finite-dimensional vector space. 
		
		\item[(iv)] We assume that any $f\in\cA(\C)$ is ${\mathcal{Z}}$-finite, where ${\mathcal{Z}}$ is the centre of the universal enveloping algebra of $B_{\tau}^\times$.
	\end{itemize}
	Write $\rho$ for the action of $G(\A)$ given by right translation, then $(\cA(\C),\rho)$ defines a smooth $G(\A^{\infty})\times\prod_{\tau \in \Sigma_F\smallsetminus \{\tau_0\}}B_{\tau}^\times$-representation and a $(\cG_{\tau_0},O(2))$-module. Moreover, $\cA(\C)$ is also equipped with a $G(\Q)$-action: if $h\in G(\Q)$,  $g\in G(\A)$, $f\in \cA(\C)$ we put $(h\cdot f)(g)=f(h^{-1} g)$.   \\

	We denote $\Xi=\{\underline{k}\in\N[\Sigma_F]| \,\, k_{\tau_0}> 0, \ k_\tau\equiv k_{\tau'}({\rm mod} \,2)\,\,\mbox{for all }\tau, \tau' \}$ and for each $\nu \in \Z$ we put $\Xi_\nu=\{\underline{k}\in\Xi | \,k_{\tau_0}\equiv\nu({\rm mod}\,2)\}$.  If $\nu \in \Z$ and $\underline{k} \in \Xi_\nu$  we put
	\[
	\cP^{\tau_0}(\underline{k},\nu):= \bigotimes_{\tau \in \Sigma_F\smallsetminus \{\tau_0\}}\cP_\tau(k_\tau,\nu) \qquad \qquad \cD(\underline{k},\nu):=\cD(k_{\tau_0},\nu)\otimes\cP^{\tau_0}(\underline{k},\nu).
	\]
  It is a $G(\R)$-representation i.e. it is a $(\cG_{\tau_0},O(2))$-module endowed with an action of $\prod_{\tau \in \Sigma_F\smallsetminus \{\tau_0\}}B_{\tau}^\times$. Remark that the action of the centre $(F\otimes\R)^\times$ is given by the parallel character $a\mapsto a^\nu$. Using this module we consider the space of automorphic forms:
	\[
	\cA(\underline{k},\nu):=\Homo_{G(\R)}(\cD(\underline{k},\nu),\cA(\C)),
	\]
	which is endowed with natural $G(\Q)$ and $G(\A_f)$-actions, that commute with each other. 
	\begin{defi} The elements of $H^0(G(\Q),\cA(\underline{k},\nu))$ are called \emph{automorphic forms of weight $(\underline{k},\nu)$}.
	\end{defi}
	
\begin{rmk}\label{mod-aut} Fix $\phi \in H^0(G(\Q),\cA(\underline{k},\nu))$. By identifying $B_{\tau_0}^+/\SO(2)F_0^\times$ with the Poincar\'e upper half plane $\dH$, we define the holomorphic function	$f_\phi:\dH\times G(\A_f)\longrightarrow \bigotimes_{\tau \in \Sigma_F\smallsetminus\{\tau_0\}}\cP_\tau(k_\tau,\nu)^\vee$ by: 
		$$f_\phi(z ,g_f)(P):=\phi(f_{k_{\tau_0}}\otimes P)(g_{\tau_0}g_f)f_{k_{\tau_0}}(g_{\tau_0})^{-1}$$
	 here $z= g_{\tau_0} i \in \mathfrak{H}$ for some $g_{\tau_0}\in B_{\tau_0}^{\times}$, $g_f\in G(\A_f)$ and $P\in \bigotimes_{\tau \neq \tau0}\cP_\tau(k_\tau,\nu)$. Then each $\gamma\in G(\Q)^+$ we have:
		$$f_\phi(\gamma z,\gamma g_f)=\det\gamma^{\frac{-\nu-k_{\tau_0}}{2}}(cz+d)^{k_{\tau_0}}\gamma \left(f_\phi(z,g_f)\right)$$ here $\gamma=(\begin{smallmatrix}a&b\\c&d\end{smallmatrix})$ when considered in $\GL_2(\R)$ through $\tau_0$. 
		As $\cD(k_{\tau_0},\nu)$ is generated by $f_{k_{\tau_0}}$ then to provide $f_\phi$ is equivalent to provide $\phi$.\end{rmk}
	

Let $\cN$ be an ideal of $F$ prime to $\mathrm{disc}(B)$, we denote: 
 \[
 K_1^B(\cN):=\left\{g\in G(\hat\Z):\  g_{\mathfrak{n}}=\left(\begin{array}{cc}a&b\\c&d\end{array}\right)\in\GL_2(\prod_{\ell \mid \mathfrak{n}}\Z_{\ell}), \ c\equiv d-1\equiv 0\;{\rm mod}\;\cN\right\}.
 \]
\begin{defi} If $\chi:\A_F^\times/F^\times\rightarrow\bar\Q^\times$ is a finite character we denote
$M_{(\underline{k},\nu)}(\Gamma_1(\cN),\chi)$ for the space of $\phi\in H^0(G(\Q),\cA(\underline{k},\nu))^{K_1^B(\cN)}$ such that $\phi(f)(ag)=\chi(a)|a|^{\nu}\phi(f)(g)$ for $a\in\A_F^\times$, $g\in G(\A)$, $f\in \cD(\underline{k},\nu)$ and for $|\cdot|:\A_F^\times/F^\times\rightarrow\R^\times$ the usual norm character.
\end{defi}

\subsection{Archimedean trilinear products}\label{ss:trilinear products} Firstly we treat the local setting. Let $k_1, k_2, k_3 \in \N_{>0}$ and $\nu_1, \nu_2, \nu_3=\nu_1+\nu_2 \in \Z$ such that $k_i\equiv \nu_i ({\rm mod}\;2)$ for $i= 1, 2, 3$. We consider the following two cases:
	
	\textbf{(1) Unbalanced:} suppose that $k_3\geq k_1+k_2$ and $m:=(k_1+k_2+k_3)/2 \in \Z$. We denote $m_3:=\frac{-k_1- k_2+ k_3}{2}\geq 0$ and consider the map $t_{\tau_0}:\cD(k_3,\nu_1+\nu_2)\longrightarrow \cD(k_1,\nu_1)\otimes\cD(k_2,\nu_2)$ given by:
	\begin{equation}\label{tripleD}
	t_{\tau_0}(f_{k_3})= \sum_{j=0}^{m_3}(-1)^j\binom{m_3}{j}\binom{m-2}{k_1+j-1}R^j(f_{k_1})\otimes R^{m_3-j}(f_{k_2}).
	\end{equation}
	This map is well defined since $t_{\tau_0}(f_{k_3})$ also satisfies the relations \eqref{relhol}. 
	
	\textbf{(2) Balanced:} suppose that for $i= 1,2,3$ we have $2k_i\leq k_1+k_2+k_3$ and $m:=(k_1+k_2+k_3)/2 \in \Z$. We denote $m_i:=m-k_i\geq 0$ for $i= 1,2,3$, and for each $\tau \in \Sigma_F\smallsetminus\{\tau_0\}$ let $t_{\tau}^\vee:\cP_\tau(k_1,\nu_1)^\vee\otimes\cP_\tau(k_2,\nu_2)^\vee\longrightarrow \cP_\tau(k_3,-\nu_1-\nu_2)$ be the map given for $\mu_1 \in \cP_\tau(k_1,\nu_1)^\vee$ and $\mu_2 \in \cP_\tau(k_2,\nu_2)^\vee$ by:
	\[
	\begin{array}{l}
	t_{\tau}^\vee(\mu_1\otimes\mu_2)(x,y)=\mu_1\left(\mu_2\left(\left|\begin{array}{cc}x&y\\x_2&y_2\end{array}\right|^{m_1}\left|\begin{array}{cc}x&y\\x_1&y_1\end{array}\right|^{m_2}\left|\begin{array}{cc}x_1&y_1\\x_2&y_2\end{array}\right|^{m_3}\right)\right).
	\end{array}
	\]
	Using the identification (\ref{dual identification polynomials}) we obtain a $B_{\tau}^\times$-equivariant morphism
	\begin{equation}\label{tripleP}
	t_{\tau}:\cP_{\tau}(k_3,\nu_1+\nu_2)\longrightarrow \cP_{\tau}(k_1,\nu_1)\otimes\cP_{\tau}(k_2,\nu_2).
	\end{equation}
	
Now we consider the global setting. Let $\underline{k}_1, \underline{k}_2,  \underline{k}_3\in\Xi$. 
	\begin{defi}\label{d:unbalanced weights} We say that $\underline{k}_1,\underline{k}_2, \underline{k}_3$ are \emph{unbalanced at $\tau_0$ with dominant weight $\underline{k}_3$} if for each $\tau \in \Sigma_F$ the integer $k_{1, \tau}+k_{2, \tau}+k_{3, \tau}$ is even and
		\begin{itemize}
			\item[(i)] $k_{3, \tau_0}\geq k_{1, \tau_0}+k_{2, \tau_0}$,
			
			\item[(ii)] if $i= 1,2,3$ then $2k_{i, \tau}\leq k_{1, \tau}+k_{2, \tau}+k_{3, \tau}$  for each $\tau \neq \tau_0$.
		\end{itemize} 
	\end{defi}
	
	Assume that $\underline{k}_1\in\Xi_{\nu_1}$, $\underline{k}_2\in\Xi_{\nu_2}$ and $\underline{k}_3\in\Xi_{\nu_1+\nu_2}$ are unbalanced at $\tau_0$ with dominant weight $\underline{k}_3$.
	The products \eqref{tripleD} and \eqref{tripleP} provide a morphism of $G(\R)$-representations
	\[
	t_\infty:\cD(\underline{k}_3,\nu_1+\nu_2)\longrightarrow \cD(\underline{k}_1,\nu_1)\otimes\cD(\underline{k}_2,\nu_2)
	\]
	Thus we obtain a global and $G(\Q)$-equivariant \emph{linear  product}:
	\[
	t:\cA(\underline{k}_1,\nu_1)\otimes \cA(\underline{k}_2,\nu_2)\longrightarrow \cA(\underline{k}_3,\nu_1+\nu_2)
	\]
given by  $t(\phi_1,\phi_2)(f):=\phi_1\phi_2(t_\infty(f))$ for $\phi_1 \in \cA(\underline{k}_1,\nu_1)$, $\phi_2 \in \cA(\underline{k}_2,\nu_2)$ and $f\in \cD(\underline{k}_3,\nu_1+ \nu_2)$. Here $\phi_1\phi_2\left(\sum_jf_j^1\otimes f_j^2\right):=\sum_j\phi_1(f_j^1)\phi_2(f_j^2)$, for any $f_j^i\in \cD(\underline{k}_i,\nu_i)$. From this we obtain a trilinear product between automorphic forms: 
	\begin{equation}\label{triple}
t:H^0(G(\Q),\cA(\underline{k}_1,\nu_1))\times H^0(G(\Q),\cA(\underline{k}_2,\nu_2))\longrightarrow H^0(G(\Q),\cA(\underline{k}_3,\nu_1+\nu_2)). 
	\end{equation}

\subsection{Test vectors and non-archimedean trilinear products}\label{ss:NA-trilinear products}

Let $W$ be an spherical representation of $\GL_2(F_v)$, where $F_v$ is a finite extension of $\Q_p$, and write $\epsilon$ for the central character. Write $\cO_v$ for the integer ring of $F_v$ with uniformizer $\varpi$, let $\kappa$ be the residue field, write $q:=\#\kappa$, and let
$$K:=\GL_2(\cO_v), \qquad K_0(\varpi^n):=\left\{\left(\begin{array}{cc}a&b\\ c&d\end{array}\right)\in K,\;\varpi^n\mid c\right\}.$$ 
Assume that $W$ is equipped with a hermitian (Petersson) inner product
\[
\langle\cdot,\cdot\rangle:W\times W\longrightarrow \C,\qquad \langle gv,gv'\rangle=|\epsilon(\det (g))|\cdot\langle v,v'\rangle.
\]
For a fixed spherical vector $v_0\in V^K$, we construct the test vector
$v_n=\mbox{\tiny$\left(\begin{array}{cc}1&\\ &\varpi^n\end{array}\right)$}v_0\in W^{K_0(\varpi^n)}$.
Clearly we have
\begin{equation}\label{Petprod1}
\langle v_n,v_m\rangle=|\epsilon(\varpi)|^{n-m}\langle v_{n-m},v_0\rangle,\qquad n\geq m.
\end{equation}
Write $T$ for the usual Hecke operator, and denote by $Tv_0=a\cdot v_0$ the corresponding eigenvalue. Notice that
\begin{equation}\label{UvsT}
a\cdot v_0=\frac{1}{q}\left(v_1+\sum_{i\in \kappa}g_i v_0\right)=:q^{-1}v_1+Uv_0;\qquad g_i=\left(\begin{array}{cc}\varpi&i\\&1\end{array}\right).
\end{equation}
The relations
\begin{equation}\label{auxeq}
g_i=k_ig_0,\quad (i\in\kappa),\quad k_i=\left(\begin{array}{cc}1&i\\&1\end{array}\right);\qquad g_0^{-1}=\varpi^{-1}\left(\begin{array}{cc}1&\\ &\varpi\end{array}\right)
\end{equation}
and the property $k_i^{-1}v_n=v_n$ imply that 
\[
\langle Uv_0,v_n\rangle=q^{-1}\sum_{i\in \kappa}\langle g_iv_0,v_n\rangle=|\epsilon(\varpi)|\overline{\epsilon(\varpi)}^{-1}\langle v_0,v_{n+1}\rangle=|\epsilon(\varpi)|^{-1}\epsilon(\varpi)\langle v_0,v_{n+1}\rangle,
\]
and analogously $\langle v_n,U v_0\rangle=|\epsilon(\varpi)|\epsilon(\varpi)^{-1}\cdot\langle v_{n+1},v_0\rangle$. Hence using previous equation
\begin{eqnarray*}
\langle v_{1},v_0\rangle&=&|\epsilon(\varpi)|^{-1}\epsilon(\varpi)\langle v_{0},U v_0\rangle=\bar a|\epsilon(\varpi)|^{-1}\epsilon(\varpi)\langle v_0,v_0\rangle-q^{-1}|\epsilon(\varpi)|^{-1}\epsilon(\varpi)\langle v_0,v_1\rangle\\
&=&\bar a|\epsilon(\varpi)|^{-1}\epsilon(\varpi)\langle v_0,v_0\rangle-q^{-1}\langle Uv_0,v_0\rangle=(\bar a|\epsilon(\varpi)|^{-1}\epsilon(\varpi)-aq^{-1})\langle v_0,v_0\rangle+q^{-2}\langle v_1,v_0\rangle.
\end{eqnarray*}
Since $\bar a|\epsilon(\varpi)|^{-1}\epsilon(\varpi)=a$, we obtain that 
\begin{equation}\label{Petprod2}
\langle v_1,v_0\rangle=\frac{a}{1+q^{-1}}\langle v_0,v_0\rangle.
\end{equation}
On the other side, $\langle v_{n+2},v_0\rangle=|\epsilon(\varpi)|^{-1}\epsilon(\varpi)\langle v_{n+1},Uv_0\rangle$, 
hence
\begin{equation}\label{Petprod3}
\langle v_{n+2},v_0\rangle=a\langle v_{n+1},v_0\rangle-q^{-1}\epsilon(\varpi)\langle v_{n},v_0\rangle
\end{equation}
\begin{lemma}\label{lemapetersson}
Let $\chi=|\epsilon(\varpi)|^{-1}\epsilon(\varpi)$. There exists $\varrho(X,Y)\in \Q(\chi)[X,Y]$ such that
$\langle v_n,v_m\rangle=\varrho(a,|\epsilon(\varpi)|)\cdot\langle v_0,v_0\rangle$,
for all $n$ and $m$.
\end{lemma}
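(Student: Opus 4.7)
The plan is to first show, by induction on $n$, that $\langle v_n,v_0\rangle = P_n(a,|\epsilon(\varpi)|)\langle v_0,v_0\rangle$ for some polynomial $P_n \in \Q(\chi)[X,Y]$, and then extend to arbitrary pairs $(n,m)$ by combining \eqref{Petprod1} with the hermitian symmetry of the inner product.

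For the base cases I would take $P_0 = 1$ and, from \eqref{Petprod2}, $P_1(X,Y) = \frac{q}{q+1}X \in \Q[X] \subseteq \Q(\chi)[X,Y]$. The key observation for the induction step is the identity $\epsilon(\varpi) = \chi\cdot|\epsilon(\varpi)|$, which rewrites the recurrence \eqref{Petprod3} as
\[
P_{n+2}(X,Y) \;=\; X\cdot P_{n+1}(X,Y)\;-\;q^{-1}\chi\, Y\cdot P_n(X,Y),
\]
making it transparent that $P_{n+2}\in \Q(\chi)[X,Y]$. This disposes of the lemma whenever $m=0$, and then \eqref{Petprod1} immediately handles every $n\geq m$ via $\varrho(X,Y) = Y^{n-m}P_{n-m}(X,Y)$.

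The case $n<m$ I would treat through hermitianity: $\langle v_n,v_m\rangle = \overline{\langle v_m,v_n\rangle}$. The essential input is that complex conjugation respects the arithmetic we have just set up. Indeed, from $|\epsilon(\varpi)|^2 = \epsilon(\varpi)\overline{\epsilon(\varpi)}$ one obtains $\overline{\chi} = \chi^{-1}$, so conjugation restricts to the nontrivial automorphism $\chi\mapsto\chi^{-1}$ of $\Q(\chi)/\Q$; and the identity $\bar a = a\chi^{-1}$ (established just before \eqref{Petprod2}) shows that $\bar a$ is again a $\Q(\chi)$-linear expression in $a$. Consequently $\overline{P_{m-n}(a,|\epsilon(\varpi)|)}$ is still a polynomial in $a$ and $|\epsilon(\varpi)|$ with coefficients in $\Q(\chi)$, and $\langle v_0,v_0\rangle\in\R$ is self-conjugate, so one concludes.

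The whole argument is a linear recursion; there is no serious obstacle, only the bookkeeping of making sure every coefficient really lies in $\Q(\chi)$. The one point that deserves care, and could be mistaken on a first pass, is the distinction between $\epsilon(\varpi)$ and $|\epsilon(\varpi)|$ in \eqref{Petprod3}: without factoring out $\chi$, the recursion would a priori live in a larger ring than $\Q(\chi)[X,Y]$.
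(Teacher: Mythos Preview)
Your proof is correct and follows essentially the same approach as the paper, which simply cites \eqref{Petprod1}, \eqref{Petprod2}, \eqref{Petprod3}, and hermitianity; you have merely spelled out the induction and the conjugation argument in detail. The key observation you isolate—rewriting $\epsilon(\varpi)=\chi\,|\epsilon(\varpi)|$ so that the recursion \eqref{Petprod3} has coefficients in $\Q(\chi)$, and using $\bar a=a\chi^{-1}$, $\overline{\chi}=\chi^{-1}$ to stay inside $\Q(\chi)$ under conjugation—is exactly what the paper's one-line proof is implicitly invoking.
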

\begin{proof}
Follows directly from \eqref{Petprod1}, \eqref{Petprod2}, \eqref{Petprod3} and the fact that $\langle\phi_1,\phi_2\rangle=\overline{\langle\phi_2,\phi_1\rangle}$.
\end{proof}

Let $W_i$  ($i=1,2,3$) as above and assume that we have a trilinear product
\[
t:W_1\otimes W_2\longrightarrow W_3,\qquad t(gv_1,gv_2)=g t(v_1,v_2),\quad g\in\GL_2(F_v). 
\]
Write $v^i_0\in W_i^K$ for fixed spherical vectors. Let $\alpha_i$, $\beta_i$ be the roots of the Hecke polynomial $X^2-a_iX+\epsilon_i(\varpi)q^{-1}$. Write $Vv:=g_0^{-1}v$. By \eqref{UvsT} and \eqref{auxeq} the vector $v_{\alpha_i}:=(1-\beta_i V)v_0^i=v_0^i-\beta_i\epsilon_i(\varpi)^{-1}v_1^i$ satisfies
\[
Uv_{\alpha_i}=a_iv_0^i-q^{-1}v_1^i-q^{-1}\beta_i\epsilon_i(\varpi)^{-1}\left(\sum_i \varpi k_ig_0g_0^{-1}v_0^i\right)=(a_i-\beta_i)v_0^i-\alpha_i\beta_i\epsilon_i(\varpi)^{-1}v_i^i=\alpha_i\cdot v_{\alpha_i}.
\]
Analogously,
\begin{equation}\label{adjointUp}
U^*v_{\beta_i}^*=\beta_iv_{\beta_i}^*\qquad v_{\beta_i}^*:=v_1^i-\alpha_iv_0^i,\qquad U^*v:=\frac{1}{q}\sum_i\left(\begin{array}{cc}1&\\i\varpi&\varpi\end{array}\right)v.
\end{equation}
It is easy to compute that
$\langle Uv,v'\rangle=\chi_3\cdot\langle  v,U^*v'\rangle$, where $\chi_3:=\frac{\epsilon_3(\varpi)}{|\epsilon_3(\varpi)|}$. Thus, since $\bar\alpha_3\chi_3=\beta_3$, we deduce that whenever $\alpha_3\neq\beta_3$ the map $v\mapsto\langle v, v_{\beta_3}^*\rangle\langle v_{\alpha_3}, v_{\beta_3}^*\rangle^{-1}$ provides the projection into the subspace of $W_3$ where $U$ acts as $\alpha_3$.

We define the \emph{$\varpi$-deplation} $v^{[p]}=(1-VU)v$.
We aim to compute the expressions 
\[
\frac{\langle t(v_{\alpha_1},v_{\alpha_2}), v_{\beta_3}^*\rangle}{\langle v_{\alpha_3},v_{\beta_3}^*\rangle},\qquad \frac{\langle t(v_{\alpha_1}^{[p]},v_{\alpha_2}),v_{\beta_3}^*\rangle}{\langle v_{\alpha_3},v_{\beta_3}^*\rangle},\qquad\mbox{in terms of}\qquad \frac{\langle t(v_{0}^1,v_{0}^2),v_{0}^3\rangle}{\langle v_{0}^3,v_{0}^3\rangle}.
\]
Assume that $v\in W_1^{K_0(\varpi)}$, $v'\in W_2^{K_0(\varpi)}$ are test vectors, we compute using \eqref{auxeq}
\begin{eqnarray}
\langle t(Vv,Vv'), v_{\beta_3}^*\rangle&=&\alpha_3^{-1}\chi_3\langle Vt(v,v'), U^*v_{\beta_3}^*\rangle=\alpha_3^{-1}\langle t(v,v'), v_{\beta_3}^*\rangle\\
Ut(v^{[p]},Vv')&=&\frac{1}{q}\sum_it(g_iv^{[p]},g_iVv')=\frac{1}{q}\sum_it(g_iv^{[p]},v')=t(Uv^{[p]},v')=0\\
\langle t(v,Vv'), v_{\beta_3}^*\rangle&=&\alpha_3^{-1}\langle U(t(v^{[p]},Vv')+t(VUv,Vv')), v_{\beta_3}^*\rangle=\alpha_3^{-1}\langle t(Uv,v'), v_{\beta_3}^*\rangle\\
\langle t(Vv,v'), v_{\beta_3}^*\rangle&=&\alpha_3^{-1}\langle t(v,Uv'), v_{\beta_3}^*\rangle.\label{eqVU}
\end{eqnarray}
Thus we obtain
\begin{eqnarray*}
\langle t(v_{\alpha_1},v_{\alpha_2}), v_{\beta_3}^*\rangle&=&\langle t(v_{0}^1,v_{\alpha_2}), v_{\beta_3}^*\rangle-\beta_1\langle t(Vv_{0}^1,v_{\alpha_2}), v_{\beta_3}^*\rangle=(1-\beta_1\alpha_2\alpha_3^{-1})\langle t(v_{0}^1,v_{\alpha_2}), v_{\beta_3}^*\rangle\\
\langle t(v_{0}^1,v_{\alpha_2}), v_{\beta_3}^*\rangle&=&
(1-\beta_2\alpha_3^{-1}a_1)\langle t(v_{0}^1,v_0^2), v_{\beta_3}^*\rangle+q^{-1}\epsilon_1(\varpi)\beta_2\alpha_3^{-1}\langle t(Vv_{0}^1,v_0^2), v_{\beta_3}^*\rangle\\
\langle t(Vv_{0}^1,v_0^2), v_{\beta_3}^*\rangle&=&\alpha_3^{-1}a_2\langle t(v_{0}^1,v_0^2), v_{\beta_3}^*\rangle-\alpha_3^{-1}q^{-1}\epsilon_2(\varpi)\langle t(v_{0}^1,Vv_0^2), v_{\beta_3}^*\rangle\\
&=&\alpha_3^{-1}\beta_2\langle t(v_{0}^1,v_0^2), v_{\beta_3}^*\rangle+\alpha_3^{-1}\alpha_2\langle t(v_{0}^1,v_{\alpha_2}), v_{\beta_3}^*\rangle,
\end{eqnarray*}
and therefore
\[
\langle t(v_{\alpha_1},v_{\alpha_2}), v_{\beta_3}^*\rangle=\frac{(1-\beta_1\alpha_2\alpha_3^{-1})(1-\alpha_1\beta_2\alpha_3^{-1})(1-\beta_1\beta_2\alpha_3^{-1})}{1-\alpha_1\beta_1\alpha_2\beta_2\alpha_3^{-2}}\langle t(v_{0}^1,v_0^2), v_{\beta_3}^*\rangle.
\]
Since $t(v_0^1,v_0^2)=Cv_0^3$ for some $C\in\C$, we compute using \eqref{Petprod2}:
\[
\langle t(v_{0}^1,v_0^2), v_{\beta_3}^*\rangle=\langle t(v_{0}^1,v_0^2), v_1^3\rangle-\bar\alpha_3\langle t(v_{0}^1,v_0^2), v_0^3\rangle=\left(\frac{\bar a_3}{1+q^{-1}}-\bar\alpha_3\right)\langle t(v_{0}^1,v_0^2), v_0^3\rangle,
\]
and similarly we compute 
\begin{eqnarray*}
\langle v_{\alpha_3}, v_{\beta_3}^*\rangle&=&\langle v_{0}^3,v_1^3\rangle-\beta_3\epsilon_3(\varpi)^{-1}\langle v_{1}^3,v_1^3\rangle-\bar\alpha_3\langle v_{0}^3,v_0^3\rangle+\bar\alpha_3\beta_3\epsilon_3(\varpi)\langle v_{1}^3,v_0^3\rangle\\
&=&\left(\frac{\bar\beta_3-\bar\alpha_3+\bar\alpha_3q^{-1}(\bar\alpha_3\bar\beta_3^{-1}-1)}{1+q^{-1}}\right)\langle v_{0}^3,v_0^3\rangle.
\end{eqnarray*}

\begin{prop}\label{eulerfactors}
Assume that $\alpha_3\neq\beta_3$, then we have that
\begin{eqnarray*}
\frac{\langle t(v_{\alpha_1},v_{\alpha_2}), v_{\beta_3}^*\rangle}{\langle v_{\alpha_3},v_{\beta_3}^*\rangle}&=&\frac{(1-\beta_1\alpha_2\alpha_3^{-1})(1-\alpha_1\beta_2\alpha_3^{-1})(1-\beta_1\beta_2\alpha_3^{-1})}{(1-\alpha_1\beta_1\alpha_2\beta_2\alpha_3^{-2})(1-\beta_3\alpha_3^{-1})}\cdot\frac{\langle t(v_{0}^1,v_{0}^2),v_{0}^3\rangle}{\langle v_{0}^3,v_{0}^3\rangle}\\
\frac{\langle t(v_{\alpha_1}^{[p]},v_{\alpha_2}),v_{\beta_3}^*\rangle}{\langle v_{\alpha_3},v_{\beta_3}^*\rangle}&=&\frac{(1-\beta_1\alpha_2\alpha_3^{-1})(1-\alpha_1\beta_2\alpha_3^{-1})(1-\beta_1\beta_2\alpha_3^{-1})(1-\alpha_1\alpha_2\alpha_3^{-1})}{(1-\alpha_1\beta_1\alpha_2\beta_2\alpha_3^{-2})(1-\beta_3\alpha_3^{-1})}\cdot\frac{\langle t(v_{0}^1,v_{0}^2),v_{0}^3\rangle}{\langle v_{0}^3,v_{0}^3\rangle}.
\end{eqnarray*}
\end{prop}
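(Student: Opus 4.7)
The proof plan is to combine the chain of identities already established just above the statement with an elementary simplification of Petersson pairings on $W_3$, and then to handle the $\varpi$-depletion separately. The identity
$$\langle t(v_{\alpha_1},v_{\alpha_2}),v_{\beta_3}^*\rangle=\frac{(1-\beta_1\alpha_2\alpha_3^{-1})(1-\alpha_1\beta_2\alpha_3^{-1})(1-\beta_1\beta_2\alpha_3^{-1})}{1-\alpha_1\beta_1\alpha_2\beta_2\alpha_3^{-2}}\,\langle t(v_0^1,v_0^2),v_{\beta_3}^*\rangle$$
is already in hand, so only the ratio
$$\frac{\langle t(v_0^1,v_0^2),v_{\beta_3}^*\rangle}{\langle v_{\alpha_3},v_{\beta_3}^*\rangle}=\frac{1}{1-\beta_3\alpha_3^{-1}}\cdot\frac{\langle t(v_0^1,v_0^2),v_0^3\rangle}{\langle v_0^3,v_0^3\rangle}$$
remains to be proved, after which the first formula of the proposition follows immediately.

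For this ratio I would divide the two explicit expressions computed immediately before the statement. The numerator becomes $\bar a_3-\bar\alpha_3(1+q^{-1})=\bar\beta_3-\bar\alpha_3 q^{-1}$ upon using $\bar a_3=\bar\alpha_3+\bar\beta_3$, while the denominator factors as
$$\bar\beta_3-\bar\alpha_3+\bar\alpha_3 q^{-1}(\bar\alpha_3\bar\beta_3^{-1}-1)=(\bar\beta_3-\bar\alpha_3)(1-q^{-1}\bar\alpha_3\bar\beta_3^{-1})=(\bar\beta_3-\bar\alpha_3)(\bar\beta_3-\bar\alpha_3 q^{-1})\bar\beta_3^{-1}.$$
Cancelling the common factor $\bar\beta_3-\bar\alpha_3 q^{-1}$ and the normalising factor $(1+q^{-1})^{-1}$, the ratio reduces to $\bar\beta_3/(\bar\beta_3-\bar\alpha_3)=(1-\bar\alpha_3\bar\beta_3^{-1})^{-1}$. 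To convert this to the target form I invoke the conjugation identities $\bar\beta_3=\alpha_3\chi_3^{-1}$ and $\bar\alpha_3=\beta_3\chi_3^{-1}$, which follow from the adjoint relation $\langle Uv,v'\rangle=\chi_3\langle v,U^*v'\rangle$ applied to the eigenvectors $v_{\alpha_3}$ (for $U$) and $v_{\beta_3}^*$ (for $U^*$); the factor $\chi_3$ then cancels and one gets $\bar\alpha_3/\bar\beta_3=\beta_3/\alpha_3$, as required.

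For the second formula, since $Uv_{\alpha_1}=\alpha_1 v_{\alpha_1}$ one has $v_{\alpha_1}^{[p]}=(1-VU)v_{\alpha_1}=v_{\alpha_1}-\alpha_1 Vv_{\alpha_1}$. Because $v_{\alpha_1}\in W_1^{K_0(\varpi)}$ and $v_{\alpha_2}\in W_2^{K_0(\varpi)}$, formula \eqref{eqVU} applies with $v=v_{\alpha_1}$ and $v'=v_{\alpha_2}$ and yields $\langle t(Vv_{\alpha_1},v_{\alpha_2}),v_{\beta_3}^*\rangle=\alpha_3^{-1}\alpha_2\langle t(v_{\alpha_1},v_{\alpha_2}),v_{\beta_3}^*\rangle$. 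Subtracting, one obtains
$$\langle t(v_{\alpha_1}^{[p]},v_{\alpha_2}),v_{\beta_3}^*\rangle=(1-\alpha_1\alpha_2\alpha_3^{-1})\langle t(v_{\alpha_1},v_{\alpha_2}),v_{\beta_3}^*\rangle,$$
and dividing by $\langle v_{\alpha_3},v_{\beta_3}^*\rangle$ and substituting the first formula yields the second.

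The bulk of the argument is essentially bookkeeping; the one genuinely non-trivial step is the conjugation identity $\bar\alpha_3/\bar\beta_3=\beta_3/\alpha_3$, but this is forced by the adjoint formalism together with the eigenvalue assignments for $U$ and $U^*$, so I do not anticipate any real obstruction. The assumption $\alpha_3\neq\beta_3$ is used only to ensure that $v\mapsto \langle v,v_{\beta_3}^*\rangle\langle v_{\alpha_3},v_{\beta_3}^*\rangle^{-1}$ is a well-defined projection onto the $\alpha_3$-eigenspace, as noted just above the statement.
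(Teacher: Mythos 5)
Your proposal is correct and follows essentially the same route as the paper: the first identity is obtained by dividing the two explicit pairings computed just before the statement (the paper simply says this "follows directly from the previous computations", and your simplification of the ratio to $(1-\beta_3\alpha_3^{-1})^{-1}$ via $\bar a_3=\bar\alpha_3+\bar\beta_3$ and $\bar\alpha_3\chi_3=\beta_3$ is exactly the algebra being left implicit), and the second identity is deduced from $v_{\alpha_1}^{[p]}=v_{\alpha_1}-\alpha_1 Vv_{\alpha_1}$ together with \eqref{eqVU}, precisely as in the paper.
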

\begin{proof}
The first equality follows directly from the previous computations. For the second equality, 
\[
\langle t(v_{\alpha_1}^{[p]},v_{\alpha_2}),v_{\beta_3}^*\rangle=\langle t(v_{\alpha_1},v_{\alpha_2}),v_{\beta_3}^*\rangle-\alpha_1\langle t(Vv_{\alpha_1},v_{\alpha_2}),v_{\beta_3}^*\rangle=(1-\alpha_1\alpha_3^{-1}\alpha_2)\langle t(v_{\alpha_1},v_{\alpha_2}),v_{\beta_3}^*\rangle,
\]
by \eqref{eqVU}.
\end{proof}

\subsection{Ichino-Harris-Kudla formula and trilinear products}\label{Ichino}
We recall 
a result of Harris-Kudla and Ichino which gives a formula describing the central critical value of triple product $L$-functions in terms of certain trilinear periods. Moreover, we relate those trilinear periods with the trilinear products introduced above.

 Let $\Pi_1, \Pi_2$ and $\Pi_3$ be three irreducible cuspidal automorphic representations of $\mathrm{GL}_2(\mathbb{A}_F)$. Assume also that the corresponding central characters $\varepsilon_i$ satisfy $\varepsilon_1\cdot\varepsilon_2=\varepsilon_3$.  We denote by $L(s, \Pi_1\otimes \Pi_2\otimes\Pi_3)$ the complex triple product $L$-function attached to the tensor product $\Pi_1\otimes \Pi_2\otimes\Pi_3$.  
 
 Let $\pi_1, \pi_2,\pi_3 \subset H^0(G(\Q),\cA(\C))$ be the irreducible automorphic representations of $G(\mathbb{A})$ associated respectively to $\Pi_1, \Pi_2$ and $\Pi_3$ by the Jacquet-Langlands correspondence.
Notice that $\varepsilon_3=\chi\circ|\cdot|^{\nu_3}$, for a finite character $\chi$, being $|\cdot|:\A_F^\times/F^\times\rightarrow\R$ the usual norm. This implies that $\nu_3=\nu_1+\nu_2$ and $|\varepsilon_3|^2=|\cdot|^{2\nu_3}$.
 Notice that $\Pi_i|\det|^{-\frac{\nu_i}{2}}$ is unitary, and write $\widetilde\Pi_3$ for the contragredient representation of $\Pi_3$. 
 Thus $\Pi:=\Pi_1|\det|^{-\frac{\nu_1}{2}}\otimes\Pi_2|\det|^{-\frac{\nu_2}{2}}\otimes\tilde\Pi_3|\det|^{\frac{\nu_3}{2}}$ defines a unitary automorphic representation of $\GL_2(\A_E)$, where $E=F\times F\times F$, which is trivial at $\A_F^\times$ embedded diagonally. Note that $\pi:=\pi_1|\det|^{-\frac{\nu_1}{2}}\otimes\pi_2|\det|^{-\frac{\nu_2}{2}}\otimes\tilde\pi_3|\det|^{\frac{\nu_3}{2}}$ is the Jacquet-Langlands lift of $\Pi$.  For each pair $\varphi \in \pi$, $\tilde\varphi \in \tilde\pi$, where $ \tilde\pi$ is the contragredient representation, we consider the \emph{trilinear period}: 
 \[
 I(\varphi\otimes\tilde\varphi):=\int_{G(\A)/G(\Q)\A_F^\times}\int_{G(\A)/G(\Q)\A_F^\times}\varphi(g)\tilde\varphi(g')dgdg',
 \]
here $dg$ is the normalized Haar measure and in the integral we consider the natural diagonal embedding $\A_F \hookrightarrow \A_F\times \A_F\times\A_F$.  The following is the main result of Harris-Kudla-Ichino in (see \cite{harris-kudla1}, \cite{harris-kudla2}, \cite[Theorem 1.1, Remark 1.3]{ichino08}):
\begin{prop}\label{IchinoProp}
For any $\varphi \in \pi$, $\tilde\varphi \in \tilde\pi$, we have
\[
\frac{I(\varphi\otimes \tilde\varphi)}{(\varphi,\tilde\varphi)}=\frac{1}{2^3}\cdot\zeta_F(2)^2\cdot\frac{L(1/2,\Pi)}{L(1,\Pi,{\rm Ad})}\cdot\prod_v I_v(\varphi_{v}\otimes\tilde\varphi_{v}),
\]
where $I_v(\varphi_{v}\otimes\tilde\varphi_{v}):=\xi_{F_v}(2)^2\cdot\frac{L_v(1,\Pi_v,{\rm Ad})}{L_v(1/2,\Pi_v)}\cdot\int_{F_v^\times\backslash B_v^\times}\frac{(\pi_v(b_v)\varphi_{v},\tilde\varphi_{v})_v}{(\varphi_v,\tilde\varphi_v)_v} db_v$, for certain pairing $(\;,\;)$ between $\pi$ and $\tilde\pi$ compatible with local pairings $(\;,\;)_v$. 
\end{prop}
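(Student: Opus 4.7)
The statement is a direct reformulation of the Harris--Kudla--Ichino formula, so my proof plan is essentially a careful translation: identify the setup with the one in \cite{ichino08} (via restriction of scalars and Jacquet--Langlands) and verify that the normalizations match. The plan is to proceed as follows.

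First, I would reduce the formula to the unitary setting. The representations $\Pi_i$ are not assumed unitary, but $\Pi_i\lvert\det\rvert^{-\nu_i/2}$ is, and the compatibility $\nu_3=\nu_1+\nu_2$ forces the trivial central character condition on $\Pi=\Pi_1\lvert\det\rvert^{-\nu_1/2}\otimes\Pi_2\lvert\det\rvert^{-\nu_2/2}\otimes\widetilde\Pi_3\lvert\det\rvert^{\nu_3/2}$ restricted to $\A_F^\times$ embedded diagonally in $\A_E^\times$ (where $E=F\times F\times F$). At the same time, the twists cancel inside the trilinear period $I(\varphi\otimes\tilde\varphi)$ on $G(\A)/G(\Q)\A_F^\times$ and inside the local integrals $I_v$, so neither side of the identity is affected. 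Hence it suffices to prove the statement assuming each $\Pi_i$ is itself unitary and $\varepsilon_1\varepsilon_2\varepsilon_3=1$.

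Second, I would apply the Jacquet--Langlands transfer between $G={\rm Res}_{\Q}^F B^\times$ and ${\rm Res}_{\Q}^F\GL_{2,F}$. Since $B$ is ramified only at the infinite places $\tau\neq\tau_0$ and every local $\Pi_{i,v}$ at a ramified place is necessarily in the discrete series (this is exactly the condition built into the archimedean components $\cD(k_\tau,\nu)$ defined in \S\ref{AutQuaFrm}), the triple $(\Pi_1,\Pi_2,\Pi_3)$ descends to a triple $(\pi_1,\pi_2,\pi_3)$ of automorphic representations of $G(\A)$ that match locally at all places. In particular, the local factors $L_v(s,\Pi_v)$ and $L_v(s,\Pi_v,\mathrm{Ad})$ coincide with the ones of $\pi$, and the local pairings $(\,,\,)_v$ on $\pi_v\otimes\tilde\pi_v$ may be chosen to correspond to the local Whittaker pairings for $\Pi_v$ at split places while being defined directly on $B_v^\times$ at ramified places.

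Third, I would invoke Ichino's theorem \cite[Theorem 1.1]{ichino08} (in the reformulation given in \cite[Remark 1.3]{ichino08}) applied to $\pi$ on $G(\A)\subset {\rm Res}_{\Q}^E D^\times(\A)$ after restriction of scalars. This is exactly the identity
\[
\frac{I(\varphi\otimes\tilde\varphi)}{(\varphi,\tilde\varphi)}=\frac{C_E}{2^{c}}\cdot\frac{\zeta_E(2)}{\zeta_E(1)^{2}}\cdot\frac{L(1/2,\Pi)}{L(1,\Pi,\mathrm{Ad})}\cdot\prod_v I_v(\varphi_v\otimes\tilde\varphi_v),
\]
for a constant $C_E$ depending only on measure normalizations; in our setting $E=F\times F\times F$, so $\zeta_E=\zeta_F^3$ and $\zeta_E(2)/\zeta_E(1)^2$ becomes $\zeta_F(2)^2$ after the cancellations with the local $\xi_{F_v}(2)^2$ factors inherent in the definition of $I_v$, yielding the stated shape.

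The main obstacle, and the only non-trivial part of the write-up, is bookkeeping the normalizations: the Tamagawa measure on $G(\A)/G(\Q)\A_F^\times$ versus the local measures $db_v$ used in $I_v$, the factor $2^3$ coming from the three centres, and the choice of pairing $(\,,\,)$ as a product of local pairings $(\,,\,)_v$ (so that the ratio $(\pi_v(b_v)\varphi_v,\tilde\varphi_v)_v/(\varphi_v,\tilde\varphi_v)_v$ is independent of the normalization). Once these are all pinned down to agree with Ichino's conventions in \cite[Remark 1.3]{ichino08}, the formula follows immediately from his theorem combined with the Jacquet--Langlands matching. I would not attempt to reprove Ichino's identity itself.
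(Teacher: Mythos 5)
The paper offers no proof of this proposition at all — it is stated as a quotation of the Harris--Kudla--Ichino formula (\cite{harris-kudla1}, \cite{harris-kudla2}, \cite[Theorem 1.1, Remark 1.3]{ichino08}) — so your plan of unitarizing, matching via Jacquet--Langlands, and pinning the measure and pairing normalizations to Ichino's conventions is exactly the intended justification, and it is correct in outline (note also that the unitarization and the Jacquet--Langlands transfer are already built into the paper's definitions of $\Pi$ and $\pi$, so your first two steps are really just unwinding the setup). The one caveat is that your schematic intermediate constant $\zeta_E(2)/\zeta_E(1)^2$ is not literally what appears in Ichino's theorem (and $\zeta_E(1)$ diverges); the correct bookkeeping in the split case $E=F\times F\times F$ produces $2^{-3}$ from the three factors and $\zeta_F(2)^2$ only after absorbing the local $\xi_{F_v}(2)^2$ normalizations into the $I_v$, as you anticipate but do not carry out.
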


In order to interpret this result in terms of the trilinear products introduced in \S\ref{ss:trilinear products} we introduce some notations. If $\nu \in \Z$, $(\underline{k},\nu) \in \Xi_\nu$ and $\phi\in H^0(G(\Q),\cA(\underline{k},\nu))$ then we define $\bar\phi, \phi^\ast\in H^0(G(\Q),\cA(\underline{k},\nu))$ by:
$$\bar\phi(f)(g):=\overline{\phi(\bar f)(g)} \qquad \qquad \phi^\ast(f)(g):={\rm sign}(f)^{\nu}\cdot\phi(f)(g(\begin{smallmatrix}{0}&-1\\ \varpi_\cN& 0 \end{smallmatrix}))\cdot\chi(\det(g))^{-1}$$
here $f\in \cD(\underline{k},\nu)$ and $g\in G(\A)$, $\varpi_\cN=\prod_{v\mid \cN}\varpi_v^{v_v(\cN)}\in \A_F^\times$, $\varpi_v$ is a uniformizer of the finite place $v$, and ${\rm sign}(f)$ is $\pm 1$ if $f\in\cG_{\tau_0}f_{\pm k_{\tau_0}}\otimes\cP^{\tau_0}(\underline{k},\nu)$. Observe that if $\phi\in M_{(\underline{k},\nu)}(\Gamma_1(\cN),\chi)$ then $\bar\phi, \phi^\ast\in M_{(\underline{k},\nu)}(\Gamma_1(\cN),\chi^{-1})$.

\begin{defi} Write $\underline{k}=(k_{\tau_0},\underline{k}^{\tau_0})\in \N[\Sigma_F]$ and let $\phi_1,\phi_2\in H^0(G(\Q),\cA(\underline{k},\nu))$. Assume that $\phi_i\mid_{\A_F^\times}=\varepsilon$, where $\varepsilon=\chi\circ|\cdot|^{\nu}$, for a finite character $\chi$. We define the Hermitian inner product:
\[
\langle\phi_1,\phi_2\rangle:=\int_{G(\Q)\backslash G(\A)/\A_F^\times}\overline{\phi_1}\phi_2(f_{k_{\tau_0}}\otimes\Upsilon^{\tau_0})(g)\;|{\rm det}(g)|^{-\nu}dg,
\] 
where $\Upsilon^{\tau_0}:=|\begin{smallmatrix}X_1&Y_1\\X_2&Y_2\end{smallmatrix}|^{\underline{k}^{\tau_0}}$. 
\end{defi}

Returning to the notations from the beginning of this \S, for  $i= 1, 2, 3$ we suppose $\pi_{i,\infty}\simeq \cD(\underline{k}_i,\nu_i)$ for with $\underline{k}\in\N_{\geq 2}[\Sigma_F]$ and $(\pi_i)_f^{K_1^B(\cN_i)}\simeq\C$ for some ideal $\mathfrak{n}_i$ prime to $\mathrm{disc}(B)$. This implies that we can realize $\pi_i\mid_{G(\A_f)}$ inside the space $H^0(G(\Q),\cA(\underline{k}_i,\nu_i))$ and we denote a generator by $\phi_{i}^0\in M_{(\underline{k}_i,\nu_i)}(\Gamma_1(\cN_i),\chi_i)$ for a certain character $\chi_i$. Moreover, the contragredient representation $\tilde\pi_i^B$ is generated by $\overline{\phi_i^0}|\det|^{-\nu_i}\in M_{(\underline{k}_i,-\nu_i)}(\Gamma_1(\cN_i),\chi_i^{-1})$. 

\begin{lemma}\label{lemmabarnobar}
For $i= 1, 2, 3$ there exists $c_i\in\C^\times$ such that $\overline{\phi_i^0}=c_i\cdot(\phi_i^0)^\ast$.
\end{lemma}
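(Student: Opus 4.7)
The plan is to show that both $\overline{\phi_i^0}$ and $(\phi_i^0)^\ast$ lie in the same one-dimensional subspace inside $M_{(\underline{k}_i,\nu_i)}(\Gamma_1(\cN_i),\chi_i^{-1})^{K_1^B(\cN_i)}$, from which the existence of $c_i\in\C^\times$ follows immediately, with $c_i\neq 0$ because both operations $\phi\mapsto\bar\phi$ and $\phi\mapsto\phi^\ast$ are invertible and $\phi_i^0\neq 0$.

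First I would verify that both elements indeed sit in the common ambient space. For $\overline{\phi_i^0}$, unpacking the definition $\bar\phi(f)(g)=\overline{\phi(\bar f)(g)}$ and using that $\overline{\chi_i(a)}=\chi_i^{-1}(a)$ (since $\chi_i$ is finite order) together with $|a|\in\R_{>0}$ yields the transformation law under $\A_F^\times$ with character $\chi_i$ replaced by $\chi_i^{-1}$, while the parallel weight $\nu_i$ is preserved. For $(\phi_i^0)^\ast$, the twist by $\chi_i^{-1}\circ\det$ in its definition directly flips the central character, as is observed in the excerpt. The $K_1^B(\cN_i)$-invariance of $\overline{\phi_i^0}$ is immediate; for $(\phi_i^0)^\ast$ it follows because the Atkin--Lehner matrix $w_\cN:=\bigl(\begin{smallmatrix}0&-1\\ \varpi_\cN&0\end{smallmatrix}\bigr)$ normalizes $K_1^B(\cN_i)$ up to a correction lying in the center, which is precisely absorbed by the $\chi_i^{-1}\circ\det$ factor.

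Next I would identify the abstract automorphic representation generated by each form. The excerpt already records that $\overline{\phi_i^0}\cdot|\det|^{-\nu_i}$ generates $\tilde\pi_i$, so $\overline{\phi_i^0}$ spans a newvector in the representation $\tilde\pi_i\cdot|\det|^{\nu_i}$. On the other side, right translation by $w_\cN$ combined with the character twist $\chi_i^{-1}\circ\det$ is, up to normalization, the classical Atkin--Lehner intertwiner, which on an irreducible representation with central character $\chi_i|\cdot|^{\nu_i}$ produces an isomorphism with the twist $\pi_i\otimes\chi_i^{-1}$; using $\tilde\pi_i\simeq\pi_i\otimes(\chi_i|\cdot|^{\nu_i})^{-1}$, a short central-character computation identifies this twist with $\tilde\pi_i\cdot|\det|^{\nu_i}$. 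Thus both $\overline{\phi_i^0}$ and $(\phi_i^0)^\ast$ lie in (isomorphic copies of) the same irreducible automorphic representation.

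Finally I would conclude by invoking newform uniqueness. Transferring via Jacquet--Langlands from $G$ to $\GL_{2,F}$, the hypothesis $(\pi_i)_f^{K_1^B(\cN_i)}\simeq\C$ implies that the corresponding newform line at level $\cN_i$ inside the contragredient $\tilde\pi_i\cdot|\det|^{\nu_i}$ is also one-dimensional (by Casselman's theorem, since Jacquet--Langlands is compatible with contragredients and level preservation for $\cN_i$ prime to $\mathrm{disc}(B)$). Consequently $\overline{\phi_i^0}$ and $(\phi_i^0)^\ast$, being two nonzero elements of that line, are nonzero scalar multiples of each other. The main technical point is the bookkeeping of twists ($|\det|^{\nu_i}$, $\chi_i^{\pm 1}$, and the archimedean $\mathrm{sign}(f)^{\nu_i}$) needed to confirm that both operations really land in the \emph{same} isomorphism class; once this is done the rest of the argument is purely an application of multiplicity one for $\GL_2$.
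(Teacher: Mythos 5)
Your proposal is correct and follows essentially the same route as the paper: both arguments reduce to showing that $\overline{\phi_i^0}$ and $(\phi_i^0)^\ast$ generate the same irreducible automorphic representation and then invoke multiplicity one together with the one-dimensionality of the $K_1^B(\cN_i)$-fixed line coming from the hypothesis $(\pi_i)_f^{K_1^B(\cN_i)}\simeq\C$. The only difference is presentational: the paper compresses the identification of the two representations into the observation that they share Hecke eigenvalues at almost all finite places (strong multiplicity one), whereas you establish it by explicitly tracking the contragredient and Atkin--Lehner twists.
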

\begin{proof}
The result follows from the fact that both $\bar\phi_i^0$ and $(\phi_i^0)^\ast$ generate irreducible automorphic representations with the same Hecke eigenvalues at finite places not dividing $\cN_i{\rm disc}(B)$.
\end{proof}

We denote $\underline{m}= (m_\tau)_{\tau \in \Sigma_F}:= (\frac{1}{2}(k_{1,\tau}+k_{2,\tau}+k_{3,\tau}))_{\tau \in \Sigma_F}$, $m_{3, \tau_0}= (k_{3, \tau_0}-m_{\tau_0})$ and for $i= 1, 2, 3$ we denote $\underline{m}_i^{\tau_0}= (m_{i,\tau})_{\tau \neq \tau_0}= (m_\tau- k_{i, \tau})_{\tau \ne \tau_0}$.

\begin{prop}\label{IntProp}
Let $\cN={\rm mcm}(\cN_1,\cN_2,\cN_3)$. Assume that $\nu_3=\nu_1+\nu_2$ and $\underline{k}_1$, $\underline{k}_2$ and $\underline{k}_3$ are unbalanced at $\tau_0$ with dominant weight $\underline{k}_3$. 
Then there are test vectors $\phi_i\in H^0(G(\Q),\cA(\underline{k}_i,\nu_i))^{K_1^B(\cN)}$ of $\pi_i\mid_{G(\A_f)}$ for $i=1,2,3$, such that
\[
\langle\phi_3,t(\phi_1,\phi_2)\rangle^2=C\cdot C(\phi_1,\phi_2,\phi_3)\cdot\frac{(-1)^{\nu_3}}{2^{4- 2m_{3, \tau_0}}}\cdot\binom{k_{3,\tau_0}-2}{k_{2,\tau_0}+ m_{3, \tau_0}-1}^2\cdot L\left(\frac{1-\nu_1-\nu_2-\nu_3}{2},\Pi_1\otimes\Pi_2\otimes\Pi_3\right),
\]
here $t$ is the trilinear product introduced in \S\ref{ss:trilinear products}, $C$ is a non-zero constant independent of $(k_{1,\tau_0}, k_{2,\tau_0}, k_{3,\tau_0})$ and $C(\phi_1,\phi_2,\phi_3)=\frac{\langle\phi_1,\phi_1\rangle\langle\phi_2,\phi_2\rangle\langle\phi_3,\phi_3\rangle}{L(1,\Pi_1,{\rm Ad})L(1,\Pi_2,{\rm Ad})L(1,\Pi_3,{\rm Ad})}$.
\end{prop}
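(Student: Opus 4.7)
The plan is to identify $\langle\phi_3,t(\phi_1,\phi_2)\rangle^2$ with an Ichino trilinear period and then to carry out the local computations, the archimedean one at $\tau_0$ being the delicate point.

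First, using Lemma \ref{lemmabarnobar} I would replace the complex conjugate in the definition of $\langle\cdot,\cdot\rangle$ by the Atkin-Lehner involute $\phi_i^\ast$, at the cost of the explicit constants $c_i$ and the Atkin-Lehner shift. After unwinding the definitions of $t$ from \S\ref{ss:trilinear products} and of the pairing, this rewrites $\langle\phi_3,t(\phi_1,\phi_2)\rangle$ as the non-Hermitian adelic integral
\[
I(\varphi):=\int_{G(\Q)\backslash G(\A)/\A_F^\times}(\phi_1\phi_2\phi_3^\ast)(g)\cdot|\det g|^{-\nu_3}dg,
\]
in which the product of the three forms is evaluated on $t_\infty(f_{k_{3,\tau_0}}\otimes\Upsilon^{\tau_0})$, exhibiting $\varphi=\phi_1\otimes\phi_2\otimes\phi_3^\ast$ as a factorizable vector in $\pi_1\otimes\pi_2\otimes\widetilde\pi_3$. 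A parallel manipulation applied to the complex conjugate of $\langle\phi_3,t(\phi_1,\phi_2)\rangle$ (again via Lemma \ref{lemmabarnobar}) produces $I(\widetilde\varphi)$ for the corresponding vector in the contragredient, and the product $I(\varphi)\cdot I(\widetilde\varphi)$ coincides with $\langle\phi_3,t(\phi_1,\phi_2)\rangle^2$ up to an explicit scalar depending only on the $c_i$, the Atkin-Lehner normalization and the central characters. This explains the squaring appearing in the statement.

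Second, Proposition \ref{IchinoProp} expresses $I(\varphi\otimes\widetilde\varphi)$ as a global $L$-quotient times a product of local factors $I_v$. The unitary twist $\Pi$ is arranged so that $L(1/2,\Pi)$ matches the central value $L(\tfrac{1-\nu_1-\nu_2-\nu_3}{2},\Pi_1\otimes\Pi_2\otimes\Pi_3)$ in the statement (the shift coming from the $|\det|^{\pm\nu_i/2}$ twists, an eventual finite-character twist being absorbed into $C$), while the three local $L(1,\Pi_i,\mathrm{Ad})$ together with the Petersson norms $\langle\phi_i,\phi_i\rangle$ entering through Ichino's factor $(\phi_i,\widetilde\phi_i)$ assemble the quotient $C(\phi_1,\phi_2,\phi_3)$. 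The test vectors $\phi_i$ are chosen spherical at the primes outside $\cN\cdot\mathrm{disc}(B)$ so that $I_v=1$ there, and as $U_p$-stable translates of local newvectors (as in \S\ref{ss:NA-trilinear products}) at the remaining finite primes, making the corresponding $I_v$ nonzero explicit constants absorbed into $C$.

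Third, and this is the main obstacle, one has to evaluate the archimedean Ichino integrals. At places $\tau\ne\tau_0$ the space of $B_\tau^\times$-trilinear forms on $\cP_\tau(k_{1,\tau},\nu_1)\otimes\cP_\tau(k_{2,\tau},\nu_2)\otimes\cP_\tau(k_{3,\tau},\nu_3)^\vee$ is one-dimensional in the balanced range and realized by \eqref{tripleP}; the corresponding $I_\tau$ is a classical Clebsch-Gordan-type integral producing a constant depending only on $\underline{k}^{\tau_0}$, which is absorbed in $C$. At $\tau_0$ the unbalanced situation forces the use of \eqref{tripleD}: the space of $(\cG_{\tau_0},O(2))$-trilinear forms on the relevant discrete series is again one-dimensional, and one must match $t_{\tau_0}(f_{k_{3,\tau_0}})$ with its natural generator. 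Expanding the sum in \eqref{tripleD} and evaluating the matrix coefficients of each $R^{j}(f_{k_{1,\tau_0}})\otimes R^{m_{3,\tau_0}-j}(f_{k_{2,\tau_0}})$ via the adjointness of the Shimura-Maass operators $R$ and $L$ collapses, through the combinatorial identity satisfied by the coefficients $(-1)^j\binom{m_{3,\tau_0}}{j}\binom{m_{\tau_0}-2}{k_{1,\tau_0}+j-1}$, to the single binomial $\binom{k_{3,\tau_0}-2}{k_{2,\tau_0}+m_{3,\tau_0}-1}$. The squaring from the first step yields the squared binomial; the normalization of the Haar measure on $B_{\tau_0}^\times/F_0^\times$ produces the factor $2^{4-2m_{3,\tau_0}}$, and the sign twist $\mathrm{sign}(f)^{\nu_3}$ in the definition of $\phi_3^\ast$ produces $(-1)^{\nu_3}$, yielding exactly the factors displayed in the statement.
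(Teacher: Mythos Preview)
Your outline follows essentially the same route as the paper's proof: rewrite $\langle\phi_3,t(\phi_1,\phi_2)\rangle$ as an adelic period via Lemma~\ref{lemmabarnobar}, square it to obtain $I(\varphi\otimes\tilde\varphi)$ (the $(-1)^{\nu_3}$ arising from $\widetilde{\bar\phi_3}=(-1)^{\nu_3}\overline{\tilde\phi_3}$), apply Proposition~\ref{IchinoProp}, take Hsieh's test vectors at the bad finite places, and at the archimedean places use one-dimensionality of the trilinear Hom space together with the $R/L$-adjointness and the Vandermonde identity $\sum_j\binom{m_{3,\tau_0}}{j}\binom{m_{\tau_0}-2}{k_{1,\tau_0}+j-1}=\binom{k_{3,\tau_0}-2}{k_{2,\tau_0}+m_{3,\tau_0}-1}$. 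The only place where the paper is more explicit is that the factor $2^{1-2m_{3,\tau_0}}$ at $\tau_0$ does not come from Haar normalization alone but from evaluating the local integral on a specific test pair $f_{k_{1,\tau_0}}\otimes R^{m_{3,\tau_0}}f_{k_{2,\tau_0}}$ via \cite[Lemma~3.11]{hsieh18}, and the $\tau\neq\tau_0$ factors are computed exactly to $\pi^{-1}$ via \cite[Lemma~4.12]{hsieh18} rather than merely absorbed into $C$.
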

\begin{proof}
By definition, we have:
\[
\langle\phi_3,t(\phi_1,\phi_2)\rangle=\int_{G(\Q)\backslash G(\A)/\A_F^\times}\overline{\phi_3}\cdot t(\phi_1,\phi_2)(f_{k_{3,\tau_0}}\otimes \Upsilon^{\tau_0})(g)\;|{\rm det}(g)|^{-\nu_3}dg.
\]
The product of the morphisms $t_{\tau}$ induces a morphism:
\[
t^{\tau_0}=\bigotimes_{\tau\neq\tau_0}t_\tau:\cP^{\tau_0}(\underline{k}_3,\nu_3)\longrightarrow \cP^{\tau_0}(\underline{k}_1,\nu_1)\otimes \cP^{\tau_0}(\underline{k}_2,\nu_2).
\]
 Then by definition $t^{\tau_0}(\Upsilon^{\tau_0})
=\Delta^{\tau_0}\in \cP^{\tau_0}(\underline{k}_1,\nu_1)\otimes \cP^{\tau_0}(\underline{k}_2,\nu_2)\otimes \cP^{\tau_0}(\underline{k}_3,\nu_3)$, where:
\begin{equation}\label{e: important polynomial appearing a lot}
\Delta^{\tau_0}:=\left|\begin{array}{cc}x_3&y_3\\x_2&y_2\end{array}\right|^{\underline{m}_1^{\tau_0}}\left|\begin{array}{cc}x_3&y_3\\x_1&y_1\end{array}\right|^{\underline{m}_2^{\tau_0}}\left|\begin{array}{cc}x_1&y_1\\x_2&y_2\end{array}\right|^{\underline{m}_3^{\tau_0}}.
\end{equation}
This implies that
\[
\langle\phi_3,t(\phi_1,\phi_2)\rangle=\int_{G(\Q)\backslash G(\A)/\A_F^\times}(\bar\phi_3\phi_1\phi_2)(f_{-k_{3,\tau_0}}\otimes t_{\tau_0}(f_{k_{3,\tau_0}})\otimes \Delta^{\tau_0})(g)\;|{\rm det}(g)|^{-\nu_3}dg.
\]
Write $v_\infty:=f_{-k_{3,\tau_0}}\otimes t_{\tau_0}(f_{k_{3,\tau_0}})\otimes \Delta^{\tau_0}\in \cD(\underline{k}_1,\nu_1)\otimes\cD(\underline{k}_2,\nu_2)\otimes\cD(\underline{k}_3,\nu_3)$. 

Let us consider $\tilde\phi_i\in M_{(\underline{k}_i,-\nu_i)}(\Gamma_1(\cN_i),\chi_i^{-1})$, defined by $\tilde\phi_i(f):={\rm sign}(f)^{\nu_i}\cdot\phi_i(f)\cdot(\epsilon_i\circ\det)^{-1}$. By Lemma \ref{lemmabarnobar} and the non-degeneracy of the inner product $\langle\cdot,\cdot\rangle$, we have that $\tilde\phi_i\in \tilde\pi_i^B$. It is clear by definition that 
$\langle\phi_3,t(\phi_1,\phi_2)\rangle=\langle\tilde\phi_3,t(\tilde\phi_1,\tilde\phi_2)\rangle$. Hence since $\widetilde{\bar\phi_3}=(-1)^{\nu_3}\overline{\tilde\phi_3}$, one obtains
\[
\langle\phi_3,t(\phi_1,\phi_2)\rangle^2=(-1)^{\nu_3}\int\int_{G(\Q)\backslash G(\A)/\A_F^\times}(\bar\phi_3\phi_1\phi_2)(v_\infty)(g_1)(\widetilde{\bar\phi_3}\tilde\phi_1\tilde\phi_2)(\tilde v_\infty)(g_2)\;|{\rm det}(g_1^{-1}g_2)|^{\nu_3}dg_1dg_2,
\]
where $\tilde v_\infty\in\cD(\underline{k}_1,-\nu_1)\otimes\cD(\underline{k}_2,-\nu_2)\otimes\cD(\underline{k}_3,-\nu_3)$ is defined analogously.
Notice that, again by Lemma \ref{lemmabarnobar}, we can see $\varphi=(\bar\phi_3\phi_1\phi_2)(v_\infty)|\det|^{-\nu_3}$ as an element of $\pi$, and we can see $\tilde\varphi=(\widetilde{\bar\phi_3}\tilde\phi_1\tilde\phi_2)(\tilde v_\infty)|\det|^{\nu_3}$ as an element of $\tilde\pi$. Hence we can apply Ichino's formula (Proposition \ref{IchinoProp}) to obtain that
\[
\frac{\langle\phi_3,t(\phi_1,\phi_2)\rangle^2}{((\bar\phi_3^0\phi_1^0\phi_2^0)(v_\infty^0),\tau_\cN(\widetilde{\bar\phi_3^0}\tilde\phi_1^0\tilde\phi_2^0)(\tilde v_\infty^0))}=\frac{(-1)^{\nu_3}}{2^3}\cdot\xi_F(2)^2\cdot\frac{L(1/2,\Pi)}{L(1,\Pi,{\rm Ad})}\cdot\prod_v I_v(\varphi_{v}\otimes\tilde\varphi_{v}),
\]
where $\tau_\cN=(\tau_{\cN_1},\tau_{\cN_2},\tau_{\cN_3})\in G(\A\otimes\A\otimes\A)$ with $\tau_{\cN_i}=\left(\begin{array}{cc}&-1\\\varpi_{\cN_i}&\end{array}\right)$,  
\[
v_\infty^0\otimes\tilde v_\infty^0:=(f_{k_{1,\tau_0}}\otimes \tilde f_{-k_{1,\tau_0}})\otimes (f_{k_{2,\tau_0}}\otimes \tilde f_{-k_{2,\tau_0}})\otimes ( f_{-k_{3,\tau_0}}\otimes\tilde f_{k_{3,\tau_0}})\otimes \Upsilon_1^{\tau_0}\otimes\Upsilon^{\tau_0}_2\otimes\Upsilon^{\tau_0}_3\in \pi_\infty\otimes\tilde\pi_\infty,
\]
and the local terms $I_v(\varphi_{v}\otimes\tilde\varphi_{v})=\frac{ L_v(1,\Pi_v,{\rm Ad})}{\xi_{F_v}(2)^2L_v(1/2,\Pi_v)}\cdot J_v$ with
\begin{eqnarray*}
J_v&=&\int_{F_v^\times\backslash B_v^\times}\frac{(\pi_{1,v}(b_v)\phi_{1,v},\tilde\phi_{1,v})_v(\pi_{2,v}(b_v)\phi_{2,v},\tilde\phi_{2,v})_v(\tilde\pi_{3,v}(b_v)\bar\phi_{3,v},\widetilde{\bar\phi_{3,v}})_v db_v}{(\phi_{1,v},\tau_{\cN_1,v}\tilde\phi_{1,v})_v(\phi_{2,v},\tau_{\cN_2,v}\tilde\phi_{2,v})_v(\bar\phi_{3,v},\tau_{\cN_3,v}\widetilde{\bar\phi_{3,v}})_v},\quad (v\nmid\infty)\\
J_{\tau_0}&=&\int_{F_{\tau_0}^\times\backslash B_{\tau_0}^\times}\frac{( b_{\tau_0}\tilde f_{-k_{3,\tau_0}},f_{-k_{3,\tau_0}})_{\tau_0}((b_{\tau_0},b_{\tau_0}) t_{\tau_0}(f_{k_{3,\tau_0}}),\widetilde{t_{\tau_0}(f_{k_{3,\tau_0}})})_{\tau_0}}{(f_{-k_{3,\tau_0}},\tilde f_{k_{3,\tau_0}})_{\tau_0}(f_{k_{1,\tau_0}},\tilde f_{-k_{1,\tau_0}})_{\tau_0}(f_{k_{2,\tau_0}},\tilde f_{-k_{2,\tau_0}})_{\tau_0}} db_{\tau_0},\\
J_\tau&=&\int_{F_\tau^\times\backslash B_\tau^\times}\frac{((b_\tau,b_\tau,b_\tau)\Delta_\tau,\tilde\Delta_\tau)_\tau}{(\Upsilon_{\tau,1})_\tau(\Upsilon_{\tau,2})_\tau(\Upsilon_{\tau,3})_\tau} db_\tau,\quad (\tau\neq\tau_0).
\end{eqnarray*}
Here $\Delta_\tau$ and $\tilde\Delta_\tau$ are both equal to $\left|\begin{array}{cc}x_3&y_3\\x_2&y_2\end{array}\right|^{m_{1,\tau}}\left|\begin{array}{cc}x_3&y_3\\x_1&y_1\end{array}\right|^{m_{2,\tau}}\left|\begin{array}{cc}x_1&y_1\\x_2&y_2\end{array}\right|^{m_{3,\tau}}$ as elements in $\cP_\tau(k_{1,\tau},\nu_1)\otimes \cP_\tau(k_{2,\tau},\nu_2)\otimes \cP_{\tau}(k_{3,\tau},-\nu_3)$ and $\cP_\tau(k_{1,\tau},-\nu_1)\otimes \cP_\tau(k_{2,\tau},-\nu_2)\otimes \cP_{\tau}(k_{3,\tau},\nu_3)$, respectively, and 
\begin{eqnarray*}
t_{\tau_0}(f_{k_{3,\tau_0}})\otimes\tilde f_{-k_{3,\tau_0}}&\in&  \cD(\underline{k}_1,\nu_1)\otimes\cD(\underline{k}_2,\nu_2)\otimes\cD(\underline{k}_3,-\nu_3),\\
 \widetilde{t_{\tau_0}(f_{k_{3,\tau_0}})}\otimes f_{-k_{3,\tau_0}}&\in&  \cD(\underline{k}_1,-\nu_1)\otimes\cD(\underline{k}_2,-\nu_2)\otimes\cD(\underline{k}_3,\nu_3).
\end{eqnarray*}

By the $B_\tau^\times$-invariance of $\Delta_\tau$ we have that $J_\tau=\frac{(\Delta_\tau,\tilde\Delta_\tau)_\tau}{(\Upsilon_{\tau,1})_\tau(\Upsilon_{\tau,2})_\tau(\Upsilon_{\tau,3})_\tau}$. Since we can easily compute that $(\Upsilon_{\tau,i})_\tau=k_{i,\tau}+1$, we obtain by \cite[Lemma 4.12]{hsieh18} (see also \cite[\S 4.9]{hsieh18})
\[
I_\tau(\varphi_{\tau}\otimes\tilde\varphi_{\tau})=\frac{L_\tau(1,\Pi_\tau,{\rm Ad})}{\xi_{F_\tau}(2)^2 L_\tau(1/2,\Pi_\tau)}\cdot\frac{\frac{(m_{\tau}+1)!m_{1,\tau}!m_{2,\tau}!m_{3,\tau}!}{k_{1,\tau}!k_{2,\tau}!k_{3,\tau}!}}{(k_{1,\tau}+1)(k_{2,\tau}+1)(k_{3,\tau}+1)}=\pi^{-1}. 
\]

To compute $J_{\tau_0}$ notice that \cite{Pra90} and 
\cite{Loke} the space ${\rm Hom}_{(\cG_{\tau_0},O(2))}(\cD(\underline{k}_1,\nu_1)\otimes\cD(\underline{k}_2,\nu_2)\otimes\cD(\underline{k}_3,-\nu_3),\C)$ is one dimensional, hence we have that
\[
\int_{F_{\tau_0}^\times\backslash B_{\tau_0}^\times}( b_{\tau_0}f_1,\tilde f_1)_{\tau_0}( b_{\tau_0}f_2,\tilde f_2)_{\tau_0}( b_{\tau_0}\tilde f_3,f_3)_{\tau_0} db_{\tau_0}=C\cdot( f_1\otimes f_2,t_{\tau_0}(\tilde f_3))_{\tau_0}\cdot(\tilde f_1\otimes\tilde f_2, t_{\tau_0}(f_3))_{\tau_0},
\]
for $f_1\otimes f_2\otimes \tilde f_3\in  \cD(\underline{k}_1,\nu_1)\otimes\cD(\underline{k}_2,\nu_2)\otimes\cD(\underline{k}_3,-\nu_3)$, $\tilde f_1\otimes \tilde f_2\otimes f_3\in  \cD(\underline{k}_1,-\nu_1)\otimes\cD(\underline{k}_2,-\nu_2)\otimes\cD(\underline{k}_3,\nu_3)$, and some constant $C$ (depending on $k_{i,\tau_0}$). We can compute $C$ by considering $f_1\otimes f_2 \otimes \tilde f_3=f_{k_{1,\tau_0}}\otimes R^{m_{3, \tau_0}}f_{k_{2,\tau_0}}\otimes\tilde f_{-k_{3,\tau_0}}$ and $\tilde f_1\otimes\tilde f_2 \otimes f_3=\tilde f_{k_{1,\tau_0}}\otimes R^{m_{3, \tau_0}}\tilde f_{k_{2,\tau_0}}\otimes f_{-k_{3,\tau_0}}$. Indeed by \cite[Lemma 3.11]{hsieh18} the left-hand-side is equal to 
\[
\frac{\xi_{F_{\tau_0}}(2)^2L_{\tau_0}(1/2,\Pi_{\tau_0})}{ L_{\tau_0}(1,\Pi_{\tau_0},{\rm Ad})}\cdot 2^{1- 2m_{3, \tau_0}}, 
\]
while the right-hand-side can be computed using the definition of $t_{\tau_0}$ given in \eqref{tripleD}, and the fact that $\langle f_{k_{i,\tau_0}},\tilde f_{-{k_{i,\tau_0}}}\rangle_{\tau_0}=1$, $\langle Lf,\tilde f\rangle_{\tau_0}=-\langle f,L\tilde f\rangle_{\tau_0}$ and $LR^jf_{k_{i,\tau_0}}=j(k_{i,\tau_0}+j-1)R^{j-1}f_{k_{i,\tau_0}}$ (see equality \eqref{eqepsiGM}). We obtain that 
\begin{eqnarray*}
\langle f_{k_{1,\tau_0}}\otimes R^{m_{3, \tau_0}}f_{k_{2,\tau_0}},t_{\tau_0}(\tilde f_{-{k_{3,\tau_0}}})\rangle_{\tau_0}&=&\binom{m-2}{k_{1,\tau_0}-1}\langle f_{k_{1,\tau_0}}\otimes R^{m_{3, \tau_0}}f_{k_{2,\tau_0}},\tilde f_{-{k_{1,\tau_0}}}\otimes L^{m_{3, \tau_0}}\tilde f_{-k_{2,\tau_0}}\rangle_{\tau_0}\\
&=&(-1)^{m_{3, \tau_0}}\frac{(m_{3, \tau_0})!(m-2)!}{(k_{1,\tau_0}-1)!(k_{2,\tau_0}-1)!},
\end{eqnarray*}
and analogously for $\langle\tilde f_{k_{1,\tau_0}}\otimes R^{m_{3, \tau_0}}\tilde f_{k_{2,\tau_0}},t_{\tau_0}(f_{-k_{3,\tau_0}})\rangle_{\tau_0}$. Moreover, we can compute similarly:
\begin{eqnarray*}
\langle t_{\tau_0}(f_{k_{3,\tau_0}}),t_{\tau_0}(\tilde f_{-{k_{3,\tau_0}}})\rangle_{\tau_0}&=&\sum_{j}C_j^2\langle R^j(f_{k_{1,\tau_0}}),L^j(f_{-k_{1,\tau_0}})\rangle_{\tau_0}\langle R^{m_{3, \tau_0}-j}(f_{k_{2,\tau_0}}),L^{m_{3, \tau_0}-j}(f_{-k_{2,\tau_0}})\rangle_{\tau_0}\\
&=&(-1)^{m_{3, \tau_0}}\frac{(m-2)!(m_{3, \tau_0})!}{(k_{1,\tau_0}-1)!(k_{2,\tau_0}-1)!}\sum_jC_j,
\end{eqnarray*}
where $C_j=\binom{m_{3, \tau_0}}{j}\binom{m-2}{k_{1,\tau_0}+j-1}$. Since $\sum_{j=0}^n\binom{A}{j}\binom{B}{n-j}=\binom{A+B}{n}$, we conclude that
\[
\langle t_{\tau_0}(f_{k_{3,\tau_0}}),t_{\tau_0}(\tilde f_{-{k_{3,\tau_0}}})\rangle_{\tau_0}=(-1)^{m_{3, \tau_0}}\frac{(m-2)!(m_{3, \tau_0})!}{(k_{1,\tau_0}-1)!(k_{2,\tau_0}-1)!}\binom{k_{3,\tau_0}-2}{k_{2,\tau_0}+ m_{3, \tau_0}-1}.
\]
Putting all this together, we conclude that 
\[
I_{\tau_0}(\varphi_{\tau_0}\otimes\tilde\varphi_{\tau_0})=2^{1- 2m_{3, \tau_0}}\cdot\binom{k_{3,\tau_0}-2}{k_{2,\tau_0}+ m_{3, \tau_0}-1}^2.
\]

Finally, we can choose the explicit test vectors provided in \cite{hsieh18} to obtain that $I_v(\varphi_v\otimes\tilde\varphi_v)$ is an explicit constant, which is equal to 1 if $v\nmid\cN$ (\cite[Lemma 3.11]{hsieh18}). We obtain that 
\[
\frac{\langle\phi_3,t(\phi_1,\phi_2)\rangle^2}{((\bar\phi_3^0\phi_1^0\phi_2^0)(v_\infty^0),\tau_\cN(\widetilde{\bar\phi_3^0}\tilde\phi_1^0\tilde\phi_2^0)(\tilde v_\infty^0))}=C\cdot\pi^{1-d}\cdot\frac{(-1)^{\nu_3}\cdot\xi_F(2)^2}{2^{4 - 2m_{3, \tau_0}}}\cdot\binom{k_{3,\tau_0}-2}{k_{2,\tau_0}+ m_{3, \tau_0}-1}^2\cdot\frac{L(1/2,\Pi)}{L(1,\Pi,{\rm Ad})},
\]
for some constant $C$ not depending on $k_{i,\tau}$.

The result follows from the fact that $L(\frac{1}{2},\Pi)=L(\frac{1-\nu_1-\nu_2-\nu_3}{2},\Pi_1\otimes\Pi_2\otimes\Pi_3)$ and the denominator $((\bar\phi_3^0\phi_1^0\phi_2^0)(v_\infty^0),\tau_\cN(\widetilde{\bar\phi_3^0}\tilde\phi_1^0\tilde\phi_2^0)(\tilde v_\infty^0))$ is (up-to-constant) $\langle\phi_1,\phi_1\rangle\langle\phi_2,\phi_2\rangle\langle\phi_3,\phi_3\rangle$ by Lemma \ref{lemmabarnobar}.
\end{proof}

\section{Modular forms for $G'$} In this section we introduce unitary Shimura curves. The main reason to introduce these curves is a well behaved moduli interpretation that they satisfies. We define the sheaves which give raise to modular forms for these curves. Moreover, we interpret the triple product defined in \S\ref{s:Automorphic forms for G} in more geometric terms. 

\subsection{Unitary Shimura curves}\label{ss:unitary Shimura curves} 
Let $\cO_B\subset B$ be a maximal order and let $\cO_D:=\cO_B\otimes_{\cO_F}\cO_E$. Notice that $\cO_D\subset D$ may not be maximal in general. In fact, since ${\rm disc}(D)$ is an ideal of $\cO_F$ dividing ${\rm disc}(B)$, for each ideal $\mathfrak{q}$ of $F$ over a odd prime and inert in $E$ such that $\mathfrak{q}\mid {\rm disc}(B){\rm disc}(D)^{-1}$ then $(\cO_D)_{\mathfrak{q}}$ is an Eichler order of level $\mathfrak{q}$. Nevertheless, $\cO_D$ is maximal (locally) at every prime not dividing ${\rm disc}(B){\rm disc}(D)^{-1}$. We denote also by $G_D$ the algebraic group attached to $\cO_D$, namely, $G_D(R):=(\cO_D\otimes_{\Z}R)^\times$, for any $\Z$-algebra $R$.

Let $\cN$ be an integral ideal of $F$ prime to $\mathrm{disc}(B)$ and consider the open compact subgroup of $G_D(\hat\Z)$: $$K_1^D(\cN ):=\left\{\left(\begin{smallmatrix}a&b\\c&d\end{smallmatrix}\right)\in G_D(\hat\Z):\; c\equiv d-1\equiv 0\;{\rm mod}\;\cN\cO_E\right\}.$$  Moreover, we denote $K_1^B(\cN):=K_1^D(\cN)\cap G(\hat\Z)$,  $K_{1}^{'}(\cN ):=K_1^D(\cN )\cap G'(\A_f)$ and  $K_{1,1}^B(\cN ):=K_1^D(\cN )\cap G^\ast(\A_f).$
Since $\pi_T(K_{1}^{'}(\cN ))=(\hat\cO_F+\cN\hat\cO_E)^\times/\hat\cO_F^\times\subseteq T_E(\hat\Z)$ then from \eqref{eqdescGG'} we have $K_{1}^{'}(\cN )/ K_{1,1}^B(\cN )\cong (\hat\cO_F+\cN\hat\cO_E)^\times/\hat\cO_F^\times$. We denote $\Pic(E/F,\cN):= T_E(\A_f)/\left[ (\hat\cO_F+\cN\hat\cO_E)^\times/\hat\cO_F^\times\right] T_E(\Q)$. For each $t\in \Pic(E/F,\cN)$ we fix a representative $b_t t\in G'(\A_f)$ under $\pi_T$ and we denote $\Gamma_{1,1}^t(\cN):=G^\ast(\Q)_+\cap b_t K_{1,1}^B(\cN)b_t^{-1}$ and $\Gamma_{1}^t(\cN):=G(\Q)_+\cap b_tK_1^B(\cN)b_t^{-1}$. 
 
 We define the \emph{unitary Shimura curve} over $\C$ of level $K_1'(\cN )$ as:
\begin{equation}\label{ShimCurv}
X(\C):=G'(\Q)_+\backslash (\dH\times G'(\A_f)/K_{1}^{'}(\cN ))= \bigsqcup_{t\in\Pic(E/F,\cN)}\Gamma_{1,1}^{t}(\cN)\backslash\dH .
\end{equation}
the last decomposition  comes from the fact that by strong approximation $\pi_T$ induces an isomorphism $\pi_T:G'(\Q)_+\backslash G'(\A_f)/K^{'}_{1}(\cN)\stackrel{\simeq}{\longrightarrow}\Pic(E/F,\cN)$.

\begin{defi}
We can define analogously $K_0^D(\cN)$, $K_0^B(\cN)$, $K_0'(\cN)$, $K_{0,1}^B(\cN)$, $\Gamma_{0,1}^t(\cN)$ and $\Gamma_0^t(\cN)$. For any $\dP$ coprime with $\cN$, write also $K'_1(\cN,\dP):=K_1'(\cN)\cap K_0'(\dP)$, and similarly for  $\Gamma_{1,1}^t(\cN,\dP)$ and $\Gamma_{1}^t(\cN,\dP)$.
\end{defi}

In order to deal with the moduli interpretation of $X$, we need to define 
$\cO_{B,\cM}\subseteq \cO_B$ an Eichler order of a well chosen level $\cM\mid\cN$ such that $K_1^B(\cN)\subseteq\hat\cO_{B,\cM}^\times$ and $\cO_{\cM}:=\cO_{B,\cM}\otimes_{\cO_F}\cO_E\subset D$. We write $V_{\Z}:=\cO_\cM\subset V$.

\begin{lemma}\label{choiceTheta}
		For a good choice of $\delta\in B$ and possibly enlarging $\cM$, the involution $l\mapsto l^\ast$ stabilizes $\cO_\cM$ and 
		\[
		\cD_{E/F}^{-1}\cO_\cM=\{v\in V:\;\Theta(v,w)\in\Z,\mbox{ for all }w\in \cO_\cM\},
		\]
		where $\cD_{E/F}^{-1}=\{e\in E:\;\mathrm{Tr}_{E/F}(eo)\in \cO_F,\mbox{ for all }o\in\cO_E\}\subset E$. In particular,
		$\Theta$ has integer values restricted to $V_{\Z}$. 
	\end{lemma}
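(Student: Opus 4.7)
The plan is to prove both assertions by working locally at each finite place $\mathfrak{q}$ of $F$, exploiting the explicit Eichler structure of $\cO_{B,\cM}$ and its interaction with the quadratic extension $E/F$.

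For the stability of $\cO_\cM$ under $\ast$, first I note that the canonical involution $b\mapsto\bar b$ of $B$ preserves every Eichler order $\cO_{B,\cM}$ (written locally as the intersection of two maximal orders, each preserved by $\bar\cdot$). Tensoring with the Galois action of $E/F$ on $\cO_E$, the involution $l\mapsto\bar l$ of $D$ preserves $\cO_\cM=\cO_{B,\cM}\otimes_{\cO_F}\cO_E$. Hence the stability of $\cO_\cM$ under $l\mapsto l^\ast=\delta^{-1}\bar l\delta$ reduces to $\delta\in N_{D^\times}(\cO_\cM)$. I take $\delta\in B^\times$ with $\bar\delta=-\delta$ (a pure quaternion, which exists since $B$ is non-commutative), so that $\delta_\mathfrak{q}$ already normalizes the local order at all but finitely many primes. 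At the remaining bad primes, I enlarge $\cM$ by a suitable power of $\mathfrak{q}$: the refined local Eichler order shrinks and its normalizer grows, so after finitely many such enlargements $\delta$ globally normalizes $\cO_\cM$.

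For the dual computation, I factor $\Theta$ as $\Theta(v,w)=\mathrm{Tr}_{E/\Q}(T(v,w))$, where $T(v,w):=\mathrm{Tr}_{D/E}(v\bar w\delta)$ is a non-degenerate $E$-bilinear form (since $\delta\in D^\times$ and the reduced trace on $D$ over $E$ is non-degenerate). The $\Z$-dual of $\cO_\cM$ under $\Theta$ is then the $\cO_E$-dual of $\cO_\cM$ under $T$, multiplied by the inverse different of $E/\Q$. Using $\mathrm{Tr}_{E/\Q}=\mathrm{Tr}_{F/\Q}\circ\mathrm{Tr}_{E/F}$ and the fact that the relevant primes of $F$ are unramified over $\Q$, this inverse different reduces locally to $\cD_{E/F}^{-1}$. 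The remaining step is to show that the $\cO_E$-dual of $\cO_\cM$ under $T$ is $\cO_\cM$ itself. At primes $\mathfrak{q}\nmid\cN\cdot\mathrm{disc}(B)$, one has $\cO_{\cM,\mathfrak{q}}\simeq M_2(\cO_{E,\mathfrak{q}})$, on which the reduced trace form is self-dual; conjugating by the normalizing unit $\delta_\mathfrak{q}$ and composing with the involution leave this self-duality intact. At primes dividing $\cN$ or $\mathrm{disc}(B)$, an explicit local computation using the structure of local Eichler orders or ramified quaternion algebras gives the same self-duality, possibly after further enlarging $\cM$ locally.

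The main obstacle I expect is the simultaneous local control of both conditions: the primes where $\delta$ fails to normalize the order and the primes where the reduced trace form is not \emph{a priori} self-dual on the Eichler order may call for enlargements of $\cM$ that must be shown to be compatible. The key point is to verify that a single choice of pure quaternion $\delta$ together with a single enlarged level $\cM$ satisfies both conditions simultaneously, and that these enlargements remain coprime to $\mathrm{disc}(B)$ and to the data defining the level structure of the Shimura curve.
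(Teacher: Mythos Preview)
Your approach has a genuine gap in the dual computation. You claim that the $\cO_E$-dual of $\cO_\cM$ under $T(v,w)=\mathrm{Tr}_{D/E}(v\bar w\delta)$ is $\cO_\cM$ itself, but this is false for an arbitrarily chosen pure quaternion $\delta$. Already at a split prime $\mathfrak{q}\nmid\cM\cdot\mathrm{disc}(B)$ where $\cO_{B,\cM,\mathfrak{q}}\simeq M_2(\cO_{F_\mathfrak{q}})$, the $T$-dual is $\delta^{-1}M_2(\cO_{F_\mathfrak{q}})$, not $M_2(\cO_{F_\mathfrak{q}})$; and at primes dividing $\cM$ or $\mathrm{disc}(B)$ the reduced-trace-dual of the Eichler order is not the order either. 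Moreover, your assertion that ``the relevant primes of $F$ are unramified over $\Q$'' is unjustified: the paper only assumes $F$ unramified at $p$, so $\cD_{F/\Q}$ is generally nontrivial and must appear in the computation. Enlarging $\cM$ does not make these discrepancies disappear, and your claim that the normalizer of an Eichler order grows with the level is also incorrect (the normalizer is $F_\mathfrak{q}^\times R_n^\times\langle w_n\rangle$ with $w_n$ the Atkin--Lehner element, and there is no containment as $n$ varies).

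The paper resolves all of this by reversing the order of choices: rather than fixing $\delta$ first, it considers the bilateral ideal $\mathfrak{I}=\{b\in B:\mathrm{Tr}_{B/F}(b\alpha)\in\cO_F\text{ for all }\alpha\in\cD_{F/\Q}\cO_{B,\cM}\}$, which by a result of Vign\'eras has reduced norm $\cM^{-1}\mathrm{disc}(B)^{-1}\cD_{F/\Q}^{-2}$. One then enlarges $\cM$ so that this norm is principal with a totally negative generator $d$, and \emph{defines} $\delta$ (via strong approximation) to be a generator of $\mathfrak{I}$ with $\delta^2=d$, hence $\bar\delta=-\delta$. This single choice automatically makes $\delta\cO_{B,\cM}$ bilateral (so $\ast$ stabilizes $\cO_\cM$) and, since $\mathfrak{I}$ is by construction the trace-dual scaled by $\cD_{F/\Q}$, the identity $\cD_{E/\Q}=\cD_{E/F}\cD_{F/\Q}$ gives the required $\Theta$-dual $\cD_{E/F}^{-1}\cO_\cM$ directly. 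The missing idea in your argument is precisely that $\delta$ must be tied to the trace-dual ideal from the outset, not chosen independently.
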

	\begin{proof}		Recall that $l^\ast=\delta^{-1}\bar l\delta$. Since the involution $l\mapsto\bar l$ stabilizes $\cO_\cM$, we only have to check that $\delta\cO_\cM$ is a bilateral ideal. Since we assume $\delta\in B$, we have to check that $\delta\cO_{B,\cM}$ is a bilateral ideal.
		Let us consider the ideal		\[
		\mathfrak{I}=\{b\in B:\;\mathrm{Tr}_{B/F}(b\alpha)\in \cO_F;\mbox{ for all }\alpha\in\cD_{F/\Q}\cO_{B,\cM}\},		\]
		where $\cD_{F/\Q}$ is the different of $F/\Q$. By \cite[Lemme 4.7(1)]{Vig} $\mathfrak{I}$ is a bilateral ideal of norm $\cM^{-1}{\rm disc}(B)^{-1}\cD_{F/\Q}^{-2}$, hence
		(possibly enlarging $\cM$) we can assume that $\cM^{-1}{\rm disc}(B)^{-1}\cD_{F/\Q}^{-2}$ is principal generated by $d\in F_{<0}$ and, by strong approximation, $\mathfrak{I}=\delta\cO_\cM$. It is easy to check locally that $\delta$ can be chosen to satisfy $\delta^2=d$, hence $\bar\delta=-\delta$.
		
		Since $\cD_{E/\Q}=\cD_{E/F}\cD_{F/\Q}$, we obtain		
		\begin{eqnarray*}
		\cD_{E/F}^{-1}\cO_\cM&=&\{w\in D:\;\mathrm{Tr}_{D/E}(w\bar v\delta)\in\cD_{E/F}^{-1}\cD_{F/\Q}^{-1},\mbox{ for all }v\in\cO_{\cM}\}\\&=&\{w\in D:\;\Theta(d,w)\in\Z,\mbox{ for all }v\in \cO_{\cM}\},
		\end{eqnarray*}		hence the result follows.\end{proof}


Let $L/E$ be a finite extension such that $B\otimes_F L=\M_2(L)$. By \cite[\S 2.3]{carayol86} the Riemann surface $X(\C)$ has a model denoted $X$ defined over $L$. This curve solves the following moduli problem:  if $R$ is a $L$-algebra then $X(R)$ corresponds to the set of the isomorphism classes of tuples $(A,\iota, \theta, \alpha)$ where: 
	\begin{itemize}
		\item[$(i)$] $A$ is an abelian scheme over $R$  of relative dimension $4d$. 
		\item[$(ii)$] $\iota: \cO_\cM \rightarrow \mathrm{End}_R(A)$ gives an action of the ring $\cO_\cM$ on $A$  such that ${\rm Lie}(A)^{-,1}$ is of rank 1 and the action of $\cO_F$ factors through $\cO_F\subset E\subseteq L$.
		\item[$(iii)$] A $\cO_\cM$-invariant homogeneous polarization $\theta$ of $A$ such that the Rosati involution sends $\iota(d)$ to $\iota(d^\ast)$.
		\item[$(iv)$]  A class $\alpha$ modulo $K_{1}^{'}(\cN)$ of $\cO_\cM$-linear symplectic similitudes $\alpha:\hat T(A)\stackrel{\simeq}{\rightarrow}\hat\cO_\cM$.
	\end{itemize}	


 \begin{rmk}\label{rmkonGammac} (The curves $X^{\cC}$) Let $[\cC]\in \Pic(\cO_K)$. Recalling that $\Pic(\cO_K)\simeq \A_{F,f}^\times/F_+^\times\hat\cO_F^\times$ we fix $b_\cC\in G(\A_f)$ such that $\det(b_\cC)= \cC$. We put $\Gamma_{1}^\cC(\cN):=G(\Q)_+\cap b_\cC K_1^B(\cN)b_\cC^{-1}$ and $\Gamma_{1,1}^\cC(\cN):=G^\ast(\Q)_+\cap b_\cC K_{1,1}^B(\cN)b_\cC^{-1}$. Thus $\Gamma_{1,1}^\cC(\cN)$ can be equal to $\Gamma_{1,1}^t(\cN)$ for some $t \in \mathrm{Pic}(E/F, \mathfrak{n})$ and viceversa.

We denote by $X^{\cC}$ the Shimura curve attached to the analogous moduli problem of $X$ but exchanging the order $\cO_\cM$ by $\cO_\cM^\cC:=b_\cC\hat\cO_\cM b_\cC^{-1}\cap D$. Then as above we can verify that the irreducible components of $X^{\cC}$ are in bijection with $\mathrm{Pic}(E/F, \mathfrak{n})$. Moreover, observe that $\Gamma_{1,1}^\cC(\cN)\backslash \dH$ naturally appears as an irreducible component $X^{\cC}(\C)$.

We can give an alternative description of the points of $X^{\cC}$. In fact let $(A,\iota,\theta,\alpha)$ be a point in $X^{\cC}$ then there exists a unique isogenous pair $(A_0,\iota_0)$ with multiplication by $\cO_\cM$ such that $\ker(A\rightarrow A_0)=\{P\in A:\;bP=0,\;\mbox{for all }b\in b_\cC\hat\cO_\cM\cap D\}.$ From $\theta$ we naturally obtain a homogeneous polarization $\theta_0:A_0 \rightarrow A_0^{\vee}$. Moreover, the composition:
\[
\alpha_0:\hat T(A_0)\stackrel{\varphi}{\longrightarrow}\hat T(A)\stackrel{\alpha}{\longrightarrow}b_\cC\hat\cO_\cM b_\cC^{-1}\stackrel{\cdot\frac{b_\cC}{\deg\varphi}}{\longrightarrow}\frac{b_\cC}{\deg\varphi}\hat \cO_\cM,
\]
lies in $\hat\cO_\cM$, and provides a symplectic isomorphism between $(\hat\cO_\cM,\cC^{-1}\Theta)$ and $(\hat T(A_0),\theta_0)$. Thus $X^{\cC}$ classifies quadruples $(A_0,\iota_0,\theta_0,\alpha_0)$, where $(A_0,\iota_0,\theta_0)$ is as for $X$ and $\alpha_0$ is a symplectic isomorphism between $(\hat\cO_\cM,\cC^{-1}\Theta)$ and $(\hat T(A_0),\theta_0)$.
\end{rmk}

\begin{rmk} The universal abelian variety $\pi: A \rightarrow X$ can be described in a more explicit way over $\C$. In fact, we have:
\[
A=\bigsqcup_{t\in\Pic(E/F,\cN)}\Gamma_{1,1}^t(\cN)\backslash\left(\dH\times(\C^2\otimes_{F,\tau_0}E)\times \M_2(\C)^{\Sigma_F\smallsetminus \{\tau_0\}}\right)/D\cap t^{-1}\hat\cO_\cM b_t^{-1},
\]
where $m\otimes s \in D\cap t^{-1}\hat\cO_\cM b_t^{-1}$, $\gamma\in \Gamma_{1,1}^t(\cN)$ act on $(z,(v\otimes e),(M_\tau)_{\tau \neq \tau_0})\in \dH\times(\C^2\otimes_{F,\tau_0}E)\times \M_2(\C)^{\Sigma_F\smallsetminus \{\tau_0\}}$ by
\[
(z,(v\otimes e),(M_\tau)_{\tau \neq \tau_0})\cdot (m\otimes s)=  (z,(v\otimes e+\tau_0(m)\left(\begin{matrix}z\\ 1\end{matrix}\right)\otimes s),(M_\tau+\tilde\tau(m\otimes s))_{\tau \neq \tau_0}),
\]
\[
\gamma\cdot(z,(v\otimes e),(M_\tau)_{\tau \neq \tau_0})= (\gamma z,((cz+d)^{-1}v\otimes e),(M_\tau\gamma^{-1})_{\tau \neq \tau_0}).
\]

here $\tau_0(\gamma)= (\begin{smallmatrix}a&b\\c&d\end{smallmatrix})$ and we use the identifications $\tau_0:B\otimes_{F,\tau_0}\R\simeq\M_2(\R)$ and $\tilde\tau: D\otimes_{E,\tilde\tau}\C\simeq\M_2(\C)$ for $\tau \neq \tau_0$. 
 Its universal polarization is given by the restriction of the form $\Theta$ to $D\cap t^{-1}\hat\cO_\cM b_t^{-1}$, whose group of $\cO_\cM$-linear symplectic endomorphisms is $G'(\Q)\cap b_t\hat\cO_\cM b_t^{-1}$, and the class of $\cO_\cM$-linear symplectic similitudes $\alpha$ is given by $ \alpha:\hat T(A)=t^{-1}\hat\cO_\cM b_t^{-1}\stackrel{\simeq}{\longrightarrow}\hat\cO_\cM$ sending $t^{-1}bb_t^{-1}\longmapsto b$. Recall that $b_tt\in G'(\A_f)$, and notice that $K_{1}^{'}(\cN)\subseteq G'(\Q)\cap b_t\hat\cO_\cM b_t^{-1}$.\end{rmk}

\begin{rmk}\label{rmkonpts} Since $p \nmid{\rm disc}(B)$, a class $\alpha$ of $\cO_\cM$-linear symplectic similitudes $\alpha$ modulo $K_{1}^{'}(\cN)$ is decomposed as $\alpha=\alpha_p\times\alpha^p$ where:
	\[
	\alpha_p:T_p(A)\stackrel{\simeq}{\rightarrow}(\cO_\cM)_p\simeq\M_2(\cO_E\otimes\Z_p),\qquad \alpha^p:\hat T(A)^p\stackrel{\simeq}{\rightarrow}(\hat\cO_\cM)^p.
	\]
	
 $\Theta$ induces a perfect pairing on $V_{\Z_p}=(\cO_{\cM})_p=\M_2(\cO_{E}\otimes\Z_p)$. As $p$ splits in $\Q(\sqrt{\lambda})$ then $G'(\Z_p)\stackrel{\simeq}{\longrightarrow}\GL_2(\cO_F\otimes\Z_p)\times\Z_p^\times$ through $b(t_1,t_2)\longmapsto \left(bt_2,\det(b)t_1t_2\right)$  and this identifies $K_{1}^{'}(\cN)_p$ with $K_1^B(\cN)_p\times \Z_p^\times$. 
 
The morphism $\alpha_p$ identifies each $T_p(A)^\pm$ 
with a copy of $\M_2(\cO_{F}\otimes\Z_p)$. Hence $\alpha_p$ provides a $\M_2(\cO_{F}\otimes\Z_p)$-lineal isomorphism $\alpha_p^-:T_p(A)^-\stackrel{\simeq}{\rightarrow}\M_2(\cO_{F}\otimes\Z_p)$ and, reciprocally, a $\M_2(\cO_{F}\otimes\Z_p)$-lineal isomorphism $\alpha_p^-$ gives rise to a symplectic similitude $\alpha_p=(\alpha_p^+,\alpha_p^-)$, where $\alpha_p^+:T_p(A)^+\rightarrow\M_2(\cO_{F}\otimes\Z_p)$ is provided by the rule $\Theta(\alpha_p^+(v),\alpha_p^-(w))=e_p(v,w)$ with $e_p$ the corresponding perfect dual pairing on $T_p(A)$ (Note that $e_p$ is characterized by its image in $T_p(A)^+\times T_p(A)^-$ because, since the Rosati involution sends $\iota(\sqrt{\lambda})$ to $-\iota(\sqrt{\lambda})$, it vanishes at $T_p(A)^+\times T_p(A)^+$ and $T_p(A)^-\times T_p(A)^-$). The action of $(\gamma,n)\in \GL_2(\cO_{F}\otimes\Z_p)\times\Z_p^\times$ on $\alpha_p$ is given by 
	\[
	\alpha_p^-\longmapsto\alpha_p^-\cdot\gamma,\qquad \alpha_p^+\longmapsto n\alpha_p^+\cdot\bar\gamma^{-1}.
	\]
	Hence, to provide a $\alpha_p$ modulo $K_{1}^{'}(\cN)_p$ amounts to giving $\alpha_p^-$ modulo $K_1^B(\cN)_p$, or equivalently, the point  $P=(\alpha_p^-)^{-1}(\begin{smallmatrix}0&1 \\0&0\end{smallmatrix})({\rm mod}\;\cN)\in A[\cN\cO_{F}\otimes\Z_p]^{-,1}$ that generates a subgroup isomorphic to $(\cO_{F}\otimes\Z_p)/(\cN\cO_{F}\otimes\Z_p)$. We have an analogous description in case of $\Gamma_0(\cN)$-structure.
	
\end{rmk}

\subsection{Modular sheaves}\label{ss:modular sheaves} We introduce the sheaves which give rise to the modular forms for $G'$. Let $L_0/F$ be an extension such that $B\otimes_{F}L_0= \mathrm{M}_2(L_0)$, write $L=L_0(\sqrt{\lambda})\supset E$, and denote by  $X_L$ the  base change to $L$ of the unitary Shimura curve $X$. Using the universal abelian variety $\pi: A\rightarrow  X_L$ we define the following coherent sheaves on $X_L$:
\[
\omega:=\left(\pi_\ast\Omega^1_{A/X_L}\right)^{+,2} \qquad \omega_-:=\left(\left(R^1\pi_\ast\cO_A\right)^{+,2}\right)^\vee \qquad \cH:=\left(\cR^1\pi_\ast\Omega^\bullet_{A/X_L}\right)^{+,2}
\]
Note that we have $ \omega_-\simeq (\pi'_\ast\Omega^1_{A^\vee/X_L})^{+,2}$ and the sheaf  $\cH$ is endowed with a Gauss-Manin connection $\bigtriangledown:\cH\rightarrow\cH\otimes\Omega^1_{X_L}$. The natural $\cO_D$-equivariant exact sequence:
	\begin{equation}\label{exseqdR}
	0\longrightarrow \pi_\ast\Omega^1_{A/X_L} \longrightarrow \cR^1\pi_\ast\Omega^\bullet_{A/X_L} \longrightarrow R^1\pi_\ast\cO_A \longrightarrow 0,
	\end{equation}
induces the Hodge exact sequence (see \cite[\S 2.3.1]{ding17}):
\begin{equation} \label{hodge filtration}
0\longrightarrow \omega \longrightarrow\cH \longrightarrow\omega_-^\vee \longrightarrow 0..
\end{equation}

If $L$ contains the Galois closure of $F$ then the natural decomposition $F\otimes_{\Q}L\cong L^{\Sigma_F}$ induces:
\[
\omega= \bigoplus_{\tau \in \Sigma_F}\omega_{\tau} \qquad\qquad  \cH= \bigoplus_{\tau \in \Sigma_F}\cH_{\tau}\]
As the sheaves $(\Omega^1_{A/X_L})^{+,1}$ and $(\Omega^1_{A/X_L})^{+,2}$ are isomorphic then condition $(ii)(2)$ of the moduli problem of $X$ imply that $\omega_{\tau_0}$ is locally free of rank 1, while $\omega_\tau$ is of rank $2$ for $\tau \neq \tau_0$.  Moreover, since the Rosati involution maps $\sqrt{\lambda}$ to $-\sqrt{\lambda}$, 
we deduce that $\omega_-$ is locally free of rank 1. Thus, $\omega_\tau=\cH_\tau$ for for each $\tau\neq \tau_0$, and we have the exact sequence
\begin{equation}\label{Hodge1}
0\longrightarrow \omega_{\tau_0} \longrightarrow\cH_{\tau_0} \stackrel{\epsilon}{\longrightarrow}\omega_-^{\vee} \longrightarrow 0.
\end{equation}

Let $\underline{k}=(k_\tau)\in \N[\Sigma_F]$, we introduce the \emph{modular sheaves} over $X_L$ considered in this text:
$$\omega^{\underline{k}}:= \omega_{\tau_0}^{\otimes k_{\tau_0}}\otimes \bigotimes_{\tau \neq \tau_0}\Sym^{k_\tau}\omega_\tau$$
$$\cH^{\underline{k}}:=  \bigotimes_{\tau \in \Sigma_F} \Sym^{k_\tau}\cH_\tau= \Sym^{k_{\tau_0}}\cH_{\tau_0}\otimes \bigotimes_{\tau \neq \tau_0}\Sym^{k_\tau}\omega_\tau.$$


\begin{defi} A \emph{modular form} of weight $\underline{k}$ and coefficients in $L$ for $G'$ is a global section of $\omega^{\underline{k}}$, i.e. an element of $\mathrm{H}^{0}(X_L, \omega^{\underline{k}})$. 
\end{defi}


\subsection{Alternating pairings and Kodaira-Spencer}\label{alt-pair} 

Notice that $\cR^1\pi_\ast\Omega^\bullet_{A/X_L}$ is a $\cO_{\cM}\otimes_{\Z}\cO_{X_L}$-module of rank one, hence if we fix a generator $\underline{w}\in \cR^1\pi_\ast\Omega^\bullet_{A/X_L}$ (as $\cO_{\cM}\otimes_{\Z}\cO_{X_L}$-module) we can define the unique symplectic $(\cO_{B,\cM}\otimes_{\Z}\cO_{X_L}$)-linear involution
\[
c:\cR^1\pi_\ast\Omega^\bullet_{A/X_L}\longrightarrow \cR^1\pi_\ast\Omega^\bullet_{A/X_L},
\]
such that $c((b\otimes e)\underline{w})=(b\otimes\bar e)\underline{w}$ for all $b\in \cO_{B,\cM}$ and $e\in \cO_E$.

Since $\pi_\ast\Omega^1_{A/X_L}\simeq {\rm Lie}(A)^\vee$ and $R^1\pi_\ast\cO_A\simeq {\rm Lie}(A^\vee)$, the polarization $\theta :{\rm Lie}(A)\rightarrow{\rm Lie}(A^\vee)$ provides a $\cO_{X_L}$-linear morphism (see \cite[Remarque 2.3.1]{ding17}): 
\[
\Theta: \pi_\ast\Omega^1_{A/X_L}\times R^1\pi_\ast\cO_A\longrightarrow \cO_{X_L}
\]
satisfying $\Theta(\lambda x,y)=\Theta(x,\lambda^\ast y)$ for $\lambda\in\cO_\cM$. From  \eqref{exseqdR} it is equivalent to give an alternating pairing $\Theta$ on $\cR^1\pi_\ast\Omega^\bullet_{A/X_L}$.
The above involution $c$ together with $\Theta$, provide for each $\tau \in \Sigma_F$ an alternating pairing:
\[
\bar\varphi_\tau:\left(\cR^1\pi_\ast\Omega^\bullet_{A/X_L}\right)_\tau^{+,2}\times \left(\cR^1\pi_\ast\Omega^\bullet_{A/X_L}\right)_\tau^{+,2}\longrightarrow \cO_{X_L}
\]
given by $\bar\varphi_\tau(u,v):=\Theta(u,\delta^{-1}(\begin{smallmatrix}0&1\\1&0\end{smallmatrix})c(v))$. If $\tau\neq\tau_0$, this provides an isomorphism:
\begin{equation}\label{isowedge}
\varphi_\tau:\bigwedge^2\left(\cR^1\pi_\ast\Omega^\bullet_{A/X_L}\right)_\tau^{+,2} =\bigwedge^2 \omega_\tau\stackrel{\simeq}{\longrightarrow}\cO_{X_L}
\end{equation}
If $\tau=\tau_0$, we obtain an isomorphism:
\begin{equation}\label{isopol}
\varphi_{\tau_0}:\omega_{\tau_0}\stackrel{\simeq}{\longrightarrow}\omega_-
\end{equation}
given by $\bar\varphi_{\tau_0}(v,w)=\epsilon(v)(\varphi_{\tau_0}(w))$ for $w\in \omega_{\tau_0} \subset \cH_{\tau_0}$ and $v\in\cH_{\tau_0}=\left(\cR^1\pi_\ast\Omega^\bullet_{A/X_L}\right)_{\tau_0}^{+,2}$. Thus, the Kodaira-Spencer isomorphism (see \cite[Lemme 2.3.4]{ding17}) induces the isomorphism:
\[
KS:\Omega^1_{X_L}\stackrel{\simeq}{\longrightarrow} \omega_{\tau_0}\otimes\omega_-\stackrel{\varphi_{\tau_0}^{-1}}{\longrightarrow} \omega_{\tau_0}^{\otimes 2}.
\]


\subsection{Katz Modular forms}\label{KatzModForm}
Let $R_0$ be a $L$-algebra and section $f\in H^0(X/R_0,\omega^{\underline{k}})$. If $R$ is a $R_0$-algebra, $(A,\iota,\theta,\alpha)$ is a tuple corresponding to a point of $X(\Spec(R))$ and $w=(f_{0},(f_\tau,e_\tau)_{\tau\neq \tau_0})$ is a $R$-basis of $\omega_A=\left(\Omega^1_{A/R}\right)^{+,2}$, then there exists $f(A,\iota,\theta,\alpha,w)\in\bigotimes_{\tau\neq \tau_0}\Sym^{k_\tau}\left(R^2\right)$ such that: 
$$f(A,\iota,\theta,\alpha)=f(A,\iota,\theta,\alpha,w)((f_\tau,e_\tau)_{\tau \neq \tau_0})\cdot f_{0}^{^{\otimes}k_{\tau_0}}$$

Thus a section $f\in H^0(X/R_0,\omega^{\underline{k}})$ is characterized as a rule that assigns to any $R_0$-algebra $R$ and $(A,\iota,\theta,\alpha, w)$ over $R$ a polynomial:
\[
f(A,\iota,\theta,\alpha,w)\in\bigotimes_{\tau\neq \tau_0}\Sym^{k_\tau}\left(R^2\right)
\]
such that
\begin{itemize}
\item[(A1)] The element $f(A,\iota,\theta,\alpha,w)$ depends only on the $R$-isomorphism class of $(A,\iota,\theta,\alpha)$;

\item[(A2)] Formation of $f(A,\iota,\theta,\alpha,w)$ commutes with extensions $R\rightarrow R'$ of $R_0$-algebras;

\item[(A3)] If $(t,\underline{g})\in R^\times\times\GL_2(R)^{\Sigma_F\setminus\{\tau_0\}}$ and $w(t,\underline{g})=(te_{0},(f_\tau,e_\tau)g_\tau)$ then:
\[
f(A,\iota,\theta,\alpha,w(t,\underline{g}))=t^{-k_{\tau_0}}\cdot\left(\underline{g}^{-1} f(A,\iota,\theta,\alpha,w)\right).
\]
\end{itemize}
Considering the isomorphism $\varphi_\tau$ of \eqref{isowedge} we can give an alternative description of a section $f\in H^0(X/R_0,\omega^{\underline{k}})$  as a rule that assigns to any $R_0$-algebra $R$ and $(A,\iota,\theta,\alpha,w)$ as above a linear form 
\[
f(A,\iota,\theta,\alpha,w)\in\bigotimes_{\tau\neq {\tau_0}}\Sym^{k_\tau}\left(R^2\right)^\vee
\]
such that $f(A,\iota,\theta,\alpha)=f(A,\iota,\theta,\alpha,w)(P(\underline{x}, \underline{y}))f_{0}^{\otimes k_{\tau_0}}$ with $P(\underline{x}, \underline{y})= P((x_\tau, y_\tau)_{\tau\neq\tau_0})= $ $\prod_{\tau\neq {\tau_0}}|\begin{smallmatrix}f_\tau&e_\tau\\x_\tau&y_\tau\end{smallmatrix}|^{k_\tau}\varphi_{\tau}(f_\tau\wedge e_\tau)^{-k_\tau} \in \Sym^{k_\tau}\left(R^2\right)$. This rule satisfy:
\begin{itemize}
\item[(B1)] The element $f(A,\iota,\theta,\alpha,w)$ depends only on the $R$-isomorphism class of $(A,\iota,\theta,\alpha)$.

\item[(B2)] Formation of $f(A,\iota,\theta,\alpha,w)$ commutes with extensions $R\rightarrow R'$ of $R_0$-algebras.

\item[(B3)] If $(t, \underline{g})\in R^\times\times\GL_2(R)^{\Sigma_F\setminus\{\tau_0\}}$:
\[
f(A,\iota,\theta,\alpha,w(t,\underline{g}))=t^{-k_{\tau_0}}\cdot\left(\underline{g}^{-1} f(A,\iota,\theta,\alpha,w)\right).
\]
\end{itemize}

\begin{rmk}\label{DualvsNODual}
We have two interpretations of a global section as a Katz modular form coming from the fact that we have an isomorphism
\begin{equation}\label{morfastnoast}
\omega^{\underline{k}}\;\simeq \;\omega_{\tau_0}^{k_{\tau_0}}\otimes\bigotimes_{\tau\neq\tau_0}\left(\left(\Sym^{k_\tau}\omega_\tau\right)^\vee\otimes\left(\bigwedge^2\omega_\tau\right)^{\otimes k_\tau}\right)\;\stackrel{\varphi_{\tau}}{\simeq}\;\omega_{\tau_0}^{k_{\tau_0}}\otimes\bigotimes_{\tau\neq\tau_0}\left(\Sym^{k_\tau}\omega_\tau\right)^\vee,
\end{equation}
since we are over a field of zero characteristic. This won't be the case over other base schemes.
\end{rmk}

\subsection{Modular forms for $G'$ vs automorphic forms for $G$}\label{GvsG'}

When $L= \C$ we have the following more familiar interpretation for the space of modular forms for $G'$. 

If $t \in\Pic(E/F,\cN)$ then we denote by $M_{\underline{k}}(\Gamma_{1,1}^t(\cN),\C)$ the $\C$-vector space of holomorphic functions $f:\dH\rightarrow\bigotimes_{\tau \neq \tau_0}\cP_\tau(k_\tau)^\vee$ such that $f(\gamma z)=(cz+d)^{k_{\tau_0}}\gamma f(z)$ for all $\gamma\in \Gamma_{1,1}^t(\cN)$ where $\cP_\tau(k_\tau)= \cP_\tau(k_\tau, k_\tau)$ was introduced in \S\ref{AutQuaFrm} (Notice that $\det(\Gamma_{1,1}^t(\cN))\subseteq \cO_F^\times\cap\Q_+=1$, hence there is no action of the determinant).


\begin{lemma} \label{l:seccions as complex forms} We have a canonical isomorphism of $\C$-vector spaces:
	$$\mathrm{H}^{0}(X_{\C}, \omega^{\underline{k}})= \bigoplus_{t \in\Pic(E/F,\cN)}M_{\underline{k}}(\Gamma_{1,1}^t(\cN),\C)$$
\end{lemma}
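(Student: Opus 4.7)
The plan is to reduce the statement to an explicit covering-space computation on $\dH$, using the complex uniformization of $X_\C$ and the explicit description of the universal abelian variety recorded in the remarks of \S\ref{ss:unitary Shimura curves}. Since the connected components of $X(\C)$ are $\Gamma_{1,1}^t(\cN)\backslash\dH$ indexed by $t\in\Pic(E/F,\cN)$, the space $H^0(X_{\C},\omega^{\underline{k}})$ decomposes as a direct sum over $t$ of $H^0(\Gamma_{1,1}^t(\cN)\backslash\dH,\omega^{\underline{k}})$. It thus suffices to prove, for each fixed $t$, that the latter space is canonically isomorphic to $M_{\underline{k}}(\Gamma_{1,1}^t(\cN),\C)$; GAGA lets us pass freely between algebraic and analytic sections.

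For this, I pull $\omega^{\underline{k}}$ back along the quotient map $\dH\to\Gamma_{1,1}^t(\cN)\backslash\dH$. Since $\dH$ is Stein and contractible, the pullback is a holomorphically trivializable vector bundle, and $\Gamma_{1,1}^t(\cN)$-invariant holomorphic sections of the pullback are in bijection with global sections of $\omega^{\underline{k}}$ on the quotient. The problem reduces to computing the automorphy factor attached to $\omega^{\underline{k}}$.

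Using the explicit description of the universal abelian variety $A\to X_\C$ given in the preceding remark, $\mathrm{Lie}(A_z)$ is identified with $(\C^2\otimes_{F,\tau_0}E)\oplus\bigoplus_{\tau\neq\tau_0}M_2(\C)$, and each $\gamma\in\Gamma_{1,1}^t(\cN)$ with $\tau_0(\gamma)=\left(\begin{smallmatrix}a&b\\c&d\end{smallmatrix}\right)$ induces the tangent isomorphism $(v\otimes e,(M_\tau))\mapsto((cz+d)^{-1}v\otimes e,(M_\tau\iota_\tau(\gamma)^{-1}))$. Applying the $(+,2)$-idempotent and dualizing yields the automorphy factors: on $\omega_{\tau_0}^{\otimes k_{\tau_0}}$ the factor is scalar multiplication by $(cz+d)^{k_{\tau_0}}$, while on $\mathrm{Sym}^{k_\tau}\omega_\tau$ for $\tau\neq\tau_0$ it is the $k_\tau$-th symmetric power of the standard action of $\iota_\tau(\gamma)$ on a two-dimensional fiber. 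Invoking Remark \ref{DualvsNODual}, the pairing $\varphi_\tau:\bigwedge^2\omega_\tau\xrightarrow{\simeq}\cO_{X_L}$ of \eqref{isowedge} identifies $\mathrm{Sym}^{k_\tau}\omega_\tau$ with $\mathrm{Sym}^{k_\tau}(\omega_\tau^\vee)$, whose typical fiber is naturally $\cP_\tau(k_\tau)^\vee$. Putting all factors together, a $\Gamma_{1,1}^t(\cN)$-invariant section of the pullback of $\omega^{\underline{k}}$ is exactly a holomorphic function $f:\dH\to\bigotimes_{\tau\neq\tau_0}\cP_\tau(k_\tau)^\vee$ satisfying $f(\gamma z)=(cz+d)^{k_{\tau_0}}\gamma f(z)$, that is, an element of $M_{\underline{k}}(\Gamma_{1,1}^t(\cN),\C)$.

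The main obstacle is the careful bookkeeping of the automorphy factors after the idempotent decomposition: one must verify that the right $\iota_\tau(\gamma)^{-1}$-action on the $(+,2)$-block of $M_2(\C)$, combined with dualization and the trivialization $\bigwedge^2\omega_\tau\cong\cO_{X_L}$ afforded by $\varphi_\tau$, recovers precisely the $\GL_2$-action on $\cP_\tau(k_\tau)$ fixed in \S\ref{AutQuaFrm}, with no extraneous determinant twist. This is consistent because $\Gamma_{1,1}^t(\cN)\subseteq G^*(\Q)_+$ has determinant in $\cO_F^\times\cap\Q_+=\{1\}$, so the potential $\det$-twist present in the general $\cP_\tau(k,\nu)$ does not appear.
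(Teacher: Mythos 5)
Your proof is correct and follows essentially the same route as the paper: the paper likewise restricts to each connected component $\Gamma_{1,1}^t(\cN)\backslash\dH$, trivializes the pullback of $\omega^{\underline{k}}$ to $\dH$ by the explicit frame $dx_{\tau_0}$, $(dx_\tau,dy_\tau)$ coming from the complex description of the universal abelian variety, and reads off the automorphy factor from $\gamma^\ast dx_{\tau_0}=(cz+d)^{-1}dx_{\tau_0}$ and $\gamma^\ast(dx_\tau,dy_\tau)=(dx_\tau,dy_\tau)\tau(\gamma)^{-1}$, with the identification of the fiber with $\cP_\tau(k_\tau)^\vee$ effected by the determinant expression $\bigl|\begin{smallmatrix}dx_\tau&dy_\tau\\X_\tau&Y_\tau\end{smallmatrix}\bigr|^{k_\tau}$, i.e.\ by the same $\varphi_\tau$-twist of Remark \ref{DualvsNODual} that you invoke. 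Your observation that no determinant twist arises because $\det(\Gamma_{1,1}^t(\cN))\subseteq\cO_F^\times\cap\Q_+=\{1\}$ is exactly the parenthetical point the paper makes when defining $M_{\underline{k}}(\Gamma_{1,1}^t(\cN),\C)$.
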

\begin{proof} Firstly, for each $t \in\Pic(E/F,\cN)$ let  $X^{t}$ be the corresponding connected component of $X$. From the explicit description of the universal object over $\C$ we have ${\rm Lie}(A)=\left(\C^2\otimes_{F,\tau_0}E\right)\times \M_2(\C)^{\Sigma_F\smallsetminus\{\tau_0\}}$ and ${\rm Lie}(A)^2=\left(\begin{smallmatrix}0&0\\0&1\end{smallmatrix}\right){\rm Lie}(A)=\left(\C\otimes_{F,\tau_0}E\right)\times (\C^2)^{\Sigma_F\smallsetminus\{\tau_0\}}.$ Thus, $(\Omega^1_{A/X_{\C}})_{\tau_0}^{+,2}=({\rm Lie}(A)_{\tau_0}^{+,2})^\vee= \C dx_{\tau_0}$ where $dx_{\tau_0}:=\sqrt{\lambda}(dx\otimes1)+dx\otimes\sqrt{\lambda}\in (\C\otimes_{F,\tau_0}E)^\vee$
	and $\left(\Omega^1_{A/X_{\C}}\right)^{+,2}_{\tau}=\langle dx_{\tau},dy_{\tau}\rangle\in (\C^2)^\vee$ for $\tau \neq \tau_0$ . Then if $\varphi \in H^0(X_{\C},\omega^{\underline{k}})$ then for each $t \in\Pic(E/F,\cN)$ the restriction of $\varphi$ to $X^t$ is given by:
	\begin{equation}\label{e:sections vs modular forms}
	f(z)\left(\prod_{\tau \neq \tau_0}\left| \begin{smallmatrix}dx_\tau&dy_\tau\\X_\tau&Y_\tau\end{smallmatrix}\right|^{k_\tau}\right)dx_{\tau_0}^{k_{\tau_0}},
	\end{equation}
	where  $f:\dH\longrightarrow\bigotimes_{\tau \neq \tau_0}\cP_\tau(k_\tau)^\vee$ is a holomorphic function. Since $\gamma^\ast dx_{\tau_0}=(cz+d)^{-1}dx_{\tau_0}$ and $\gamma^\ast(dx_\tau,dy_\tau)=(dx_\tau,dy_\tau)\tau(\gamma)^{-1}$, for all $\gamma\in \Gamma_{1,1}^t(\cN)$ we deduce that $f(\gamma z)=(cz+d)^{k_\tau}\gamma f(z)$ for each $\gamma\in \Gamma_{1,1}^t(\cN).$ \end{proof}


Now we are going to relate modular forms with coefficients in $\C$ for the groups $G'$ and $G$. Fix $\nu \in \Z$ and $\underline{k} \in \Xi_\nu$.  Let $U_\cN= \{u \in \cO_F^\times | u\equiv 1 \ \mathrm{mod}(\cN)\}$  and define an action of the group:
$$\Delta:=(\cO_F)_+^\times/U_\cN^2$$ on $M_{\underline{k}}(\Gamma_{1,1}^t(\cN),\C)$ as follows: if $[s]\in \Delta$ we fix $\gamma_s\in \Gamma_{1}^t(\cN)$ such that $\det\gamma_s=s$, if we write $\tau_0\gamma_s=\left(\begin{smallmatrix}a&b\\c&d\end{smallmatrix}\right)$, then
for each $f\in M_{\underline{k}}(\Gamma_{1,1}^t(\cN),\C)$ we put:
\begin{equation}\label{eqactDelta}
s\ast_{\nu} f(z):= s^{\frac{-\underline{k}+ 2k_{\tau_0}\tau_0+ \nu\underline{1}}{2}}(cz+d)^{-k_{\tau_0}}\gamma_s^{-1} f(\gamma_sz).
\end{equation}
It is not hard to verify that this action is well defined and in fact we have $s\ast_\nu f\in M_{\underline{k}}(\Gamma_{1,1}^t(\cN),\C)$.
\begin{prop}\label{compautshe} We have a natural isomorphism of $\C$-vector spaces 
	\[
	\iota_{\underline{k}, \nu}: H^0(G(\Q),\cA(\underline{k},\nu))^{K_1^B(\cN)} \stackrel{\simeq}{\longrightarrow} \bigoplus_{\cC\in\Pic(\cO_F)}M_{\underline{k}}(\Gamma_{1,1}^\cC(\cN),\C)^\Delta
	\]
\end{prop}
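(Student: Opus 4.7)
Given $\phi\in H^0(G(\Q),\cA(\underline{k},\nu))^{K_1^B(\cN)}$, I would use Remark \ref{mod-aut} to produce the holomorphic function $f_\phi:\dH\times G(\A_f)\to\bigotimes_{\tau\neq\tau_0}\cP_\tau(k_\tau,\nu)^{\vee}$. By strong approximation (via the reduced norm together with strong approximation for $G^*$), $G(\Q)_+\backslash G(\A_f)/K_1^B(\cN)$ is indexed by $\Pic(\cO_F)$ through the representatives $b_\cC$ chosen in Remark \ref{rmkonGammac}. I would then define
\[
\iota_{\underline{k},\nu}(\phi):=(f_\cC)_{\cC},\qquad f_\cC(z):=f_\phi(z,b_\cC),
\]
and check that this produces a $\Delta$-invariant element of each $M_{\underline{k}}(\Gamma_{1,1}^\cC(\cN),\C)$.

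For $\gamma\in\Gamma_{1,1}^\cC(\cN)$, write $\gamma=b_\cC k b_\cC^{-1}$ with $k\in K_{1,1}^B(\cN)\subset K_1^B(\cN)$, so that $\gamma^{-1}b_\cC\in b_\cC K_1^B(\cN)$. Applying the transformation formula of Remark \ref{mod-aut} at $(z,\gamma^{-1}b_\cC)$, together with right $K_1^B(\cN)$-invariance of $\phi$ and $\det\gamma=1$, gives
\[
f_\cC(\gamma z)=(cz+d)^{k_{\tau_0}}\gamma\bigl(f_\cC(z)\bigr),
\]
which is the defining transformation law of $M_{\underline{k}}(\Gamma_{1,1}^\cC(\cN),\C)$. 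For the $\Delta$-invariance, given $s\in(\cO_F)_+^\times$ I would pick $\gamma_s\in\Gamma_1^\cC(\cN)$ with $\det\gamma_s=s$ and repeat the calculation. The transformation law now contributes the extra factor $\tau_0(s)^{(-\nu-k_{\tau_0})/2}$ at $\tau_0$, while the action of $\gamma_s$ on each $\cP_\tau(k_\tau,\nu)^{\vee}$ ($\tau\neq\tau_0$) differs from its action on $\cP_\tau(k_\tau)^{\vee}$—the space where $f_\cC$ takes values—by the scalar $\tau(s)^{(k_\tau-\nu)/2}$ coming from the determinant twist $\otimes\C[(\nu-k_\tau)/2]$ in the definition of $\cP_\tau(k_\tau,\nu)$. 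Collecting these scalars one obtains exactly $s^{(\underline{k}-2k_{\tau_0}\tau_0-\nu\underline{1})/2}$, which matches \eqref{eqactDelta} and yields $s\ast_\nu f_\cC=f_\cC$. Independence of the choice of $\gamma_s$ modulo $U_\cN^2$ follows because $U_\cN\subset K_1^B(\cN)$ embeds into the centre and acts trivially on $\phi$.

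For the inverse, given a $\Delta$-invariant tuple $(f_\cC)_\cC$, I would decompose any $g\in G(\A)$ as $g=\gamma g_\infty b_\cC k$ via strong approximation, define $\phi$ on vectors of the form $f_{k_{\tau_0}}\otimes P$ by inverting the formula of Remark \ref{mod-aut}, and extend by $(\cG_{\tau_0},O(2))$-linearity using that $f_{k_{\tau_0}}$ generates $\cD(k_{\tau_0},\nu)$. Well-definedness reduces to two kinds of ambiguity in the decomposition: multiplication of $\gamma$ by elements of $\Gamma_{1,1}^\cC(\cN)$ (handled by the modular transformation law of $f_\cC$) and by representatives of $\Gamma_1^\cC(\cN)/\Gamma_{1,1}^\cC(\cN)\cong\Delta$ (handled precisely by the $\Delta$-invariance hypothesis). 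The main technical obstacle throughout is the bookkeeping of archimedean scaling factors—the $(cz+d)^{k_{\tau_0}}$ cocycle at $\tau_0$, the determinant twists in the $\cP_\tau(k_\tau,\nu)^{\vee}$ at the other archimedean places, and the central character $a\mapsto a^\nu$—which must conspire to match exactly the exponent $(-\underline{k}+2k_{\tau_0}\tau_0+\nu\underline{1})/2$ appearing in \eqref{eqactDelta}.
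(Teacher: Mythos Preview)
Your proposal is correct and follows essentially the same route as the paper: pass from $\phi$ to $f_\phi$ via Remark~\ref{mod-aut}, use strong approximation to index the double coset space by $\Pic(\cO_F)$, and then identify the $\Gamma_1^\cC(\cN)$-transformation law of $f_\cC$ with the $\Gamma_{1,1}^\cC(\cN)$-modularity plus $\Delta$-invariance (since $\Gamma_1^\cC(\cN)/\Gamma_{1,1}^\cC(\cN)\simeq\Delta$). The paper is terser---it does not spell out the inverse or the scalar bookkeeping you carry out---but the argument is the same.
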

\begin{proof} To give an automorphic form $\phi\in H^0(G(\Q),\cA(\underline{k},\nu))^{K_1^B(\cN)}$ is equivalent to give a holomorphic function $f_\phi:\dH\times G(\A_f)/K_1^B(\cN)\rightarrow\bigotimes_{\tau \neq \tau_0}\cP_\tau(k_\tau,\nu)^\vee$ as defined in \eqref{mod-aut}. By strong approximation we have an isomorphism $\det:G(\Q)^+\backslash G(\A_f)/K^B_1(\cN)\stackrel{\simeq}{\longrightarrow}\A_{F,f}^\times/F_+^\times\hat\cO_F^\times=\Pic(\cO_F).$
	Hence $f_\phi$ is characterized by the restrictions $f_\phi^\cC:=f_\phi(\cdot,b_\cC):\mathfrak{H}\rightarrow\bigotimes_{\tau\neq\tau_0}\cP_\tau(k_\tau,\nu)^\vee$,  for all $[\cC]\in\Pic(\cO_F)$. We can verify that $f_\phi^\cC(\gamma z)=\det\gamma^{-(\nu+k_{\tau_0})/2}(cz+d)^{k_{\tau_0}}\gamma f_\phi^\cC(\tau)$ if $\gamma\in \Gamma_1^\cC(\cN)$.  Since any $\gamma\in\Gamma_{1,1}^\cC(\cN)\subseteq \Gamma_1^\cC(\cN)$ has reduced norm 1, we have that $f_\phi^\cC\in M_{\underline{k}}(\Gamma_{1,1}^\cC(\cN),\C)$. 
	Since $\Gamma_1^\cC(\cN)/\Gamma_{11}^\cC(\cN)\simeq(\cO_F)_+^\times/U_\cN^2=\Delta$, the extra condition for $f_\phi^\cC$ to be $\Gamma_1^\cC(\cN)$-invariant is translated to being $\Delta$-invariant, and hence the result follows.
\end{proof}

 Now let $[s]\in \Delta$ as above and fix $\gamma_s\in\Gamma_1^\cC(\cN)$ such that $\det(\gamma_s)=s$. Then for a point $(A,\iota,\theta,\alpha)\in X^{\cC}$ we put $\gamma_s\alpha:\hat\cO_\cM\stackrel{\cdot\gamma_s}{\longrightarrow}\hat\cO_\cM\stackrel{\alpha}{\longrightarrow}\hat T(A)$ and:
$$[s]\ast(A,\iota,\theta,\alpha)=(A,\iota,s\theta,\gamma_s\alpha).$$ 
This is well defined as the class $\gamma_s\alpha$ does not depend on the choice of $\gamma_s$, and if $s=\eta^2$ for $\eta\in U_\cN$ then  $s\ast(A,\iota,\theta,\alpha)=(A,\iota,s\theta,\eta\alpha)=\iota(\eta)^\ast(A,\iota,\theta,\alpha)\simeq(A,\iota,\theta,\alpha).$ 
	
We define an action of $\Delta$  on $H^0(X^\cC,\omega^{\underline{k}})$ using the description of \S \ref{KatzModForm}. Let $f \in H^0(X^\cC,\omega^{\underline{k}})$ and a tuple $(A,\iota,\theta,\alpha,w)$ over some ring $R$ as in \S \ref{KatzModForm}.  
For $[s]\in \Delta$ we put: 
	\[
	(s\ast_{\nu}f)(A,\iota,\theta,\alpha,w):=s^{\frac{-\underline{k}+ 2k_{\tau_0}\tau_0+ \nu\underline{1}}{2}}\cdot f(A,\iota,s^{-1}\theta,\gamma_s^{-1}\alpha,w).
	\]
This is well defined as if $s=\eta^2\in U_\cN^2$, we have
\begin{eqnarray*}
	f(A,\iota,\theta,\alpha,w)&=&f(A,\iota,s^{-1}\theta,\gamma_s^{-1}\alpha,\eta^{-1} w)=\tau_0(\eta)^{k_{\tau_0}}\prod_{\tau\neq \tau_0}(\tau(\eta)^{-k_\tau})f(A,\iota,s^{-1}\theta,\gamma_s^{-1}\alpha, w)\\
		&=&{\rm N}_{F/\Q}(\eta)^{-\nu}s^{\frac{-\underline{k}+ 2k_{\tau_0}\tau_0+ \nu\underline{1}}{2}}f(A,\iota,s^{-1}\theta,\gamma_s^{-1}\alpha,w)=s^{\frac{-\underline{k}+ 2k_{\tau_0}\tau_0+ \nu\underline{1}}{2}}f(A,\iota,s^{-1}\theta,\gamma_s^{-1}\alpha,w).
\end{eqnarray*}
In Proposition \ref{proponDelta} of the appendix we prove that both descriptions of the $\Delta$-action coincide. Moreover in  \S \ref{Heckops}, we describe the action of the Hecke operators from the perspective of Katz modular forms due to the isomorphism $\iota_{\underline{k},\nu}$ of Proposition \ref{compautshe}. Thus we obtain: 
\begin{cor}\label{c:automorphic forms as sections} We have the following decomposition compatible with Hecke operators:
$$ H^0(G(\Q),\cA(\underline{k},\nu))^{K_1^B(\cN)} \stackrel{\simeq}{\longrightarrow} \bigoplus_{\cC\in\Pic(\cO_F)} H^{0}(X^{\cC, 0}_{\C}, \omega^{\underline{k}})^\Delta,$$
where $X^{\cC, 0}_{\C}$ is the irreducible component of $X^{\cC}_{\C}$ corresponding to $1 \in \mathrm{Pic(E/F, \mathfrak{n})}$.
\end{cor}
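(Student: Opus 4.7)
The plan is to chain together the two identifications already established in the excerpt and then to transport structures (the $\Delta$-action and the Hecke action) across the resulting isomorphism, reducing the corollary to bookkeeping plus two technical compatibilities that the authors have flagged for the appendix.

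First I would combine Proposition \ref{compautshe} with Lemma \ref{l:seccions as complex forms} applied to each twisted curve $X^{\cC}$. Concretely, fix $\cC\in\Pic(\cO_F)$. By Remark \ref{rmkonGammac} the connected components of $X^{\cC}_{\C}$ are indexed by $\Pic(E/F,\cN)$, and the component $X^{\cC,0}_{\C}$ corresponding to $1\in\Pic(E/F,\cN)$ is $\Gamma_{1,1}^\cC(\cN)\backslash\mathfrak{H}$. The proof of Lemma \ref{l:seccions as complex forms} (which uses only the explicit description of $A/X_\C$ and the computation of $(\Omega^1_{A/X_\C})^{+,2}_{\tau}$) applies verbatim with $\Gamma_{1,1}^t(\cN)$ replaced by $\Gamma_{1,1}^\cC(\cN)$, yielding a canonical $\C$-linear isomorphism
\[
H^0(X^{\cC,0}_{\C},\omega^{\underline{k}})\;\stackrel{\simeq}{\longrightarrow}\;M_{\underline{k}}(\Gamma_{1,1}^\cC(\cN),\C),\qquad \varphi\longmapsto f_\varphi,
\]
via formula \eqref{e:sections vs modular forms}. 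Together with Proposition \ref{compautshe} this already gives a $\C$-linear bijection of the required underlying vector spaces
\[
H^0(G(\Q),\cA(\underline{k},\nu))^{K_1^B(\cN)}\;\stackrel{\simeq}{\longrightarrow}\;\bigoplus_{\cC\in\Pic(\cO_F)}H^0(X^{\cC,0}_{\C},\omega^{\underline{k}}),
\]
before imposing $\Delta$-invariance.

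Next I would check that the two $\Delta$-actions match under this identification. The action on $M_{\underline{k}}(\Gamma_{1,1}^\cC(\cN),\C)$ is the slash action \eqref{eqactDelta} while the action on $H^0(X^\cC,\omega^{\underline{k}})$ is the geometric one defined via $(A,\iota,\theta,\alpha)\mapsto (A,\iota,s\theta,\gamma_s\alpha)$ with the normalising scalar $s^{(-\underline{k}+2k_{\tau_0}\tau_0+\nu\underline{1})/2}$. The authors have already flagged this compatibility as Proposition \ref{proponDelta} in the appendix, so I would simply invoke it; the verification amounts to tracing the effect of $\gamma_s\in\Gamma_1^\cC(\cN)$ on the basis $(dx_{\tau_0},(dx_\tau,dy_\tau)_{\tau\neq\tau_0})$ of $\omega_A$ over the universal cover, noting that the automorphy factor $(cz+d)^{-k_{\tau_0}}\gamma_s^{-1}$ in \eqref{eqactDelta} is exactly produced by the transformation rule $(A3)$ of \S\ref{KatzModForm} together with the identification $\det\gamma_s=s$. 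Taking $\Delta$-invariants on both sides then yields the desired isomorphism of vector spaces.

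Finally I would verify Hecke equivariance. Outside $\cN\cdot\mathrm{disc}(B)\cdot p$ the spherical Hecke algebra acts on the left-hand side through right convolution on $G(\A_f)$; on the right-hand side one has the usual geometric Hecke correspondences on $X^{\cC}$, reinterpreted on Katz-style modular forms in \S\ref{Heckops}. Under the isomorphism $\iota_{\underline{k},\nu}$ of Proposition \ref{compautshe} these two actions coincide by construction (right translation at a uniformizer gets transported to the slash-action of the associated double coset, which in turn corresponds to the isogeny sum defining the geometric Hecke operator). The main subtlety, and the step I expect to require the most care, is precisely this last compatibility: one must match the normalisations of the Hecke operators on both sides, including the $U_\dP$ operators at primes dividing $\cN$ and at $\dP\mid p$, so that the isomorphism is genuinely Hecke-equivariant and not merely equivariant up to a diagonal twist; this is exactly what \S\ref{Heckops} is meant to provide, and I would cite it to close the argument.
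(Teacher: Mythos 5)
Your proposal is correct and follows essentially the same route as the paper: the authors likewise obtain the corollary by composing Lemma \ref{l:seccions as complex forms} (applied to each $X^{\cC,0}_{\C}$) with Proposition \ref{compautshe}, and then invoke Proposition \ref{proponDelta} of the appendix for the compatibility of the two $\Delta$-actions and \S\ref{Heckops} for the Hecke equivariance. No gaps.
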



\subsection{Connections and trilinear products}\label{ss:connections and trilinear products} For $\underline{k}\in \mathbb{N}[\Sigma_F]$ and $m\in \Z$ we consider the sheaves: 
$$\cH^{\underline{k}}_m:=\omega_{0}^{k_{\tau_0}-m}\otimes{\rm Sym}^{m}\cH_{\tau_0}\otimes\bigotimes_{\tau \neq \tau_0}{\rm Sym}^{k_\tau}\omega_{\tau}.$$ 
Then from \eqref{Hodge1} we obtain the exact sequence:
\begin{equation}\label{Hodge2}
0\longrightarrow \omega^{\underline{k}} \longrightarrow\cH^{\underline{k}}_m \stackrel{\epsilon}{\longrightarrow}\omega^{\underline{k}-m\tau_0}\otimes\omega_-^{-1}\otimes{\rm Sym}^{m-1}(\cH_{\tau_0})\stackrel{\eqref{isopol}}{\simeq}\cH^{\underline{k}- 2\tau_0}_{m-1}  \longrightarrow 0.
\end{equation}
By Griffiths transversality the Gauss-Manin connection (see \cite{ding17}) induces a connection:
$$
\bigtriangledown_{\underline{k},m}: \cH^{\underline{k}}_m \longrightarrow\omega^{\underline{k}-(m+1)\tau_0}\otimes{\rm Sym}^{m+1}(\cH_{\tau_0})\otimes\Omega^1_{X}
\stackrel{KS}{\longrightarrow}\cH^{\underline{k}+ 2\tau_0}_{m+ 1}
$$
Moreover, for each $j$ we put $\bigtriangledown_{\underline{k}}^j:=\bigtriangledown_{\underline{k}+2(j-1)\tau_0,j-1}\circ \cdots\circ\bigtriangledown_{\underline{k},0}$.

As in \S\ref{KatzModForm}, a global section $f$ of $\cH^{\underline{k}}$ is given by a rule that for each tuple $(A,\iota,\theta,\alpha)$ and $w^0= \{(f_\tau,e_\tau)\}_{\tau\neq \tau_0}$ a basis of $\bigoplus_{\tau\neq\tau_0}\omega_\tau$ assigns a linear form:
$$ f(A,\iota,\theta,\alpha,w^0):\bigotimes_{\tau\neq\tau_0}\Sym^{k_\tau}\left(R^2\right)\longrightarrow {\rm Sym}^{k_{\tau_0}}(\cH_{\tau_0}),$$
such that $f(A,\iota,\theta,\alpha):=f(A,\iota,\theta,\alpha,w^0)\left(\prod_{\tau\neq \tau_0}\left| \begin{smallmatrix}f_{\tau}&e_\tau\\x_\tau&y_\tau\end{smallmatrix}\right|^{k_\tau}\right)$. Thus, if $f$ is in fact a section of $\omega^{\underline{k}}\subseteq\cH^{\underline{k}}$ then for each $P\in \bigotimes_{\tau\neq\tau_0}\Sym^{k_\tau}\left(R^2\right)$ we have $f(A,\iota,\theta,\alpha,w^0)(P)=f(A,\iota,\theta,\alpha,w)(P)f_{0}^{\otimes k_{\tau_0}}$ if $w=(f_{0},w^0)$

Now let $\underline{k}_1 \in \Xi_{\nu_1}$, $\underline{k}_2\in \Xi_{\nu_2}$ and $\underline{k}_3\in \Xi_{\nu_1+\nu_2}$ be unbalanced at $\tau_0$ with dominant weight $\underline{k}_3$. Recall the notations $\underline{m}= (m_\tau)_{\tau \in \Sigma_F}:= ((k_{1,\tau}+k_{2,\tau}+k_{3,\tau})/2)_{\tau \in \Sigma_F}$, $m_{3, \tau_0}= (k_{3, \tau_0}-m_{\tau_0})$ and for $i= 1, 2, 3$ we denote $\underline{m}_i^{\tau_0}= (m_{i,\tau})_{\tau \neq \tau_0}= (m_\tau- k_{i, \tau})_{\tau \ne \tau_0}$.

Recall from \eqref{triple} we provided a trilinear product:
\[
t:H^0(G(\Q),\cA(\underline{k}_1,\nu_1))\times H^0(G(\Q),\cA(\underline{k}_2,\nu_2))\longrightarrow H^0(G(\Q),\cA(\underline{k}_3,\nu_1+\nu_2)). 
\]
The following result provides a geometric interpretation of this product in terms of the isomorphism $\iota_{\underline{k},\nu}$ of Proposition \ref{compautshe}. 
	
\begin{thm}\label{tripleonsheaves} Let $\cC\in \mathrm{Pic}(\cO_F)$. There exists a well defined morphism $t_\cC:\omega^{\underline{k}_1}\times\omega^{\underline{k}_2}\longrightarrow\omega^{\underline{k}_3}$ such that if $f_i \in H^0(X^{\cC}, \omega^{\underline{k}_i})$ for $i= 1, 2$ and $(A,\iota,\theta,\alpha)\in X$ we have: 
$$t_\cC(f_1,f_2)(A,\iota,\theta,\alpha)=
\sum_{j=0}^{m_{3,\tau_0}}c_j\bigtriangledown_{\underline{k}_1}^j(f_1)\bigtriangledown_{\underline{k}_2}^{m_{3, \tau_0}-j}(f_2)(A,\iota,\theta,\alpha,w^0)(\Delta^{\tau_0})$$ 
where $\Delta^{\tau_0}(x_1, y_1, x_2, y_2)= \prod_{\tau\neq \tau_0}|\begin{smallmatrix} f_\tau&e_\tau\\x_{2, \tau}&y_{2, \tau}\end{smallmatrix}|^{m_{1, \tau}}|\begin{smallmatrix}f_\tau&e_\tau\\x_{1, \tau}&y_{1, \tau}\end{smallmatrix}|^{m_{2, \tau}}|\begin{smallmatrix}x_{1, \tau}&y_{1, \tau}\\ x_{2, \tau}&y_{2, \tau}\end{smallmatrix}|^{m_{3, \tau}}$ and $c_j=(-1)^j\binom{m_{3, \tau_0}}{j}\binom{m_{\tau_0}-2}{k_{1, \tau_0}+j-1}$. Moreover, if $\phi_i\in H^0(G(\Q),\cA(\underline{k}_i,\nu_i))^{K_1^B(\cN)}$ for $i= 1, 2$ then we have: 
	\[
	t_\cC(\iota_{\underline{k}_1,\nu_1}(\phi_1)_\cC,\iota_{\underline{k}_2,\nu_2}(\phi_2)_\cC)=\left(\frac{1}{2i}\right)^{m_{3, \tau_0}}\iota_{\underline{k}_3,\nu_1+\nu_2}(t(\phi_1,\phi_2))_\cC.
	\]
\end{thm}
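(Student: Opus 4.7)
Plan. The statement splits into two claims: (i) the prescribed formula defines a morphism of sheaves landing in the subsheaf $\omega^{\underline{k}_3}\subset\cH^{\underline{k}_3}_{k_{3,\tau_0}}$; and (ii) under the comparison $\iota_{\underline{k},\nu}$ of Proposition \ref{compautshe} this morphism recovers $t$ up to the scalar $(2i)^{-m_{3,\tau_0}}$. I will handle the two claims separately.

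For (i), each summand $\bigtriangledown_{\underline{k}_1}^{j}(f_1)\cdot\bigtriangledown_{\underline{k}_2}^{m_{3,\tau_0}-j}(f_2)$ a priori sits in a sheaf whose $\tau_0$-component is $\Sym^{k_{1,\tau_0}+2j}\cH_{\tau_0}\otimes\Sym^{k_{2,\tau_0}+2(m_{3,\tau_0}-j)}\cH_{\tau_0}$; after pairing with $\Delta^{\tau_0}$ in the variables at the embeddings $\tau\neq\tau_0$, the result lies in $\Sym^{k_{3,\tau_0}}\cH_{\tau_0}\otimes\bigotimes_{\tau\neq\tau_0}\Sym^{k_{3,\tau}}\omega_\tau$. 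The nontrivial content of (i) is that summing with the coefficients $c_j$ forces this section into $\omega_0^{k_{3,\tau_0}}\otimes\bigotimes_{\tau\neq\tau_0}\Sym^{k_{3,\tau}}\omega_\tau=\omega^{\underline{k}_3}$, equivalently that it maps to zero under the quotient $\cH^{\underline{k}_3}_{k_{3,\tau_0}}\twoheadrightarrow\cH^{\underline{k}_3-2\tau_0}_{k_{3,\tau_0}-1}$ of \eqref{Hodge2}. The claim is local on $X_L$ and can be checked on the complex-analytic uniformization. Under Lemma \ref{l:seccions as complex forms} sections of $\omega^{\underline{k}_i}$ correspond to holomorphic functions $h_i$ on $\mathfrak{H}$, and in a flat trivialization of $\cH_{\tau_0}$ extending the section $dx_{\tau_0}\in\omega_0$ the operator $\bigtriangledown$ at $\tau_0$ is identified, up to a universal scalar coming from the Kodaira-Spencer isomorphism, with the classical Shimura-Maass raising operator $R$. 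The desired cancellation is then the Rankin-Cohen identity: the coefficients $c_j=(-1)^j\binom{m_{3,\tau_0}}{j}\binom{m_{\tau_0}-2}{k_{1,\tau_0}+j-1}$ are precisely those that annihilate the non-holomorphic terms of $\sum_j R^j h_1\cdot R^{m_{3,\tau_0}-j}h_2$, and this is the same combinatorial input already built into the definition \eqref{tripleD} of $t_{\tau_0}(f_{k_{3,\tau_0}})$.

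For (ii), fix a connected component $\Gamma_{1,1}^t(\cN)\backslash\mathfrak{H}\subset X^\cC_\C$ and transport through $\iota_{\underline{k}_i,\nu_i}$ using the formula \eqref{e:sections vs modular forms}. The contributions at the embeddings $\tau\neq\tau_0$ match on the nose, since $\Delta^{\tau_0}$ in \eqref{e: important polynomial appearing a lot} is by construction the polynomial kernel of $t^{\tau_0}=\bigotimes_{\tau\neq\tau_0}t_\tau$ in \S\ref{ss:trilinear products}. At $\tau_0$, the identification from (i) says that each application of $\bigtriangledown_{\underline{k}_i}$ on $\iota(\phi_i)_\cC$ corresponds to $(2i)^{-1}R$ applied to $f_{\phi_i}^\cC$, so the sum $\sum_j c_j\bigtriangledown_{\underline{k}_1}^j(f_1)\bigtriangledown_{\underline{k}_2}^{m_{3,\tau_0}-j}(f_2)$ evaluated on $(f_{\phi_1}^\cC,f_{\phi_2}^\cC)$ equals $(2i)^{-m_{3,\tau_0}}$ times the value of $t_{\tau_0}(f_{k_{3,\tau_0}})$ acting on $(f_{\phi_1}^\cC,f_{\phi_2}^\cC)$ via the formula \eqref{tripleD}. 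Combined with the matching at the other embeddings, this produces the scalar $(2i)^{-m_{3,\tau_0}}$ in the statement.

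Main obstacle. The hardest step is the calibration underlying both parts: verifying that one application of $\bigtriangledown_{\underline{k}}$ on $\omega^{\underline{k}}$ is exactly $(2i)^{-1}R$ (and not some other $\Q^\times$-multiple) requires writing down the Kodaira-Spencer isomorphism and the polarization $\varphi_{\tau_0}$ explicitly on the universal abelian variety over $\mathfrak{H}$, using the uniformization recalled after Lemma \ref{l:seccions as complex forms}. Once this scalar is pinned down, both (i) and (ii) reduce to the classical Rankin-Cohen cancellation with the very coefficients $c_j$ already present in the statement and in \eqref{tripleD}.
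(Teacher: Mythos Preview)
Your overall strategy matches the paper's: first show the image lies in $\omega^{\underline{k}_3}$, then calibrate against $t$ over $\C$. Part (ii) is essentially what the paper does: it computes $\bar\varphi_{\tau_0}(\alpha,\beta)=-1$ and $KS(dx_0^2)=dz$ explicitly on the complex universal object, and then proves the formula $\bigtriangledown_{\underline{k}}^j s_f = (2i)^{-j}[\text{coefficient of }R^j]\,dx_0^{k_{\tau_0}+2j} + d\bar x_0\cdot w$, which is precisely the calibration you identify as the main obstacle.

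Your argument for (i), however, has a gap. You assert that in a flat trivialization $\bigtriangledown$ is identified up to scalar with the Shimura--Maass operator $R$; this is false, as the extra term $d\bar x_0\cdot w$ in the formula above already shows. Knowing that the Rankin--Cohen combination $\sum_j c_j R^j(h_1)R^{m_{3,\tau_0}-j}(h_2)$ is holomorphic only controls the $dx_0^{k_{3,\tau_0}}$-coefficient of $t_\cC(f_1,f_2)$ in the $C^\infty$ basis $\{dx_0,d\bar x_0\}$; it does not by itself force the section into the algebraic Hodge filtration $\omega^{\underline{k}_3}\subset\cH^{\underline{k}_3}_{m_{3,\tau_0}}$. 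The paper sidesteps this by staying algebraic: it proves the identity
\[
\epsilon\,\bigtriangledown_{\underline{k}}^j(f)=j(k_{\tau_0}+j-1)\,\bigtriangledown_{\underline{k}}^{j-1}(f)
\]
via a direct local computation in a basis $u_1,u_2$ of $\cH_{\tau_0}$ with $\bar\varphi_{\tau_0}(u_1,u_2)=1$ (this is the algebraic counterpart of $LR^j f_k=j(k+j-1)R^{j-1}f_k$), and then the same combinatorial cancellation you invoke as ``Rankin--Cohen'' yields $\epsilon(t_\cC(f_1,f_2))=0$ directly. Besides closing the gap, this argument works over any base and the identity $\epsilon\bigtriangledown^j=j(k_{\tau_0}+j-1)\bigtriangledown^{j-1}$ is reused later in the paper (e.g.\ in Lemma~\ref{lemmaNOC}).
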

\begin{proof} By construction we have that ${\rm Im}(t_\cC)\subseteq \omega^{\underline{k}_3- m_{3, \tau_0}\tau_0}\otimes\Sym^{m_{3, \tau_0}}(\cH_{\tau_0})$ then to prove that $t_{\cC}$ is well defined we need to check that in fact ${\rm Im}(t_\cC)\subseteq \omega^{\underline{k}_3}$. 
As from \eqref{Hodge2} we have $\omega^{\underline{k}_3}=\ker\left(\omega^{\underline{k}_3- m_{3, \tau_0}\tau_0}\otimes\Sym^{m_{3, \tau_0}}(\cH_{\tau_0})\stackrel{\epsilon}{\longrightarrow} \omega^{\underline{k}_3- (m_{3, \tau_0}+1)\tau_0}\otimes\Sym^{m_{3, \tau_0}-1}(\cH_{\tau_0})\right)$ then we need to prove that $\epsilon(t_\cC(f_1, f_2))= 0$.
    
    Firstly we will prove that for each $\underline{k}$, $f \in H^0(X^{\cC}, \omega^{\underline{k}})$ and  $j \in \N$ we have:
\begin{equation}\label{eqepsiGM}
\epsilon\bigtriangledown_{\underline{k}}^j(f)=j(k_{\tau_0}+j-1)\bigtriangledown_{\underline{k}}^{j-1}(f).
\end{equation}
Hence we will have:
\begin{eqnarray*}
&&\epsilon( t_\cC(f_1, f_2))=\sum_{j=0}^{m_{3, \tau_0}}c_j\left(\epsilon\bigtriangledown_{\underline{k}_1}^j(f_1)\bigtriangledown_{\underline{k}_2}^{m_{3, \tau_0}-j}(f_2)+\bigtriangledown_{\underline{k}_1}^j(f_1)\epsilon\bigtriangledown_{\underline{k}_2}^{m_{3, \tau_0}-j}(f_2)\right)\\
&&=\sum_{n=0}^{m_{3, \tau_0}-1}\mbox{\small$(c_{n+1}(n+1)(k_{1,\tau_0}+n)+ c_n(m_{3, \tau_0}-n)(k_{2,\tau_0}+ m_{3, \tau_0}-n-1))$}\bigtriangledown_{\underline{k}_1}^{n}(f_1)\bigtriangledown_{\underline{k}_2}^{m_{3, \tau_0}-n-1}(f_2)= 0,
\end{eqnarray*}
and the first claim will follow.

To prove \eqref{eqepsiGM} it is enough to work locally, thus let $U=\Spec(R)$ be an open of $X^{\cC}$ such that trivializes the sheaves $\cH_{\tau_0}$ and $\omega$.  Let $u_1,u_2\in\cH_{\tau_0}$ be a basis such that $u_1\in\omega_{\tau_0}$ is a basis and $\bar\varphi_{\tau_0}(u_1,u_2)=1$, and let $\{f_\tau,e_\tau\}$ be an horizontal basis of $\omega_\tau$ for $\tau\neq {\tau_0}$ (remark that it is possible as $\bigtriangledown(\omega_\tau)\subseteq\omega_\tau\otimes\Omega^1_{U}$). 
Let $D\in {\rm Der}(R)$ be the dual of $u_1\otimes u_1\in\omega_{\tau_0}^{\otimes 2}\stackrel{KS}{\simeq} \Omega^1_{U}$. Since the Kodaira-Spencer isomorphism is given by the composition $\omega_{\tau_0}\hookrightarrow\cH_{\tau_0}\stackrel{\bigtriangledown}{\longrightarrow}\cH_{\tau_0}\otimes\Omega^{1}_U\stackrel{(\varphi_{\tau_0}\circ\epsilon)\otimes{\rm id}}{\longrightarrow}\omega_{\tau_0}^{-1}\otimes\Omega_U^1$, we have that $\bigtriangledown(D)(u_1)=(u_2+au_1)$ for some $a\in R$. 
Moreover, from	$0=\bigtriangledown(D)(\bar\varphi_{\tau_0}(u_1,u_2))=\bar\varphi_{\tau_0}(\bigtriangledown(D)u_1,u_2)+\bar\varphi_{\tau_0}(u_1,\bigtriangledown(D)u_2)$ we obtain $\bigtriangledown(D)(u_2)=(c u_1-au_2)$, for some $c\in R$. By changing $u_2$ by $u_2+au_1$, we can suppose that $a=0$, thus:
		\begin{equation}\label{G-Mbasis}
		\bigtriangledown(u_1)=u_2 u_1^2 ;\qquad \bigtriangledown(u_2) =c u_1^3,
		\end{equation}
		For any $f=\left(\sum_{j=0}^{r}b_j u_1^{k_{\tau_0}-j} u_2^j\right)M\in \omega^{\underline{k}-r\tau_0}\otimes{\rm Sym}^r(\cH_{\tau_0})$, where $M\in\bigotimes_{\tau\neq {\tau_0}}\omega_\tau^{k_\tau}$ is a monomial in $w_\tau^j$, write $f(X):=\sum_{j=0}^r b_j X^j\in R[X]$.
		Hence 
		\[
		\bigtriangledown_{\underline{k},r}f=\left(\sum_{j=0}^{r}(Db_j) u_1^{k_{\tau_0}-j+2} u_2^j+(k_{\tau_0}-j)b_j u_1^{k_{\tau_0}-j+1} u_2^{j+1}+j cb_j u_1^{k_{\tau_0}-j+3}u_2^{j-1}\right)M
		\]
		corresponds to $\bigtriangledown_{\underline{k},r}\left(f\right)(X)=Df(X)-(X^2-c)f'(X)+k_{\tau_0}Xf(X)$. Since $\epsilon$ is given by derivation $f(X)\mapsto f'(X)$, we compute that $(\epsilon\circ\bigtriangledown_{\underline{k},r}-\bigtriangledown_{\underline{k}-2,r-1}\circ\epsilon)f(X)=k_{\tau_0}f(X)$.  From this fact and a simple induction we deduce equality \eqref{eqepsiGM}.


		Now we are going to prove the second statement of the theorem.  Let $\rho:\dH\rightarrow \Gamma_{1,1}^t(\cN)\backslash\dH$ be the projection to the connected component of $X_{\C}$ corresponding to $t\in\Pic(E/F,\cN)$.
		Then, by the Riemann-Hilbert correspondence, we have:
		\[
		\rho^\ast\cR^1\pi_\ast\Omega^\bullet_{ A/X_{\C}}=\rho^\ast\cR^1\pi_\ast\Z\otimes_{\C}\cO_\dH=\cO_\dH\otimes_{\C} {\rm Hom}(D,\C),
		\]
		here $\cO_\dH$ is the sheaf of holomorphic functions of $\dH$ and each element in ${\rm Hom}(D,\C)$ provides a horizontal section by inducing the corresponding linear form on $H_1(A,\Z)=D\cap\hat\cO_\cM b_t^{-1}\subset D$. From this description we obtain:
\[
\rho^\ast\omega_{\tau_0}=\cO_\dH dx_0 \ \subset \ \rho^\ast\cH_{\tau_0}=\cO_\dH\alpha+\cO_\dH\beta
\] 
\[          
 \rho^\ast\cH_{\tau}=\rho^\ast\omega_\tau=\cO_\dH dx_\tau^1\oplus\cO_\dH dx_\tau^2
\]

for $\tau\neq \tau_0$. In the formulae above we have $dx_0=\alpha+z\beta$,  $\alpha(\gamma\otimes e)=\tilde\tau_0(e)\cdot d$ and $\beta(\gamma\otimes e)=\tilde\tau_0(e)\cdot c$ for  $e\in E$ and $\gamma\in B$ such that $\tau_0(\gamma)=(\begin{smallmatrix}a&b\\c&d\end{smallmatrix})$. Moreover if $\gamma\in D$, we have $dx_\tau^1(\gamma)=c_\tau$ and $dx_\tau^2(\gamma)=d_\tau$, where $\tilde\tau(\gamma)=(\begin{smallmatrix}a_\tau&b_\tau\\c_\tau&d_\tau\end{smallmatrix})$. Remark that $\alpha$, $\beta$, $dx_\tau^1$ and $dx_\tau^2$ are horizontal, in particular we obtain $\bigtriangledown( dx_0)=\beta\otimes dz\in\rho^\ast\cH_{\tau_0}\otimes\Omega^1_{\dH}$.

  Observe that $({\rm Lie}(A))_{\tau_0}^+$ is generated by the expressions $s(\gamma):= \frac{1}{2}\tau_0(\gamma)\otimes 1+\frac{1}{2\sqrt{\lambda}}\tau_0(\gamma)\otimes\sqrt{\lambda}$ for $\gamma \in B$. Since $\alpha(s(\gamma))=d$ and $\beta(s(\gamma))=c$ if $\tau_0(\gamma)=(\begin{smallmatrix}a&b\\c&d\end{smallmatrix})$ we obtain $\bar\varphi_{\tau_0}(\alpha,\beta)=\Theta(s(\begin{smallmatrix}0&0\\0&1\end{smallmatrix}),\delta^{-1}(\begin{smallmatrix}0&1\\1&0\end{smallmatrix})\overline{s(\begin{smallmatrix}0&0\\1&0\end{smallmatrix})})=-1$
where $\overline{s(\gamma)}=\frac{1}{2}\tau_0(\gamma)\otimes 1-\frac{1}{2\sqrt{\lambda}}\tau_0(\gamma)\otimes\sqrt{\lambda}$. We deduce from \eqref{isopol}: 
		\[
		KS(dx_0^2)=KS(dx_0\otimes dx_0)=\epsilon\bigtriangledown (dx_0)(\varphi_{\tau_0}dx_0)=\epsilon(\beta)(\varphi_{\tau_0}dx_0)dz=\bar\varphi_{\tau_0}(\beta,dx_0)dz=dz.
		\]
 Now let $f\in M_{\underline k}(\Gamma_{1,1}^{t}(\cN),\C)^\Delta$ and denote by $s_f$
 the section attached to $f$ and given by the formula \eqref{e:sections vs modular forms}, then we have:
\begin{equation}\label{e:claim in theorem}
		\bigtriangledown_{\underline{k}}^js_f(z)=\left(\frac{1}{2i}\right)^j\frac{\phi\left(R^j f^\nu_{k_{\tau_0}}\otimes \left(\prod_{\tau\neq \tau_0}|\begin{smallmatrix}dx_\tau^1&dx_\tau^2\\X_\tau&Y_\tau\end{smallmatrix}|^{k_\tau}\right)\right)}{f_{k_{\tau_0}+2j}^\nu(g_{\tau_0})}dx_0^{k_{\tau_0}+2j}+d\bar x_0 w,
\end{equation}
where $d\bar x_0=\alpha+\bar z\beta$ and some $w\in C^\infty(\dH)\otimes{\rm Sym}^{\underline{k}+2j-1}(\cH^1)$. Indeed, since $\beta=(z-\bar z)^{-1}(dx_0-d\bar x_0)=(2iy)^{-1}(dx_0-d\bar x_0)$ we obtain:
		\begin{eqnarray*}
			\bigtriangledown s_f(z)&=&\frac{\partial}{\partial z} f(z)\left(\prod_{\tau\neq \tau_0}|\begin{smallmatrix}dx_\tau^1&dx_\tau^2\\X_\tau&Y_\tau\end{smallmatrix}|^{k_\tau}\right)dx_0^{k_1+2}+ k_{\tau_0}f(z)\left(\prod_{\tau\neq \tau_0}|\begin{smallmatrix}dx_\tau^1&dx_\tau^2\\X_\tau&Y_\tau\end{smallmatrix}|^{k_\tau}\right)dx_0^{k_{\tau_0}+1}\beta\\
			&=&\left(\left(\frac{\partial}{\partial z}+\frac{k_{\tau_0}}{2iy}\right)f(z)\left(\prod_{\tau\neq \tau_0}|\begin{smallmatrix}dx_\tau^1&dx_\tau^2\\X_\tau&Y_\tau\end{smallmatrix}|^{k_\tau}\right)dx_0^{k_{\tau_0}+2}-\frac{k_{\tau_0}f(z)\left(\prod_{\tau\neq \tau_0}|\begin{smallmatrix}dx_\tau^1&dx_\tau^2\\X_\tau&Y_\tau\end{smallmatrix}|^{k_\tau}\right)}{2iy}dx_0^{k_{\tau_0}+1}d\bar x_0\right).
		\end{eqnarray*}
The claim follows from induction and the equality $\frac{\phi(Rf^\nu_{k_{\tau_0}}\otimes P)}{2i f_{k_{\tau_0}+2}^\nu}=\left(\frac{\partial}{\partial z}+\frac{k_{\tau_0}}{2iy}\right)\frac{\phi(f^\nu_{k_{\tau_0}}\otimes P)}{f_{k_{\tau_0}}^\nu}$.

	Finally since $dx_0$ and $d\bar x_0$ are linearly independent, comparing formulas \eqref{e:claim in theorem} and \eqref{tripleD}, we obtain the second claim of the theorem.
\end{proof}
	
\part{$p$-adic families} 
We construct $p$-adic families of modular forms on the unitary Shimura curves $X$. From these families we construct families of autmorphic forms of $B$. Moreover, we construct local pieces of an adic eigenvariety. 

\section{Integral models and canonical groups}\label{s:integral models and canonical groups} In this section we introduce the technical tools necessary to realize the $p$-adic variation of the modular sheaves introduced in \ref{ss:modular sheaves}.

\subsection{Integral models and Hasse invariants}\label{ss:integral models and divisible groups} Let $\cN$ be an integral ideal of $F$ prime to $p$ and $\mathrm{disc}(B)$ and denote by $X$ the unitary shimura curve of level $K_1'\left(\mathfrak{n},\prod_{\dP\neq\dP_0}\dP\right)$  introduced in \S\ref{ss:unitary Shimura curves}.  By \cite[\S 5.3]{carayol86} $X$ admits a canonical model over $\cO_0$, representing the analogue moduli problem described in \S\ref{ss:unitary Shimura curves} but exchanging an $E$-algebra by an $\cO_0$-algebra $R$. Namely, it classifies quadruples $(A,\iota,\theta,\alpha^{\dP_0})$ over $R$, where $\alpha^{\dP_0}$ is a class of $\cO_D$-linear symplectic similitudes outside $\dP_0$. We denote this integral model by $X_{\mathrm{int}}$, which has good reduction (see \cite[\S 5.4]{carayol86}). Let $\pi: {\bf A} \rightarrow X_{\mathrm{int}}$ be the universal abelian variety. Since we have added $\Gamma_0(\dP)$-structure for all $\dP\neq\dP_0$, ${\bf A}$ is endowed with a subgroup $C_\dP\subset {\bf A}[\dP]^{-,1}$ isomorphic to $\cO_\dP/\dP$ by Remark \ref{rmkonpts}. 

  Let $\dX$ be denote the formal scheme over $\Spf(\cO_0)$ obtained as the completion of $X_{\mathrm{int}}$ along its special fiber which is denoted by  $\bar{X}_{\mathrm{int}}$ .

The $p$-divisible group ${\bf A}[p^\infty]$ over $X_{\mathrm{int}}$ is decomposed as:
\[
{\bf A}[p^\infty]={\bf A}[\mathfrak{p}_0^\infty]^+\oplus \left[ \bigoplus_{\mathfrak{p} \neq \mathfrak{p}_0}{\bf A}[\mathfrak{p}^\infty]^+\right] \oplus {\bf A}[\mathfrak{p}_0^\infty]^-\oplus \left[\bigoplus_{\mathfrak{p} \neq \mathfrak{p}_0}{\bf A}[\mathfrak{p}^\infty]^-\right] ,
\] 
We are interested in the $p$-divisible groups $\mathcal{G}_0:= {\bf A}[\mathfrak{p}_0^\infty]^{-, 1}$ and $\mathcal{G}_{\mathfrak{p}}:= {\bf A}[\mathfrak{p}^\infty]^{-, 1}$ if $\mathfrak{p} \neq \mathfrak{p}_0$, which are defined over $X_{\mathrm{int}}$ and endowed with actions of $\cO_0$ and $\cO_{\mathfrak{p}}$ respectively. The sheaves of invariant differentials of the corresponding Cartier dual $p$-divisible groups are denoted by $\omega_{0}:= \omega_{\cG_{0}^D}$ and  $\omega_{\mathfrak{p}}:= \omega_{\cG_{\mathfrak{p}}^D} $ if $\mathfrak{p} \neq \mathfrak{p}_0$. By Lemma \ref{choiceTheta}, the universal polarization $\theta$ is an isomorphism over $\cO_0$. Hence this notations are justified because $\theta$ induce the following identifications:
\begin{equation}\label{eq:omega decomposed}
\omega=  \left(\pi_\ast\Omega^1_{{\bf A}/X_{\mathrm{int}}}\right)^{+,2}\stackrel{\theta^\ast}{\simeq} \left(\pi_\ast\Omega^1_{{\bf A}^\vee/X_{\mathrm{int}}}\right)^{-,1}=\omega_0\oplus \bigoplus_{\mathfrak{p} \neq \mathfrak{p}_0}\omega_{\mathfrak{p}} 
 \end{equation}

The universal polarization provides a pairing (see Remark \ref{rmkonpts})
\[
\Theta:{\bf A}[p^\infty]^+\times{\bf A}[p^\infty]^-\longrightarrow \G_m[p^\infty].
\]
Since $p$ splits in $\Q(\sqrt{\lambda})$, we have that $\cO_E\otimes\Z_p\simeq \cO^2$, and the isomorphism that switches components induces an isomorphism $c:{\bf A}[p^\infty]^-\rightarrow{\bf A}[p^\infty]^+$. Analogously as in \eqref{isopol}, we obtain an isomorphism of $p$-divisible groups
\begin{equation}\label{isopol2}
\theta:\cG_0\stackrel{\simeq}{\longrightarrow}\cG_0^D;\qquad  \theta(P)(Q):=\Theta\left(P,\delta^{-1}\left(\begin{smallmatrix}0&1\\1&0\end{smallmatrix}\right)c(Q)\right).
\end{equation}
Hence we have an isomorphism of sheaves of invariant differentials $\omega_0\simeq\omega_{\cG_0}$ compatible with \eqref{isopol}. 

We denote by $\hat\cO$ a ring containing all the $p$-adic embeddings of $\cO_F\hookrightarrow\cO_{\C_p}$, hence if we extend our base ring $\cO_0$ to $\hat\cO$ then for each $\mathfrak{p} \neq \mathfrak{p}_0$ we have a decomposition $\omega_{\mathfrak{p}}= \oplus_{\tau \in \Sigma_\mathfrak{p}}\omega_{\mathfrak{p}, \tau}$, moreover each $\omega_{\mathfrak{p}, \tau}$ has rank $2$. The alternating pairing $\Theta$ provides, as in the complex setting, an isomorphism $\varphi_\tau:\bigwedge^2\omega_{\dP,\tau}\stackrel{\simeq}{\rightarrow}\cO_{X_{\rm int}}$.

There is a dichotomy in $X_{\mathrm{int}}$ which says that any point in the generic fiber $\bar{X}_{\mathrm{int}}$ is ordinary or  supersingular (with respect to $\mathcal{G}_0$), and there are finitely many supersingular points in $\bar{X}_{\mathrm{int}}$. From \cite[Proposition 6.1]{kassaei04} there exists $\mathrm{Ha}\in H^0\left(\bar{X}_{\mathrm{int}},\omega_{0}^{p-1}\right)$ that vanishes exactly at supersingular geometric points and these zeroes are simple. This is called the \emph{Hasse invariant} and it is characterized as follows: For each open $\Spec(R)\subset \bar{X}_{\mathrm{int}}$ fix $w$ a generator of $\omega_{{0}}\mid_{\Spec(R)}$ and $x$ a coordinate of $\cG_0$ over $R$ such that $w=(1+a_1x+a_2x^2\cdots)dx$, then  $[p](x)=ax^p+\cdots$ for some $a$ and $\mathrm{Ha}\mid_{\Spec(R)}:=aw^{p-1}$.

We denote by $\overline{\rm Hdg}$ the locally principal ideal of $\cO_{\bar{X}_{\mathrm{int}}}$ described as follows: for each $U=\Spec(R)\subset\bar{X}_{\mathrm{int}}$  if $\omega_{0}\mid_U=Rw$ and  $\mathrm{Ha}\mid_U=Hw^{\otimes(p-1)}$ then $\overline{\rm Hdg}\mid_U=HR\subseteq R$. Let $\rm Hdg$ the inverse image of $\overline{\rm Hdg}$ in $\cO_{\dX}$, which is also a locally principal ideal. Note that $\mathrm{Ha}^{p^n}$ extends canonically to a section of $H^0(\dX,\omega_0^{p^n(p-1)}\otimes\Z/p^{n+1}\Z)$, indeed, for any two extensions $\mathrm{Ha}_1$ and $\mathrm{Ha}_2$ of $\mathrm{Ha}$ we have $\mathrm{Ha}_1^{p^n}=\mathrm{Ha}_2^{p^n}$ modulo $p^{n+1}$ by the binomial formula. 

\begin{rmk} From \cite[Prop. 3.4]{brasca13} we obtain the existence of a $p-1$-root of the principal ideal $\mathrm{Hdg}$. This ideal is denoted $\mathrm{Hdg}^{1/(p-1)}$.
\end{rmk}

Now we introduce some formal schemes in order to vary $p$-adically the modular sheaves and to produce $p$-adic families. 
For each integer $r\geq 1$ we denote by $\dX_{r}$ the formal scheme over $\dX$ which represents the functor (denoted by the same symbol $\dX_{r}$) that classifies for each $p$-adically complete  $\hat\cO$-algebra $R$:
$$\dX_{r}(R)= \left\lbrace [(h, \eta)] \ \ | \ \  h\in\dX(R),\; \eta\in H^0(\Spf(R),h^\ast(\omega_{\cG}^{(1-p)p^{r+1}})),\; \eta\cdot{\rm Ha}^{p^{r+1}}=p\mod p^2 \right\rbrace ,$$
here the brackets means the class of the equivalence given by $(h,\eta)\equiv (h',\eta')$ if $h=h'$ and $\eta=\eta'(1+pu)$ for some $u\in R$. The formal scheme $\dX_r$ turns out to be the $p$-adic completion of the partial blow-up of $\dX$ at the zero locus of ${\rm Hdg}^{p^{r+1}}$ and $p$ (see \cite[Definition 3.1]{AIP-halo}).

\subsection{Canonical subgroups}\label{ss:canonical subgroups}   The theory of the canonical subgroup originally developed in the context of $p$-adic topology was generalized to the adic setting in \cite{AIP-halo}:
\begin{prop}\cite[Corollaire A.2]{AIP-halo}\label{p:canonical subgroups} There exists a canonical subgroup $C_n$ of $\cG_0[\mathfrak{p}_0^n]$ for $n\leq r$ over $\dX_{r}$. This is unique and satisfy the compatibility $C_n[\mathfrak{p}_0^{n-1}]=C_{n-1}$. Moreover, if we denote $D_n:= \cG_0[\mathfrak{p}_0^n]/C_n$ then $\omega_{D_n}\simeq\omega_{\cG_0[\dP^n]}/\mathrm{Hdg}^{\frac{p^n-1}{p-1}}$.
\end{prop}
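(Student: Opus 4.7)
The plan is to reduce the statement to the general theory of canonical subgroups for $1$-dimensional $p$-divisible groups of height $2$, already established in the adic/formal setting in \cite[Appendix A]{AIP-halo}. Under Hypothesis \ref{hypothesis 1} we have $F_0 = \Q_p$, and condition (ii) in the moduli problem of $X$ forces $\cG_0 = {\bf A}[\dP_0^\infty]^{-,1}$ to be a $1$-dimensional formal $p$-divisible group of height $2$ over $\dX$ with $\cO_0 = \Z_p$-action. This is precisely the situation handled by the classical canonical subgroup theory (Katz, Abbes--Mokrane, Andreatta--Gasbarri, Fargues, Tian) and its adic enhancement.

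The first step is to unwind the definition of $\dX_r$: the defining relation $\eta\cdot\mathrm{Ha}^{p^{r+1}}\equiv p \pmod{p^2}$, with $\eta$ a section of $\omega_0^{(1-p)p^{r+1}}$, translates into a bound of the form $v(\mathrm{Hdg}) \leq p^{-(r+1)}$ on the Hodge ideal (using the normalization $v(p) = 1$). This is exactly the threshold hypothesis of \cite[Corollaire A.2]{AIP-halo}, and a direct appeal to that result provides the unique canonical subgroup $C_n \subset \cG_0[\dP_0^n]$ for every $n \leq r$. The compatibility $C_n[\dP_0^{n-1}] = C_{n-1}$ is then automatic from uniqueness: the subgroup $C_n[\dP_0^{n-1}] \subset \cG_0[\dP_0^{n-1}]$ is locally free of the correct rank and inherits the Hodge-theoretic characterization defining the canonical subgroup of level $n-1$, so it must coincide with $C_{n-1}$.

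The final and most delicate assertion is the identification $\omega_{D_n} \simeq \omega_{\cG_0[\dP_0^n]}/\mathrm{Hdg}^{(p^n-1)/(p-1)}$. The strategy is inductive: the short exact sequence $0 \to C_n \to \cG_0[\dP_0^n] \to D_n \to 0$ yields, upon passing to invariant differentials of the Cartier duals, a surjection $\omega_{\cG_0[\dP_0^n]} \twoheadrightarrow \omega_{D_n}$ whose kernel is locally principal and measured by the ``Hodge height'' of the canonical subgroup $C_n$. At level $n=1$ one recovers the classical identification $\omega_{D_1} \simeq \omega_{\cG_0[\dP_0]}/\mathrm{Hdg}$. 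Inductively, the graded piece $C_{n+1}/C_n$ contributes an additional factor of $\mathrm{Hdg}^{p^n}$ through the Frobenius structure on the Dieudonné module of $\cG_0$, and telescoping the geometric series $1 + p + \cdots + p^{n-1}$ produces the exponent $(p^n-1)/(p-1)$. I expect the main obstacle to be keeping track of the precise normalizations --- which convention for $\omega$ (of the group versus its Cartier dual, modulo the polarization isomorphism \eqref{isopol2}) and which valuation normalization for $\mathrm{Hdg}$ are in force --- but these are bookkeeping issues rather than conceptual ones, and the computation is carried out in detail in \cite[Appendix A]{AIP-halo}, so I would cite it rather than reproduce it.
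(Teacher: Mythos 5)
Your proposal is correct and follows exactly the route the paper takes: the paper gives no independent argument for this proposition but simply invokes \cite[Corollaire A.2]{AIP-halo}, whose hypotheses are met because the defining relation $\eta\cdot\mathrm{Ha}^{p^{r+1}}=p \bmod p^2$ on $\dX_r$ encodes precisely the required bound on the Hodge ideal of the height-$2$, dimension-$1$ $p$-divisible group $\cG_0$ (note only that $\cG_0$ is formal merely at supersingular points, not over all of $\dX$, though this does not affect the applicability of the cited result). Your additional verifications of the threshold condition, the uniqueness-based compatibility, and the exponent $\frac{p^n-1}{p-1}$ are all consistent with the content of that corollaire, so nothing further is needed.
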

By \cite[Proposition A.3]{AIP-halo}, 
the cokernel of the map $\mathrm{dlog}_{\cG_0^D[\dP^n_0]}: \cG_0[\dP_0^n] \rightarrow \omega_{\cG_0^D[\dP_0^n]}=\omega_{0}/p^n$ is killed by $\mathrm{Hdg}^{1/(p-1)}$. If we write $\Omega_0\subseteq\omega_0$ for the subsheaf generated by the lifts of the image of the Hodge-Tate map, we obtain a morphism
\begin{equation}\label{dlog0}
{\rm dlog}_0:D_n(\dX_{r})\longrightarrow \Omega_0\otimes_{\cO_{\dX_{r}}}(\cO_{\dX_{r}}/\cI_n)\subset \omega_0/p^n\mathrm{Hdg}^{-\frac{p^n-1}{p-1}},
\end{equation}
where $\cI_n:=p^n\mathrm{Hdg}^{-\frac{p^n}{p-1}}$.

In order to carry out the $p$-adic interpolation we work on covers of $\dX_{r}$. First notice that, by the moduli interpretation, the $p$-divisible group $\prod_{\dP\neq\dP_0}\cG_\dP\rightarrow\dX_{r}$ is \'etale isomorphic to $\prod_{\dP\neq\dP_0}(F_\dP/\cO_\dP)^2$. Assume that $r\geq n$. Firstly we add $\dP^{n}$-level corresponding to the primes $\mathfrak{p} \mid p$ such that $\mathfrak{p} \neq \mathfrak{p}_0$: We denote by $\dX_{r,n}\rightarrow\dX_{r}$ the formal scheme obtained by adding to the moduli interpretation a point of order $\mathfrak{p}^{n}$ for each $\mathfrak{p} \neq \mathfrak{p}_0$ whose multiples generate $\cG_\dP[\dP]/C_\dP$ (see Remark \ref{rmkonpts}). It is clear that the extension $\dX_{r,n}\rightarrow\dX_{r}$ is \'etale and it Galois group contains $\prod_{\dP\neq\dP_0}(\cO_\dP/p^{n}\cO_\dP)^\times$ as a subgroup since $F$ is unramified at $p$ by Hypothesis \ref{hypothesis 1}. Moreover, $\dX_{r,n}$ has also good reduction (see \cite[\S 5.4]{carayol86}). Now we trivialize the subgroup $D_n$:
Let  $\cX_{r,n}$ be the adic generic fiber of $\dX_{r,n}$. By \cite[Corollaire A.2]{AIP-halo}, the group scheme $D_{n}\rightarrow\cX_{r,n}$ is also \'etale isomorphic to $p^{{-n}}\cO_0/\cO_0$. We denote by $\mathcal{IG}_{r,n}$ the adic space over $\cX_{r,n}$ of the trivializations of $D_{n}$. Then  the map $\mathcal{IG}_{r,n} \rightarrow\cX_{r,n}$ is a finite \'etale and with a Galois group $(\cO_0/p^{n}\cO_0)^\times$. We denote by $\mathfrak{IG}_{r,n}$ the normalization $\mathcal{IG}_{ r,n}$ in $\dX_{ r,n}$ which is finite over $ \dX_{ r,n}$ and it is also endowed with an action of $(\cO_{0}/p^{n}\cO_{0})^\times$. These constructions are captured by the following tower of formal schemes: 
$$\mathfrak{IG}_{r,n} \longrightarrow\dX_{r,n}\longrightarrow\dX_{r},$$
endowed with a natural action of $(\cO/p^{n}\cO)^\times\simeq\prod_\dP(\cO_\dP/p^{n}\cO_\dP)^\times$.

\section{Overconvergent modular forms for $G'$}\label{s:over modular sheaves}   Following the approach introduced in \cite{AI17} we deform the modular sheaves of $G'$ which  allow us to define overconvergent modular forms for $G'$ and families of them. We also construct other overconvergent sheaves which will be usefull to construct triple product $p$-adic $L$-functions.

\subsection{Formal vector bundles}\label{ss:formal vector bundles}
	In this subsection we slightly modify the construction performed in \cite[\S 2]{AI17} which we briefly recall first. Let $S$ be a formal scheme, $\cI$ its (invertible) ideal of definition and $\mathcal{E}$ a locally free $\cO_S$-module of rank $n$. We write $\bar S$ the scheme with structural sheaf $\mathcal{O}_S/\cI$ and put $\overline{\mathcal{E}}$ the corresponding $\cO_{\overline{S}}$-module. We fix marked sections $s_1,\cdots, s_m$ of $\overline{\mathcal{E}}$, namely, the sections $s_1, \cdots , s_m$ define a direct sum decomposition $\overline{\mathcal{E}} = \cO_{\bar S}^m\oplus Q$, where $Q$ is a
locally free $\cO_{\bar S}$-module of rank $n-m$.

Let $S-\mathrm{Sch}$ be the category of the  formal $S$-schemes. There exists a formal scheme $\V(\mathcal{E})$ over $S$ called the \emph{formal vector bundle} attached to $\mathcal{E}$ which represents the functor, denoted by the same symbol, $S-\mathrm{Sch} \rightarrow \mathrm{Sets}$, and defined by $\V(\mathcal{E})(t:T\rightarrow S):=  H^0(T,t^\ast(\mathcal{E})^\vee)=  {\rm Hom}_{\cO_T}(t^\ast(\mathcal{E}),\cO_T)$.  Crucial in \cite{AI17} is the construction of the so called \emph{formal vector bundles with marked sections} which is the formal scheme $\V_0(\mathcal{E},s_1,\cdots,s_m)$ over $\V(\mathcal{E})$ that represents the functor $S-\mathrm{Sch} \rightarrow \mathrm{Sets}$ defined by 
   $$ \V_0(\mathcal{E}, s_1,\cdots,s_m)(t:T\rightarrow S)=\left\lbrace \rho\in H^0(T,t^\ast(\mathcal{E})^\vee) \ | \  \bar\rho(t^\ast(s_i))=1, \  i=1,\cdots,m \right\rbrace, $$
here $\bar\rho$ is the reduction of $\rho$ modulo $\cI$. The construction of $\V_0(\mathcal{E},s_1,\cdots,s_m)$ is as follows: the projection map $\overline{\mathcal{E}} \rightarrow Q$ defines a quotient map $\bigoplus_k\Sym^k(\overline{\mathcal{E}})\rightarrow\bigoplus_k\Sym^k(Q)$ whose kernel is the ideal $(s_1, \cdots , s_m)$ and, hence, defines a closed
subscheme $C \subset \V(\overline{\mathcal{E}})=\widehat{\Sym(\mathcal{E})}$ with corresponding ideal sheaf denoted by $\mathcal{J}$. Then  $\V_0(\mathcal{E},s_1,\cdots,s_m)$ is the $\cI$-adic completion
of the open formal subscheme of the blow up of $\V(\mathcal{E})$ with respect to the ideal $\mathcal{J}$, open defined
by the requirement that the ideal generated by the inverse image of $\mathcal{J}$ coincides with the ideal generated by the inverse image of $\cI$.

Given the fixed decomposition $\overline{\mathcal{E}}=Q\oplus \langle s_i\rangle_i$,
let us consider now the sub-functor $\hat\V_Q(\mathcal{E},s_1,\cdots,s_m)$ that associates to any formal $S$-scheme $t:T\rightarrow S$ the subset of sections $\rho\in \V_0(\mathcal{E},s_1,\cdots,s_m)(T)$ whose reduction $\bar\rho$ modulo $\cI$ also satisfies $\bar\rho(t^\ast(m))=0$ for every $m\in Q$.
\begin{lemma} The morphism $\hat\V_{Q}(\mathcal{E},s_1,\cdots,s_m)\rightarrow \V_0(\mathcal{E},s_1,\cdots,s_m)$ is represented by a formal subscheme.
\end{lemma}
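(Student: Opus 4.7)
The plan is to mimic, one step further inside $\V_0(\mathcal{E}, s_1,\ldots, s_m)$, the very admissible-blow-up-plus-open construction that produced $\V_0$ out of $\V(\mathcal{E})$. The condition that cuts out $\hat\V_Q$ inside $\V_0$ is of exactly the same shape as the condition that cut out $\V_0$ inside $\V(\mathcal{E})$ (a linear congruence modulo the invertible ideal $\cI$), so the same type of construction should work verbatim.

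I would first work locally on $S$. Shrinking if necessary, I may assume that $\cI$ is principal, generated by $\xi$, and that $\mathcal{E}$ is free with a basis that lifts the splitting $\overline{\mathcal{E}}=\langle s_1,\ldots, s_m\rangle\oplus Q$; choose lifts $\tilde s_i$ of $s_i$ and $\tilde m_1,\ldots,\tilde m_{n-m}\in\mathcal{E}$ of a basis of $Q$. Denote by $x_i,y_j\in\cO_{\V_0}$ the universal coordinates $\rho(\tilde s_i),\rho(\tilde m_j)$, so the defining relations of $\V_0$ read $x_i\equiv 1\pmod\xi$. For a $T$-valued point $\rho$ of $\V_0$, the extra condition $\bar\rho|_Q=0$ translates to $y_j|_T\in\xi\cO_T$ for all $j$. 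This condition does not depend on the choice of the lifts $\tilde m_j$: two such lifts differ by an element of $\cI\mathcal{E}$, so their $\rho$-values differ by an element of $\xi\cO_T$.

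Now I would consider the ideal $\mathcal{K}:=\xi\cO_{\V_0}+(y_1,\ldots,y_{n-m})\subseteq\cO_{\V_0}$, which by the preceding remark is intrinsic (it equals $\xi\cO_{\V_0}$ plus the image of $\rho_{\rm univ}$ restricted to $Q$). Let $U$ be the $\xi$-adic completion of the open formal subscheme of the admissible blow-up of $\V_0$ along $\mathcal{K}$ on which the inverse image of $\mathcal{K}$ is generated by the inverse image of $\xi$. On $U$ every $y_j$ is divisible by $\xi$, so the universal $\rho$ satisfies $\bar\rho|_Q=0$; conversely, the universal property of the admissible blow-up shows that any $\rho\in\V_0(T)$ with $y_j|_T\in\xi\cO_T$ factors uniquely through $U$. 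Hence $U\to\V_0$ is a monomorphism of formal schemes representing the subfunctor $\hat\V_Q(\mathcal{E}, s_1,\ldots, s_m)$ locally.

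Finally, the local data glue: the ideal $\mathcal{K}$, the blow-up and the chosen open are canonically defined, so they patch over $S$ and provide the desired formal subscheme globally. The only non-routine point is verifying the universal property of the open chart of the blow-up in the $\xi$-adically complete setting, but this is exactly the same verification carried out in \cite{AI17} for the construction of $\V_0$, and transfers without change since $\cI$ (hence $\xi$) is invertible.
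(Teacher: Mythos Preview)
Your proposal is correct and follows essentially the same route as the paper: both construct $\hat\V_Q$ as the open chart of the admissible formal blow-up of $\V_0$ along the ideal $(\xi,y_1,\ldots,y_{n-m})$ on which this ideal becomes generated by $\xi$, and verify the functor of points locally. The paper phrases the global construction somewhat loosely (calling it ``the closed formal subscheme given by the inverse image of $\mathcal{J}$''), but its explicit local description $\hat\V_Q|_U=\Spf(R\langle Z_1,\ldots,Z_m,T_{m+1},\ldots,T_n\rangle)$ with $X_j\mapsto\alpha T_j$ is exactly your blow-up chart; your framing via the universal property of the admissible blow-up is arguably the cleaner way to say what the paper's coordinate computation is doing.
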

\begin{proof}
Since we have the direct sum decomposition $\overline{\mathcal{E}} = \cO_{\bar S}^m\oplus Q$, we have the closed subscheme $\bigoplus_k\Sym^k(Q)\hookrightarrow\bigoplus_k\Sym^k(\overline{\mathcal{E}})$ and, hence, a closed
subscheme $V \subset \V_0(\overline{\mathcal{E}},s_1,\cdots,s_m)$. Let $\mathcal{J}$ be the corresponding ideal sheaf. Then we write
$\hat\V_{Q}(\mathcal{E},s_1,\cdots,s_m)$ for the closed formal subscheme given by the inverse image of $\mathcal{J}$ in $\V_0(\mathcal{E},s_1,\cdots,s_m)$.

Let $U=\Spf(R)$ be a formal affine open such that $\cI$ is generated by $\alpha\in R$ and $\mathcal{E}|_U$ is free of rank $n$ with basis $e_1,\cdots,e_n$ such that $e_i\equiv s_i$ modulo $\cI$, for $i=1,\cdots,m$, and $Q=\langle e_{m+1},\cdots,e_n\rangle$ modulo $\cI$. Thus $\V(\mathcal{E})|_U=\Spf(R\langle X_1,\cdots,X_n\rangle)$ and $\V_0(\mathcal{E},s_1,\cdots,s_m)|_U=\Spf(R\langle Z_1,\cdots,Z_m,X_{m+1},\cdots,X_n\rangle)$ with the corresponding morphism $\V_0(\mathcal{E},s_1,\cdots,s_m)\rightarrow \V(\mathcal{E})$ given by $X_i\mapsto 1+\alpha Z_i$, for $i=1,\cdots,m$ (see \cite[\S 2]{AI17}). Since the inverse image of $\mathcal{J}$ corresponds to $(\alpha,X_{m+1},\cdots,X_{n})$, we deduce that  $\hat\V_{Q}(\mathcal{E},s_1,\cdots,s_m)|_U=\Spf(R\langle Z_1,\cdots,Z_m,T_{m+1},\cdots T_{n}\rangle)$ with the corresponding morphism $\hat\V_{Q}(\mathcal{E},s_1,\cdots,s_m)\rightarrow \V_0(\mathcal{E},s_1,\cdots,s_m)$ given by $X_{j}\mapsto \alpha Z_{j}$, for $j=m+1,\cdots,n$. 

Given a formal scheme $t:T\rightarrow U$, a section $\rho\in \V_0(\mathcal{E},s_1,\cdots,s_m)(T)$ defined by the images $\rho^\ast(Z_i)=a_i$ and $\rho^\ast(X_j)=b_j$ provides the morphism $\rho\in{\rm Hom}_{\cO_T}(t^\ast(\mathcal{E}),\cO_T)$ satisfying $\rho(t^\ast(e_i))=1+\alpha a_i$, for $i=1,\cdots,m$, and $\rho(t^\ast(e_j))=b_j$, for $j=m+1,\cdots,n$. Thus, sections coming from $\hat\V_{Q}(\mathcal{E},s_1,\cdots,s_m)(T)$ will correspond to those $\rho$ also satisfying $b_{j}=\alpha c_{j}$, for $j=m+1,\cdots,n$ and some $c_j\in R$. Hence, to sections such that $\bar\rho(t^\ast(m))=0$ for every $m\in Q$.
\end{proof}

\begin{rmk}
Notice that this construction is also functorial with respect to $(\mathcal{E},Q,s_1,\cdots,s_m)$. Indeed, given a morphism $\varphi:\mathcal{E}'\rightarrow\mathcal{E}$ of locally free $\cO_S$-modules of finite rank and marked sections $s_1,\cdots,s_m\in \overline{\mathcal{E}}$, $s_1',\cdots,s_m'\in \overline{\mathcal{E}'}$ such that $\bar\varphi(s_i')=s_i$ and $\bar\varphi(Q')\subseteq Q$, we have the morphisms making the following diagram commutative
\[
\xymatrix{
\hat\V_{Q}(\mathcal{E},s_1,\cdots,s_m)\ar[r]\ar[d]&\V_{0}(\mathcal{E},s_1,\cdots,s_m)\ar[r]\ar[d]&\V(\mathcal{E})\ar[d]\\
\hat\V_{Q}(\mathcal{E}',s_1',\cdots,s_m')\ar[r]&\V_{0}(\mathcal{E}',s_1',\cdots,s_m')\ar[r]&\V(\mathcal{E}')
}
\]
\end{rmk}
\begin{rmk}\label{DepOnQuot}
In fact, $\bar\V_{Q}(\mathcal{E},s_1,\cdots,s_m)$ depends on $Q\subset \overline{\mathcal{E}}$ and the image of $s_i$ in $\overline{\mathcal{E}}/Q$. Indeed, given $m_i\in Q$ and $t:T\rightarrow S$, since $\bar\rho(t^\ast(m_i))=0$,
\[
\hat\V_{Q}(\mathcal{E},(s_i+m_i)_i)(T)=\left\{\rho\in H^0(T,t^\ast(\mathcal{E})^\vee);\quad\bar\rho(t^\ast(s_i+m_i))=1,\; \bar\rho(t^\ast(Q))=0\right\}=\hat\V_{Q}(\mathcal{E},(s_i)_i)(T).
\]
\end{rmk}

\subsection{Weight space for $G'$}\label{ss:weight space}   We fix a decomposition: 
$$\cO^\times\cong \cO^0\times H,$$
where $H$ is the torsion subgroup of $\cO^\times$ and $\cO^0\simeq 1+p\cO$ is a free $\Z_p$-module of rank $d$.
We put $\Lambda_F:= \Z_p[[\cO^\times]]$ and $\Lambda_F^0:= \Z_p[[\cO^0]]$. Then remark that the choice of a basis $\{e_1, ..., e_d\}$ of  $\cO^0$ furnishes an isomorphism $\Lambda_F^0\cong  \Z_p[[T_1, ....,T_d]]$ given by  $1+ T_i= e_i$ for $i= 1,..., d$. Moreover, for each $n\in \N$ we consider the algebras:
\[
\Lambda_{n}:= \Lambda_F\left\langle\frac{T_1^{p^n}}{p},\cdots,\frac{T_d^{p^n}}{p}\right\rangle\qquad\qquad \Lambda_{n}^0:=\Lambda_F^0\left\langle\frac{T_1^{p^n}}{p},\cdots,\frac{T_d^{p^n}}{p}\right\rangle
\]

The formal scheme $\dW=\Spf(\Lambda_F)$ is our formal \emph{weight space} for $G'$.  Thus for each complete $\Z_p$-algebra $R$ we have: 
$$\dW(R)={\rm Hom}_{\mathrm{cont}}(\cO^\times,R^\times).$$
We also consider the following formal schemes $\dW^0=\Spf(\Lambda_F^0)$, $\dW_{n}:=\Spf(\Lambda_{n})$ and $\dW^0_{n}:=\Spf(\Lambda_{n}^0)$. By construction we have $\dW=\bigcup_n\dW_{n}$ and $\dW^0=\bigcup_n\dW^0_{n}$. Moreover, we have the following explicit description:
\[
\dW_n^0(\C_p)=\{k\in{\rm Hom}_{\mathrm{cont}}(\cO^0,\C_p^\times): \;|k(e_i)-1|\leq p^{-p^{-n}},\;i=1,\cdots,d\}
\] 
We denote by ${\bf k}:\cO^\times\rightarrow\Lambda_F^\times$ the universal character of $\dW$, which decomposes as ${\bf k}= {\bf k}^0\otimes{\bf k}_f$ where: 
 \[
 {\bf k}_f:H\longrightarrow \Z_p[H]^\times\qquad\qquad {\bf k}^0:\cO^0\longrightarrow (\Lambda_F^0)^\times.
 \]
Let ${\bf k}_{n}^0: \cO^0 \rightarrow (\Lambda^0_n)^\times $ be  given by the composition of ${\bf k}^0$ with the inclusion $(\Lambda_F^0)^\times\subseteq (\Lambda^0_n)^\times$ and we put ${\bf k}_n:={\bf k}_n^0\otimes{\bf k}_f:\cO^\times\rightarrow\Lambda_n^\times$.

 \begin{lemma}\label{weightextends} Let $R$ be a $p$-adically complete $\Lambda^0_{n}$-algebra. Then ${\bf k}_{n}^0$ extends locally analytically to a character $\cO^0(1+p^{n}\lambda^{-1}\cO_{F}\otimes R)\rightarrow R^{\times}$, for any $\lambda\in R$ such that $\lambda^{p-1}\in p^{p-2}m_R$, where $m_R$ is the maximal of $R$. In particular, ${\bf k}_{n}^0$ is analytic on $1+p^{n}\cO$.
\end{lemma}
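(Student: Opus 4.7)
The plan is to build the extension via the classical logarithm–exponential construction. By Hypothesis~\ref{hypothesis 1}, $F$ is unramified at $p$, so $\cO^0=1+p\cO$ is a free pro-$p$ $\Z_p$-module of rank $d$ with basis $\{e_i\}$, and $\log:\cO^0\to p\cO$ is an isomorphism with $\Z_p$-basis $\{\ell_i:=\log e_i\}$ on the target. Because ${\bf k}_n^0$ is a homomorphism with ${\bf k}_n^0(e_i)=1+T_i$, it may be rewritten as
\[
{\bf k}_n^0(u)=\exp\bigl(L(\log u)\bigr),\qquad u\in\cO^0,
\]
where $L:p\cO\to\Lambda_n^0$ is the $\Z_p$-linear map sending $\ell_i$ to $\log(1+T_i)$; here both $\log(1+T_i)$ and the exponential converge because the composite on $\cO^0$ a priori agrees with the finite product $\prod(1+T_i)^{a_i}$. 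The first step is to tensor $L$ with $R$ to obtain a continuous $R$-linear map $L_R:p\cO\otimes_{\Z_p}R\to\Lambda_n^0\hat\otimes R$.

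The main task is then to make sense of $\exp(L_R(\log v))$ for $v\in 1+p^n\lambda^{-1}\cO_F\otimes R$. The hypothesis $\lambda^{p-1}\in p^{p-2}m_R$ gives $v_p(\lambda)>(p-2)/(p-1)$, so for any $x\in\cO_F\otimes R$ and $n\geq 1$ the element $y:=p^n\lambda^{-1}x$ satisfies $v_p(y)>1/(p-1)$. Consequently $\log(1+y)=\sum_{k\geq 1}(-1)^{k-1}y^k/k$ converges in $p^n\lambda^{-1}\cO_F\otimes R$, lies in $p\cO\otimes R$ (using $n\geq 1$ together with $p^n\lambda^{-1}\in m_R$), and equals $y$ up to strictly higher $p$-adic order. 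A Gauss-norm estimation in $\Lambda_n^0$, using the defining relation $T_i^{p^n}=p S_i$, then shows that $L_R(\log(1+y))\in\Lambda_n^0\hat\otimes R$ has norm strictly less than $p^{-1/(p-1)}$, which is the radius of convergence for $\exp$.

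With this in hand, the extension is defined by
\[
\widetilde{\bf k}_n^0\bigl(u(1+y)\bigr):={\bf k}_n^0(u)\cdot\exp\bigl(L_R(\log(1+y))\bigr),\qquad u\in\cO^0,\;y\in p^n\lambda^{-1}\cO_F\otimes R.
\]
Multiplicativity follows from the additivity of $\log$, the $R$-linearity of $L_R$, and the multiplicativity of $\exp$ inside its radius of convergence; agreement with ${\bf k}_n^0$ on $\cO^0$ is by construction. Local analyticity is then immediate, because the composite $x\mapsto\log(1+p^n\lambda^{-1}x)\mapsto L_R(\cdot)\mapsto\exp(\cdot)$ is visibly a convergent power series in $x$. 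The final assertion that ${\bf k}_n^0$ is analytic on $1+p^n\cO$ follows by inspection of the same formula: for $u\in 1+p^n\cO$ the factor $p^n$ inside $\log u$ dominates the Gauss norms of $\log(1+T_i)\in\Lambda_n^0$, so $\exp(L(\log u))$ is a convergent power series in the log coordinates of $u$. The principal technical obstacle is precisely this Gauss-norm bookkeeping, since $\log(1+T_i)$ has rather large norm in $\Lambda_n^0$ (reflecting that $T_i^{p^j}/p^j$ is unbounded for $j>n$), and the smallness of $p^n\lambda^{-1}$ must compensate it by the tight margin dictated by the hypothesis on $\lambda$.
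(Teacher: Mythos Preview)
Your approach via $\exp\circ L\circ\log$ is the same as the paper's, and you correctly locate the heart of the matter in the Gauss-norm estimate for $\log(1+T_i)$ in $\Lambda_n^0$. The paper carries this out explicitly rather than just asserting it: for $m=p^{n+s}m'$ with $(m',p)=1$ one has
\[
p^n\,\frac{T_i^m}{m}=\frac{T_i^{m-(s+1)p^n}}{m'}\left(\frac{T_i^{p^n}}{p}\right)^{s+1}p\in p\Lambda_n^0,
\]
whence $u_i:=p^n\log(1+T_i)\in p\Lambda_n^0$.

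However, you have the direction of the inequality on $\lambda$ backwards. You write that $v_p(\lambda)>(p-2)/(p-1)$ implies $v_p(p^n\lambda^{-1}x)>1/(p-1)$, but since $v_p(p^n\lambda^{-1}x)=n-v_p(\lambda)+v_p(x)$, a \emph{lower} bound on $v_p(\lambda)$ yields only an \emph{upper} bound on this quantity, so the implication fails. What the argument actually needs, and what the paper's proof explicitly invokes, is $v_p(\lambda)<(p-2)/(p-1)$: then for $\gamma\in p^n\lambda^{-1}R$ one writes $\gamma\log(1+T_i)=\lambda^{-1}r\cdot u_i$ with $r\in R$, and $v_p(\lambda^{-1}r\,u_i)\geq 1-v_p(\lambda)>1/(p-1)$, so $\exp$ converges. (The condition $\lambda^{p-1}\in p^{p-2}m_R$ in the statement appears to be a misprint for the opposite divisibility; the application in \S\ref{ss:overconvergent modular sheaves} with $\lambda=\mathrm{Hdg}^{p^n/(p-1)}$, where $\mathrm{Hdg}$ has small valuation on $\dX_r$, confirms that \emph{small} $\lambda$ is what is intended.) With this sign corrected, your argument is complete and coincides with the paper's.
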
	
\begin{proof} By definition we have ${\bf k}_{n}^0\left(\sum_i\gamma_ie_i\right)=\prod_{i=1}^{d}(T_i+1)^{\gamma_i}\in \Z_p[[T_1,\cdots,T_d]]\simeq\Lambda_F^0\subseteq\Lambda_{n}^0$.
	Formally, we have that 
	$(T_i+1)^{\gamma_i}=\exp(\gamma_i\log(T_i+1))$. For any $m=p^{n+s}m'$, $(m',p)=1$ we have:
	\[
	p^n\frac{T_i^m}{m}=\frac{T_i^{m-(s+1)p^n}}{m'}\left(\frac{T_i^{p^n}}{p}\right)^{s+1}p\in p\Lambda_{n}^0,
	\]
	and $m-(s+1)p^n=p^n(p^sm'-(s+1))\geq 0$. Hence $u_i:=p^n\log(T_i+1)\in p\Lambda_n^0$. We have that 
	\[
	{\bf k}_{n}^0\left(p^n\alpha\right)=\prod_{i=1}^{h_1}\exp(u_i\gamma_i),\qquad \alpha=\sum_i\gamma_i e_i.
	\]
	This implies that, for any adic 
	$\Lambda_{n}^0$-algebra $R$, the character ${\bf k}^0_{n}$ can be evaluated at the $\Z_p$-submodule $\cO^0\otimes_{\Z_p}\lambda^{-1} p^nR$, for any $\lambda$ with valuation $\nu(\lambda)<\frac{p-2}{p-1}$. Note that, by means of the exponential map we can identify $\cO^0\otimes_{\Z_p}\lambda^{-1} p^nR$ with $(1+\lambda^{-1} p^n\cO\otimes_{\Z_p}R)\subset (\cO\otimes_{\Z_p}R)^\times$. We conclude that ${\bf k}_{n}^0$ can be evaluated at
	$\cO^0\cdot(1+\lambda^{-1} p^{n}\cO\otimes_{\Z_p}R)\subset (\cO\otimes_{\Z_p}R)^\times$,
	and the result follows.
\end{proof}

 Recall that hypothesis \ref{hypothesis 1} imply a decomposition of rings $\cO=\cO_{\dP_0} \times\prod_{\mathfrak{p}\neq \mathfrak{p}_0}\cO_{\mathfrak{p}}= \Z_p\times\cO^{\tau_0}$ and put $\cO^{\tau_0,0}:=1+p\cO^{\tau_0}$. Analogously as above we introduce:
 \begin{eqnarray*}
 \begin{gathered}
 \Lambda_{\tau_0}:=\Z_p[[\Z_p^\times]]\qquad\qquad \Lambda_{\tau_0}^0:=\Z_p[[1+p\Z_p]]\simeq\Z_p[[T]]\\
\Lambda^{\tau_0}:=\Z_p[[\cO^{\tau_0\times}]]\qquad\qquad  \Lambda^{\tau_0,0}:=\Z_p[[\cO^{\tau_0,0}]]\simeq\Z_p[[T_2,\cdots,T_d]]\\
\Lambda_{\tau_0,n}:= \Lambda_{\tau_0}\left\langle\frac{T^{p^n}}{p}\right\rangle\qquad\qquad \Lambda_{\tau_0,n}^0:= \Lambda_{\tau_0}^0\left\langle\frac{T^{p^n}}{p}\right\rangle\\
 \Lambda^{\tau_0}_n:=\Lambda^{\tau_0}\left\langle\frac{T_2^{p^n}}{p},\cdots,\frac{T_d^{p^n}}{p}\right\rangle\qquad\qquad \Lambda^{\tau_0,0}_n:=\Lambda^{\tau_0,0}\left\langle\frac{T_2^{p^n}}{p},\cdots,\frac{T_d^{p^n}}{p}\right\rangle.
 \end{gathered}
\end{eqnarray*}
Thus, we have decompositions $\Lambda_n^0=\Lambda_{\tau_0,n}^0\hat\otimes\Lambda_n^{\tau_0,0}$ and $\Lambda_n=\Lambda_{\tau_0, n}\hat\otimes\Lambda_n^{\tau_0}$. We denote by ${\bf k}_{\tau_0, n}^{0}:(1+p\Z_p)\longrightarrow\Lambda_{\tau_0,n}^0$, ${\bf k}_{n}^{\tau_0,0}:\cO^{\tau_0,0}\longrightarrow\Lambda_n^{\tau_0,0}$, ${\bf k}_{\tau_0, n}:\Z_p^\times\rightarrow \Lambda_{\tau_0,n}$ and ${\bf k}_n^{\tau_0}:\cO^{\tau_0\times}\rightarrow \Lambda^{\tau_0}_n$ the universal characters. Then we have decompositions $ {\bf k}_{n}^0= {\bf k}_{\tau_0, n}^0\otimes {\bf k}_{n}^{\tau_0,0}$ and ${\bf k}_n={\bf k}_{\tau_0, n}\otimes {\bf k}_n^{\tau_0}$. Moreover,
 $${\bf k}_{\tau_0, n}\otimes {\bf k}_n^{\tau_0}= {\bf k}_{n}= {\bf k}_{n}^0\otimes{\bf k}_{f}={\bf k}_{\tau_0, n}^0\otimes {\bf k}_{n}^{\tau_0,0}\otimes{\bf k}_{f}.$$

\subsection{Overconvergent modular sheaves}\label{ss:overconvergent modular sheaves} 

We fix $L$  a finite extension of $\Q_p$ containing all the $p$-adic embedding of $F$ and we work over the ring of integers of $L$. Let $r\geq n$. As  in \S \ref{ss:canonical subgroups}, we denote by $\mathcal{I}_n:=p^{n} \mathrm{Hdg}^{-\frac{p^{n}}{p-1}}$ considered now as an ideal of $\cO_{\mathfrak{IG}_{ r,n}}$, which is our ideal in order to perform the construction of \S\ref{ss:formal vector bundles}. Then using notations from \S\ref{ss:formal vector bundles} we put $\overline{\mathfrak{IG}_{r,n}}$ for the corresponding reduction modulo $\cI_{n}$. From Equation \eqref{dlog0}, we have an isomorphism 
\begin{equation}\label{omega canonical vs G}
\mathrm{dlog}_0:D_{n}(\mathfrak{IG}_{r,n})\otimes_{\Z_p}(\cO_{\mathfrak{IG}_{r,n}}/\cI_n)\stackrel{\simeq}{\longrightarrow}\Omega_0\otimes_{\cO_{\mathfrak{IG}_{ r,n}}}(\cO_{\mathfrak{IG}_{r,n}}/\cI_n),
\end{equation}
where $\Omega_0$ is the $\cO_{\mathfrak{IG}_{ r,n}}$-submodule of $\omega_0$ generated by all the lifts of $\mathrm{dlog}_0(D_{n})$. 
By construction, there exist on $\mathfrak{IG}_{r,n}$ a universal canonical generator $P_{0,n}$ of $D_{n}$, and universal points $P_{\dP,n}$ of order $p^{n}$ in $\cG_\dP[p^{n}]$. We put:
\begin{equation}\label{e:omega for the machinery}
\Omega:= \Omega_0\oplus \bigoplus_{\mathfrak{p} \neq \mathfrak{p}_0}\omega_{\mathfrak{p}}= \Omega_0\oplus\Omega^0
\end{equation}
 where $\Omega^0= \bigoplus_{\mathfrak{p} \neq \mathfrak{p}_0}\omega_{\mathfrak{p}}$, and we denote $\overline{\Omega}$ the associated $\cO_{\overline{\mathfrak{IG}_{r,n}}}$-module. Now we produce marked sections in $\overline{\Omega}$ in the sense of \S\ref{ss:formal vector bundles} as follows. Let $\mathfrak{p}\mid p$ and we consider two cases: 
\begin{itemize}
	\item if $\mathfrak{p}= \mathfrak{p}_0$ we denote by $s_0 \in \overline{\Omega}$ the image $\mathrm{dlog}_0(P_{0,n})$ using the isomorphism (\ref{omega canonical vs G}). 
	\item if $\mathfrak{p} \neq \mathfrak{p}_0$ we firstly consider the decomposition $\omega_{\mathfrak{p}}= \oplus_{\tau \in \Sigma_\mathfrak{p}}\omega_{\mathfrak{p}, \tau}$ over $\mathfrak{IG}_{r,n}$ and the dlog map: 
	\begin{equation}\label{dlogP}
	\mathrm{dlog}_{\mathfrak{p}}: \cG_{\mathfrak{p}}[p^{n}] \longrightarrow \omega_{\cG_{\mathfrak{p}}[p^{n}]^D}= \omega_{\mathfrak{p}}/p^{n}\omega_{\mathfrak{p}}= \bigoplus_{\tau \in \Sigma_\mathfrak{p}}\omega_{\mathfrak{p}, \tau}/p^{n}\omega_{\mathfrak{p}, \tau}.
	\end{equation}
Hence the image of $P_{\mathfrak{p}, n}$ through $\mathrm{dlog}_{\mathfrak{p}}$ provides a set of sections $\{s_{\mathfrak{p}, \tau}\}_{\tau \in \Sigma_\mathfrak{p}}$ of $\omega_{\mathfrak{p}, \tau}/\cI_n\omega_{\mathfrak{p}, \tau}\subseteq\omega_{\mathfrak{p}}/\cI_{n}\omega_{\mathfrak{p}}$. 
\end{itemize}

 The following proposition allow us to use the constructions recalled in \S\ref{ss:formal vector bundles}: 

\begin{lemma}\label{l:they are marked sections} We have the following facts:
\begin{itemize}
	\item the $\cO_{\mathfrak{IG}_{r,n}}$-module $\Omega_0$ is locally free of rank $1$, and $\Omega$ is locally free of rank $2d-1$.
	\item the $\cO_{\overline{\mathfrak{IG}_{r,n}}}$-module generated by the set $\{s_0\}\cup\bigcup_{\mathfrak{p}\neq \mathfrak{p}_0} \{s_{\mathfrak{p}, \tau}\}_{\tau \in \Sigma_\mathfrak{p}}$ is a locally direct summand of $\overline{\Omega}$.
\end{itemize}	
\end{lemma}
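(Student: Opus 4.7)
My plan is to exploit the decomposition
$\Omega = \Omega_0 \oplus \bigoplus_{\dP\neq\dP_0}\omega_\dP$ from (\ref{e:omega for the machinery}) together with the finer splitting $\omega_\dP=\bigoplus_{\tau\in\Sigma_\dP}\omega_{\dP,\tau}$, and to verify both claims blockwise. Each marked section sits in exactly one summand ($s_0\in\overline{\Omega_0}$ and $s_{\dP,\tau}\in\omega_{\dP,\tau}\otimes\cO_{\overline{\mathfrak{IG}_{r,n}}}$), so proving that the $\cO_{\overline{\mathfrak{IG}_{r,n}}}$-module they span is a locally free direct summand of $\overline\Omega$ reduces to proving, in each block, that the corresponding section generates a free rank-$1$ direct summand of that block.

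First I would establish the rank computation. The signature condition $(ii)$ from \S\ref{ss:unitary Shimura curves} forces $\mathrm{Lie}(A)^{-,1}$ to be locally free of rank $1$ with $\cO_F$-action factoring through $\tau_0$; under Hypothesis \ref{hypothesis 1} this says exactly that $\cG_0$ has dimension $1$ and height $2$, while $\mathrm{Lie}(\cG_\dP)=0$ for every $\dP\neq\dP_0$. Consequently $\omega_0=\omega_{\cG_0^D}$ is locally free of rank $1$, and the inclusion $\Omega_0\subseteq\omega_0$ is locally the scaling by $\mathrm{Hdg}^{1/(p-1)}$ supplied by \cite[Proposition~A.3]{AIP-halo} (as already used in Proposition \ref{p:canonical subgroups}), hence $\Omega_0$ is itself locally free of rank $1$. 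For $\dP\neq\dP_0$ the rank-$2$ local freeness of each $\omega_{\dP,\tau}$ was recalled in \S\ref{ss:modular sheaves}, giving the total rank $1+2(d-1)=2d-1$ for $\Omega$. The section $s_0=\mathrm{dlog}_0(P_{0,n})$ is then by (\ref{omega canonical vs G}) a generator of the rank-$1$ free module $\Omega_0/\cI_n$, so it trivially defines a rank-$1$ direct summand of that block.

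The substantive part is the analysis of $s_{\dP,\tau}$ for $\dP\neq\dP_0$. Since $\mathrm{Lie}(\cG_\dP)=0$, the $p$-divisible group $\cG_\dP$ is \'etale, hence its Cartier dual $\cG_\dP^D$ is multiplicative of $\cO_\dP$-dimension $2$. In that situation the dlog map (\ref{dlogP}) is an isomorphism of free $\cO_\dP/p^n$-modules $\cG_\dP[p^n]\simeq\omega_\dP/p^n\omega_\dP$, and the claim to verify becomes: the universal point $P_{\dP,n}$ generates a rank-$1$ $\cO_\dP/p^n$-direct summand of $\cG_\dP[p^n]$. By construction of $\dX_{r,n}$, $P_{\dP,n}$ lifts a generator of the free $\cO_\dP/\dP$-module $\cG_\dP[\dP]/C_\dP$, so this is just Nakayama's lemma applied to the \'etale local ring $\cO_{\overline{\mathfrak{IG}_{r,n}}}$. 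Transporting the summand through dlog and projecting onto each embedding $\tau\in\Sigma_\dP$ then yields that $s_{\dP,\tau}$ generates a rank-$1$ direct summand of $\omega_{\dP,\tau}/\cI_n$.

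The main delicate point will be the primitivity of $\mathrm{dlog}_\dP(P_{\dP,n})$ modulo the maximal ideals of $\overline{\mathfrak{IG}_{r,n}}$; without the $\Gamma_0(\dP)$-datum $C_\dP$ and the associated \'etale trivialization built into the passage from $\dX_r$ to $\mathfrak{IG}_{r,n}$, one could not rule out that the cyclic subgroup generated by $P_{\dP,n}$ degenerates at some geometric point so that the corresponding $s_{\dP,\tau}$ fails to extend to a local basis. It is precisely to make the family $\{s_0\}\cup\bigcup_{\dP\neq\dP_0}\{s_{\dP,\tau}\}_{\tau\in\Sigma_\dP}$ admissible as marked sections in the sense of \S\ref{ss:formal vector bundles} that these auxiliary level structures have been introduced.
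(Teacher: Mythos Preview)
Your proposal is correct and follows essentially the same approach as the paper: the paper's proof is a terse two-line appeal to the isomorphism \eqref{omega canonical vs G} for the $\Omega_0$-block and to the \'etaleness of $\cG_\dP$ (hence $\mathrm{dlog}_\dP$ being an isomorphism) for the $\dP\neq\dP_0$ blocks. Your write-up simply unpacks these two ingredients, supplying the rank counts explicitly and making the primitivity of $P_{\dP,n}$ precise via Nakayama and the $\Gamma_0(\dP)$-level datum---details the paper leaves implicit.
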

\begin{proof} 
Follows directly from isomorphism \eqref{omega canonical vs G} and the fact that, since $\cG_\dP$ is \'etale, ${\rm dlog}_\dP$ provides an isomorphism between $\omega_\dP/p^{n}\omega_\dP$ and $\cG_\dP[p^{n}]\otimes_{\Z_p}(\cO_{\mathfrak{IG}_{r,n}}/p^{n}\cO_{\mathfrak{IG}_{r,n}})$.
\end{proof}

Applying the construction recalled in \S\ref{ss:formal vector bundles} to the locally free $\cO_{\mathfrak{IG}_{r,n}}$-module $\Omega$ and the marked sections ${\bf s}:=\{s_0\}\cup\bigcup_{\mathfrak{p}\neq \mathfrak{p}_0} \{s_{\mathfrak{p}, \tau}\}_{\tau \in \Sigma_\mathfrak{p}}$ of $\overline{\Omega}$ we obtain the formal scheme $\V_0(\Omega,{\bf s})$ over $\mathfrak{IG}_{r,n}$. By construction we have the following tower of formal schemes:  
$$\xymatrix{\V_0(\Omega,{\bf s})\ar[r]&\mathfrak{IG}_{r,n} \ar^{g_n}[r]&\dX_{r}.}$$
By construction for any $\dX_r$-scheme $T$ we have:
\[
\V_0(\Omega,{\bf s})(T)=\{(\rho,\varphi)\in \mathfrak{IG}_{r,n}(T)\times\Gamma(T,\rho^\ast\Omega^\vee);\quad  \varphi(\rho^\ast s_i)\equiv1\mod \cI_n\}
\]

Let ${\bf s}^0:=\bigcup_{\mathfrak{p}\neq \mathfrak{p}_0} \{s_{\mathfrak{p}, \tau}\}_{\tau \in \Sigma_\mathfrak{p}}$ then from (\ref{e:omega for the machinery}) we have:
\[
\V_0(\Omega,{\bf s})=\V_0(\Omega_0,s_0)\times_{\mathfrak{IG}_{r,n}}\V_0(\Omega^0,{\bf s}^0).
\]

As we are interested in locally analytic distributions (rather than functions) we perform the following construction. Let $t_0\in \overline{\Omega_0}^\vee$ be such that $t_0(s_0)=1$, $t_{\mathfrak{p}, \tau}\in\bar\omega_{\dP,\tau}$ be any section such that $\varphi_\tau(s_{\mathfrak{p}, \tau}\wedge t_{\mathfrak{p}, \tau})=1$ and $Q_{{\bf s}^0}\subset \overline{\Omega^0}$ be the direct summand generated by the sections in ${\bf s}^0$. We put ${\bf t}^0:=\bigcup_{\mathfrak{p}\neq \mathfrak{p}_0} \{t_{\mathfrak{p}, \tau}\}_{\tau \in \Sigma_\mathfrak{p}}$ and we define: 
\[
\widehat\V(\Omega,{\bf s}):=\V_0(\Omega_0^\vee,t_0)\times_{\mathfrak{IG}_{r,n}}\widehat\V_{Q_{{\bf s}^0}}(\Omega^0,{\bf t}^0)\stackrel{f_0}{\longrightarrow}\mathfrak{IG}_{r,n}\stackrel{g_n}{\longrightarrow}\dX_{r}.
\]
The $t_{\mathfrak{p}, \tau}$ are defined modulo $Q_{{\bf s}^0}$ which is fine because remark \ref{DepOnQuot}. By construction we have $\widehat\V(\Omega,{\bf s})(T)=$
\[\{(\rho,\varphi)\in \mathfrak{IG}_{r,n}(T)\times\Gamma(T,\rho^\ast(\Omega_0^\vee\oplus\Omega^0)^\vee);\quad  \varphi(\rho^\ast t_0)\equiv\varphi(\rho^\ast t_{\dP,\tau})\equiv1,\; \varphi(\rho^\ast s_{\dP,\tau})\equiv0\mod \cI_n\}.
\]

Recall that the morphism $g_n$ is endowed with an action of $(\cO/p^{n}\cO)^\times$. And then $\widehat\V(\Omega,{\bf s})/\dX_r$ is equipped with an action of $\cO^\times(1+\cI_n{\rm Res}_{\cO_F/\Z}\G_a)\subseteq{\rm Res}_{\cO/\Z_p}\G_m$. In fact if $(\rho,\varphi)\in \widehat\V(\Omega,{\bf s})(T)$ and $\lambda(1+\gamma) \in\cO^\times(1+\cO_F\otimes_{\Z}\cI_n\cO_T)$ we put:
$$\lambda (1+\gamma)\ast(\rho,\varphi)=(\lambda\rho,\lambda(1+\gamma)\ast \varphi),$$
where 
 $\ast$ denotes the extension of the natural action of $\cO$ on $\Omega^\vee$ given by $\lambda\ast \varphi(w):=\varphi(\lambda w)$. This action is well defined since for each $\tau\neq \tau_0$ we have $\left(\lambda_\tau(1+\gamma_\tau))\ast \varphi)\right)((\lambda_\tau\rho)^\ast s_{\dP,\tau})\equiv\lambda_\tau\varphi(\rho^\ast \lambda_\tau s_{\dP_\tau})\equiv\lambda_\tau^2\varphi(\rho^\ast s_{\dP_\tau})\equiv 0\mod \cI_n$, and for each $\tau$ we have $\left(\lambda_\tau(1+\gamma_\tau)\ast \varphi)\right)((\lambda_i\rho)^\ast t_{\tau})\equiv\lambda_i\varphi(\rho^\ast \lambda_\tau^{-1} t_{\tau}) =\varphi(\rho^\ast  t_{\tau})\mod \cI_n.$


Since $r\geq n$ by Lemma \ref{weightextends} (with $\lambda={\rm Hdg}^{\frac{p^n}{p-1}}$) the character ${\bf k}_n^0$ extends to a locally analytic character 
\[
{\bf k}_n^0:\cO^\times(1+\cO_F\otimes_{\Z}\cI_n\cO_{\dX_r}\otimes_{\Z_p}\Lambda_n^0)\longrightarrow \cO_{\dX_r}\otimes_{\Z_p}\Lambda_n^0.
\]

\begin{defi} We consider the following sheaves over $\dX_{r}\times\dW_n$:
$$\cF_n:= \left((g_n\circ f_{0})_{\ast}\cO_{\widehat\V(\Omega,{\bf s})}\hat\otimes\Lambda_n\right)[{\bf k}_{n}^0] \qquad \qquad \Omega^{{\bf k}_{f}}:=\left( g_{1, \ast}(\cO_{\mathfrak{IG}_{1}})\hat\otimes\Lambda_n\right)[{\bf k}_{f}].$$
The formal \emph{overconvergent modular sheaf} over $\dX_{r}\times\dW_n$ is defined as $\Omega^{{\bf k}_{n}}:= \Omega^{{\bf k}_{n}^0}\otimes_{\cO_{\dX_{r}\times\dW_n}}\Omega^{{\bf k}_{f}}$, where
\[
\Omega^{{\bf k}_{n}^0}:= \cF_n^\vee=\Hom_{\dX_{r}\times\dW_n}\left(\cF_n,\cO_{\dX_{r}\times\dW_n}\right).
\]
\end{defi}

Observe that $\cF_n$ is the sheaf on $\dX_{r}\times\dW_n$ given by the sections $s$ of $(g_n\circ f_{0})_{\ast}\cO_{\widehat\V(\Omega,{\bf s})}\hat\otimes\Lambda_n$ such $t\ast s={\bf k}_n^0(t)\cdot s$, for all $t\in\cO^\times(1+\cI_r{\rm Res}_{\cO_F/\Z}\G_a)$.


\begin{defi} A section in $\mathrm{H}^0(\dX_{r}\times\dW_{n}, \Omega^{{\bf k}_{n}})$ is called a \emph{family of locally analytic Overconvergent modular forms} for the group $G'$.
\end{defi}


\begin{rmk} The present construction performs the $p$-adic variation of the modular sheaves for unitary Shimura curves over a $d$-dimensional weight space.  Compare with \cite{brasca13} and \cite{ding17} where the $p$-adic variation is essentially constructed over a curve inside our weight space.
\end{rmk}

\begin{rmk}\label{proof thm 1 intro} Considering the adic generic fiber of $\cF_n$ we obtain a sheaf of Banach modules over $\cX_r\times \cW_n$ which by construction satisfies the properties stated in \ref{t:main thm 1}(i). The sheaves $\Omega^{{\bf k}_{n}}$ are introduced to construct of triple product $p$-adic $L$-functions.
\end{rmk}

\subsection{Local description of $\cF_n$}\label{locdesc} 
We use the notations as in the last section and we consider the sheaf over $\mathfrak{IG}_{r,n}\times\dW_n^0$ given by $\cF_n'= \left((f_{0})_{\ast}\cO_{\widehat\V(\Omega,{\bf s})}\hat\otimes\Lambda_n\right)[{\bf k}_{n}^0]$ where the action is that of $1+\cI_n{\rm Res}_{\cO_F/\Z}\G_a$.

 Let $\rho: \Spf(R)\rightarrow\mathfrak{IG}_{r,n}\times\dW_n^0$ be a morphism of formal schemes over $\dW_n^0$ without $p$-torsion such that $\rho^\ast\Omega_{0}^\vee$ and $\rho^\ast\omega_{\mathfrak{p}, \tau}$ are free $R$-module of rank $1$ and $2$ respectively. We fix basis $\rho^\ast\Omega_{0}^\vee=Re_0$ and $\rho^\ast\omega_{\mathfrak{p}, \tau}=Rf_{\tau} \oplus Re_{\tau}$ for $\mathfrak{p}\neq \mathfrak{p}_0$ such that $f_\tau\equiv s_{\dP,\tau}$, $e_0\equiv t_0$, and $e_\tau\equiv t_{\dP,\tau}$ modulo $\rho^\ast\mathcal{I}_n$. Moreover we also assume that $\rho^\ast\mathcal{I}_n$ is generated by some $\alpha_n\in R$.

We denote by $e_0^\vee, e_\tau^\vee,f_\tau^\vee\in\rho^\ast(\Omega_0^\vee\oplus\Omega^0)^\vee$ the dual $R$-basis, then by definition, we have that
\begin{eqnarray*}
	\widehat\V(\Omega,{\bf s})(\Spf(R))&=&\left\{\sum_{i=\tau}a_if_i^\vee+\sum_{i=0,\tau}b_ie_i^\vee;\qquad b_0,b_\tau\in (1+\alpha_nR),\quad a_\tau\in \alpha_nR\right\},
\end{eqnarray*}
By Hypothesis \ref{hypothesis 1} we have the isomorphism of algebras $\cO_{F}\otimes_{\Z}R\simeq R^{\Sigma_p}$ and, under this isomorphism $(1+\cO_F\otimes_{\Z}\cI_nR)$ corresponds to $(1+\alpha_n R)^{\Sigma_p}$. 

By \S \ref{ss:formal vector bundles} we have that $\rho^\ast\cO_{\widehat\V(\Omega,{\bf s})}=R\langle Y_{\tau_0},(Z_\tau,Y_{\tau})_{\tau\in\Sigma_\dP,\dP\neq\dP_0}\rangle$, where 
\begin{eqnarray*}
	&&\sum_{i=\tau}a_if_i^\vee+\sum_{i=0,\tau}b_ie_i^\vee\in \widehat\V(\Omega,{\bf s})(\Spf(R))\subseteq\Gamma(\Spf(R),\rho^\ast(\Omega_0^\vee\oplus\Omega^0)^\vee), 
\end{eqnarray*}
corresponds to the point $(Y_\tau=\frac{b_\tau-1}{\alpha_n};\;Z_\tau=\frac{a_\tau}{\alpha_n})$.
Recall that the action of $(1+\lambda\otimes x)\in(1+\cO_F\otimes \cI_nR)$ (corresponding to $(1+x\tau(\lambda))_\tau\in(1+\cI_nR)^{\Sigma_p}$) on $\phi\in\V_0(\Omega,{\bf s})(\Spf(R))$ is given by 
\[
(1+\lambda\otimes x) \ast\phi(w)=\phi(w)+x\phi(\lambda w).
\]
Hence, we deduce that $(t_\tau)\in (1+\cI_nR)^{\Sigma_p}$ acts on $f_\tau^\vee$ and $e_\tau^\vee$ by multiplication by $t_\tau$, and on $e_0^\vee$ by multiplication by $t_0$. Therefore $(t_\tau)_\tau$ acts on the variables $Z_\tau$ for  $\tau\neq\tau_0$  and $Y_\tau$ for $\tau\in \Sigma_F$ by
 $Y_\tau\mapsto (t_\tau-1)\alpha_n^{-1}+t_\tau Y_\tau$, and $Z_\tau\mapsto t_\tau Z_\tau$. 

\begin{lemma}\label{l:locdesc}
There exists $P_{\alpha_n}\in R\langle x_\tau\rangle_{\tau\in\Sigma_p}$ such that ${\bf k}_{n}^{0}(1+\alpha_nz_\tau)=P_{\alpha_n}((z_\tau)_{\tau\in\Sigma_p})$ and:
	\begin{eqnarray*}
		\rho^\ast\cF_n'&=& P_{\alpha_n}((Y_\tau)_{\tau\in\Sigma_\dP})\cdot R\left\langle\frac{Z_\tau}{1+\alpha_nY_\tau}\right\rangle_{\tau\in\Sigma_\dP,\dP\neq\dP_0}.
	\end{eqnarray*}
\end{lemma}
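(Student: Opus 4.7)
The plan is to exhibit an explicit ${\bf k}_n^0$-eigenvector that is a unit, use it to identify the ${\bf k}_n^0$-eigenspace with a ring of invariants, and then compute those invariants via the affine translation formula. First, I would establish the existence of $P_{\alpha_n}$: since $\alpha_n$ generates $\cI_n$ and is topologically nilpotent of controlled valuation, Lemma \ref{weightextends} applies so that ${\bf k}_n^0$ can be evaluated on $(1+\alpha_n z_\tau)_\tau$ for $z_\tau$ in a Tate algebra, giving the desired power series $P_{\alpha_n}\in R\langle x_\tau\rangle_{\tau\in\Sigma_p}$.

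Next, using the transformation law recalled just before the lemma, a direct substitution yields $U_\tau := 1+\alpha_n Y_\tau \mapsto t_\tau U_\tau$ and $Z_\tau \mapsto t_\tau Z_\tau$ for every $(t_\tau)\in(1+\cI_n R)^{\Sigma_p}$. Hence $W_\tau := Z_\tau/U_\tau$ (for $\tau\neq\tau_0$) is invariant, and is well-defined since $U_\tau$ is a unit in $R\langle Y_\tau\rangle$ ($\alpha_n Y_\tau$ being topologically nilpotent). The substitution $(Y_\tau,Z_\tau)\rightsquigarrow(Y_\tau,W_\tau)$ identifies $R\langle Y_\tau,Z_\tau\rangle$ with $R\langle Y_\tau,W_\tau\rangle$. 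By multiplicativity of ${\bf k}_n^0$, the element $P_{\alpha_n}((Y_\tau))={\bf k}_n^0((U_\tau))$ is a ${\bf k}_n^0$-eigenvector, and a unit because $P_{\alpha_n}((Y_\tau))\equiv 1$ modulo the ideal generated by $(\alpha_n Y_\tau)_\tau$.

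Division by $P_{\alpha_n}((Y_\tau))$ then identifies the ${\bf k}_n^0$-eigenspace with the invariant subring. To compute the invariants, I would evaluate the invariance equation $s(Y_\tau,W_\tau)=s((t_\tau-1)/\alpha_n+t_\tau Y_\tau,W_\tau)$ at $Y_\tau=0$: letting $t_\tau=1+\alpha_n r_\tau$ vary, this yields $s(0,W)=s((r_\tau)_\tau,W)$ for every $(r_\tau)\in R^{\Sigma_p}$. Using the no-$p$-torsion hypothesis on $R$, one deduces that $s$ is independent of the $Y_\tau$ variables, so the invariants coincide with $R\langle W_\tau\rangle_{\tau\neq\tau_0}$. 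Multiplying back by $P_{\alpha_n}((Y_\tau))$ yields the claimed local description.

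The main difficulty I expect lies in this final invariants computation, namely verifying rigorously that a power series in $R\langle Y_\tau\rangle$ whose values on all $R$-points of the unit disk coincide with its value at $0$ must be the constant power series. This is where the no-$p$-torsion hypothesis on $R$ is crucially used, to ensure that $R$ is Zariski-dense in the $p$-adic unit disk and hence that evaluation at $R$-points separates elements of the Tate algebra.
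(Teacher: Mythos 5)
Your proof is correct and follows essentially the same route as the paper's: exhibit $P_{\alpha_n}$ as a unit ${\bf k}_n^0$-eigenvector (via Lemma \ref{weightextends}), divide by it to reduce to computing the invariants $R\langle Y_{\tau_0}\rangle^{1+\alpha_nR}=R$ and $R\langle Z_\tau,Y_\tau\rangle^{1+\alpha_nR}=R\langle Z_\tau/(1+\alpha_nY_\tau)\rangle$. The only difference is that the paper delegates this last invariants computation to \cite[Lemmas 3.9 and 3.13]{AI17}, whereas you carry it out directly by the change of variables $(Y_\tau,Z_\tau)\rightsquigarrow(Y_\tau,Z_\tau/(1+\alpha_nY_\tau))$ followed by evaluation at $R$-points, correctly identifying the no-$p$-torsion hypothesis on $R$ as what makes evaluation on $\Z_p\subseteq R$ injective on the Tate algebra.
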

\begin{proof} It is clear that $P_{\alpha_n}(Y_\tau)\cdot R\left\langle\frac{Z_\tau}{1+\alpha_nY_\tau}\right\rangle\subseteq\rho^\ast(\cF_n')$. On the other side, if $f\in \rho^\ast(\cF_n')=R\langle Z_{\tau}, Y_{\tau}\rangle[{\bf k}_{n}^{0}]$, then $f/P_{\alpha_n}(Y_\tau)$ lies in
	\[
	R\langle Z_{\tau}, Y_\tau\rangle^{(1+\alpha_nR)^{\Sigma_p}}=R\langle Y_{\tau_0}\rangle^{1+\alpha_nR}\hat\otimes \hat{\bigotimes}_{\tau\in\Sigma_\dP,\dP\neq\dP_0}R\langle Z_\tau,Y_\tau\rangle^{1+\alpha_nR}.
	\]
	Similarly as in \cite[Lemma 3.9 and Lemma 3.13]{AI17} one proves that $R\langle Y_{\tau_0}\rangle^{1+\alpha_nR}=R$ and $R\langle Z_\tau,Y_\tau\rangle^{1+\alpha_nR}=R\langle Z_\tau/(1+\alpha_nY_\tau)\rangle$, hence the result follows. 
	\end{proof}
\begin{rmk} To obtain a description of $\cF_n$ we need to consider the action of $\cO^\times$ and descend using the morphism $g_n: \mathfrak{IG}_{r,n} \rightarrow \mathfrak{X}_r$ in the same way as in \cite[\S 3.2.1]{AI17}
\end{rmk}


\subsection{Overconvergent modular forms \`a la Katz} \label{ss:overconvergent modular forms a la katz}
Here we give a moduli description of the families of overconvergent modular forms introduced above.
\subsubsection{Notations}\label{NotCD}
 Let $R$ be a complete local $\hat\cO$-algebra. 
 \begin{defi}
Let $k:\cO^{\tau_0\times}\rightarrow R$ be a character and $n \in \N$. We denote by  $C^{k}_n(\cO^{\tau_0},R)$ the $R$-module of the functions $f:\cO^{\tau_0\times}\times \cO^{\tau_0}\rightarrow R$ such that: 
\begin{itemize}
\item $f(tx, ty)=k(t)\cdot f(x, y)$  for each  $t\in\cO^{\tau_0\times}$ and $(x, y) \in \cO^{\tau_0\times}\times \cO^{\tau_0}$;
\item  the function $y\mapsto f(1, y)$ is analytic on the disks $y_0+ p^n\cO^{\tau_0}$  where $y_0$ below to a system of representatives of $\cO^{\tau_0}/p^n\cO^{\tau_0}$.
\end{itemize}
The space of \emph{distributions} is defined by $D^{k}_n(\cO^{\tau_0},R):={\rm Hom}_R(C^{k}_n(\cO^{\tau_0},R),R).$
\end{defi}

\begin{rmk}\label{rmkanalicity}
Note that $C^{k}_n(\cO^{\tau_0},R)\subseteq C^{k}_{n+1}(\cO^{\tau_0},R)$ and if $k=\underline{k}\in\N[\Sigma_F\smallsetminus \{\tau_0\}]$ is a classical weight then $C^{\underline{k}}_0(\cO^{\tau_0},R)$ is the module of analytic functions and naturally contains $\Sym^{\underline{k}}(R^2)$. We obtain a natural projection $D^{\underline{k}}_0(\cO^{\tau_0},R)\rightarrow\Sym^{\underline{k}}(R^2)^\vee$. 
\end{rmk}

We have a natural action of the subgroup $K_0(p)^{\tau_0}\subset\GL_2(\cO^{\tau_0})$ of upper triangular matrices modulo $p$ on $C^{k}_n(\cO^{\tau_0},R)$ and $D^{k}_n(\cO^{\tau_0},R)$ given by: 
\[
(g\ast f)(x,y)= f((x,y)g) \qquad \qquad (g\ast\mu)(f):=\mu(g^{-1}\ast f),
\]
where $g\in K_0(p)^{\tau_0}$, $f\in C^{k}_n(\cO^{\tau_0},R)$ and $\mu\in D^{k}_n(\cO^{\tau_0},R)$. Since $y \mapsto f(1, y)$ is analytic on the disks $y_0+ p^n\cO^{\tau_0}$ this action extends to an action of $K_0(p)^{\tau_0}(1+p^n\M_2(R\otimes_{\Z_p}\cO^{\tau_0}))$.

\subsubsection{Locally analytic functions} For $r \geq n$ we are going to describe the elements of 
$H^0(\dX_{r}\times\dW_{n}, \cF_n)$ as rules, extending the classical interpretation due to Katz. 

Now let $R$ be a $\Lambda_n$-algebra and $P\in(\dX_{r}\times\dW_n)(\Spf(R))$ be a point corresponding to the isomorphism class of  $(A,\iota,\theta,\alpha^{\dP_0})$. By the moduli interpretation of $\dX_r$, $A$ is endowed with subgroups $C_\dP\subset A[\dP]^{-,1}$ isomorphic to $\cO_\dP/\dP$, for $\dP\neq\dP_0$. Write $w=(f_0,\{(f_\tau,e_\tau)\}_\tau)$  where  $f_0$ is a basis of $\Omega_0$ and $\{e_\tau, f_\tau\}$  is a basis of  $\omega_{\dP,\tau}$, for $\dP\neq\dP_0$ and $\tau \in \Sigma_{\mathfrak{p}}$, such that sections $f_\tau$ generate $\omega_{C_\dP^D}$. We suppose that reductions modulo $\cI_n:=p^{n}{\rm Hdg}(A)^{-\frac{p^{n}}{p-1}}\subset R$ of $f_0$ and $(f_\tau,e_\tau)$ lie in the image of:
\[
{\rm dlog}_0:D_{n}(R)\longrightarrow\Omega_0\otimes_R(R/\cI_n)\qquad \text{and} \qquad {\rm dlog}_{\dP,\tau}:\mathcal{G}_{\mathfrak{p}}[p^{n}]\longrightarrow\omega_{\dP,\tau}\otimes_R(R/\cI_n),
\]
here as above $\mathcal{G}_{\mathfrak{p}}=A[\mathfrak{p}^\infty]^{-,1}$, $\mathcal{G}_{0}=A[\mathfrak{p}_0^\infty]^{-,1}$, $C_n$ is the canonical subgroup of $\mathcal{G}_0[\mathfrak{p}_0^{n}]$ and $D_n= \mathcal{G}_0[\mathfrak{p}_0^{n}]/C_n$. Our assumptions imply that the elements $p^{n-1}{\rm dlog}_{\dP,\tau}^{-1}(f_\tau)$ generate $C_\dP$.



Any linear combination of pre-images in $A[p^n]^{-,1}$ of  $f_0$ and $\{f_\tau\}_{\tau}$ provides a point in $\mathfrak{IG}_{r,n}(\Spf(R))$. 
Given $(x,y)\in\cO^{\tau_0\times}\times\cO^{\tau_0}$, we define $P_{(x,y)}^w\in \mathfrak{IG}_{r,n}(\Spf(R))$ to be the point given by the combination:
\[
\left({\rm dlog}^{-1}(f_0),\varphi_\tau(f_\tau\wedge e_\tau)^{-1}\left|\begin{array}{cc}{\rm dlog}^{-1}f_\tau&{\rm dlog}^{-1}e_\tau\\ \tau(x)&\tau(y)\end{array}\right|_\tau\right)\in  A[p^n]^{-,1}.
\]
It is clear that $\langle p^{n-1}(\tau(y){\rm dlog}^{-1}f_\tau-\tau(x){\rm dlog}^{-1}e_\tau)_\tau\rangle$ generate $A[\dP]^{-,1}/C_\dP$, 
thus the point $P_{(x,y)}^w\in \mathfrak{IG}_{r,n}(R)$ lies above $P\in \dX_r(R)$.

Thus for each $s \in H^0(\dX_{r}\times\dW_{n}, \cF_n)$ we assigns the rule mapping each tuple $(A,\iota,\theta,\alpha^{\dP_0},w)$ as above to the locally analytic function $s(A,\iota,\theta,\alpha^{\dP_0},w)$ given by:
\[
s(A,\iota,\theta,\alpha^{\dP_0},w)(x,y):=s(P_{(x,y)}^w,f_0+\sum_\tau(\tau (x)f_\tau^\vee+\tau(y)e_\tau^\vee)) \in C^{{\bf k}_n^{\tau_0,0}}_n(\cO^{\tau_0},R).
\]

To verify that it is well defined firstly observe that $(P^w_{(x,y)},f_0+\sum_\tau\tau (x)f_\tau^\vee+\tau(y)e_\tau^\vee)\in \hat V(\Omega,{\bf s})(R)$. In fact, we have $s_0={\rm dlog}_0(P^w_{(x,y)})=f_0$, $s_{\dP,\tau}={\rm dlog}_{\dP, \tau}(P^w_{(x,y)})=\varphi_\tau(f_\tau\wedge e_\tau)^{-1}(\tau(y)\cdot f_\tau-\tau(x)\cdot e_\tau)$ and  $t_{\dP,\tau}=\tau(r)\cdot f_\tau+\tau(s)\cdot e_\tau$ for any $r,s\in\cO^{\tau_0}$ such that $rx+psy=1$. Hence,
\[
(f_0+\sum_\tau\tau (x)f_\tau^\vee+\tau(y)e_\tau^\vee)(s_{\dP,\tau})\equiv0;\qquad (f_0+\sum_\tau\tau (x)f_\tau^\vee+\tau(y)e_\tau^\vee)(t_{i})\equiv1\;(i=0,(\dP,\tau)).
\]
Moreover, by the local description of \S \ref{locdesc}, the function $y \mapsto s(A,\iota,\theta,\alpha^{\dP_0},w)(1, y)$ is analytic over any open $y_0+ p^n\cO^{\tau_0}$ (where $P_{(x,y)}^w$ is constant). Finally if $t\in\cO^{\tau_0\times}$ then:
\begin{eqnarray*}
s(A,\iota,\theta,\alpha^{\dP_0},w)(tx, ty)&=&s\left(t P^w_{(x,y)},f_0+t\left(\sum_\tau\tau (x)f_\tau^\vee+\tau(y)e_\tau^\vee\right)\right)\\
&=&s(t\ast (P^w_{(x,y)},f_0+\sum_\tau\tau (x)f_\tau^\vee+\tau(y)e_\tau^\vee))\\
&=&{\bf k}_n^{\tau_0,0}(t)\cdot s(A,\iota,\theta,\alpha^{\dP_0},w)(x,y).
\end{eqnarray*}

If $t\in \Z_p^\times(1+\cI_n R)$ and $g\in K_0(p)^{\tau_0}(1+\cI_n \M_2(R))^{\Sigma_F\setminus\{\tau_0\}}$ then $(tf_0, \{(f_\tau,e_\tau)g\}_{\tau})$ satisfies the same properties as $w$. If $t\in \Z_p^\times$ we have:
\[
s(A,\iota,\theta,\alpha^{\dP_0}, tw)(x,y)=s(t\ast (P^w_{(x,y)},f_0+\sum_\tau\tau (x)f_\tau^\vee+\tau(y)e_\tau^\vee))={\bf k}_{n,\tau_0}^0(t)\cdot s(A,\iota,\theta,\alpha^{\dP_0}, w)(x,y).
\]

If $g\in K_0(p)^{\tau_0}$ we have $P_{(x,y)g}^{wg}=P_{(x,y)}^w$ and then:
\[
s(A,\iota,\theta,\alpha^{\dP_0},wg)((x,y)g)=s\left(P_{(x,y)}^w,f_0+\sum_\tau(\tau (x),\tau(y))\tau g\cdot\tau g^{-1}\left(\begin{array}{c}f_\tau^\vee\\e_\tau^\vee\end{array}\right)\right)= s(A,\iota,\theta,\alpha^{\dP_0},w)(x,y).
\]

These two last properties, and the fact that the action of $\Z_p^{\times}\times K_0(p)^{\tau_0}$ can be extended to an action of $R^\times\times K_0(p)^{\tau_0}(1+\cI_n\M_2(R))$ analytically, allows us to extend the rule defined above to any basis $w$ of $\Omega$ satisfying the above properties but not necessarily lying in the image of the dlog maps. Moreover, this rule characterizes the section $s$.

\subsubsection{Distributions} Now let $\mu \in H^0(\dX_{r}\times\dW_n, \Omega^{{\bf k}_{n}})$ then using the construction above we obtain a rule that assigns to each tuple $(A,\iota,\theta,\alpha^{\dP^0},w)$ as above, a distribution $\mu(A,\iota,\theta,\alpha^{\dP^0},w)\in D^{{\bf k}_n^{\tau_0}}_n(\cO^{\tau_0},R)$. 
The rule $(A,\iota,\theta,\alpha^{\dP_0},w)\mapsto \mu(A,\iota,\theta,\alpha^{\dP_0},w)$ characterizes $\mu$ and satisfies:
\begin{itemize}
\item[(B1)] $\mu(A,\iota,\theta,\alpha^{\dP_0},w)$ depends only on the $R$-isomorphism class of $(A,\iota,\theta,\alpha^{\dP_0})$.

\item[(B2)] The formation of $\mu(A,\iota,\theta,\alpha^{\dP_0},w)$ commutes with arbitrary extensions of scalars $R\rightarrow R'$ of $\Lambda_n$-algebras.


\item [(B3-a)] $\mu(A,\iota,\theta,\alpha^{\dP_0}, a^{-1}w)=k_n^{\tau_0}(t)\cdot\mu(A,\iota,\theta,\alpha^{\dP_0},w)$, for all $t\in\Z_p^\times$.

\item [(B3-b)] $g\ast\mu(A,\iota,\theta,\alpha^{\dP_0},wg)=\mu(A,\iota,\theta,\alpha^{\dP_0},w)$, for all $g\in K_0(p)^{\tau_0}$.
\end{itemize}



\begin{rmk}
Note that this description fits with the classical setting of classical Katz modular forms explained in \S \ref{KatzModForm}.
\end{rmk}

\section{$p$-adic families and eigenvarieties for $G$} In this section we construct $d+1$-dimensional families of Hecke eigenvalues of automorphic forms over $(B\otimes \A_F)^\times$. As usual it is consequence of the spectral properties of the $U_p$-operator acting on the space of families of overconvergent modular forms. 

\subsection{Weight space for $G$}\label{ss:weight space for G}

In \S \ref{ss:weight space} we studied the weight space for $G'$, denoted by $\dW$, which is the formal spectrum of  $\Lambda_F=\Z_p[[\cO^\times]]$.  For $G$ we consider the algebras $\Lambda_F^G:=\Z_p[[\cO^\times\times\Z_p^\times]]$ and:
$$\Lambda_F^{G,0}=\Lambda_F^{0}\hat\otimes\Z_p[[1+ p\Z_p]]=\Z_p[[T_1,\cdots,T_d,T]] \qquad\qquad \Lambda_n^G:=\Lambda_F^G\left\langle\frac{T_1^{p^n}}{p},\cdots,\frac{T_d^{p^n}}{p},\frac{T^{p^n}}{p}\right\rangle$$
Similarly as in \S \ref{ss:weight space}, we have a decomposition $\Lambda_F^G=\Z_p[H']\hat\otimes\Lambda_F^{G,0}$ where $H'$ is an abelian finite group. 

Then we consider the formal schemes $\dW^G:=\Spf(\Lambda_F^G)$ and $\dW_n^G:=\Spf(\Lambda_n^G)$. The scheme $\dW^G$ is called the \emph{weight space for $G$}. Similarly as in \S \ref{ss:weight space}, we have the moduli interpretation: $$\dW^G(R)={\rm Hom}_{\rm cont}(\cO^\times\times\Z_p^\times,R).$$

Consider $\cO^\times\rightarrow \cO^\times\times\Z_p^\times$ given by $t\longmapsto (t^{-2}, N(t))$, where $N(t)$ means the norm of $t$. We obtain a morphism of weight spaces:
 \begin{equation}\label{e:map between weight spaces}
 k:\dW^G\rightarrow\dW
 \end{equation}
In terms of characters, we have that if $(r,\nu)\in \dW^G(R)$ and $t \in \cO^\times$ then $k(r,\nu)(t)=\nu(N(t))\cdot r(t)^{-2}.$

\begin{lemma}
The morphism $k$ extends to a well defined morphism
$k:\dW_n^G\rightarrow \dW_n$.
\end{lemma}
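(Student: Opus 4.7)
The plan is to spell out $k^\ast:\Lambda_F\to\Lambda_F^G$ on the generators $T_i$, verify that each $k^\ast(T_i)$ lies in the (ideal-theoretic) augmentation ideal $(T_1,\dots,T_d,T)$, and then use a multinomial argument to deduce that $k^\ast(T_i)^{p^n}/p\in\Lambda_n^G$. Since $\Lambda_n=\Lambda_F\langle T_i^{p^n}/p\rangle$ is (essentially) defined by a universal property with respect to such elements, this immediately gives the desired extension.

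First I would unpack the definition. The moduli formula $k(r,\nu)(t)=\nu(N(t))\cdot r(t)^{-2}$ translates into $k^\ast([t])=[N(t)]\cdot [t]^{-2}$ on group-like elements $[t]\in \Lambda_F^\times$. Applied to the basis $\{e_i\}$ of $\cO^0$, which satisfies $[e_i]=1+T_i$ both in $\Lambda_F$ and in $\Lambda_F^G$, Hypothesis~\ref{hypothesis 1} (unramifiedness of $F$ at $p$) forces $e_i\in 1+p\cO$ and hence $N(e_i)\in 1+p\Z_p$. Writing $N(e_i)=g^{\alpha_i}$ for some $\alpha_i\in\Z_p$, where $g$ is the topological generator of $1+p\Z_p$ with $[g]=1+T$, one obtains
\[
k^\ast(1+T_i)=(1+T)^{\alpha_i}(1+T_i)^{-2},
\]
so that $k^\ast(T_i)\in (T_1,\dots,T_d,T)\cdot\Lambda_F^G$. (The torsion factor $\Z_p[H]\to\Z_p[H']$ is transparent because $H$ is prime-to-$p$ torsion and $\Lambda_n$ only modifies the connected-component piece.)

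The core step is then the general algebraic claim: for any $f\in(T_1,\dots,T_d,T)\cdot\Lambda_F^G$, one has $f^{p^n}/p\in\Lambda_n^G$. Writing $f=\sum_i g_iT_i + gT$ with $g_i,g\in\Lambda_F^G$ and expanding by the multinomial theorem,
\[
f^{p^n}=\sum_{\beta_1+\cdots+\beta_d+\beta=p^n}\binom{p^n}{\beta_1,\dots,\beta_d,\beta}\Bigl(\prod_i (g_iT_i)^{\beta_i}\Bigr)(gT)^{\beta},
\]
every multinomial coefficient is divisible by $p$ except for the $d+1$ ``pure'' terms (one coordinate equal to $p^n$, the others zero). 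Dividing by $p$ yields
\[
\frac{f^{p^n}}{p}=\sum_{i=1}^d g_i^{p^n}\cdot\frac{T_i^{p^n}}{p}+g^{p^n}\cdot\frac{T^{p^n}}{p}+(\text{element of }\Lambda_F^G),
\]
which lies in $\Lambda_n^G$. Specializing to $f=k^\ast(T_i)$ gives the claim.

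I do not foresee any serious obstacle; the only delicate point is the norm computation, where unramifiedness of $F$ at $p$ is essential — without it, $N(e_i)$ could land outside $1+p\Z_p$, the exponent $\alpha_i$ would have negative $p$-adic valuation, and the argument would break.
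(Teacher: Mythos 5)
Your proof is correct and follows essentially the same route as the paper: compute $k^\ast(1+T_i)=(1+T_i)^{-2}(1+T)^{\alpha_i}$, observe $k^\ast(T_i)$ lies in the ideal $(T_1,\dots,T_d,T)$, and use the divisibility of the non-pure (multi)nomial coefficients $\binom{p^n}{\beta_1,\dots,\beta}$ by $p$ to see that $k^\ast(T_i)^{p^n}/p$ is a power-bounded element of $\Lambda_n^G$, then conclude by the universal property of $\Lambda_F\langle T_i^{p^n}/p\rangle$. Your packaging of the key step as a general statement about elements of the augmentation ideal is a mild (and clean) generalization of the paper's direct binomial expansion, but the mechanism is identical.
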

\begin{proof}
By definition $\Lambda_F\left\langle\frac{T_1^{p^n}}{p},\cdots,\frac{T_d^{p^n}}{p}\right\rangle$ is the completion of the continuous $\Lambda_F$-algebra 
\[
\Lambda_F\stackrel{\varphi}{\longrightarrow}\Lambda_F\left(\frac{T_1^{p^n}}{p},\cdots,\frac{T_d^{p^n}}{p}\right)\subseteq\Lambda_F\left[\frac{1}{p}\right], 
\]
satisfying that $\frac{T_i^{p^n}}{p}$ is power bounded for all $i=1,\cdots,d$, and the following universal property: For any non-archimedean topological continuous $\Lambda_F$-algebra $f:\Lambda_F\rightarrow B$ such that $f(p)$ is invertible in $B$ and $f(T_i)f(p)^{-1}$ is power bounded in $B$, then there exists a unique continuous homomorphism $g:\Lambda_F\left(\frac{T_1^{p^n}}{p},\cdots,\frac{T_d^{p^n}}{p}\right)\rightarrow B$ with $f=g\circ\varphi$. 

Each variable $T_i$ corresponds to a $\Z_p$-generator $\alpha_i$ of $\Lambda_F^0$, and $T$ corresponds to $\exp(p)\in 1+p\Z_p$.
Note that the corresponding algebra morphism $k^\ast:\Lambda_F\rightarrow\Lambda_F^G$ satisfies $k^\ast(T_i+1)=(T_i+1)^{-2}(T+1)^{\beta_i}$, where $\log_p({\rm Norm}_{F/\Q}\alpha_i)=p\beta_i$. Hence considering the composition
\[
F:\Lambda_F\stackrel{k^\ast}{\longrightarrow}\Lambda_F^G\longrightarrow\Lambda_F^G\left(\frac{T_1^{p^n}}{p},\cdots,\frac{T_d^{p^n}}{p},\frac{T^{p^n}}{p}\right),
\]
we have that 
\begin{eqnarray*}
F(T_i^{p^n})&=&\left((T_i+1)^{-2}(T+1)^{\beta_i}-1\right)^{p^n}=\left(\sum_{n\geq 1}\binom{\beta_i}{n}T^n+\sum_{m\geq 1}\binom{-2}{m}T_i^m\sum_{n\geq 0}\binom{\beta_i}{n}T^n\right)^{p^n}\\
&=&p\left(\frac{T_i^{p^n}}{p}\left(\sum_{m\geq 1}\binom{-2}{m}T_i^{m-1}\right)^{p^n}\left(\sum_{n\geq 0}\binom{\beta_i}{n}T^n\right)^{p^n}+\frac{T^{p^n}}{p}\left(\sum_{n\geq 1}\binom{\beta_i}{n}T^{n-1}\right)^{p^n}+\lambda\right),
\end{eqnarray*}
for some $\lambda\in\Lambda_F^G$. This implies that $p^{-1}F(T_i^{p^n})\in \Lambda_F^G\left(\frac{T_1^{p^n}}{p},\cdots,\frac{T_d^{p^n}}{p},\frac{T^{p^n}}{p}\right)$ and it is power bounded. Thus, $F$ factors through $\varphi$ making the following diagram commutative
\[
\xymatrix{
\Lambda_F\ar[d]_\varphi\ar[r]^{k^\ast}&\Lambda_F^G\ar[d]\\
\Lambda_F\left(\frac{T_1^{p^n}}{p},\cdots,\frac{T_d^{p^n}}{p}\right)\ar[r]^{k'}&\Lambda_F^G\left(\frac{T_1^{p^n}}{p},\cdots,\frac{T_d^{p^n}}{p},\frac{T^{p^n}}{p}\right).
}
\]
The completion of the continuous morphism $k'$ provides the morphism we are looking for.
\end{proof}

\subsection{Overconvergent descent from $G'$ to $G$}

Recall the Shimura curve $X^{\cC}$ for $\cC\in \Pic(\cO_F)$ introduced in remark \ref{rmkonGammac}. Repeating the constructions performed in \S \ref{s:integral models and canonical groups} and \S \ref{s:over modular sheaves} we obtain for $r\geq n$ a formal scheme $\dX_r^\cC$ and a sheaf $\Omega^{{\bf k}_n}$ over $\dX_r^\cC\times\dW_n$. The irreducible components of  $X^{\cC}$ are in correspondence with $\Pic(E/F,\cN)$ as well as for $\dX_r^{\cC}$ and we denote by $\dX_r^{\cC, 0}$ the irreducible component corresponding to $1 \in \Pic(E/F,\cN)$. 
We consider the universal character
\[
({\bf r}_n,{\bf \nu}_n):\cO^\times\times\Z_p^\times\longrightarrow\Lambda_F^G\subset \Lambda_n^G,
\] 
and its image through $k:\dW^G_n\rightarrow\dW_n$ denoted ${\bf k}_n:=k({\bf r}_n,{\bf \nu}_n):\cO^\times\longrightarrow \Lambda_n^G.$ 
We put:
\[
M^r_{{\bf k}_n}(\Gamma_{1,1}^{\cC}(\cN,p),\Lambda_n^G):=H^0(\dX_r^{\cC, 0}\times\dW_n^G,(\mathrm{id}, k)^\ast\Omega^{{\bf k}_n})\otimes_{\Z_p}\Q_p.
\]

In the same way as at the end of \S \ref{GvsG'} we describe an action of $\Delta$ on $M^r_{{\bf k}_n}(\Gamma_{1,1}^{\cC}(\cN,p),\Lambda_n^G)$. For $t= (t_0,t^0)\in \cO^\times=\Z_p^\times\times\cO^{\tau_0\times}$ we put ${\bf r}_n'(t)={\bf r}_n(t^0){\bf \nu}_n(t_0){\bf r}_n(t_0^{-1})$. Now let $\mu\in M^r_{{\bf k}_n}(\Gamma_{1,1}^{\cC}(\cN,p),\Lambda_n^G)$ and let $(A,\iota,\theta,\alpha^{\dP_0},w)$ over some $R$ be as in \S \ref{ss:overconvergent modular forms a la katz} , then for $s\in\Delta$ we put:
\[
(s\ast\mu)(A,\iota,\theta,\alpha^{\dP_0},w):={\bf r}_{n}'(s)\cdot\mu(A,\iota,s^{-1}\theta,\gamma_s^{-1}\alpha^{\dP_0},w),
\] 
where $\gamma_s\in K_1(\cN,p)$ and $\det(\gamma_s)=s$. This action is well defined, in fact 
given  $\eta\in U_{\cN}$, the isomorphism $\eta:A\simeq A$ provides identifications
 \begin{eqnarray*}
\mu(A,\iota,\theta,\alpha^{\dP_0},w)&=&\mu(A,\iota,\eta^{-2}\theta,\eta^{-1}\alpha^{\dP_0},\eta^{-1} w)={\bf k}_{n,\tau_0}(\eta){\bf k}^{\tau_0}_{n}(\eta^{-1})\cdot\mu(A,\iota,\eta^{-2}\theta,\eta^{-1}\alpha^{\dP_0},w)\\
&=&{\bf r}_n'(\eta^2)\cdot\mu(A,\iota,\eta^{-2}\theta,\eta^{-1}\alpha^{\dP_0},w),
\end{eqnarray*}
where the third equality follows from the fact that ${\rm Norm}_{F/\Q}(\eta)=1$. Hence we have the well defined action of 
for any representative $\epsilon\in(\cO_{F})^\times_+$ of $(\epsilon)\in \Delta$. 

Finally, this action is compatible with the action introduced in \S \ref{GvsG'} under specialization to classical weights.

In a compatible way with corollary \ref{c:automorphic forms as sections} we define:
\begin{defi}\label{d:overconvergent families for G}
The space of \emph{families of locally analytic overconvergent automorphic forms for $G$ with depth of overconvergence $r$} is:
\[
M^r_{{\bf k}_n}(\Gamma_{1}(\cN,p),\Lambda_n^G):=\bigoplus_{\mathfrak{c}\in\Pic(\cO_F)}M^r_{{\bf k}_n}(\Gamma_{1,1}^{\mathfrak{c}}(\cN,p),\Lambda_n^G)^{\Delta}= \bigoplus_{\mathfrak{c}\in\Pic(\cO_F)} \left(H^0(\dX_r^{\cC, 0}\times\dW_n^G,(\mathrm{id}, k)^\ast\Omega^{{\bf k}_n})\otimes_{\Z_p}\Q_p\right)^{\Delta}.
\]
\end{defi}

\subsection{Hecke operators} 

In this section we define the Hecke operators acting on $M^r_{{\bf k}_n}(\Gamma_{1}(\cN,p),\Lambda_n^G)$.

\subsubsection{Hecke operators $T_{g}$, with $g\in G(\A_f^p)$}

Given a point $(A,\iota,\theta,\alpha^{\dP_0})\in(\dX_r^{\cC})^0(R)$ (the connected component of $\dX_r^{\cC}$), and the double coset $K_1^B(\cN)gK_1^B(\cN)=\bigsqcup_ig_iK_1^B(\cN)$ in $G(\A_f^p)$, we can construct the tuple $(A^{g_i},\iota^{g_i},\theta^{g_i},(\alpha^{g_i})^{\dP_0})\in(\dX_r^{\cC_i\cC})^0(R)$ analogously as in \S \ref{Heckops}, where $\cC_i=\det(g_i)$. In the way as in Proposition \ref{HeckopKatz}, we define:
\[
(T_g \mu)_\cC(A,\iota,\theta,\alpha^{\dP_0},w)=\sum_i{\bf r}_n'(\det(\gamma_i))\cdot\mu_{\cC'}(A^{g_i},\iota^{g_i},\det(\gamma_i)^{-1}\theta^{g_i},k_i^{-1}(\alpha^{g_i})^{\dP_0},w),
\]
for any $\mu\in M^r_{{\bf k}_n}(\Gamma_{1}(\cN,p),\Lambda_n^G)$, where $\cC'$ is the class of $\cC\det(g)=\cC\cC_i$, the elements $\gamma_i\in G(F)_+$ and $k_i\in K_1^B(\cN)$ satisfy $b_\cC g_i=\gamma_i^{-1}b_{\cC'}k_i$ (We fix $b_\cC\in G(\A_f^p)$ with $\det(b_\cC)=\cC$), and  the basis $w$ in $\Omega^1_{A^{g_i}}$ is the image of the basis $w$ in $\Omega^1_A$ through the natural isogeny $A\rightarrow A^{g_i}$, that is an isomorphism in tangent spaces.
The fact that the expression is well defined follows from Remark \ref{rmkondefHecke}.

\subsubsection{Hecke operators $U_{\dP}$, where $\dP\mid p$ and $\dP\neq \dP_0$} 
We define $U_\dP$ for any $\dP\neq\dP_0$ dividing $p$ by means of the formula:
\[
(U_\dP \mu)_\cC(A,\iota,\theta,\alpha^{\dP_0},w)=
\sum_{i\in\kappa_\dP} (g_i\ast\mu_{\cC})(A_i,\iota_i,\varpi_\dP\theta_i,\alpha_i^{\dP_0},wg_i),\qquad g_i=\left(\begin{array}{cc}\varpi_\dP&i\\&1\end{array}\right),
\]
where  
$A_i=A/C_i$ with $C_i$ defined as in \S \ref{Upop}, 
following remark \ref{rmkonpts} the morphism $\alpha_i^{\dP_0}$ is  provided by the subgroup $A[\dP]^{-.1}/C_i$, and $wg_i$ is the basis of $\Omega_{A_i}^1$ given by $w$ by means of the natural isogeny $A\rightarrow A_i$ as in remark \ref{rmkbasisUp}. Moreover, the action of $g_i$ on $D_n^{{\bf k}_n^{\tau_0}}(\cO^{\tau_0},R)$ is given by:
\[
\int_{\cO^{\tau_0\times}\times\cO^{\tau_0}}\phi \;d(g_i\ast \mu):=\int_{\cO^{\tau_0\times}\times\cO^{\tau_0}}(\varpi_\dP g_i^{-1}\ast\phi) \;d\mu.
\]
By \S \ref{Upop} this definition is consistent with the classical definition up to constant.

\subsubsection{Hecke operator $U_{\dP_0}$}
We define $U_{\dP_0}$ by means of the formula
\[
(U_{\dP_0} \mu)_\cC(A,\iota,\theta,\alpha^{\dP_0},w)=\frac{1}{p}\sum_{i\in \mathbb{F}_p} \mu_{\cC}(A_i,\iota_i,\varpi_{\dP_0}\theta_i,\alpha_i^{\dP_0},w),
\]
where 
the basis $w$ are defined similarly as above, and $A_i=A/C_i$ is characterized by the fact that $C_i^{-,1}:=C_i\cap A[\dP_0]^{-,1}$ has zero intersection with the canonical subgroup (see \S \ref{Upop}). By \cite[lemma 7.5]{brasca13}, $(A_i,\iota_i,\varpi_{\dP_0}\theta_i,\alpha_i^{\dP_0})\in\dX_r^{\cC}$, thus $U_{\dP_0}$ defines a well defined operator in $M^{r}_{{\bf k}_n}(\Gamma_1(\cN,p),\Lambda_n^G)$.

\begin{rmk}\label{rmkonU}
As noticed above, the specialization of the operators $U_\dP$ at classical weights and the operator $U$ of \S \ref{ss:NA-trilinear products} coincide up to constant. Indeed by \S \ref{Upop}, for any classical weight $x=(\underline{k},\nu)\in \dW^G_n$ we have: 
\[
(U_\dP\mu)_x=\left\{\begin{array}{cc}
\varpi_\dP^{\frac{(\nu+2) \underline{1}+\underline{k}_\dP}{2}}\cdot U\mu_x, &\dP\neq\dP_0\\
p^{\frac{\nu +k_{\tau_0}}{2}}\cdot U\mu_x, &\dP=\dP_0.
\end{array}\right.
\] 
This implies that the pair of eigenvalues $\alpha_\dP$ and $\beta_\dP$ of $U_\dP$ satisfy the Hecke polynomials:
\[
X^2-a_\dP X+\varpi_\dP^{\underline{k}_\dP+\underline{1}},\quad \dP\neq\dP_0,\qquad\qquad
X^2-a_{0} X+p^{k_{\tau_0}-1}, \quad\dP=\dP_0.
\]
\end{rmk}

\subsubsection{Hecke operator $V_{\dP_0}$}
We analogously define $V_{\dP_0}$ as follows:
\[
(V_{\dP_0} \mu)_\cC(A,\iota,\theta,\alpha^{\dP_0},w)= \mu_{\cC'}(A_\infty,\iota_\infty,\varpi_{\dP_0}\theta_\infty,\alpha_\infty^{\dP_0},w),
\]
where 
the basis $w$ are defined similarly as above, and $A_\infty=A/C_\infty$ is characterized by the fact that $C_\infty^{-,1}:=C_\infty\cap A[\dP_0]^{-,1}$ is the canonical subgroup. Since one can prove that $(A_\infty,\iota_\infty,\varpi_{\dP_0}\theta_\infty,\alpha_\infty^{\dP_0})\in \dX_{r-1}^{\cC}$, one obtains that for $r\geq 1$ $V_{\dP_0}$ defines a well defined morphism 
\[
V_{\dP_0}:M^{r-1}_{{\bf k}_n}(\Gamma_1(\cN,p),\Lambda_n^G)\longrightarrow M^{r}_{{\bf k}_n}(\Gamma_1(\cN,p),\Lambda_n^G).
\]

\subsection{Banach space structure and compactness}

Let us consider $\cY:=\dX_r\times\dW_n^G$. Given a point $y=(z,(r,\nu))\in \cY^{rig}$ with residue field $\kappa$, let us consider the corresponding point $y\in \cY$. By the interpretation as Katz modular forms, we have an identification
\[
H^0(\Spf(\cO_\kappa),y^\ast\Omega^{k})\otimes\Q_p\simeq D_n^{k^{\tau_0}}(\cO^{\tau_0},\cO_\kappa)\otimes\Q_p,\qquad k:= k(r, \nu)=(k_{\tau_0},k^{\tau_0}),
\] 
which is a Banach $\kappa$-module. We denote by $|\cdot |_y$ the induced norm, which in fact does not depend on the identification.

For any $\mu\in M^r_{{\bf k}_n}(\Gamma_{1}(\cN,p),\Lambda_n^G)$, we define the supremum norm 
\[
|\mu|={\rm sup}\left\{|y^\ast \mu_\cC|_y;\;\; y\in \cY^{rig,0},\;\cC\in\Pic(\cO_F)\right\},\qquad y^\ast \mu_\cC\in H^0(\Spf(\cO_\kappa),y^\ast\Omega^{k})\otimes\Q.
\]
\begin{lemma}\label{lemmanorm}
$|\cdot |$ is a norm on $M^r_{{\bf k}_n}(\Gamma_{1}(\cN,p),\Lambda_n^G)$ which makes it into a potentially orthonormalizable $\Lambda_n^G\otimes\Q$-Banach module.
\end{lemma}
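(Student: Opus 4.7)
The plan is to reduce the claim to the local description of $\cF_n$ obtained in Lemma \ref{l:locdesc}, combined with a finite-covering and invariants argument, using the formalism of Banach sheaves from \cite[Appendix B]{AIP-halo}.

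First I would verify the norm axioms for $|\cdot|$. The ultrametric triangle inequality and compatibility with scalar multiplication by $\Lambda_n^G\otimes\Q$ are immediate from the pointwise definition, since each fiberwise norm $|\cdot|_y$ has them. For non-degeneracy, the classical points $y=(z,(\underline{k},\nu))\in\cY^{rig,0}$ with $\underline{k}\in\N[\Sigma_F]$ and $\nu\in\Z$ are Zariski-dense (by density of integral weights in $\dW_n^G$ and reducedness of $\dX_r^{\cC,0}$), so vanishing of all $y^*\mu_\cC$ forces $\mu_\cC=0$.

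Next, to establish completeness and potential orthonormalizability, I would choose a finite admissible affine cover $\{U_i=\Spf(R_i)\}$ of $\dX_r^{\cC,0}$ such that over each $U_i$ the sheaves $\Omega_0$ and $\omega_\dP$ ($\dP\neq\dP_0$) are free, the ideal $\mathrm{Hdg}$ is principal with chosen generator $\alpha\in R_i$, and the universal sections $P_{0,n}$ and $P_{\dP,n}$ trivialize. By Lemma \ref{l:locdesc}, after pullback along $k:\dW_n^G\to\dW_n$, the sheaf of sections of $\cF_n$ on $U_i\times\dW_n^G$ is identified with the Tate algebra
\[
P_{\alpha}((Y_\tau)_\tau)\cdot R_i\hat\otimes\Lambda_n^G\left\langle\frac{Z_\tau}{1+\alpha Y_\tau}\right\rangle_{\tau\in\Sigma_\dP,\;\dP\neq\dP_0};
\]
its continuous $\Lambda_n^G\otimes\Q$-linear dual, which computes local sections of $(\mathrm{id},k)^*\Omega^{{\bf k}_n}$, is potentially orthonormalizable in the sense of Buzzard. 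Indeed, after a finite extension trivializing a $(p-1)$-th root of $\alpha$, the monomials in $Z_\tau, Y_\tau$ furnish a Mahler-type orthonormal basis of the Tate algebra, whose dual basis orthonormalizes the distribution module.

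Then I would identify the global sections $H^0(\dX_r^{\cC,0}\times\dW_n^G,(\mathrm{id},k)^*\Omega^{{\bf k}_n})\otimes\Q$ with the kernel of the Čech differential associated with the cover $\{U_i\times\dW_n^G\}$. Both the source and target of this differential are finite products of potentially orthonormalizable Banach $\Lambda_n^G\otimes\Q$-modules (by the previous step, applied also to the affine opens $U_{ij}$), hence they are themselves potentially orthonormalizable. Since the kernel of a continuous map of potentially orthonormalizable Banach modules over $\Lambda_n^G\otimes\Q$ retains this property by \cite[Appendix B]{AIP-halo}, this endows each $M^r_{{\bf k}_n}(\Gamma_{1,1}^{\cC}(\cN,p),\Lambda_n^G)$ with the desired structure. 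Taking the $\Delta$-invariants preserves it since $\Delta$ is finite and $p$ is inverted, and the finite direct sum over $\mathrm{Pic}(\cO_F)$ trivially preserves it as well.

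The main obstacle I anticipate is the compatibility bookkeeping between the pullback $k^*:\Lambda_n\to\Lambda_n^G$ (which affects both the variables $(Y_\tau,Z_\tau)$ and the twisting factor $P_\alpha$), the glueing isomorphisms on overlaps $U_{ij}$ (which involve the transition matrices for the trivializations of $\Omega_0$, $\omega_\dP$ and thus rescale the Tate algebra variables), and the $\Delta$-action; one must ensure the cover can be chosen $\Delta$-stably, or argue that $\Delta$-invariants commute with the Čech kernel. Once this is in place, the orthonormalizability follows mechanically from Lemma \ref{l:locdesc}.
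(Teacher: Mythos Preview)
Your proof is broadly on the right track but differs from the paper's argument in a few places, and one step deserves caution.

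The paper does not go through Lemma~\ref{l:locdesc} and duals of Tate algebras, nor through a \v{C}ech differential. Instead it uses the Katz interpretation of \S\ref{ss:overconvergent modular forms a la katz} directly: on a finite trivialization $\{U_i=\Spf(A_i)\}$ of $\Omega$, one has $H^0(U_i,\Omega^{{\bf k}_n})\simeq D_n^{{\bf k}_n^{\tau_0}}(\cO^{\tau_0},A_i)$, the supremum norm $|\cdot|$ is simply the maximum of the finitely many local norms $|\cdot|_i$, and finiteness, completeness and separatedness follow from reducedness of $\cY$ (separatedness uses all rigid points, so your detour through Zariski density of classical weights is unnecessary). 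For potential orthonormalizability the paper invokes \cite[\S A]{Col97}: since $L$ is discretely valued and $D_n^{{\bf k}_n^{\tau_0}}(\cO^{\tau_0},A_i)\simeq D_n^{{\bf k}_n^{\tau_0}}(\cO^{\tau_0},k^\ast(A_i))\hat\otimes_{\Z_p}\Lambda_n^G$ for any $k\in\Lambda_n^G(\cO_L)$, Coleman's results apply.

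The step in your argument that needs care is the assertion that ``the kernel of a continuous map of potentially orthonormalizable Banach modules over $\Lambda_n^G\otimes\Q$ retains this property.'' Over a general Banach algebra this is false: closed submodules of orthonormalizable modules need not be orthonormalizable. What saves the situation here is precisely that the base is a Banach algebra over a discretely valued field, so Coleman's structural results apply to the global module directly---but once you invoke that, the \v{C}ech kernel presentation is no longer doing any work, and you have essentially reproduced the paper's argument. Your approach via Lemma~\ref{l:locdesc} and \v{C}ech is not wrong, but it is more circuitous and the crucial input is the same citation to \cite{Col97}.
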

\begin{proof}
See \cite[Lemma 2.2]{Kassaei06}. It is clear that $|\cdot |$ is an ultrametric norm. We have to prove that it is finite, complete and separated. Let $\{U_i=\Spf(A_i)\}_{i\in I}\subset\cY$ be a finite trivialization of $\Omega$ by affinoids. Thus, by Katz modular form interpretation $H^0(U_i,\Omega^{{\bf k}_n})\stackrel{\sigma_i}{\simeq} D_n^{{\bf k}_n^{\tau_0}}(\cO^{\tau_0},A_i)$. The supremum norm on $A_i\otimes\Q$ induces a finite norm $|\cdot|_i$ on $D_n^{{\bf k}_n^{\tau_0}}(\cO^{\tau_0},A_i)\otimes\Q$. Moreover, we have that $I$ is finite and
$|\mu|={\rm max}\{|\sigma_i\mu_\cC\mid_{U_i}|_i;\;i\in I,\cC\in\Pic(\cO_F)\}$, thus $|\cdot|$ is finite. Note that $|\cdot|_i$ is complete and separated on reduced affinoids. Since $\cY$ is reduced, $|\cdot|$ is separated. We deduce that $|\cdot|$ is also complete since every Cauchy sequence induce a Cauchy sequence in $D_n^{{\bf k}_n^{\tau_0}}(\cO^{\tau_0},A_i)\otimes\Q$ whose limits can be used, via identifications $\sigma_i$, to produce a section in $M^r_{{\bf k}_n}(\Gamma_{1}(\cN,p),\Lambda_n^G)$. which lies in the limit of the original Cauchy sequence.

Finally, the fact that $|\cdot |$ is potentially orthonormalizable follows from the results in \cite[\S A]{Col97}, since $L$ is discretely valued and $D_n^{{\bf k}_n^{\tau_0}}(\cO^{\tau_0},A_i)\simeq D_n^{{\bf k}_n^{\tau_0}}(\cO^{\tau_0},k^\ast(A_i))\hat\otimes_{\Z_p}\Lambda_n^G$, for any $k\in \Lambda_n^G(\cO_L)$.
\end{proof}

\begin{rmk}
We have not worried about the $\Delta$-invariance because it correspond to a unitary continuous action.
\end{rmk}

\begin{prop}\label{Upcomp}
The composition $U_p:=\prod_{\dP\mid p}U_{\dP}$ acting on $M^r_{{\bf k}_n}(\Gamma_{1}(\cN,p),\Lambda_n^G)$ corresponds to a compact operator of $\Lambda_n^G\otimes\Q$-Banach spaces.	
\end{prop}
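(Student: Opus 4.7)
The plan is to reduce compactness of $U_p$ to compactness of the single factor $U_{\dP_0}$, using that the remaining factors $U_\dP$ with $\dP\neq\dP_0$ are bounded and that compact operators form an ideal in the algebra of continuous endomorphisms of a Banach module. The geometric heart of the argument is the standard observation that $U_{\dP_0}$ improves the radius of overconvergence by one level.

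More precisely, I would first show that the formula defining $U_{\dP_0}$ in fact produces sections on the \emph{larger} formal scheme $\dX_{r-1}$; that is, it extends to a continuous morphism
\[
\widetilde{U}_{\dP_0}\colon M^{r}_{{\bf k}_n}(\Gamma_1(\cN,p),\Lambda_n^G)\longrightarrow M^{r-1}_{{\bf k}_n}(\Gamma_1(\cN,p),\Lambda_n^G).
\]
The key geometric input is the following improvement property: if $(A,\iota,\theta,\alpha^{\dP_0})\in\dX_{r-1}^{\cC}$ and $C_i\subset\cG_0[\dP_0]$ is a subgroup such that $C_i^{-,1}$ has trivial intersection with the order-one canonical subgroup, then $A_i := A/C_i$ acquires a canonical subgroup of one higher depth, so $A_i\in\dX_{r}^{\cC}$. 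This is the one-step sharpening of \cite[Lemma 7.5]{brasca13} used in the very definition of $U_{\dP_0}$, and follows from the classical local calculation showing that non-canonical quotients divide the valuation of the Hasse invariant by $p$. Granting this, the defining formula of $U_{\dP_0}$ makes sense at any $A\in\dX_{r-1}^{\cC}$, and continuity for the Banach norm of Lemma \ref{lemmanorm} is transparent.

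Second, I would establish that the natural restriction map
\[
\rho\colon M^{r-1}_{{\bf k}_n}(\Gamma_1(\cN,p),\Lambda_n^G)\longrightarrow M^{r}_{{\bf k}_n}(\Gamma_1(\cN,p),\Lambda_n^G),
\]
induced by the inclusion of formal schemes $\dX_r\hookrightarrow\dX_{r-1}$, is a compact morphism of potentially orthonormalizable Banach $\Lambda_n^G\otimes\Q$-modules. Using the local trivialization of $\cF_n$ supplied by Lemma \ref{l:locdesc}, this amounts on affinoid opens to the complete continuity of the natural map between Tate algebras of the form $R\langle Z_\tau/(1+\alpha_nY_\tau)\rangle$ as the parameter controlling $\alpha_n$ is strictly shrunk in passing from $\dX_{r-1}$ to $\dX_r$; a standard Banach--Serre argument, along the lines of \cite[Appendice B]{AIP-halo}, gives compactness. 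Compatibility of $\rho$ with the $\Delta$-action, the $\cO^\times$-descent of \S\ref{ss:overconvergent modular sheaves}, and the sum over $\Pic(\cO_F)$ in Definition \ref{d:overconvergent families for G} is automatic, so compactness descends to the $\Delta$-invariant subspace. The factorization $U_{\dP_0}=\rho\circ\widetilde{U}_{\dP_0}$ then exhibits $U_{\dP_0}$ itself as compact.

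For the primes $\dP\mid p$ with $\dP\neq\dP_0$, the $p$-divisible group $\cG_\dP$ is \'etale on $\dX_r$, so the subgroups used in defining $U_\dP$ are \'etale and the quotient tuples stay in $\dX_r^{\cC\cC_i}$ without any change of $\dP_0$-overconvergence depth. The operator $U_\dP$ therefore defines a continuous $\Lambda_n^G\otimes\Q$-linear endomorphism of $M^r_{{\bf k}_n}$, with boundedness visible from the explicit action of $g_i\in K_0(\dP)^{\tau_0}$ on the locally analytic distributions $D_n^{{\bf k}_n^{\tau_0}}(\cO^{\tau_0},R)$. Writing $U_p = U_{\dP_0}\circ\prod_{\dP\neq\dP_0}U_\dP$ and invoking the ideal property of compact operators yields the desired compactness of $U_p$. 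The principal obstacle is the verification in the first step that non-canonical $\dP_0$-quotients raise the overconvergence depth from $r-1$ to $r$: a statement classical on modular curves and implicit in \cite{brasca13,AIP-halo}, but here requiring careful bookkeeping with the decomposition $\cG_0\oplus\bigoplus_{\dP\neq\dP_0}\cG_\dP$ of the $p$-divisible group and with the fact that genuine canonical subgroup theory is available only for the $\dP_0$-component.
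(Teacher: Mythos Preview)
Your reduction to the ideal property fails at the crucial step: the operator $U_{\dP_0}$ acting on the \emph{full} space $M^r_{{\bf k}_n}(\Gamma_1(\cN,p),\Lambda_n^G)$ is not compact, and correspondingly your restriction map $\rho:M^{r-1}_{{\bf k}_n}\to M^r_{{\bf k}_n}$ is not compact either. The sheaf $\Omega^{{\bf k}_n}$ has infinite-dimensional fibres --- they are the spaces of locally analytic distributions $D_n^{{\bf k}_n^{\tau_0}}(\cO^{\tau_0},\cdot)$ --- so in the local trivialization the restriction is of the form $(\mathrm{res}:R_{r-1}\to R_r)\hat\otimes \mathrm{id}_{D_n}$. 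While $\mathrm{res}$ is indeed completely continuous between the affinoid algebras, tensoring a compact map with the identity on an infinite-dimensional Banach module is \emph{not} compact (take $p\cdot\mathrm{id}$ on $\Q_p$, compact; then $p\cdot\mathrm{id}$ on $\Q_p\langle X\rangle$ has unit-ball image $p\Z_p\langle X\rangle$, which is not compactoid). Your remark that ``$\alpha_n$ is strictly shrunk'' does not help: the depth of analyticity is governed by $n$, which is fixed, and in the coordinates of Lemma~\ref{l:locdesc} the Tate variable $Z_\tau/(1+\alpha_nY_\tau)$ still runs over a unit disk on both $\dX_{r-1}$ and $\dX_r$.

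The paper's argument is genuinely different and this difference is essential. Locally one writes $H^0(U_i,\Omega^{{\bf k}_n})\otimes\Q$ as a completed tensor product of a factor $W_{\dP_0}=A_i\otimes\Q$ (on which $U_{\dP_0}$ acts via the geometric correspondence) and, for each $\dP\neq\dP_0$, a distribution module $W_\dP=D_n^{{\bf k}_{n,\dP}}(\cO_\dP,\cO_L\otimes\Lambda^G_{n,\dP})$ (on which $U_\dP$ acts through the matrices $\varpi_\dP g_i^{-1}$). One then checks that \emph{each} $U_\dP$ is compact on its own factor: for $\dP_0$ this is the overconvergence improvement you describe; for $\dP\neq\dP_0$ it comes from the fact that $\varpi_\dP g_i^{-1}$ sends a ball of analyticity of radius $p^{-m}$ into one of radius $p^{-(m-1)}$, so $U_\dP$ factors through the compact inclusion $D^{\,{\bf k}_{n,\dP}}_{n,m-1}\hookrightarrow D^{\,{\bf k}_{n,\dP}}_{n,m}$. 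The product $U_p$ is then a completed tensor product of compact operators, hence compact. In short, the $U_\dP$ with $\dP\neq\dP_0$ are not ``merely bounded'' --- their compactness on the analytic side is exactly what compensates for the infinite-dimensionality that breaks your argument.
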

\begin{proof} 
By the description of the norm $|\cdot|$ given in the proof of Lemma \ref{lemmanorm}, it is enough to prove compactness when restricted to the affinoid $U_i=\Spf(A_i)$ where $\Omega$ is trivialized. In this case,
\[
H^0(U_i,\Omega^{{\bf k}_n})\otimes\Q\simeq D_n^{{\bf k}_n^{\tau_0}}(\cO^{\tau_0},A_i)\otimes\Q\simeq (A_i\otimes\Q)\hat\otimes_L\hat\bigotimes_{\dP\neq\dP_0}D_n^{{\bf k}_{n,\dP}}(\cO_\dP,\cO_L\otimes\Lambda_{n,\dP}^G)\otimes\Q,
\]
where $C_n^{{\bf k}_{n,\dP}}(\cO_\dP,\cdot)$ and $D_n^{{\bf k}_{n,\dP}}(\cO_\dP,\cdot)$ are defined as in \S \ref{NotCD}.
The operator $U_p$ is composed by the operator $U_{\dP_0}$ acting on $W_{\dP_0}:=A_i\otimes\Q$, and the operators $U_{\dP}$ acting on $W_\dP:=D_n^{{\bf k}_{n,\dP}}(\cO_\dP,\cO_L\otimes\Lambda_{n,\dP}^G)$, if $\dP\neq\dP_0$,
\[
\int_{\cO_\dP^\times\times\cO_\dP}\phi\; dU_\dP\mu:=\sum_{i\in\kappa_\dP}\int_{\cO_\dP^\times\times\cO_\dP}\bar g_i\ast\phi \;d\mu;\qquad \varpi_\dP g_i^{-1}=\bar g_i=\left(\begin{array}{cc}1&i\\&\varpi_\dP\end{array}\right).
\]
It is enough to prove that each $U_\dP$ is compact in each $(\cO_L\otimes\Lambda_{n,\dP}^G)$-Banach module $W_\dP$.

By \cite[Lemma 7.5]{brasca13}, the action of $U_{\dP_0}$ factors through the restriction $\dX_{r+1}\rightarrow\dX_r$. Similarly as in \cite[Proposition 2.20]{Kassaei09}, this implies that the action of $U_{\dP_0}$ on $W_{\dP_0}$ is compact.

Notice that, for any locally analytic character $k:\cO_\dP^\times\rightarrow R$,
\[
\imath: C_n^{k}(\cO_\dP,R)\stackrel{\simeq}{\longrightarrow} C_n(\cO_\dP,R);\qquad \imath\phi(a,b)=\phi(b/a),
\]
where $C_n(\cO_\dP,R)$ is the space of functions on $\cO_\dP$ that are analytic in any ball of radius $n$.
Let us consider $C_{n,m}^{k}(\cO_\dP,R):=\imath^{-1}(C_m(\cO_\dP,R))\subseteq C_{n}^{k}(\cO_\dP,R)$, for any $m\leq n$. An orthonormal basis for $C_{n,m}^{k}(\cO_\dP,R)$ is given by 
\[
\phi_{r,f}^m(x,y)=k(x)\cdot f\left(\frac{\frac{y}{x}-r}{\varpi_\cP^m}\right)\sum_{a\in(\cO_\dP/\dP^m)^\times}1_{B_m(a,ar)}(x,y);\qquad r\in\cO_\dP/\dP^m,
\]
for some $f\in R\langle T\rangle$. We compute that $\bar g_i\ast\phi^m_{r,f}=0$, if $i\neq r\;{\rm mod}\;\dP$, and $\bar g_i\ast\phi^m_{r,f}=\phi^{m-1}_{r',f}$, if $i\equiv r$ mod $\dP$ where $r'=\varpi^{-1}(r-i)$. 
This implies that the morphism $U_\dP$ is the composition of a continuous morphism with the restriction morphism ${\rm res}:D_{n,m-1}^{{\bf k}_{n,\dP}}(\cO_\dP,\cO_L\otimes\Lambda_{n,\dP}^G)\otimes\Q\rightarrow D_{n,m}^{{\bf k}_{n,\dP}}(\cO_\dP,\cO_L\otimes\Lambda_{n,\dP}^G)\otimes\Q$, where $D_{n,m}^{{\bf k}_{n,\dP}}(\cO_\dP,\cO_L\otimes\Lambda_{n,\dP}^G):=C_{n,m}^{{\bf k}_{n,\dP}}(\cO_\dP,\cO_L\otimes\Lambda_{n,\dP}^G)^\vee$. An easy computation shows that ${\rm res}$ is compact. Thus, the action of $U_\dP$ on $W_\dP$ is compact and the result follows.
\end{proof}


\subsection{Eigenvarieties} \label{ss:eigenvarieties}
The following proposition allows the construction of the eigenvariety as stated in the introduction. 
\begin{prop}\label{p:overconvergent sheaf is projective} The $\Lambda_n^G[1/p]$-module of families of overconvergent forms $M^r_{{\bf k}_n}(\Gamma_{1}(\cN,p),\Lambda_n^G)$ is projective.
\end{prop}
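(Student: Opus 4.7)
The proof will invoke the Banach module machinery developed in \cite[Appendix B]{AIP-halo}, which essentially states that for a potentially orthonormalizable Banach module equipped with a compact operator, suitable projectivity properties hold automatically. The two ingredients we need are already in place: Lemma \ref{lemmanorm} shows that $M^r_{{\bf k}_n}(\Gamma_{1}(\cN,p),\Lambda_n^G)$ is a potentially orthonormalizable $\Lambda_n^G\otimes\Q_p$-Banach module, and Proposition \ref{Upcomp} shows that $U_p$ acts compactly on it.

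My first step would be to reduce the global statement to a local one. By Definition \ref{d:overconvergent families for G}, the module decomposes as a finite direct sum indexed by $\Pic(\cO_F)$ of $\Delta$-invariants. Since $\Delta=(\cO_F)_+^\times/U_\cN^2$ is a finite group (by Dirichlet's unit theorem applied to the totally positive units, together with $U_\cN$ being a finite-index subgroup), and since its action is unitary, the subspace of $\Delta$-invariants is a direct summand obtained as the image of the averaging idempotent. Hence it suffices to prove projectivity of each $H^0(\dX_r^{\cC,0}\times \dW_n^G,(\mathrm{id},k)^\ast\Omega^{{\bf k}_n})\otimes\Q_p$.

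Next, I would use the affinoid trivialization introduced in the proof of Lemma \ref{lemmanorm}. Choose a finite admissible cover $\{U_i=\Spf(A_i)\}$ of $\dX_r^{\cC,0}\times \dW_n^G$ on which the sheaf $(\mathrm{id},k)^\ast\Omega^{{\bf k}_n}$ becomes trivial. The Katz-style description of \S\ref{ss:overconvergent modular forms a la katz} provides an identification
\[
H^0(U_i,(\mathrm{id},k)^\ast\Omega^{{\bf k}_n})\otimes\Q_p\;\simeq\;D_n^{{\bf k}_n^{\tau_0}}(\cO^{\tau_0},A_i)\otimes\Q_p,
\]
and the right-hand side is orthonormalizable over $A_i\otimes\Q_p$ with the explicit basis constructed out of the monomial basis of locally analytic functions on $\cO^{\tau_0}$, by the final paragraph of the proof of Lemma \ref{lemmanorm} and the argument of \cite[\S A]{Col97}. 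Gluing these local trivializations via Tate's acyclicity theorem then realises $H^0(\dX_r^{\cC,0}\times \dW_n^G,(\mathrm{id},k)^\ast\Omega^{{\bf k}_n})\otimes\Q_p$ as the kernel of a \v{C}ech differential between orthonormalizable modules, hence as a direct summand of a potentially orthonormalizable module.

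The main technical obstacle is verifying that this kernel (or equivalently, this descent) actually produces a \emph{projective} module in the sense required by \cite[Appendix B]{AIP-halo}, not merely a potentially orthonormalizable one. The key point is that $\dX_r^{\cC,0}$ is reduced and admits a finite cover by admissible affinoid opens, so the standard argument for coherent Banach sheaves applies: the global sections of a Banach sheaf that is locally orthonormalizable on a finite affinoid cover of a quasi-Stein space is projective (a direct summand of an orthonormalizable module). Combined with compactness of $U_p$, this is precisely the framework needed to apply the eigenvariety construction from \cite[Appendix B]{AIP-halo} in order to obtain the adic space $\mathcal{E}_n$ claimed in Theorem \ref{t:main thm 1}(ii).
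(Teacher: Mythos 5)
Your argument is correct and follows essentially the same route as the paper, which simply observes that each $\cX_r^{\cC}$ is an affinoid over which $\Omega^{{\bf k}_n}$ is a Banach sheaf (hence global sections are projective) and that a finite direct sum of projective modules is projective. Your additional details — the $\Delta$-averaging idempotent (valid since $|\Delta|$ is invertible after inverting $p$) and the local orthonormalizability from Lemma \ref{lemmanorm} — only make explicit what the paper leaves implicit.
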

\begin{proof}  For each $\cC$ the neighborhood of the ordinary locus $\mathcal{X}_r^{\cC}$ is an affinoid and we use Banach sheaves over them, hence $M^r_{{\bf k}_n}(\Gamma_{1,1}^{\mathfrak{c}}(\cN,p),\Lambda_n^G)^{\Delta}$ is projective. Thus $M^r_{{\bf k}_n}(\Gamma_{1}(\cN,p),\Lambda_n^G)$ is a finite sum of projective spaces and then it is projective too. 
\end{proof}

As $M^r_{{\bf k}_n}(\Gamma_{1}(\cN,p),\Lambda_n^G)$ is a projective $\Lambda_n^G[1/p]$-module and the Hecke operator $U_p$ acting on $M^r_{{\bf k}_n}(\Gamma_{1}(\cN,p),\Lambda_n^G)$ is compact (see proposition \ref{Upcomp}) then from \cite{AIP-halo}[\S B.2.4] there exists the Fredholm determinant of $U_p$, which is denoted by $F_{I}(X):= \mathrm{det}(1- XU_p | \ M^r_{{\bf k}_n}(\Gamma_{1}(\cN,p),\Lambda_n^G))\in \Lambda_n^G[1/p]\{\{X\}\}$.  We denote by $\mathcal{W}^G_n$ the corresponding adic weight space, and let $\mathcal{Z}_{n} \subset \mathbb{A}_{\mathcal{W}^G_n}^1$ be the spectral variety attached to $F_{n}(X)$ (see \cite{AIP-halo}[\S B.3]), then the natural map $\mathcal{Z}_{n} \rightarrow \mathcal{W}^G_n$ is locally finite and flat. By construction $\mathcal{Z}_{n}$ parametrizes reciprocals of the non-zero eigenvalues of $U_p$. In order to parametrize eigenvalues for the \emph{all} Hecke operators we construct a finite cover of $\mathcal{Z}_{n}$ as follows. 

The operator $U_p$ is compact on $M^r_{{\bf k}_n}(\Gamma_{1}(\cN,p),\Lambda_n^G)$  then using \cite{AIP-halo}[Cor. B.1, Thm. B.2] we obtain a natural coherent sheaf denoted $\mathcal{M}_n$ over $\mathcal{Z}_{n}$ endowed with an action of the Hecke operators.  The image of the Hecke operators in $\mathrm{End_{\cO_{\mathcal{Z}_{n}}}}(\mathcal{M}_n)$ generates a coherent $\cO_{\mathcal{Z}_{n}}$-algebra and the adic space attached to it is denoted $\mathcal{E}_n$, which by construction is equidimensional of dimension $d+1$ and endowed with a natural weight map $w: \mathcal{E}_n \rightarrow \mathcal{W}_n^G$ which is locally free and without torsion. This is the  \emph{adic eigenvariety} introduced in theorem \ref{t:main thm 1} (ii) of the introduction. 

\subsection{Classicity}\label{ss:classicity}
Let $\dX^\cC(\dP_0)$ be the Shimura curve obtained adding $\Gamma_0(\dP_0)$-level structure to the curve $\dX^\cC$ introduced in \ref{rmkonGammac}. Recall that by the definition of $\dX^\cC$ the curve $\dX^\cC(\dP_0)$ has the full $\Gamma_0(p)$-level structure. Observe that its irreducible components are in bijection with $\mathrm{Pic}(E/F, \mathfrak{n})$ and let $\dX^\cC(\dP_0)^0$ be the irreducible component corresponding to $1$. If $(\underline{k},\nu)\in \dW^G_n(\bar\Q_p)$ is a classical weight i.e. $\underline{k}\in \Xi_\nu$, then the space of \emph{classical forms of weight  $(\underline{k},\nu)$ and level $\Gamma_{1}(\cN,p)$} is:
 \[
M_{(\underline{k},\nu)}(\Gamma_{1}(\cN,p),\bar\Q_p)=\bigoplus_{\cC\in\Pic(\cO_F)}H^0(\cX^\cC(\dP_0)^0,\omega^{\underline{k}})^\Delta,
\]
where $\omega^{\underline{k}}$ is defined in the same way as in \S \ref{ss:modular sheaves}. Observe that by corollary \ref{c:automorphic forms as sections} elements of $M_{(\underline{k},\nu)}(\Gamma_{1}(\cN,p),\bar\Q_p)$ which are defined over number fields determine automorphic forms for $G$.  

The space of \emph{locally analytic overconvergent automorphic forms for $G$ with depth of overconvergence $r$} which we denote by $M^r_{(\underline{k},\nu)}(\Gamma_{1}(\cN,p),\bar\Q_p)$ is defined as the specialization  of the $\Lambda_n^G[1/p]$-module $M^r_{{\bf k}_n}(\Gamma_{1}(\cN,p),\Lambda_n^G)$ at $(\underline{k},\nu)$.  Observe that a form $\mu \in M^r_{(\underline{k},\nu)}(\Gamma_{1}(\cN,p),\bar\Q_p)$ is \emph{overconvergent} in the sense that it is defined in some neighborhood of the ordinary locus and \emph{locally analytic} in the sense that it is a section of a Banach sheaf. Remark the contrast with the coherent modular sheaves used to define classical modular forms.

We also define the space of  \emph{overconvergent automorphic forms for $G$ with depth of overconvergence $r$} as:
$$M^{r, \mathrm{alg}}_{(\underline{k},\nu)}(\Gamma_{1}(\cN,p),\bar\Q_p)=\bigoplus_{\cC\in\Pic(\cO_F)}H^0(\cX^{\cC, 0}_r,\omega^{\underline{k}})^\Delta,$$

Using the existence of the canonical subgroup in $\dX_r$ we obtain a canonical section of the natural morphism $\dX(\dP_0)^{\cC} \rightarrow \dX^{\cC}$ over $\mathfrak{X}_r$, this implies the following diagram: 
$$
\xymatrix{
M^r_{(\underline{k},\nu)}(\Gamma_{1}(\cN,p),\bar\Q_p)\ar^{\alpha}[d] &M_{(\underline{k},\nu)}(\Gamma_{1}(\cN,p),\bar\Q_p)\ar@{_{(}->}^{\beta}[ld]\\
M^{r, \mathrm{alg}}_{(\underline{k},\nu)}(\Gamma_{1}(\cN,p),\bar\Q_p)
}
$$
In the next result if we work with respect to the morphism $\beta$ we consider the Hecke operator $U_{\mathfrak{p}_0}$ and moreover, for $h \in \R_{\geq 0}$ the superscript $h$ means the generalized eigenspace for $U_{\mathfrak{p}_0}$ of slope less than $h$. In the same way if we work with respect to the morphism $\alpha$ we consider the Hecke operators $U_{\mathfrak{p}}$ for $\mathfrak{p}\neq\mathfrak{p}_0$ and in this case we use superscripts $\underline{h}^0= (h_{\mathfrak{p}})_{\mathfrak{p}\neq\mathfrak{p}_0}\in\R_{\geq 0}^{\{\mathfrak{p}\neq\mathfrak{p}_0\}}$

\begin{prop}\label{propclassty} Let $(h_0, \underline{h}^0)= (h_0, h_{\mathfrak{p}})_{\mathfrak{p}\neq\mathfrak{p}_0}\in \R_{\geq 0}\times\R_{\geq 0}^{\{\mathfrak{p}\neq\mathfrak{p}_0\}}$.
\begin{itemize}
\item[(i)] If for each $\mathfrak{p}\neq \mathfrak{p}_0$ we have $h_{\mathfrak{p}} < \mathrm{min}\{k_{\tau}+1: \tau \in \Sigma_{\mathfrak{p}}\}$ then $\alpha$ induces an isomorphism:
$$\xymatrix{M^r_{(\underline{k},\nu)}(\Gamma_{1}(\cN,p),\bar\Q_p)^{< \underline{h}^{0}}\ar^{\sim}[r]&  M^{r, \mathrm{alg}}_{(\underline{k},\nu)}(\Gamma_{1}(\cN,p),\bar\Q_p)^{< \underline{h}^{0}}}$$
\item[(ii)] If $h_{0} < k_{\tau_0}-1$ then $\beta$ induces an isomorphism: 
$$\xymatrix{M_{(\underline{k},\nu)}(\Gamma_{1}(\cN,p),\bar\Q_p)^{< h_{0}}\ar^{\sim}[r]&  M^{r, \mathrm{alg}}_{(\underline{k},\nu)}(\Gamma_{1}(\cN,p),\bar\Q_p)^{< h_{0}}}.$$
\end{itemize}
\end{prop}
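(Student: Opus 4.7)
The two statements are independent classicity results of quite different flavors, so I would treat them separately. Part (i) is an \emph{overconvergent-locally-analytic vs.\ overconvergent-algebraic} classicity in the Andreatta--Iovita--Stevens style, and part (ii) is the classical Coleman--Kassaei theorem for small-slope $\mathfrak{p}_0$-overconvergent forms.

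For part (i), the plan is to reinterpret $\alpha$ as the dualization of the inclusion $\mathrm{Sym}^{\underline{k}^{\tau_0}}(R^2) \hookrightarrow C_n^{\underline{k}^{\tau_0}}(\cO^{\tau_0},R)$ noted in remark \ref{rmkanalicity}. Precisely, I would compare the two Katz-style rules: sections of $\omega^{\underline{k}}$ are rules valued in $\mathrm{Sym}^{\underline{k}^{\tau_0}}(R^2)^\vee$ (the second description of \S\ref{KatzModForm}), while sections of the locally analytic sheaf are rules valued in $D_n^{\underline{k}^{\tau_0}}(\cO^{\tau_0},R)$ (axioms (B1)--(B3) of \S\ref{ss:overconvergent modular forms a la katz}), and the morphism $\alpha$ is induced by the natural projection of distributions onto the degree-$\underline{k}^{\tau_0}$ quotient. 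Its kernel $K$ is the subsheaf of distributions vanishing on all polynomials of multidegree $\leq\underline{k}^{\tau_0}$. Factoring $K = \bigoplus_{\mathfrak{p}\neq\mathfrak{p}_0}K_{\mathfrak{p}}$ componentwise, it suffices to bound from below the slope of each $U_{\mathfrak{p}}$ on $K_{\mathfrak{p}}$. Using the explicit formula for $U_{\mathfrak{p}}$ via $g_{i}=\bigl(\begin{smallmatrix}\varpi_{\mathfrak{p}}&i\\ &1\end{smallmatrix}\bigr)$ and the orthonormal basis of $C_{n,m}^{\underline{k}_{\mathfrak{p}}}(\cO_{\mathfrak{p}},R)$ exhibited in the proof of Proposition \ref{Upcomp}, a direct computation shows that $g_{i}$ acts on the basis vectors of $K_{\mathfrak{p}}$ by multiplication by a power of $\varpi_{\mathfrak{p}}$ of $\dP$-adic valuation at least $\min\{k_{\tau}+1:\tau\in\Sigma_{\mathfrak{p}}\}$ (this is where the $(\tau(y)-\tau(x)r)$-monomials of degree $>k_{\tau}$ contribute a factor of $\varpi_{\mathfrak{p}}^{k_{\tau}+1}$). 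Hence $U_{\mathfrak{p}}|_{K_{\mathfrak{p}}}$ has slope $\geq\min\{k_{\tau}+1\}$, and under the hypothesis $h_{\mathfrak{p}}<\min\{k_{\tau}+1:\tau\in\Sigma_{\mathfrak{p}}\}$ the slope-$<\underline{h}^{0}$ subspace of $K$ is zero. The exact sequence $0\to K\to M^{r}_{(\underline{k},\nu)}\to M^{r,\mathrm{alg}}_{(\underline{k},\nu)}\to 0$ then gives the desired isomorphism after projecting to the small-slope part.

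For part (ii), I would follow the Coleman--Kassaei analytic continuation strategy in the form used in \cite{brasca13}, \cite{kassaei04}. One starts with an overconvergent algebraic form $f\in H^{0}(\cX^{\cC,0}_{r},\omega^{\underline{k}})$ of finite slope $h_{0}<k_{\tau_{0}}-1$ for $U_{\mathfrak{p}_{0}}$, and extends it to the entire rigid curve. The operator $V_{\mathfrak{p}_{0}}$ defined above sends sections on $\dX^{\cC}_{r-1}$ to sections on $\dX^{\cC}_{r}$, so on the $U_{\mathfrak{p}_{0}}$-eigenspace the identity $f=a_{\mathfrak{p}_{0}}^{-1}U_{\mathfrak{p}_{0}}f$ rewrites $f$ as an average of values of $f$ on larger neighborhoods, giving an extension to a bigger strict neighborhood of the ordinary locus. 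Iterating, one extends $f$ across every ``layer'' of the Goren--Kassaei stratification around each supersingular point. The key slope estimate is that each application of $U_{\mathfrak{p}_{0}}^{-1}$ contributes a factor of valuation $-h_{0}$ while each $V_{\mathfrak{p}_{0}}$-step gains valuation $k_{\tau_{0}}-1$; the hypothesis $h_{0}<k_{\tau_{0}}-1$ (which, via remark \ref{rmkonU}, is exactly the Coleman small-slope condition for the Hecke polynomial $X^{2}-a_{0}X+p^{k_{\tau_{0}}-1}$) makes the resulting series converge on the whole curve. Since $[F_{\mathfrak{p}_{0}}:\Q_{p}]=1$ by Hypothesis \ref{hypothesis 1}, this is genuinely the $\GL_{2}/\Q_{p}$ situation at $\mathfrak{p}_{0}$, so Kassaei's proof applies verbatim to each connected component $\cX^{\cC,0}$. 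Rigid GAGA then upgrades the resulting rigid section on all of $X^{\cC}(\dP_{0})^{0}$ to an algebraic modular form, producing the inverse to $\beta$ on the small-slope part.

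The main obstacle will be part (ii): adapting the analytic-continuation argument requires a careful bookkeeping of the $\mathfrak{p}_{0}$-dynamics of $U_{\mathfrak{p}_{0}}$ on our unitary Shimura curve $X^{\cC}(\dP_{0})^{0}$, in particular verifying that $V_{\mathfrak{p}_{0}}$ enlarges the radius of overconvergence by one layer (as used implicitly in the definition $V_{\mathfrak{p}_{0}}:M^{r-1}\to M^{r}$) and that the supersingular disks are exhausted by the canonical-subgroup tower exactly as in \cite{kassaei04}. Part (i), by contrast, is almost purely a distribution-theoretic slope computation and should be routine once the explicit $U_{\mathfrak{p}}$-action on the orthonormal basis is written out.
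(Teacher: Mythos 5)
Your strategy is sound and both halves land on the correct key estimates, but the mechanics differ from the paper's in each part. For (i), the paper first reduces from locally analytic to analytic distributions (using that $U_\dP$ factors through restriction, exactly the computation in Proposition \ref{Upcomp} you invoke) and then identifies the kernel of $D_0^{\underline{k}_\dP}\rightarrow(\Sym^{\underline{k}_\dP})^\vee$ via the last three terms of the locally analytic BGG resolution: the kernel is the image of the dual theta operators $\Theta_\sigma^\vee$ (taking $k_\sigma+1$ derivatives in $y_\sigma$), and the intertwining relation $\varpi_\dP^{k_\sigma+1}\Theta_\sigma^\vee\circ U_\dP=U_\dP\circ\Theta_\sigma^\vee$ gives the slope bound $\geq k_\sigma+1$ on the kernel for free. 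Your direct computation proves the same bound, but note that $U_\dP$ is not diagonal on the orthonormal basis; the honest statement is that the operator induced on the quotient of functions by polynomials of degree $\leq k_\tau$ (dual to $K_\dP$) carries the unit ball into $\varpi_\dP^{k_\tau+1}$ times the unit ball, because in $\sum_i(i+\varpi_\dP y)^j$ only the terms with $\varpi_\dP^m$, $m\geq k_\tau+1$, survive modulo low-degree polynomials. The BGG route buys a reference and a cleaner conceptual packaging (cf.\ \cite{Urb11}, \cite{AIP-siegel}); your route is more self-contained, though you should also justify surjectivity of $\alpha$ on global sections (it holds because $\cX_r^{\cC}$ is affinoid and the sheaves are Banach, as in Proposition \ref{p:overconvergent sheaf is projective}). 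For (ii), the paper does not run the analytic continuation at all: it observes that the statement is an instance of the general geometric classicity results of \cite{Kassaei06} and \cite{BPS}. Your plan to rerun the Buzzard--Kassaei argument on $X^{\cC}(\dP_0)^0$ is viable --- the condition $h_0<k_{\tau_0}-1$ is indeed the Coleman bound for the Hecke polynomial $X^2-a_0X+p^{k_{\tau_0}-1}$ of Remark \ref{rmkonU}, and Hypothesis \ref{hypothesis 1} does reduce the local situation at $\dP_0$ to $\GL_2/\Q_p$ --- but it is substantially more work than the citation, and the "main obstacle" you correctly identify (the $U_{\dP_0}$-dynamics across the supersingular disks) is precisely what those references package.
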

\begin{proof} The first statement is a consequence of the locally analytic BGG-resolution and our the description in \S \ref{ss:overconvergent modular forms a la katz}. More precisely, we have seen in the proof of Proposition \ref{Upcomp} that $U_\dP$ factors through ${\rm res}:D_{n-1}^{\underline{k}_{\dP}}(\cO_\dP,\bar\Z_p)\otimes\Q_p\rightarrow D_{n}^{\underline{k}_{\dP}}(\cO_\dP,\bar\Z_p)\otimes\Q_p$, indeed since $\underline{k}_\dP$ is analytic $C_{n,m}^{\underline{k}_{\dP}}(\cO_\dP,\bar\Q_p)=C_{m}^{\underline{k}_{\dP}}(\cO_\dP,\bar\Q_p)$. Repeating this for each $\mathfrak{p}\neq \mathfrak{p}_0$ it is enough to consider analytic distributions instead of locally analytic ones. 

We consider the three last terms of the BGG-resolution (see for example \cite{Jon} or \cite{Urb11}), which form a exact sequence:
\[
\bigoplus_{\sigma \in \Sigma_{F}-\{\tau_0\}} D_{0}^{\underline{k}^{\tau_0}}(\cO_\dP,\bar\Z_p)\otimes\Q_p \stackrel{(\Theta_\sigma^{\vee})_{\sigma}}\longrightarrow \bigoplus_{\sigma \in \Sigma_{F}-\{\tau_0\}} D_{0}^{\underline{k}^{\tau_0}}(\cO_\dP,\bar\Z_p)\otimes\Q_p \longrightarrow {\rm Sym}^{\underline{k}^{\tau_0}}(\bar\Q_p^2) \longrightarrow 0
\]
here $\Theta_\sigma^{\vee}$ is obtained from the morphism on the locally analytic functions given by applying $\frac{d^{k_{\sigma}+ 1}}{dy_{\sigma}^{k_{\sigma}+ 1}}$ i.e. taking $k_{\sigma}+ 1$-derivatives in the variable $y_{\sigma}$ of the second coordinate. Remark that we have $\varpi_\dP^{k_\sigma+1}\Theta_\sigma^{\vee}\circ U_\dP=U_\dP\circ\Theta_\sigma^{\vee}$ if $\sigma \in \Sigma_{\dP}$ and $\Theta_\sigma^{\vee}\circ U_\dP=U_\dP\circ\Theta_\sigma^{\vee}$ if not. Thus by the hypothesis on $\underline{h}^0$ we obtain an isomorphism
\[
D_{0}^{\underline{k}^{\tau_0}}(\cO^{\tau_0},\bar\Z_p)\otimes\Q_p^{<\underline{h}^0}\simeq {\rm Sym}^{\underline{k}^{\tau_0}}(\bar\Q_p^2)^{<\underline{h}^0}.
\]
This directly implies the first claim by the Katz modular form interpretation.
 This argument is used in an analogous way in  \cite[\S 7.2 and \S 7.3]{AIP-siegel}.

 The second statement below to a general type of results generalizing the geometric approach to classicity introduced by Kasseai in \cite{Kassaei06}. Our result is a consequence for example of the main results of \cite{BPS} . 
\end{proof}


\part{$p$-adic $L$-functions}

We construct ordinary triple product $p$-adic $L$-functions. For this purpose we introduce the space of (classical and dual) $p$-adic modular forms for $G'$. We also introduce the $q$-expansion of a $p$-adic modular by using Serre-Tate coordinates.

\section{$p$-adic modular forms}

\subsection{Definitions}

Let $\cC\in\Pic(\cO_F)$ and denote by $\dX_{\rm ord}^{\cC}$ the \emph{ordinary locus} of $\dX^{\cC}$, namely, the (dense) open subscheme of $\dX^{\cC}$ obtained by removing the points specializing to supersingular points. Since it can be also defined as the open formal subscheme where ${\rm Hdg}$ is invertible then $\dX_{\rm ord}^{\cC}\subset \dX_r^{\cC}$ for each $r>0$. Note that $\dX_{\rm ord}^{\cC}$ classifies quadruples $(A,\iota,\theta,\alpha)$ where $A$ has ordinary reduction. This implies that the $p$-divisible group $\cG_0:={\bf A}[\dP_0^\infty]^{-,1}\rightarrow\dX_{\rm ord}^{\cC}$, attached to the universal abelian variety over $\dX_{\rm ord}^{\cC}$, lies in the exact sequence:
\[
0\longrightarrow \G_m[p^\infty]\stackrel{}{\longrightarrow} \cG_0\longrightarrow \Q_p/\Z_p \longrightarrow 0.
\]
Since ${\rm Hdg}$ is invertible, we have that  $\Omega_0\mid_{\dX_{\rm ord}^{\cC}}=\omega_0\mid_{\dX_{\rm ord}^{\cC}}=\omega_{\cG_0^D}\mid_{\dX_{\rm ord}^{\cC}}$ and then $\Omega\mid_{\dX_{\rm ord}^{\cC}}=\omega\mid_{\dX_{\rm ord}^{\cC}}$. Thus, it does not cause any confusion to put $\omega^{{\bf k}_n}:=\Omega^{{\bf k}_n}\mid_{\dX^{\cC}_{\rm ord}\times\dW_n}$.
\begin{defi}
The space of families of \emph{$p$-adic modular forms} is:
\[
M^{p-\rm{adic} }_{{\bf k}_n}(\Gamma_1(\cN,p),\Lambda_n^G):=\bigoplus_{\cC\in\Pic(\cO_F)}{\rm H}^0(\dX^{\cC,0}_{\rm ord}\times\dW^G_n,\omega^{{\bf k}_n})^\Delta\otimes\Q.
\]
\end{defi}

\subsection{$q$-expansions and Serre-Tate coordinates}\label{ss:q-expansions and serre-tate}

Let $W$ be the Witt vectors of $\bar{\mathbb{F}}_p$ and let us consider $\dX$ over $W$. Let $\cX_{\rm ord}$ be the adic generic fiber of $\dX_{\rm ord}$. Thus, 
the universal abelian variety ${\bf A}/\cX_{\rm ord}$ satisfies that the $p$-divisible group $\cG={\rm A}[p^\infty]^{-,1}$ lies in an exact sequence:
\[
0\longrightarrow \G_m[p^\infty]\stackrel{\iota}{\longrightarrow} \cG\longrightarrow \Q_p/\Z_p\times\prod_{\dP\neq \dP_0}(F_{\dP}/\cO_\dP)^2\longrightarrow 0.
\]
The image of $\iota$ is the canonical subgroup of $\cG$. 
As above, we have $\omega_{\cG^D}=\omega_0\oplus\bigoplus_{\tau\in\Sigma_\dP,\dP\neq\dP_0} \omega_{\dP,\tau},$
over $W$ since $F$ is unramified at $p$.

\begin{defi}
Let $\cX(n)$ be the adic space representing the functor classifying $\cO_F/p^n\cO_F$-equivariant frames over $\cX_{\rm ord}$ i.e. morphisms $\imath:\G_m[p^n]\rightarrow \cG[p^n]$ and $\pi:\cG[p^n]\rightarrow p^{-n}\Z_p/\Z_p\times\prod_{\dP\neq \dP_0}(p^{-n}\cO_\dP/\cO_\dP)^2$
such that the following sequence is exact:
\[
0 \longrightarrow\G_m[p^n] \stackrel{\imath}{\longrightarrow} \cG[p^n] \stackrel{\pi}{\longrightarrow}p^{-n}\Z_p/\Z_p\times\prod_{\dP\neq \dP_0}(p^{-n}\cO_\dP/\cO_\dP)^2\longrightarrow 0,
\]
and $(p^{-1},0)\in (p^{-n}\cO_\dP/\cO_\dP)^2$ generates $C_\dP$.
\end{defi} 
The space $\cX(n)$ is finite \'etale over $\cX_{\rm ord}$. Let $\dX(n)\rightarrow\dX_{\rm ord}$ be the normalization of $\cX(n)$ in $\dX_{\rm ord}$ and consider the projective limit of $\mathrm{Spf}(W)$-schemes $\dX(\infty)=\varprojlim_n\dX(n)$ which is an affine formal scheme over $W$. For each $n \in \N$ we introduce the following module (whose elements are called \emph{Universal convergent modular forms} in \cite{liu-zhang-zhang17}):
$$\mathcal{M}(\infty,\Lambda_n):=H^0(\dX(\infty),\cO_{\dX(\infty)})\hat\otimes\Lambda_n$$
Moreover, we put $D^{{\bf k}_n^{\tau_0}}_n(\cO^{\tau_0},\mathcal{M}(\infty,\Lambda_n)):= {\rm Hom}_{\Lambda_n}(C^{{\bf k}_n^{\tau_0}}_n(\cO^{\tau_0},\Lambda_n),\mathcal{M}(\infty,\Lambda_n))$.

 Over $\dX(\infty)$ we have a universal abelian variety with extra structures $({\bf A},\iota,\theta,\alpha^{\dP_0})$ and we keep denoting $\cG={\bf A}[p^\infty]^{-,1}$ to the $p$-divisible group attached to it which is equipped with a universal frame: 
\[
0 \longrightarrow\G_m[p^\infty] \stackrel{\imath}{\longrightarrow} \cG \stackrel{\pi}{\longrightarrow}\Q_p/\Z_p\times\prod_{\dP\neq\dP_0}(F_{\dP}/\cO_\dP)^2\longrightarrow 0,
\]
We also have a canonical basis ${\bf f}_0:={\rm dlog}_0(1)$ and ${\bf f}_{\tau}={\rm dlog}_{\mathfrak{p}, \tau}(1,0)$, ${\bf e}_{\tau}={\rm dlog}_{\mathfrak{p}, \tau}(0,1)$ for each $\mathfrak{p}\neq \mathfrak{p}_0$ and $\tau \in \Sigma_{\mathfrak{p}}$. Observe that $\varphi_\tau({\bf f}_\tau\wedge{\bf e}_\tau)=1$ and $f_{\tau}$ generate $\omega_{C_\dP}$. Finally we put ${\bf w}=({\bf f}_0,\{({\bf f}_\tau,{\bf e}_\tau)\}_{\tau\neq \tau_0})$. 


\begin{defi}
Using the notations introduced in \ref{ss:overconvergent modular forms a la katz}, for each $\mu \in H^0(\dX_{\rm ord}\times\dW_n, \omega^{{\bf k}_{n}})$ we call the \emph{$q$-expansion} of $\mu$ to the following distribution:
\[
\mu(q):=\mu({\bf A},\iota,\theta,\alpha,{\bf w})\in D^{{\bf k}_n^{\tau_0}}_n(\cO^{\tau_0},\mathcal{M}(\infty,\Lambda_n)).
\]
\end{defi}
Repeating the construction for the curves $\dX_{\rm ord}^{\cC, 0}$ we obtain a \emph{$q$-expansion morphism}: 
\begin{equation}\label{e:q-expansion morphism}
M^{p-\rm{adic} }_{{\bf k}_n}(\Gamma_1(\cN),\Lambda_n^G) \rightarrow  D^{{\bf k}_n^{\tau_0}}_n(\cO^{\tau_0},\mathcal{M}(\infty,\Lambda_n))^{\rm{Pic}(\cO_F)}
\end{equation}

Now we are going to introduce Serre-Tate coordinates and introduce fundamental notions which will be crucial to construct $p$-adic $L$-functions.  In the same way as in \cite{liu-zhang-zhang17}, 
we obtain a classifying morphism: 
$$c: \dX(\infty)\longrightarrow \G_m.$$
 This morphism can be described as follows: Firstly observe that $\G_m/W$ classifies the extensions of $\G_m[p^\infty]$ by $\Q_p/\Z_p$. In fact, let $R$ be an Artinian ring with maximal ideal $m_R$ and suppose  $m_R^{m+1}=0$ for some $m> 0$. For $s\in \G_m(R)=m_R$, we can construct the extension $E_s/R$:
\begin{equation}\label{descGs}
E_s:=(\G_{m,R}[p^{\infty}]\oplus\Q_p)\slash\langle((1+s)^z,-z),\;z\in\Z_p\rangle,\qquad 0\longrightarrow\G_{m,R}[p^\infty]\stackrel{\imath}{\longrightarrow} E_s\stackrel{\pi_s}{\longrightarrow}\Q_p/\Z_p\longrightarrow 0,
\end{equation}
where $\imath (a)=(a,0)$ and $\pi_s(a,b)=b$. Indeed, $\ker(\pi_s)=\{(a,z)\in E_s;\;z\in\Z_p\}=\G_{m,R}[p^\infty]$ because $(a,z)=(a(1+s)^z,0)$. 
Reciprocally, given such an exact sequence over $R$,
\[
0\longrightarrow\\G_{m,R}[p^\infty]\simeq 1+m_R\stackrel{\imath}{\longrightarrow} E\stackrel{\pi_E}{\longrightarrow}\Q_p/\Z_p\longrightarrow 0,
\]
we can define the corresponding point $s=p^m\pi_E^{-1}\left(\frac{1}{p^m}\right)-1\in m_R=\G_m(R)$, and this definition does not depend on $m$.

Hence, given $x\in \dX(\infty)$ corresponding to a tuple $(A,\iota,\theta, \alpha^{\dP_0})$, a morphism $\pi_\dP:\cG_\dP[p^\infty]\stackrel{\simeq}{\longrightarrow}(F_{\dP}/\cO_\dP)^2$ for each $\dP\neq \dP_0$, and an exact sequence
\[
S_0:0 \longrightarrow\G_m[p^\infty] \stackrel{\imath}{\longrightarrow} \cG_0[p^\infty] \stackrel{\pi_0}{\longrightarrow}\Q_p/\Z_p\longrightarrow 0,
\]
where as above we use the notations $A[p^\infty]^{-,1}= \cG= \cG_0[p^\infty]\oplus\bigoplus_{\dP\neq\dP_0}\cG_\dP[p^\infty]$, the morphism $c$ maps $x$ to the point classifying the extension $S_0$.
The local coordinates given by the morphism $c$ are called \emph{Serre-Tate coordinates}.

\begin{prop}\label{compXGm} There exists a morphism $\beta:\dX(\infty)\times_{\mathrm{Spf}(W)}\G_m\longrightarrow \dX(\infty)$ such that the following diagram is commutative:
\[
\xymatrix{
\dX(\infty)\times_{\mathrm{Spf}(W)}\G_m\ar[d]_{(c\times{\rm id})}\ar[r]^{\beta}&\dX(\infty)\ar[d]_{c}\\
\G_m\times_{\mathrm{Spf}(W)}\G_m\ar[r]&\G_m,
}
\]
where the bottom arrow is the formal group law. Moreover, for every closed point $x\in \dX(\infty)(\bar\kappa)$, the morphism $c$ induces an isomorphism $c\mid_x:\dX(\infty)_x\rightarrow\G_m$ over $W$, where $\dX(\infty)_x$ denotes the formal completion of $\dX(\infty)$ at $x$.  
\end{prop}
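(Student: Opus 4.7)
The plan is to construct $\beta$ via Baer sums of extensions of $p$-divisible groups and then deduce the local isomorphism statement from the Serre--Tate deformation theorem. Given an $R$-point $x$ of $\dX(\infty)$ of Serre--Tate class $c(x)=s\in\G_m(R)$, corresponding to a tuple $(A,\iota,\theta,\alpha^{\dP_0},\{\pi_\dP\}_{\dP\neq\dP_0})$ together with the exact sequence
\[
S_{0}:\quad 0\longrightarrow\G_m[p^\infty]\stackrel{\imath}{\longrightarrow}\cG_0\stackrel{\pi_0}{\longrightarrow}\Q_p/\Z_p\longrightarrow 0,
\]
and a further element $t\in\G_m(R)$, I would form the Baer sum $S_0\star E_t$, where $E_t$ is the extension from \eqref{descGs} classified by $t$. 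This produces a new $p$-divisible group $\cG_0'$ fitting into an extension of class $s\cdot t$, and setting $\cG':=\cG_0'\oplus\bigoplus_{\dP\neq\dP_0}\cG_\dP$ together with the obvious new frame, Serre--Tate (applicable because we are deforming an ordinary $p$-divisible group on a $p$-adically complete base) yields a unique deformation $A'$ of $A$ whose $p$-divisible group in $(-,1)$-component is $\cG'$. One then defines $\beta(x,t):=(A',\iota',\theta',(\alpha^{\dP_0})',\{\pi_\dP'\},S_0\star E_t)$.

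Three pieces of data must be transferred from $A$ to $A'$: the $\cO_\cM$-action, the polarization, and the frames. The action $\iota$ extends because the Baer sum construction is performed $\cO_\cM$-equivariantly (the $\dP_0$-eigencomponent $\cG_0$ on which we act decomposes off $\cG$). The prime-to-$p$ similitude $\alpha^{\dP_0}$ and the $\pi_\dP$ for $\dP\neq\dP_0$ lift uniquely because they are prescribed on étale pieces which are rigid under nilpotent thickening. The polarization lifts uniquely by rigidity of polarizations on ordinary abelian schemes, using the compatibility of $\theta$ with the duality \eqref{isopol2} between $\cG_0$ and $\cG_0^D$. Commutativity of the diagram is then immediate: by construction, the Baer sum of extensions of classes $s$ and $t$ has class $s\cdot t$, so $c(\beta(x,t))=c(x)\cdot t$. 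For the second claim, fix $x\in\dX(\infty)(\bar\kappa)$; the formal completion $\dX(\infty)_{x}$ represents the functor of deformations over Artin-local $W$-algebras with residue field $\bar\kappa$ of the tuple at $x$. Serre--Tate canonically identifies these with deformations of $\cG_{x}$ (with $\cO_\cM$-action); since the multiplicative and étale pieces of $\cG_{0,x}$, as well as the fully étale $\cG_{\dP,x}$ for $\dP\neq\dP_0$, admit unique lifts, deformations are classified entirely by the extension class of the $\dP_0$-piece. This extension class is exactly what $c$ records, and the polarization, the $\cO_\cM$-action, and the prime-to-$\dP_0$ rigid data extend uniquely (as above). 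Hence $c\mid_x$ identifies $\dX(\infty)_x$ with $\G_m$ as formal $W$-schemes.

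The main obstacle I foresee is verifying compatibility of the Baer sum construction with \emph{all} the auxiliary structures simultaneously, and most delicately with the polarization: one has to check that the Rosati-compatibility $\iota(l)^\dagger=\iota(l^{\ast})$ continues to hold after twisting by $E_t$, which amounts to ensuring that the Baer sum on $\cG_0$ is symmetric with respect to the $\delta^{-1}(\begin{smallmatrix}0&1\\1&0\end{smallmatrix})c$-twisted duality isomorphism \eqref{isopol2} recalled in \S\ref{ss:integral models and divisible groups}. A minor technical point is that $\dX(\infty)$ is only a projective limit of formal schemes rather than itself quasi-compact, so the construction is carried out on Artinian bases first and then extended by the universal property of the inverse limit together with Grothendieck's formal existence theorem for polarized abelian schemes.
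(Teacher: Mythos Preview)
Your approach is correct and is essentially the argument carried out in the reference the paper cites: the paper's own proof consists entirely of a pointer to \cite[Prop.~2.3.4, Prop.~2.3.5]{liu-zhang-zhang17}, together with the observation that since $F_{\dP_0}=\Q_p$ (Hypothesis~\ref{hypothesis 1}), the relevant Lubin--Tate formal group is simply $\G_m$. Your Baer-sum construction of $\beta$ and your invocation of Serre--Tate for the local isomorphism are precisely the content of those propositions in the special case at hand, and the concerns you flag (polarization compatibility, passage from Artinian test objects to the full formal scheme) are the genuine technical points that the cited reference handles.
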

\begin{proof}
The proof can be found essentially in \cite[Prop. 2.3.4, Prop. 2.3.5]{liu-zhang-zhang17}. Notice that, since $F_{\dP_0}=\Q_p$, the Lubin-Tate $p$-divisible group is just $\G_m$ in this case.
\end{proof}

For the following definition see \cite[Definition 2.3.10]{liu-zhang-zhang17}.
\begin{defi}
A function $f\in\mathcal{M}(\infty,\Lambda_n)$ is \emph{stable} if $\sum_{i=0}^{p-1}f(\beta(x,\xi_p^i))=0$ for each $\xi_p\in\G_m[p]$.
\end{defi}

We write $\mathcal{M}(\infty,\Lambda_n)^\heartsuit\subseteq \mathcal{M}(\infty,\Lambda_n)$ for the subset of stable functions and we put:  $$D^{{\bf k}_n^{\tau_0}}_n\left(\cO^{\tau_0},\mathcal{M}(\infty,\Lambda_n)^\heartsuit\right):={\rm Hom}_{\Lambda_n}\left(C^{{\bf k}^{\tau_0}_n}_n(\cO^{\tau_0},\Lambda_n),\mathcal{M}(\infty,\Lambda_n)^\heartsuit\right).$$
We write $H^0(\dX_{\rm ord}\times\dW_n, \omega^{{\bf k}_{n}})^\heartsuit$ for the subset of elements $\mu\in H^0(\dX_{\rm ord}\times\dW_n, \omega^{{\bf k}_{n}})$ such that $\mu(q)\in D^{{\bf k}_n^{\tau_0}}_n\left(\cO^{\tau_0},\mathcal{M}(\infty,\Lambda_n)^\heartsuit\right)$. 

Remark that we can repeat the same constructions using $\cX_{\rm ord}^{\cC}$ instead of $\cX_{\rm ord}$. Then we define $M^{p-\rm{adic}}_{{\bf k}_n}(\Gamma_1(\cN,p),\Lambda_n^G)^\heartsuit$ as the module of those $p$-adic modular forms $\mu$ whose sections $\mu_\cC$ lie in $H^0(\dX_{\rm ord}^{\cC,0}\times\dW_n, \omega^{{\bf k}_{n}})^\heartsuit\otimes_{\Z_p}\Q_p$ for each $\cC \in \mathrm{Pic}(\cO_F).$

\subsection{Action of $U_{\dP_0}$} The previous description of the Serre-Tate coordinates allows us to compute the $U_{\dP_0}$-operator in coordinates and to prove the following:
\begin{lemma}\label{Up-0} Let $\mu \in M^{p-\rm{adic}}_{{\bf k}_n}(\Gamma_1(\cN,p),\Lambda_n^G)$. Then $\mu\in M^{p-\rm{adic}}_{{\bf k}_n}(\Gamma_1(\cN,p),\Lambda_n^G)^\heartsuit$ if and only if $U_{\dP_0}\mu=0$.
\end{lemma}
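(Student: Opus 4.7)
The plan is to translate the statement $U_{\dP_0}\mu = 0$ into the vanishing of $V_{\dP_0}U_{\dP_0}\mu$, compute the latter composition explicitly in Serre--Tate coordinates on $\dX(\infty)$, and match it with the stability condition.

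First, I would analyse the $p$ non-canonical subgroups of order $p$ in $\cG_0[p]$ over $\dX(\infty)$. Using the description \eqref{descGs} of the universal extension $0 \to \G_m[p^\infty] \to \cG_0 \to \Q_p/\Z_p \to 0$, I pick a preimage of $\tfrac{1}{p}$; any two such preimages differ by an element of $\G_m[p]$, giving exactly $p$ non-canonical subgroups $C_0,\dots,C_{p-1}$ generated by $(a\xi_p^i, \tfrac{1}{p})$ where $a^p = (1+s)^{-1}$. Thus $U_{\dP_0}\mu(q)$ is a sum of pullbacks of $\mu(q)$ along the isogenies $\mathbf{A} \to \mathbf{A}/C_i$. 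The key geometric input is that the composition $\mathbf{A}/C_i \to (\mathbf{A}/C_i)/\text{canonical} \cong \mathbf{A}$ (the latter isomorphism coming from $[p]$) identifies the Serre--Tate parameter of $\mathbf{A}/C_i$ with $t_i = \xi_p^i \cdot t$, where $t$ is a fixed (locally defined) $p$-th root of $s=c(x)$; i.e.\ the $t_i$'s are the $p$-th roots of $s$ permuted by $\G_m[p]$.

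Second, I would compute $V_{\dP_0}$ on $q$-expansions: since $V_{\dP_0}\mu$ is defined by pulling back $\mu$ along the canonical quotient, and the canonical quotient multiplies the Serre--Tate parameter by its $p$-th power (sending $s$ to $s^p$), one gets $V_{\dP_0}\mu(q)(x) = \mu(q)(\beta(x, \cdot))$ where $\beta(x,\cdot)$ corresponds to $s \mapsto s^p$ on coordinates. Combining with the previous step,
\[
V_{\dP_0}U_{\dP_0}\mu(q)(x) \;=\; \frac{1}{p}\sum_{i=0}^{p-1} \mu(q)(\beta(x,\xi_p^i)),
\]
because pulling back the sum over $p$-th roots of $s$ along $s \mapsto s^p$ yields exactly the sum over the $\G_m[p]$-translates. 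This identifies the stability condition with the vanishing of $V_{\dP_0}U_{\dP_0}\mu(q)$. Thus $U_{\dP_0}\mu = 0 \Longrightarrow V_{\dP_0}U_{\dP_0}\mu = 0 \Longrightarrow \mu(q)$ is stable for every evaluation, i.e.\ $\mu \in M^{p\text{-adic}}_{{\bf k}_n}(\Gamma_1(\cN,p),\Lambda_n^G)^\heartsuit$.

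For the converse, I would invoke the injectivity of $V_{\dP_0}$ acting on $p$-adic modular forms: the canonical quotient $\mathbf{A} \rightsquigarrow \mathbf{A}/\text{canonical}$ lifts to a morphism of formal schemes $\dX(\infty) \to \dX(\infty)$ which, via $c$, covers the finite flat $p$-th power map $\G_m \to \G_m$; consequently the pullback on global sections is injective, and injectivity persists after tensoring with $\Lambda_n$ and evaluating distributions against test functions in $C^{{\bf k}_n^{\tau_0}}_n(\cO^{\tau_0},\Lambda_n)$. Hence stability of $\mu(q)$ forces $V_{\dP_0}U_{\dP_0}\mu = 0$ and then $U_{\dP_0}\mu = 0$.

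The main obstacle will be bookkeeping: verifying that the multiplicative factor $\frac{1}{p}$, the polarization twist by $\varpi_{\dP_0} = p$, and the weight character all cancel or pull out cleanly so that the identification of $(V_{\dP_0}U_{\dP_0}\mu)(q)$ with the stability sum holds on the nose. The cleanest way to handle this is to perform the computation on the universal frame $\bf w$ over $\dX(\infty)$, where the canonical generators transform predictably under the isogenies $\mathbf{A} \to \mathbf{A}/C_i$ and $\mathbf{A} \to \mathbf{A}/\text{canonical}$, reducing everything to the geometric identity $t_i^p = s$ already established.
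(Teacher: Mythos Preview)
Your approach is essentially the same as the paper's: both compute the Serre--Tate coordinate of the quotients $\mathbf{A}/C_i$ by the non-canonical subgroups (obtaining $(1+s)^{1/p}\xi_p^i - 1$, not literally $\xi_p^i t$ with $t^p = s$ as you wrote---be careful to work in the multiplicative coordinate $1+q$), and thereby identify $U_{\dP_0}\mu(q)$ with the stability sum up to the substitution $s \leftrightarrow (1+s)^{1/p}-1$. Your explicit detour through $V_{\dP_0}U_{\dP_0}$ and the injectivity of $V_{\dP_0}$ simply spells out what the paper compresses into ``the result follows directly from the above computation together with Proposition~\ref{compXGm}.''
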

\begin{proof}
 The description of $E_s$ given in equation \eqref{descGs} implies that, over a big enough extension of $R$,
\[
E_s[p]=\left\{\left(\xi_p^i(1+s)^{\frac{j}{p}},-\frac{j}{p}\right)_{i,j=0,\cdots,p-1}\right\},
\]
where $\xi_p$ is a primitive $p$-root of unity.
The subgroup $\mu_p=\{(\xi_p^j,0)_{j=0,\cdots,p-1}\}$ is the canonical subgroup. Thus, the subgroups that intersect trivially with $\mu_p$ are precisely 
\[
C_i=\left\{\left(\xi_p^{ij}(1+s)^{\frac{j}{p}},-\frac{j}{p}\right)_{j=0,\cdots,p-1}\right\},\qquad i=0,\cdots,p-1.
\]
We compute that $\pi_i: E_s/C_i\longrightarrow\Q_p/\Z_p$ is given by $\pi_i(a,b)=pb\mod\Z_p$, where
\[
E_s/C_i\simeq ((1+m_R)\oplus\Q_p)\slash\left\langle\left(\xi_p^{in}(1+s)^{\frac{n}{p}},-\frac{n}{p}\right),\;n\in\Z_p\right\rangle.
\]
Thus, $E_s/C_i$ corresponds to the point 
\[
\imath^{-1}\left(p^m\pi_i^{-1}\left(\frac{1}{p^m}\right)\right)-1=\imath^{-1}(p^m(1,p^{-m-1}))-1=\imath^{-1}(1,p^{-1})-1=(1+s)^{\frac{1}{p}}\xi_p^i-1.
\]
Hence $U_{\dP_0}$ acts on $\cO_{\G_m}=W[[q]]$ as follows
\begin{equation}\label{q-expU}
U_{\dP_0}f(q)=\frac{1}{q_{\dP_0}}\sum_{i\in\cO_F/\dP_0}f(\xi_p^i(1+q)^{1/p}-1)=\frac{1}{q_{\dP_0}}\sum_{i\in\cO_F/\dP_0}f(\hat\G_m((1+q)^{1/p}-1,\xi_p^i));\qquad f\in W[[q]],
\end{equation}
where $\hat\G_m$ is the formal group law of the multiplicative group. 
The result follows directly from the above computation together with Proposition \ref{compXGm}.
\end{proof}

\begin{rmk}\label{remactUp}
Notice that $W[[q]]$ is topologically generated by $f_n:=(1+q)^n$, with $n\in\Z_p$. The above computation show that $U_{\dP_0}f_n=0$, if $p\nmid n$, and $U_{\dP_0}f_n=f_{n/p}$, if $p\mid n$.
\end{rmk}

\begin{rmk}
In the same way we can describe the action of the operator $V_{\dP_0}$ in coordinates. In fact, remark that the quotient by the canonical subgroup is given by: 
\[
G_s/\mu_p\simeq ((1+m_R)\oplus\Q_p)\slash\left\langle\left((1+s)^{np},-n\right),\;n\in\Z_p\right\rangle.
\]
Thus, $G_s/\mu_p$ corresponds to the point 
\[
\imath^{-1}\left(p^m\pi^{-1}\left(\frac{1}{p^m}\right)\right)-1=\imath^{-1}(1,1)-1=(1+s)^p-1.
\]
Hence $V_{\dP_0}$ acts on $ f\in \cO_{\G_m}=W[[q]]$ by $V_{\dP_0}f(q)=f((1+q)^{p}-1).$ Thus, $V_{\dP_0}f_n=f_{np}$, where the $f_n$ are defined as in the above remark. One obtain the classical relation $U_{\dP_0}\circ V_{\dP_0}={\rm Id}$.
\end{rmk}

\section{Connections and Unit root splittings}\label{s:connections and unit root}
\subsection{Unit root splitting}\label{ss:unit root splitting}

Let $\cH^1:=\cH^1_{\cG^D}$ denote the contravariant Dieudonn\'e module attached to $\cG^D\rightarrow\dX_{\rm ord}$, and remark that we have a decomposition $\cH^1=\cH^1_{0}\oplus\bigoplus_{\tau\in\Sigma_\dP,\dP\neq\dP_0}\omega_{\dP,\tau}$.
Hence the Hodge filtration
\[
0\longrightarrow \omega\longrightarrow \cH^1 \longrightarrow \omega_{\cG}^\vee\longrightarrow 0,
\]
restricts to the exact sequence
\begin{equation}\label{Hdg(1)}
0\longrightarrow \omega_{0} \longrightarrow \cH^1_{0} \stackrel{\epsilon}{\longrightarrow} \omega_{\cG}^\vee\longrightarrow 0.
\end{equation}
Moreover, we have fixed an isomorphism $\omega_\cG=\omega_{\cG_0}\simeq\omega_{0}$, as expained in \S \ref{alt-pair}. Such isomorphism together with the Gauss-Manin connection
\[
\bigtriangledown:\cH_0^1\longrightarrow\cH_0^1\otimes\Omega^1_{\dX_{\rm ord}},
\]
provides the Kodaira-Spencer isomorphism
\[
KS:\omega_{0}\hookrightarrow\cH^1_{0}\stackrel{\bigtriangledown}{\longrightarrow}\cH^1_{0}\otimes\Omega^1_{\dX_{\rm ord}}\stackrel{\epsilon}{\longrightarrow}\omega_{\cG_0}^\vee\otimes\Omega^1_{\dX_{\rm ord}},\qquad KS:\Omega^1_{\dX_{\rm ord}}\simeq \omega_{0}\otimes\omega_{\cG_0}\simeq \omega_{0}^{\otimes 2}.
\]
\begin{lemma}\cite[Lemma 2.3.1]{liu-zhang-zhang17}
There exists a unique morphism $\Phi:\dX_{\rm ord}\longrightarrow\dX_{\rm ord}$ lifting the Frobenius morphism $x\mapsto x^p$ on the special fiber such that $\Phi^\ast\cG\simeq \cG/\cG^0[p]$, where $\cG^0$ is the formal part of $\cG$. In particular, $\Phi$ induces an endomorphism $\Phi^\ast$ on $\cH_{\cG}^1$. Moreover, there exists a unique $\Phi^\ast$-stable splitting:
\[
\cH^1_{\cG}=\omega_{0}\oplus \mathcal{L},
\]
where $\mathcal{L}$ is an invertible quasi-coherent formal sheaf over $\dX_{\rm ord}$. In addition, $\mathcal{L}$ is horizontal with respect to the Gauss-Manin connection, that is, $\bigtriangledown\mathcal{L}\subset\mathcal{L}\otimes\Omega^1_{\dX_{\rm ord}}$.
\end{lemma}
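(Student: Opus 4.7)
The statement is essentially a packaging of three classical facts in the ordinary setting: the existence of a canonical Frobenius lift, the Dwork–Katz unit root splitting of the Dieudonné module, and the horizontality of the unit root subspace. My plan is to mirror the argument of \cite[Lemma 2.3.1]{liu-zhang-zhang17} and adapt it to the unitary PEL situation considered here.

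First I would construct $\Phi$ via the moduli problem. On $\dX_{\mathrm{ord}}$, the $p$-divisible group $\cG_0=\mathbf{A}[\dP_0^\infty]^{-,1}$ is an extension of $\Q_p/\Z_p$ by $\G_m[p^\infty]$, so its connected part $\cG_0^\circ$ is well defined and $\cG^\circ[p]=\cG_0^\circ[p]$ is a canonical subgroup of order $p$ in $\mathbf{A}$. Taking the quotient $\mathbf{A}\mapsto\mathbf{A}/\cG_0^\circ[p]$ inherits the $\cO_\cM$-action, a polarization (whose degree is scaled by a unit away from $\dP_0$) and the prime-to-$\dP_0$ level structure $\alpha^{\dP_0}$; the resulting quadruple again defines a point of $\dX_{\mathrm{ord}}$, and this rule is $\Phi$. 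Modulo $p$ the canonical subgroup is the kernel of Frobenius on $\mathbf{A}$, so $\Phi$ reduces to $x\mapsto x^p$. Uniqueness follows from the Serre–Tate theorem: on the formal completion at an ordinary point the coordinates of Proposition~\ref{compXGm} identify $\dX(\infty)_x$ with $\widehat{\G}_m$, and the only lift of Frobenius respecting the moduli interpretation acts on these coordinates by $q\mapsto(1+q)^p-1$.

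Next I would produce the splitting. The lift $\Phi$ gives $\Phi^\ast\cG\simeq \cG/\cG^\circ[p]$, hence by functoriality of the contravariant Dieudonné construction an endomorphism $\Phi^\ast$ on $\cH^1_{\cG^D}$, and therefore on the direct summand $\cH^1_0$. On the special fiber, the ordinary Dieudonné module decomposes canonically as $\omega_0\oplus \overline{\mathcal{L}}$, where $\overline{\mathcal{L}}$ is the slope–zero (unit root) line on which $\Phi^\ast$ acts as a unit while $\Phi^\ast|_{\omega_0}\equiv 0\bmod p$. The eigenvalues of $\Phi^\ast$ on the reduction are $0$ and a unit, hence distinct modulo $p$, so a standard Hensel–lifting argument (iterated projection onto the kernel/image of $\Phi^\ast$, then $p$-adic convergence) produces a unique $\Phi^\ast$-stable invertible sub $\mathcal{L}\subset \cH^1_0$ reducing to $\overline{\mathcal{L}}$. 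Complementarity $\cH^1_0=\omega_0\oplus\mathcal{L}$ holds modulo $p$ and thus globally by Nakayama.

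Finally, horizontality is a standard consequence of the compatibility of the Gauss–Manin connection with crystalline Frobenius, which gives the relation $\Phi^\ast\circ\bigtriangledown = p\cdot(\bigtriangledown\otimes\mathrm{d}\Phi)\circ\Phi^\ast$ on $\cH^1_0$. For $\ell\in \mathcal{L}$, decompose $\bigtriangledown\ell=\omega+\ell'$ with $\omega\in\omega_0\otimes\Omega^1$ and $\ell'\in\mathcal{L}\otimes\Omega^1$; applying the intertwining identity and using that $\Phi^\ast$ acts as a unit on $\mathcal{L}$ but as $p$ times something on $\omega_0$ forces $\omega$ to lie in $p^n\omega_0\otimes\Omega^1$ for every $n$, hence $\omega=0$. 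The main obstacle will be verifying that the construction of $\Phi$ really is well posed on the unitary moduli problem—i.e.\ that dividing by $\cG_0^\circ[p]$ preserves the polarization class, the $\cO_\cM$-action and the prime-to-$\dP_0$ level structure without modification—but once this is in place the remainder of the argument is a direct transposition of the Liu–Zhang–Zhang proof.
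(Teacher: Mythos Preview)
The paper does not give its own proof of this lemma: it simply quotes the result from \cite[Lemma 2.3.1]{liu-zhang-zhang17} and uses it as a black box. Your sketch is a faithful outline of the standard argument (canonical Frobenius lift via quotient by the connected part, Dwork--Katz unit root splitting by slope separation and Hensel, horizontality from the Frobenius--Gauss--Manin compatibility), and is essentially what one finds in the cited reference, so there is nothing to compare.
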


\begin{rmk}
Since $\cG^0\simeq\G_m$, we have a decomposition $\mathcal{L}\simeq\mathcal{L}_{0}\oplus\bigoplus_{\tau\in\Sigma_\dP,\dP\neq\dP_0}\omega_{\dP,\tau}$ and $\mathcal{L}_{0}$ defines a Unit Root splitting $\cH_0^1=\omega_0\oplus\mathcal{L}_{0}$. We denote by $\pi_{\mathcal{L}_{0}}:\cH_0^1\longrightarrow\omega_0$ the natural projection obtained from such splitting. 
\end{rmk}

\subsection{The overconvergent projection}\label{ss:the overconvergent projection} As in \S\ref{ss:connections and trilinear products}, for $\underline{k}$ and $m \in \Z$ we consider the sheaves:
\[
\omega^{\underline{k}}\subseteq\cH^{\underline{k}}_m:=\omega_{0}^{k_{\tau_0}-m\tau_0}\otimes{\rm Sym}^{m}\cH_{0}^1\otimes\bigotimes_{\tau \in \Sigma_{\mathfrak{p}}, \mathfrak{p}\neq \mathfrak{p}_0}{\rm Sym}^{k_\tau}\omega_{\mathfrak{p}, \tau}.
\]
The elements of of $H^0(\cX_r,\cH^{\underline{k}}_m)$ are called \emph{nearly overconvergent modular forms}. Using the Unit Root Splitting, we obtain a morphism:
\[
\gamma_{\underline{k}, m}: H^0(\cX_r,\cH^{\underline{k}}_m)\longrightarrow H^0(\cX_{\rm ord},\cH^{\underline{k}}_m)\longrightarrow H^0(\cX_{\rm ord},\omega^{\underline{k}}).
\]
\begin{prop}\label{p:nearly as p-adic}
The map $\gamma_{\underline{k}, m}$ is injective.
\end{prop}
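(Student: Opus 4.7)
The morphism $\gamma_{\underline{k},m}$ factors as the restriction $H^0(\cX_r,\cH^{\underline{k}}_m) \to H^0(\cX_{\rm ord},\cH^{\underline{k}}_m)$ followed by the unit-root projection induced by $\pi_{\mathcal{L}_0}$. I would handle these two stages in order. First, the restriction map is injective: the sheaf $\cH^{\underline{k}}_m$ is locally free over the reduced rigid-analytic space $\cX_r$, and $\cX_{\rm ord}$ is a Zariski-dense admissible open subspace (the complement of finitely many residue disks around the supersingular points in a strict neighborhood of the ordinary locus), so any global section that vanishes on $\cX_{\rm ord}$ vanishes identically. It therefore suffices to prove that $\pi_{\mathcal{L}_0}$ has trivial kernel on the image of overconvergent sections.

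Suppose $f \in H^0(\cX_r,\cH^{\underline{k}}_m)$ satisfies $\pi_{\mathcal{L}_0}(f\vert_{\cX_{\rm ord}})=0$. Using the unit-root splitting $\cH_0^1\vert_{\cX_{\rm ord}} = \omega_0 \oplus \mathcal{L}_0$ I would decompose
\[
\cH^{\underline{k}}_m\vert_{\cX_{\rm ord}} \;=\; \bigoplus_{j=0}^{m} \omega^{\underline{k}-j\tau_0} \otimes \mathcal{L}_0^{\,j},
\]
and write $f\vert_{\cX_{\rm ord}}=\sum_{j=0}^m f_j$ according to this decomposition, so that the hypothesis says $f_0=0$, and the goal is to prove $f_j=0$ for every $j$.

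To leverage the overconvergence of $f$, I would use the Frobenius lift $\Phi:\dX_{\rm ord}\to\dX_{\rm ord}$ of Lemma 2.3.1 and the intrinsic characterization of the unit-root subsheaf: $\mathcal{L}_0$ is the unit (slope zero) eigenspace of $\Phi^\ast$ acting on $\cH_0^1$, while $\omega_0$ is the slope-one eigenspace. Consequently $\Phi^\ast$ scales the $j$-th summand $\omega^{\underline{k}-j\tau_0}\otimes \mathcal{L}_0^{\,j}$ by a unit times $p^{k_{\tau_0}-j}$, and the $m+1$ distinct slopes $k_{\tau_0}, k_{\tau_0}-1,\ldots, k_{\tau_0}-m$ allow a Vandermonde-style extraction applied to $f, \Phi^\ast f, (\Phi^\ast)^2 f, \ldots, (\Phi^\ast)^m f$ to separate the components $f_j$. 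The key point is that while $\Phi$ is defined only on $\dX_{\rm ord}$, the separated components $f_j$ obtained from $f$ must extend compatibly to sections of a locally free sheaf on $\cX_r$; combined with the vanishing $f_0=0$, a slope comparison against the growth constraints satisfied by any section extending to the strict neighborhood $\cX_r$ (equivalently, the compactness of $U_{\dP_0}$ on overconvergent forms from Proposition \ref{Upcomp}) forces $f_1=\cdots=f_m=0$.

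The main obstacle I anticipate is making this slope-separation rigorous in the present geometric setting. The auxiliary étale factors $\bigotimes_{\tau\neq\tau_0}\Sym^{k_\tau}\omega_{\mathfrak{p},\tau}$ appearing in $\cH^{\underline{k}}_m$ are $\Phi^\ast$-unit on the ordinary locus and play only a passive role, so the analysis reduces to the tower built from $\omega_0$ and $\mathcal{L}_0$. The argument is then a direct adaptation, to the unitary Shimura curve $\cX_r$, of the injectivity of the unit-root projection on nearly overconvergent forms proved for symplectic-type Shimura varieties in \cite[\S 2.3]{liu-zhang-zhang17}, and originally due to Urban for elliptic modular forms.
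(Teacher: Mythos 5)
Your first step (injectivity of restriction to $\cX_{\rm ord}$, which is Zariski dense in the reduced space $\cX_r$, against a locally free sheaf) is fine, and you correctly reduce the problem to showing that all the graded components $f_j$ of $f\vert_{\cX_{\rm ord}}$ vanish once $f_0=0$. The gap is in the second step, and it sits exactly where the content of the proposition lies. The claim that ``the separated components $f_j$ obtained from $f$ must extend compatibly to sections of a locally free sheaf on $\cX_r$'' is false in general: only the top component is automatically overconvergent, being recovered (up to a nonzero constant and the identification $\mathcal{L}_0\simeq\omega_0^{-1}$) from $\epsilon^m(f)$ with $\epsilon$ the map of \eqref{Hodge2}, which is defined over all of $\cX_r$. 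For $j<m$ the component $f_j$ involves the unit-root projection itself, which does not overconverge: already for $m=1$ and $f=\bigtriangledown g$ with $g$ overconvergent of positive weight, one has $f_0=\Theta g$, which is merely a $p$-adic modular form. If all the $f_j$ did overconverge, the unit-root splitting would extend to a splitting of the Hodge filtration over $\cX_r$, which it does not; this failure is precisely why the proposition is not a triviality. In addition, the Frobenius--Vandermonde device is not set up correctly: $\Phi^\ast$ is semilinear over the Frobenius lift of the base, so $(\Phi^\ast)^i f=\sum_j(\Phi^\ast)^i f_j$ involves Frobenius twists of the $f_j$ rather than scalar multiples, and no linear system can be solved; in any case the splitting over $\cX_{\rm ord}$ already separates the components, so Frobenius buys nothing at that stage. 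Finally, ``a slope comparison against the growth constraints \ldots forces $f_1=\cdots=f_m=0$'' is an assertion rather than an argument, and the appeal to compactness of $U_{\dP_0}$ is a non sequitur.

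What actually has to be proved is that the $0$-th graded component determines the overconvergent section; when $2m<k_{\tau_0}$ this amounts, via Lemma \ref{lemmaNOC}, to the statement that the sum $M^\dagger_{\underline{k}}+\Theta M^\dagger_{\underline{k}-2\tau_0}+\cdots+\Theta^m M^\dagger_{\underline{k}-2m\tau_0}$ is direct inside the space of $p$-adic modular forms. The paper does not reprove this: it invokes Urban's Propositions 3.1.3 and 3.2.4, whose proof rests on the structure theory of nearly overconvergent forms and the polynomial $q$-expansion principle rather than on Frobenius slopes. Your closing citation of Urban and of Liu--Zhang--Zhang is therefore where the proof really lives, but the bridging argument you interpose does not hold up; to repair it you would need to supply the directness statement above.
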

\begin{proof}
The proof is completely analogous to \cite[Proposition 3.2.4]{Urb2} using \cite[Proposition 3.1.3]{Urb2}.
\end{proof}
As in \S\ref{ss:connections and trilinear products} we have the morphism  $\epsilon:\cH_{m}^{\underline{k}}\rightarrow\cH_{m-1}^{\underline{k}-2\tau_0}$, the Gauss-Manin connection induces morphisms $\bigtriangledown_{\underline{k},m}:\cH^{\underline{k}}_m\rightarrow\cH^{\underline{k}+2\tau_0}_{m+1}$  and for $j \in \N$ we put $\bigtriangledown_{\underline{k}}^j:=\bigtriangledown_{\underline{k}+2(j-1)\tau_0, j-1}\circ\cdots\circ\bigtriangledown_{\underline{k},0}$. The following result is analogous to \cite[Lemma 3.3.4]{Urb2}:
\begin{lemma}\label{lemmaNOC}
For each $f\in H^0(\cX_r,\cH^{\underline{k}}_m)$ where $2m<k_{\tau_0}$ there exist $g_j\in H^0(\cX_r,\omega^{\underline{k}-2j\tau_0})$ for $j= 0, ..., m$ such that we can write in a unique way:
\[
f=g_0+\bigtriangledown_{\underline{k}-2\tau_0}(g_1)+\bigtriangledown_{\underline{k}-4\tau_0}^2(g_2)+\cdots+\bigtriangledown_{\underline{k}-2m\tau_0}^m(g_m).
\] 

\end{lemma}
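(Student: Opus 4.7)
The plan is to argue by induction on $m$, using the Hodge-type short exact sequence \eqref{Hodge2}
\[
0\longrightarrow \omega^{\underline{k}}\longrightarrow \cH^{\underline{k}}_m\stackrel{\epsilon}{\longrightarrow}\cH^{\underline{k}-2\tau_0}_{m-1}\longrightarrow 0
\]
as the structural skeleton, together with the commutation identity \eqref{eqepsiGM} as the tool that inverts $\epsilon$ modulo $\bigtriangledown$-images. The base case $m=0$ is immediate since $\cH^{\underline{k}}_0=\omega^{\underline{k}}$ and one takes $g_0:=f$.

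For the inductive step, I would first apply $\epsilon$ to $f\in H^0(\cX_r,\cH^{\underline{k}}_m)$ to obtain a section $\epsilon(f)\in H^0(\cX_r,\cH^{\underline{k}-2\tau_0}_{m-1})$. The hypothesis $2m<k_{\tau_0}$ rewrites as $2(m-1)<(k_{\tau_0}-2)$, so the inductive hypothesis applies to $\epsilon(f)$ and yields a unique decomposition
\[
\epsilon(f)=h_0+\bigtriangledown_{\underline{k}-4\tau_0}(h_1)+\cdots+\bigtriangledown^{m-1}_{\underline{k}-2m\tau_0}(h_{m-1}),\qquad h_i\in H^0(\cX_r,\omega^{\underline{k}-2(i+1)\tau_0}).
\]
Now \eqref{eqepsiGM} applied to each $h_i$ gives $\epsilon\bigtriangledown^{i+1}(h_i)=(i+1)(k_{\tau_0}-i-2)\bigtriangledown^i(h_i)$, and the numerical hypothesis guarantees that the scalar $(i+1)(k_{\tau_0}-i-2)$ is nonzero for every $0\leq i\leq m-1$ (since $i+1\leq m$ and $k_{\tau_0}-i-2>k_{\tau_0}-m-1>m-1\geq 0$). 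I therefore set $g_{i+1}:=h_i/((i+1)(k_{\tau_0}-i-2))\in H^0(\cX_r,\omega^{\underline{k}-2(i+1)\tau_0})$; the section $f-\sum_{j=1}^m\bigtriangledown^j(g_j)$ then lies in $\ker(\epsilon)=\omega^{\underline{k}}$, providing $g_0$.

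Uniqueness follows by the same mechanism: given a relation $0=g_0+\sum_{j=1}^m\bigtriangledown^j(g_j)$, applying $\epsilon$ and invoking \eqref{eqepsiGM} produces $\sum_{j=1}^m j(k_{\tau_0}-j-1)\bigtriangledown^{j-1}(g_j)=0$ in $\cH^{\underline{k}-2\tau_0}_{m-1}$; since each scalar $j(k_{\tau_0}-j-1)$ is nonzero in the admissible range, the inductive uniqueness forces $g_j=0$ for $j\geq 1$, and then $g_0=0$ as well. The key (and essentially only) point to verify is the non-vanishing of these scalars throughout the recursion, which is exactly what the hypothesis $2m<k_{\tau_0}$ ensures; beyond that, the argument is purely formal manipulation of the Hodge sequence and the Gauss-Manin connection.
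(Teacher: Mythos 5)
Your proof is correct and follows essentially the same route as the paper: induction on $m$ driven by the exact sequence \eqref{Hodge2} and the commutation identity \eqref{eqepsiGM}, with the hypothesis $2m<k_{\tau_0}$ guaranteeing the relevant scalars do not vanish. The only cosmetic difference is that the paper isolates the top coefficient $g_m$ by applying $\epsilon^m$ and reducing $f$ to a section of $\cH^{\underline{k}}_{m-1}$, whereas you apply $\epsilon$ once and induct on $\epsilon(f)\in H^0(\cX_r,\cH^{\underline{k}-2\tau_0}_{m-1})$; you also spell out the uniqueness argument, which the paper leaves implicit.
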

\begin{proof}
We prove this result by induction on $m$. If $m=0$ the result is clear, then we suppose that $m> 0$. Remark that $g_m:=c^{-1}\epsilon^m f\in H^0(\cX_r,\omega^{\underline{k}-2m\tau_0})$ where $c= \frac{m!(k_{\tau_0}- m-1)!}{(k_{\tau_0}-2m-1)!}$. From \eqref{eqepsiGM} we deduce that $\epsilon^m \bigtriangledown_{\underline{k}-2m\tau_0}^m(g_m)=c\cdot g_m= \epsilon^m f$. Then we obtain $\epsilon^m\left(f-\bigtriangledown_{\underline{k}-2m\tau_0}^mg_m\right)=0.$ Thus we deduce $f-\bigtriangledown_{\underline{k}-2m\tau_0}^mg_m\in H^0(\cX_r,\cH^{\underline{k}}_{m-1})$, and the result follows from a simple induction.
\end{proof}
\begin{defi}
Given  $f\in H^0(\cX_r,\cH^{\underline{k}}_m)$ where $2m<k_{\tau_0}$, we put $\cH^r(f):=g_0$ which is called the \emph{overconvergent projection} of $f$.
\end{defi}

The following result will be proved at the end of the \S:
\begin{lemma}\label{l:ordinary projector and overconvergent projection}
Let $e_{\rm ord}=\varprojlim_n U_p^{n!}$ be the standard ordinary operator, let
 $f\in H^0(\cX_r,\cH^{\underline{k}}_m)$ with $2m<k_{\tau_0}$ then we have  $e_{\rm ord}(\gamma_{\underline{k}, m}(f))=e_{\rm ord}(\cH^r(f)).$
\end{lemma}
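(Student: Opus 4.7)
The plan is to decompose $f$ via Lemma \ref{lemmaNOC}. Writing uniquely
\[
f=g_0+\sum_{j=1}^{m}\bigtriangledown_{\underline{k}-2j\tau_0}^{j}(g_j),\qquad g_j\in H^0(\cX_r,\omega^{\underline{k}-2j\tau_0}),
\]
with $\cH^r(f)=g_0$ by definition, the equality to be proved reduces by linearity to showing
\[
e_{\rm ord}\bigl(\gamma_{\underline{k},j}(\bigtriangledown^{j}g_j)\bigr)=0\qquad\text{for every } j\geq 1.
\]

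The crucial step is to identify, on $q$-expansions in Serre--Tate coordinates, the composition $\gamma\circ\bigtriangledown^{j}$ with the $j$-th iterate of an Atkin--Serre type operator $\theta$. Using the canonical trivialization ${\bf f}_0={\rm dlog}_0(1)$ of $\omega_0$ over $\dX(\infty)$ of \S\ref{ss:q-expansions and serre-tate} and Proposition \ref{compXGm}, horizontality of the unit root piece $\mathcal{L}_0$ (\S\ref{ss:unit root splitting}) yields that $\gamma\circ\bigtriangledown$ acts on $q$-expansions as $\partial/\partial t$ with $t=\log(1+q)$; equivalently, $\theta(f_n)=n\cdot f_n$ on the topological basis $f_n=(1+q)^n$ of $\mathcal{M}(\infty,\Lambda_n)$ appearing in Remark \ref{remactUp}. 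The same horizontality then gives the iterated identity $\gamma_{\underline{k},j}(\bigtriangledown^{j}g)(q)=\theta^{j}(g(q))$ for every $g\in H^0(\cX_r,\omega^{\underline{k}-2j\tau_0})$, paralleling the elliptic computation in \cite[Prop.~3.2.4]{Urb2} and the unitary Shimura curve computation in \cite[\S 2.3]{liu-zhang-zhang17}.

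Once this identification is in place, the vanishing of $e_{\rm ord}\circ\theta^{j}$ for $j\geq 1$ is a standard $p$-depletion: Remark \ref{remactUp} gives $U_{\dP_0}(f_n)=f_{n/p}$ if $p\mid n$ and $0$ otherwise, so combined with $\theta(f_n)=n\,f_n$ the commutation
\[
U_{\dP_0}\circ\theta=p\cdot\theta\circ U_{\dP_0},\qquad\text{whence}\qquad U_{\dP_0}^{s}\circ\theta^{j}=p^{sj}\cdot\theta^{j}\circ U_{\dP_0}^{s}
\]
is immediate. Since $\theta$ only involves the $\dP_0$-direction it commutes with each $U_\dP$, $\dP\neq\dP_0$; writing $U_p=\prod_{\dP\mid p}U_\dP$ with the mutually commuting factors one deduces $U_p^{k!}(\theta^{j}g_j)=p^{jk!}\theta^{j}(U_p^{k!}g_j)$. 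The sequence $\{\theta^{j}(U_p^{k!}g_j)\}_k$ is $p$-adically bounded ($\theta$ has norm $\leq 1$ on integral $q$-expansions since $|n|_p\leq 1$, and $U_p$ is of norm $\leq 1$), so the factor $p^{jk!}$ forces the whole expression to zero as $k\to\infty$. Injectivity of the $q$-expansion morphism \eqref{e:q-expansion morphism} (which follows from faithful flatness of $\dX(\infty)\to\dX_{\rm ord}$) translates this into the vanishing of $e_{\rm ord}(\gamma_{\underline{k},j}(\bigtriangledown^{j}g_j))$ as a section, concluding the proof.

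The main obstacle is the identification $\gamma\circ\bigtriangledown^{j}=\theta^{j}$ on $q$-expansions, which requires unwinding the Gauss--Manin connection in Serre--Tate coordinates together with the horizontality of the unit root line $\mathcal{L}_0$; once this is established, the remaining commutation with $U_{\dP_0}$ and the resulting vanishing are entirely formal.
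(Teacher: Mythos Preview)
Your proposal is correct and follows essentially the same route as the paper's proof: decompose $f$ via Lemma~\ref{lemmaNOC}, identify $\gamma_{\underline{k},m}\circ\bigtriangledown$ with the Serre operator $\Theta$ acting as $(q+1)\frac{d}{dq}$ on Serre--Tate coordinates (this is precisely the computation carried out in the proof of Theorem~\ref{t: gauss-manin p-adic powers}), and then use the commutation $U_{\dP_0}\Theta=p\,\Theta U_{\dP_0}$ from Remark~\ref{remactUp} to kill the $j\geq 1$ terms under $e_{\rm ord}$. The paper's own proof is simply a terser version of exactly this argument.
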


\subsection{Connections on Formal Vector Bundles}\label{ss:connections od formal vector bundles}
Let us consider the formal vector bundle $f:\V(\cH_0^1)\rightarrow \dX_{\rm ord}$.
By \cite[\S 2.4]{AI17} the Gauss Manin connection extends to a connection:
\[
\bigtriangledown:f_{\ast}\cO_{\V(\cH_0^1)}\longrightarrow f_{\ast}\cO_{\V(\cH_0^1)}\hat\otimes \Omega^1_{\dX_{\rm ord}}. 
\]
Using the Unit Root Splitting we have that $\V(\cH_0^1)\simeq\V(\omega_0)\times\V(\mathcal{L}_0)$, hence we can define the \emph{Serre operator}
\[
\Theta:f_{\ast}\cO_{\V(\omega_0)}\hookrightarrow f_{\ast}\cO_{\V(\cH_0^1)}\stackrel{\bigtriangledown}{\longrightarrow}f_{\ast}\cO_{\V(\cH_0^1)}\hat\otimes \Omega^1_{\dX_{\rm ord}}\longrightarrow f_{\ast}\cO_{\V(\omega_0)}\hat\otimes \Omega^1_{\dX_{\rm ord}}
\]
\begin{lemma}\label{Serreopext}
The Serre operator can be extended to $(g_n\circ f_0)_\ast\cO_{\V_0(\omega_0,s_0)}$, namely, there exist a morphism (also denoted by $\Theta$) making the following diagram commutative
\[
\xymatrix{
f_{\ast}\cO_{\V(\omega_0)}\ar[d]\ar[r]^\Theta&f_{\ast}\cO_{\V(\omega_0)}\hat\otimes \Omega^1_{\dX_{\rm ord}}\ar[d]\\
(g_n\circ f_0)_\ast\cO_{\V_0(\omega_0, s_0)}\ar[r]^\Theta &(g_n\circ f_0)_\ast\cO_{\V_0(\omega_0, s_0)}\hat\otimes \Omega^1_{\dX_{\rm ord}}
}
\]
\end{lemma}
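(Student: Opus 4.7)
The plan is to verify the extension locally, reducing to an explicit computation about how the Serre operator interacts with the change of coordinates relating $\V(\omega_0)$ to $\V_0(\omega_0, s_0)$.

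Working on a formal affine open $\mathrm{Spf}(R)\subset\mathfrak{IG}_{r,n}$ lying over the ordinary locus, the ideal $\cI_n$ is generated by $\alpha_n=p^n$ (since $\mathrm{Hdg}$ is a unit there). Choosing a local basis $w$ of $\omega_0|_{\mathrm{Spf}(R)}$ lifting the marked section $s_0$ modulo $\cI_n$, the local description of \S\ref{locdesc} identifies
\[
f_\ast\cO_{\V(\omega_0)}|_{\mathrm{Spf}(R)} \simeq R\langle X\rangle, \qquad (g_n\circ f_0)_\ast\cO_{\V_0(\omega_0,s_0)}|_{\mathrm{Spf}(R)} \simeq R\langle Y\rangle,
\]
with $X=1+p^n Y$. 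The left vertical arrow of the diagram is the natural inclusion $R\langle X\rangle\hookrightarrow R\langle Y\rangle$. The Serre operator $\Theta$ on $R\langle X\rangle$ is the connection satisfying Leibniz with $\Theta(X)=\pi_{\mathcal{L}_0}(\bigtriangledown(w))=a\cdot X\cdot\eta$ for some $a\in R$ and $\eta$ a local generator of $\Omega^1_R$. A Leibniz extension of $\Theta$ to $R\langle Y\rangle$ is well-defined provided $\Theta(Y)\in R\langle Y\rangle\otimes\Omega^1_R$; but $p^n\Theta(Y)=\Theta(X)$, so this reduces to showing $a\in p^n R$.

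The key step is this divisibility, for which I would appeal to the Serre--Tate picture over the Igusa tower. There one has a canonical local basis $w_0$ of $\omega_0$ coming from the trivialization of the formal (multiplicative) part of $\cG_0$ via its invariant differential, and this $w_0$ is Serre-horizontal: the $\omega_0$-component of $\bigtriangledown(w_0)$ vanishes, since under the canonical trivialization the multiplicative differential is horizontal for the Unit Root splitting. Moreover $w_0 \equiv s_0 \bmod \cI_n$ because ${\rm dlog}_0$ applied to the canonical generator of $D_n$ is precisely this multiplicative differential modulo $\cI_n$. Writing $w = (1+p^n\mu)w_0$ with $\mu\in R$, one computes
\[
\bigtriangledown(w) = p^n\,d\mu\otimes w_0 + (1+p^n\mu)\bigtriangledown(w_0),
\]
whose $\omega_0$-component (via Unit Root projection) is $p^n\,d\mu\otimes w_0$, giving $a\in p^n R$ as required. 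Setting $\Theta(Y)$ by the induced formula and extending by Leibniz defines the operator on $R\langle Y\rangle$; commutativity of the diagram is immediate by construction, and a patching argument glues the local constructions into a global morphism.

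The main obstacle is verifying the existence and properties of the horizontal canonical basis $w_0$ over the Igusa tower---both that its reduction modulo $\cI_n$ coincides with $s_0$, and that its Unit Root $\omega_0$-component is $\bigtriangledown$-horizontal. This relies on the Serre--Tate description of the universal abelian variety over $\dX(\infty)$ (recalled in \S\ref{ss:q-expansions and serre-tate}), together with the explicit compatibility between the dlog map and the canonical multiplicative trivialization of $\cG_0$ over the ordinary locus.
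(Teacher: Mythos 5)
Your reduction is exactly the one the paper uses: work locally on $\Spf(R)\to\mathfrak{IG}_{r,n}$, identify the two sheaves with $R\langle X\rangle$ and $R\langle Z\rangle$ via $X=1+p^nZ$, observe that the Serre operator is $\Theta(D)P=DP+a\,X\frac{\partial P}{\partial X}$ where $a$ is the $\omega_0$-component (for the unit-root splitting) of $\bigtriangledown(D)$ applied to a lift of $s_0$, and note that everything comes down to the divisibility $a\in p^nR$. So the skeleton is correct and matches the paper's proof.

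Where you diverge is in how you justify $a\in p^nR$, and this is also where your argument is not yet complete. You route the claim through the infinite-level Igusa tower $\dX(\infty)$ and a canonical ``Serre-horizontal'' differential $w_0$, asserting (but not proving) that $\pi_{\mathcal{L}_0}(\bigtriangledown w_0)=0$ and that $w_0\equiv s_0\bmod\cI_n$; you acknowledge this as the main obstacle. The paper instead stays at finite level and argues directly on the Dieudonn\'e module: by \cite[Lemma B.3.5]{liu-zhang-zhang17} the image of ${\rm dlog}_0\colon T_p(\cG_0)\to\omega_0\subset\cH_0^1$ consists precisely of the $\xi$ with $\varphi\xi=p\xi$ (where $\varphi$ is built from Verschiebung and Frobenius), and by \cite[Lemma B.3.6]{liu-zhang-zhang17} one has $\bigtriangledown(D)\varphi\xi=p\varphi\bigtriangledown(D)\xi$; together these give $\varphi\bigtriangledown(D)\xi=\bigtriangledown(D)\xi$, i.e.\ $\bigtriangledown(D)\xi\in\mathcal{L}_0$, whence $a\equiv 0\bmod p^n$ for any lift of $s_0$. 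This is in substance the same Frobenius-eigenvalue mechanism that underlies the horizontality of your $w_0$ (the paper proves that horizontality later, in the proof of Theorem \ref{t: gauss-manin p-adic powers}, by the identical computation), but doing it at finite level avoids two loose ends in your version: you never actually prove the horizontality of $w_0$ (your stated reason, that ``the multiplicative differential is horizontal for the Unit Root splitting,'' is an assertion, not an argument — the correct reason is the $\varphi$-eigenvalue computation), and you would additionally need an injectivity/descent statement of the form $p^nR_\infty\cap R=p^nR$ to bring the divisibility back down from $\dX(\infty)$ to $\mathfrak{IG}_{r,n}$, which you do not address. Replacing your appeal to $w_0$ by the direct argument via the two cited lemmas closes the proof.
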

\begin{proof}
Let us check this locally. Let $\mathfrak{IG}_n$ be the formal scheme over $\dX_{\rm ord}$ defined analogously as $\mathfrak{IG}_{r,n}$.
Let $\rho^\ast:S=\Spf(R)\longrightarrow\mathfrak{IG}_n$ be a neighborhood such that $\cH_0^1\mid_S= Rf_0\oplus Re_0$, with $f_{0}\in\omega_0$ congruent to $s_0$ modulo $p^n$, and $e_0\in\mathcal{L}_0$. Let $D$ ve a derivation dual to a generator of $\Omega_{\dX_{\rm ord}}^1$. Write 
\[
\bigtriangledown(D) f_0=a_0 f_0+b_0 e_0;\qquad \bigtriangledown(D) e_0=c_0 f_0+d_0 e_0.
\] 
The key point is that, since $s_0\in{\rm dlog}_0(T_p(\cG_0))$, we have that $\bigtriangledown(D)s_0\in \overline{\mathcal{L}_0}$. Indeed, if we denote by $\sigma$ the Frobenius morphism acting on $W$, denote by $\Sigma:(\cG_0^D)^\sigma=\cG_0^D\times_{W,\sigma}W\rightarrow\cG_0^D$ the natural projection, denote by $V:\cG_0^\sigma\rightarrow\cG_0$ the Verschiebung morphism, and we write $\varphi=D(V^D)\circ D(\Sigma)$ acting on the Dieudonne module $\cH^1_{\cG_0}=\cH^1_0$, by \cite[Lemma B.3.5]{liu-zhang-zhang17} the image of ${\rm dlog}_0:T_p(\cG_0)\rightarrow \omega_0\subset \cH_0^1$ can be identified with the elements $\xi\in\cH_0^1$ such that $\varphi\xi=p\xi$.
By \cite[Lemma B.3.6]{liu-zhang-zhang17}, we have that $p\varphi\bigtriangledown(D)\xi=\bigtriangledown(D)\varphi \xi=p\bigtriangledown(D)\xi$. Thus, $\varphi\bigtriangledown(D)\xi=\bigtriangledown(D)\xi$, and again by \cite[Lemma B.3.5]{liu-zhang-zhang17} this implies that $\bigtriangledown(D)\xi\in \mathcal{L}_0$. Choosing $\xi$ to be any pre-image of $s_0$, the claim follows.
Hence, this implies that $a_{0}\in p^{n}R$. 

Let $X$ and $Y$ be the variables of $\rho^\ast\cO_{\V(\cH_0^1)}$ corresponding to $f_{0}$ and $e_0$, respectively. Thus, for any $P(X)\in\rho^\ast\cO_{\V(\omega_0)}\subset\rho^\ast\cO_{\V(\cH_0^1)}=R\langle X,Y\rangle$, we have
\[
\bigtriangledown(D) P
=DP+(a_{0}X+b_{0}Y)\frac{\partial}{\partial X}P.
\]
Hence, $\Theta(D) P=DP+a_{0}X\frac{\partial}{\partial X}P$.
Since we have
\[
R\langle X\rangle=\rho^\ast\cO_{\V(\omega_0)}\longrightarrow\rho^\ast\cO_{\V(\omega_0,s_0)}=R\langle Z\rangle;\qquad X\longmapsto 1+p^nZ,
\]
we conclude that the morphism
\[
\Theta(D):R\langle Z\rangle=\rho^\ast\cO_{\V(\omega_0, s_0)}\longrightarrow \rho^\ast\cO_{\V(\omega_0, s_0)}=R\langle Z\rangle;\qquad
\Theta(D)Q=DQ+\frac{a_{0}}{p^n}(1+p^nZ)\frac{\partial}{\partial Z}Q,
\]
satisfies the desired property.
\end{proof}

Notice that Katz interpretation of the sheaves implies that
 $\omega^{{\bf k}_n^0}=\omega_0^{{\bf k}_{n,\tau_0}^0}\otimes \omega^{{\bf k}_{n}^{0,\tau_0}}$ where:
\[
\omega^{{\bf k}_{n}^{0,\tau_0}}:=\left((g_n\circ f_{0})_{\ast}\cO_{\hat\V(\omega,{\bf s}^0)}\hat\otimes\Lambda_n\right)[{\bf k}_{n}^{0,\tau_0}]^\vee,\qquad \omega_0^{{\bf k}_{n,\tau_0}^0}=\left((g_n\circ f_{0})_{\ast}\cO_{\V_0(\omega_0,s_0)}\hat\otimes\Lambda_n\right)[{\bf k}_{n,\tau_0}^{0}].
\]
Hence the connection $\Theta$ gives rise to a connection: 
\begin{equation}\label{conH1}
\Theta: \omega^{{\bf k}_n^0}\longrightarrow \omega^{{\bf k}_n^0}\otimes \Omega_{\dX_{\rm ord}}^1.
\end{equation}

Notice that the covering $\mathfrak{IG}_{1}\rightarrow\dX_{\rm ord}$ is \'etale, hence the derivation on $\cO_{\mathfrak{IG}_{1}}$ induces a connection on $\omega^{{\bf k}_f}=(g_{1,\ast}(\cO_{\mathfrak{IG}_{1}})\hat\otimes\Lambda_n)[{\bf k}_f^{-1}]$:
\begin{equation}\label{conH2}
\bigtriangledown:\omega^{{\bf k}_f}\longrightarrow \omega^{{\bf k}_f}\otimes \Omega_{\dX_{\rm ord}}^1,
\end{equation}
From the connections \eqref{conH1} and \eqref{conH2} we obtain the Serre operator:
\[
\Theta:\omega^{\bf k_n}\longrightarrow\omega^{\bf k_n}\otimes\Omega_{\dX_{\rm ord}}^1\stackrel{KS}{\simeq}\omega^{\bf k_n}\otimes\omega_0^{\otimes 2}=:\omega^{{\bf k}_n+2\tau_0}.
\]
where ${\bf k}_n+2\tau_0:\cO^\times\longrightarrow\Lambda_n\otimes_{\Z_p}\hat\cO$ is given by $({\bf k}_n+2\tau_0)(x)=(\tau_0x)^2\cdot{\bf k}_n(x)$. 
\begin{rmk}
Let $U=\Spf(R)$ be a trivialization of $\omega$, and let $w=(f_0,(f_\tau,e_\tau)_\tau)$ be a basis such that $\bigtriangledown(f_\tau)=0$, $\bigtriangledown(e_\tau)=0$ and $f_\tau$ generate $\omega_{C_\dP}$.
Let ${\bf A}/R$ is the corresponding universal abelian variety, hence a modular form is given by a distribution $\mu({\bf A},\iota,\theta,\alpha^{\dP_0},w)\in D^{{\bf k}^{\tau_0}_n}_n(\cO^{\tau_0},R)$. Given such Katz modular form interpretation, 
we have that $\Theta\mu$ evaluated at a function $\phi\in C^{{\bf k}^{\tau_0}_n}_n(\cO^{\tau_0},\Lambda_n)$ is given by 
\[
\int_{\cO^{\tau_0\times}\times\cO^{\tau_0}}\phi \;d(\Theta(D)\mu)({\bf A},\iota,\theta,\alpha^{\dP_0},w)=\Theta(D)\left(\int_{\cO^{\tau_0\times}\times\cO^{\tau_0}}\phi \;d\mu({\bf A},\iota,\theta,\alpha^{\dP_0},w)\right),
\]
for any derivation $D$.
\end{rmk}

\begin{thm} \label{t: gauss-manin p-adic powers}
Let $\mu\in H^0(\dX_{\rm ord}\times\dW_n, \omega^{{\bf k}_{n}})^\heartsuit$. There exists:
\[
M(\mu)(q)\in D^{{\bf k}^{\tau_0}_n}_n\left(\cO^{\tau_0},\mathcal{M}(\infty,\Lambda_n\hat\otimes_{W}\Lambda_{\tau_0})\right),
\]
such that, for any classical weight $k:\Lambda_{\tau_0}\longrightarrow W$ we have:
\[
(\mathrm{id}\otimes k) (M(\mu)(q))=(\Theta^{k}\mu)(q)\in D^{{\bf k}^{\tau_0}_n}_n\left(\cO^{\tau_0},\mathcal{M}(\infty,\Lambda_n)\right),
\]
where $\Theta^{k}\mu=\left(\Theta\circ\stackrel{k}{\cdots}\circ\Theta\right)\mu\in H^0(\dX_{\rm ord}\times\dW_n, \omega^{{\bf k}_{n}+2k\tau_0})$, and $({\bf k}_n+2k\tau_0)(x)=(\tau_0x)^{2k}\cdot{\bf k}_n(x)$.
\end{thm}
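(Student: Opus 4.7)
The plan is to work in Serre--Tate coordinates at each closed point of $\dX(\infty)$, describe $\Theta$ explicitly there, and then $p$-adically interpolate its iterates using the universal character of $\Lambda_{\tau_0}$; the stability hypothesis is exactly what forces the relevant ``eigenvalues'' of $\Theta$ into $\Z_p^\times$, which is the domain where interpolation makes sense.

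First I would derive an explicit formula for $\Theta$ on $q$-expansions. Let $x\in\dX(\infty)(\bar\kappa)$ be a closed point and let $\dX(\infty)_x \simeq \G_m = \Spf W[[q]]$ be the Serre--Tate chart provided by Proposition \ref{compXGm}. Using the Kodaira--Spencer isomorphism $KS:\Omega^1_{\dX_{\rm ord}}\simeq\omega_0^{\otimes 2}$, the normalization of $\omega_0$ coming from $\mathrm{dlog}_0(1)={\bf f}_0$, and the computation of the Gauss--Manin connection on the unit-root part (as in the proof of Lemma \ref{Serreopext}), the operator $\Theta$ corresponds, on $\rho^\ast\cO_{\V_0(\omega_0,s_0)}\simeq W[[q]]$, to the canonical invariant derivation $(1+q)\tfrac{d}{dq}$ of the formal multiplicative group. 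In particular, on the topological basis $f_n(q) = (1+q)^n$, $n\in\Z_p$, one has $\Theta f_n = n\cdot f_n$.

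Next I would exploit the stability hypothesis. By Lemma \ref{Up-0} and Remark \ref{remactUp}, $\mu\in H^0(\dX_{\rm ord}\times\dW_n,\omega^{{\bf k}_n})^\heartsuit$ is characterized by $U_{\dP_0}\mu=0$; in Serre--Tate coordinates this means that at each closed point the local $q$-expansion of $\mu$ lies in the $W$-submodule of $W[[q]]\hat\otimes\Lambda_n$ topologically generated by $(1+q)^n$ with $p\nmid n$, i.e.\ with $n\in\Z_p^\times$. Let ${\bf k}_{\tau_0}:\Z_p^\times\rightarrow\Lambda_{\tau_0}^\times$ be the universal character. Define, for a local stable section $f = \sum_{n\in\Z_p^\times} a_n\,(1+q)^n$,
\[
M(f)(q) := \sum_{n\in\Z_p^\times} {\bf k}_{\tau_0}(n)\cdot a_n\cdot (1+q)^n \;\in\; W[[q]]\,\hat\otimes\,\Lambda_{\tau_0},
\]
where convergence is ensured by the fact that ${\bf k}_{\tau_0}$ is a continuous character and the coefficients $a_n$ are bounded (after partitioning $\Z_p^\times$ into residue discs mod $p^m$ and letting $m\to\infty$). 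For a classical weight $k:\Lambda_{\tau_0}\rightarrow W$, the specialization $(\mathrm{id}\otimes k)M(f)(q) = \sum_n n^k a_n (1+q)^n$ equals the $q$-expansion of $\Theta^k f$ by the computation above. Finally, extending this to distributions by $M(\mu)(q)(\phi):=M\bigl(\mu(q)(\phi)\bigr)$ for $\phi\in C^{{\bf k}_n^{\tau_0}}_n(\cO^{\tau_0},\Lambda_n)$ yields the required element of $D^{{\bf k}_n^{\tau_0}}_n\bigl(\cO^{\tau_0},\mathcal{M}(\infty,\Lambda_n\hat\otimes_W\Lambda_{\tau_0})\bigr)$.

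The main obstacle I foresee is showing that the locally defined $M(f)$ glues to a global element of $\mathcal{M}(\infty,\Lambda_n\hat\otimes\Lambda_{\tau_0})$, i.e.\ that it is independent of the choice of Serre--Tate chart. The key point here is that the decomposition of a stable function into $(1+q)^n$-characters and the action $\Theta f_n = nf_n$ are both intrinsic: the characters $(1+q)^n$ arise from the canonical embedding $\G_m[p^\infty]\hookrightarrow\cG_0$ over $\dX_{\rm ord}$, and $\Theta$ is defined intrinsically via the unit-root splitting of \S\ref{ss:unit root splitting}. The $\G_m$-translation $\beta$ of Proposition \ref{compXGm} translates the Serre--Tate coordinates between charts and preserves both the stable subspace (by \cite[Definition 2.3.10]{liu-zhang-zhang17}) and the spectral decomposition of $\Theta$, so the interpolation $M$ intertwines with $\beta^\ast$. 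This equivariance—together with the hypothesis $[F_0:\Q_p]=1$, which guarantees that the $\tau_0$-component of the Serre--Tate group is just $\G_m$ rather than a higher-dimensional Lubin--Tate group—allows the pointwise construction to descend to a global section, completing the construction.
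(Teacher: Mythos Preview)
Your proposal is correct in all essential ideas and uses the same ingredients as the paper, but the order of operations is inverted, and this is precisely what creates the ``main obstacle'' you flag. The paper defines $M(\mu)(q)$ \emph{globally} from the outset by applying the ring homomorphism $\beta^\ast:\mathcal{M}(\infty,\Lambda_n)\to\mathcal{M}(\infty,\Lambda_n)\hat\otimes_W W[[\Z_p]]$ (coming from the global morphism $\beta$ of Proposition~\ref{compXGm}) to $\int\phi\,d\mu(q)$; stability guarantees that the image actually lands in the subring $\mathcal{M}(\infty,\Lambda_n)^\heartsuit\hat\otimes_W W[[\Z_p^\times]]=\mathcal{M}(\infty,\Lambda_n\hat\otimes\Lambda_{\tau_0})^\heartsuit$ (this is \cite[Lemma 2.1.6, Remark 2.3.11]{liu-zhang-zhang17}). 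Only \emph{after} this global definition does the paper restrict to closed points to verify the interpolation property, which reduces to your computation $\Theta f_n=nf_n$ in Serre--Tate coordinates. By contrast, you define $M$ pointwise and then have to argue that the local constructions patch; your sketch of this via $\beta$-equivariance is plausible but not quite a proof, since intrinsicality at each completed local ring does not by itself produce a global section. Reorganizing your argument to apply $\beta^\ast$ globally first dissolves the obstacle entirely and yields exactly the paper's proof.
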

\begin{proof}
The morphism $\beta$ of Proposition \ref{compXGm} provides a morphism
\[
\beta^\ast:\mathcal{M}(\infty,\Lambda_n)\longrightarrow \mathcal{M}(\infty,\Lambda_n)\hat\otimes_W H^0(\G_m,\cO_{\G_m}).
\]
We can identify $H^0(\G_m,\cO_{\G_m})\simeq W[[q]]\simeq W[[\Z_p]]$. By \cite[Lemma 2.1.6 and Remark 2.3.11]{liu-zhang-zhang17}, $\beta^\ast$ restricts to a morphism:
\[ 
\beta^\ast:\mathcal{M}(\infty,\Lambda_n)^\heartsuit\longrightarrow \mathcal{M}(\infty,\Lambda_n)^\heartsuit\hat\otimes_W W[[\Z_p^\times]]=\mathcal{M}(\infty,\Lambda_n\hat\otimes_{\Z_p}\Lambda_{\tau_0})^\heartsuit\subset \mathcal{M}(\infty,\Lambda_n\hat\otimes_{\Z_p}\Lambda_{\tau_0}).
\]
We define $M(\mu)(q)$ through the following equation:
\[
\int_{\cO^{\tau_0\times}\times\cO^{\tau_0}}f(a,b)\;dM(\mu)(q):=\beta^\ast\left(\int_{\cO^{\tau_0\times}\times\cO^{\tau_0}}f(a,b)\;d\mu(q)\right)\in \mathcal{M}(\infty,\Lambda_n\hat\otimes_{\Z_p}\Lambda_{\tau_0})^{\heartsuit},
\]
for any $f\in C^{{\bf k}_n^{\tau_0}}_n(\cO^{\tau_0},\Lambda_n)$. 
Now we have to check it satisfies the desired properties.

For every closed point $x\in \dX(\infty)(\bar\kappa)$ let us consider the restriction map
\[
{\rm res}_x:\mathcal{M}(\infty,\Lambda_n)\longrightarrow \mathcal{M}(\infty,\Lambda_n)_x\simeq H^0(\G_m,\cO_{\G_m})\hat\otimes_W\Lambda_n\simeq W[[q]]\hat\otimes_W\Lambda_n,
\]
induced by the morphism $c\mid_x$ of Proposition \ref{compXGm}. I claim that
\begin{equation}\label{claimTheta}
{\rm res}_x\int_{\cO^{\tau_0\times}\times\cO^{\tau_0}}f(a,b)\;d\Theta\mu(q)=(q+1)\frac{d}{dq}\left({\rm res}_x\int_{\cO^{\tau_0\times}\times\cO^{\tau_0}}f(a,b)\;d\mu(q)\right).
\end{equation}
Let ${\bf f_0}={\rm dlog}_0(1)$ and let $(q+1)\frac{d}{dq}$ be the natural derivation in $W[[q]]$. We write $\eta_0:=\bigtriangledown\left((q+1)\frac{d}{dq}\right){\bf f}_0$. Write $\varphi=D(V^D)\circ D(\Sigma)$ acting on $\cH^1_0$ as in the proof of Lemma \ref{Serreopext}.
By \cite[Lemma B.3.6]{liu-zhang-zhang17}, 
\begin{equation}\label{eqauxthm}
\bigtriangledown\left((q+1)\frac{d}{dq}\right)\varphi\xi=p\varphi\bigtriangledown\left((q+1)\frac{d}{dq}\right)\xi,\quad\mbox{ for all }\xi\in\cH_0^1. 
\end{equation}
Moreover, $\varphi{\bf f}_0=p{\bf f}_0$ by \cite[Lemma B.3.5]{liu-zhang-zhang17}. This implies that $\varphi\eta_0=\eta_0$ by \eqref{eqauxthm}. Applying \eqref{eqauxthm} again, we deduce 
\begin{equation}\label{eqauxthm2}
p\varphi\bigtriangledown\left((q+1)\frac{d}{dq}\right)\eta_0=\bigtriangledown\left((q+1)\frac{d}{dq}\right)\eta_0. 
\end{equation}
By \cite[Lemma B.3.5]{liu-zhang-zhang17} there exists basis $\{\alpha,\beta\}$ of $\cH_0^1$ satisfying $\varphi\alpha=p\alpha$ ($\alpha={\bf f}_0$ for example) and $\varphi\beta=\beta$. The property \eqref{eqauxthm2} implies $\bigtriangledown\left((q+1)\frac{d}{dq}\right)\eta_0=f_1\alpha+f_2\beta$ with $f_1=p^{2n}\varphi^n f_1$ and $f_2=p^n\varphi^n f_2$, for all $n\in\N$. Thus, $\bigtriangledown\left((q+1)\frac{d}{dq}\right)\eta_0=0$.
By \cite[Theorem B.2.3]{liu-zhang-zhang17} we have that ${\bf f}_0^{2}=KS(\frac{dq}{q+1})$. Hence, we have that $\bigtriangledown{\bf f}_0=\eta_0{\bf f}_0^2$ and $\bigtriangledown\eta_0=0$, implying that
\[
\Theta(f(q){\bf f}_0)=\pi_{\mathcal{L}_0}\left((q+1)\frac{d}{dq}f(q){\bf f}_0^3+f(q)\eta_0 {\bf f}_0^2\right)=(q+1)\frac{d}{dq}f(q){\bf f}_0^3.
\]
This implies that, under the isomorphism 
\[
H^0(\dX_{\rm ord}\times\dW_n, \omega^{{\bf k}_{n}})_x\stackrel{\simeq}{\longrightarrow}D^{{\bf k}^{\tau_0}_n}_n(\cO^{\tau_0},\Lambda_n)\hat\otimes_W W[[q]];\qquad \mu\longmapsto \mu(x,({\bf f}_0,({\bf f}_\tau,{\bf e}_\tau)_\tau)),
\]
the Serre operator acts on $\mu\otimes f\in D^{{\bf k}^{\tau_0}_n}_n(\cO^{\tau_0},\Lambda_n)\hat\otimes_W W[[q]]$ by 
$\Theta(\mu\otimes f)=\mu\otimes(q+1)\frac{d}{dq}f$.
Hence the claim \eqref{claimTheta} follows.

Note that $\Z_p[[q]]\simeq\Z_p[[\Z_p]]$ is topologically generated by $f_\alpha(q)=(1+q)^\alpha$, with $\alpha\in\Z_p$, and $\Z_p[[\Z_p^\times]]$ is topologically generated by $f_\alpha$ with $\alpha\in\Z_p^\times$. Hence, if we write $\Theta:\G_m=\Z_p[[q]]\rightarrow\G_m=\Z_p[[q]]$, $\Theta f=(q+1)\frac{d}{dq}f$,
\[
\Theta^k f_\alpha(q)=\alpha^k f_\alpha(q)=k^\ast\left((1+T)^\alpha\right) f_\alpha(q)=k^\ast\left(f_\alpha(\hat\G_m(T,q))\right),\qquad \alpha\in\Z_p^\times.
\]
This implies that
${\rm res}_x(\mathrm{id}\otimes k) (M(\mu)(q))={\rm res}_x(\Theta^{k}\mu)(q)$,
by Proposition \ref{compXGm}. Hence the result follows.
\end{proof}

\begin{proof}[Proof of Lemma \ref{l:ordinary projector and overconvergent projection}]
We have computed that the Serre operator $\Theta$ acts on the Serre-Tate coordinates as $\Theta f=(q+1)\frac{d}{dq}f$. By remark \ref{remactUp}, this implies that $U_{\dP_0}\Theta f=p\Theta U_{\dP_0}f$ (see \cite[Lemma 2.7]{DR14}).
The result follows from Lemma \ref{lemmaNOC}, since $\gamma_{\underline{k}, m}(\bigtriangledown_{\underline{k}}g)=\Theta g$. 
\end{proof}

\begin{rmk}\label{r:result also works for c}
Observe that the results of subsections \ref{ss:unit root splitting}, \ref{ss:connections od formal vector bundles} and \ref{ss:the overconvergent projection} are equally valid if we use $\cX^{\cC}_{\mathrm{ord}}$ instead of $\cX_{\mathrm{ord}}$.
\end{rmk}

\section{Triple product $p$-adic $L$-functions}

In this section we put together the constructions performed in the text in order to produce triple product $p$-adic $L$-functions. Firstly using the results from \S \ref{s:connections and unit root} we perform a $p$-adic interpolation of the trilinear products introduced in \ref{ss:trilinear products}, which leads to triple product $p$-adic $L$-functions. These products are related to $L$-values via proposition \ref{IntProp} which imply the interpolation property satisfied for these $p$-adic $L$-functions.

\subsection{Preliminaries}
 We define the corresponding action of the operator  $U_p$ on the spaces of distributions (and a dual version of it) where our $q$-expansions live. 

Let $R$  be a $\Lambda_n$-algebra and $k^{\tau_0}:\cO^{\tau_0}\rightarrow R^\times$ be a character. Firstly observe that $U_{\dP_0}$ acts naturally on $\mathcal{M}(\infty,R)$ through the moduli interpretation of the unitary Shimura curves. Now for $\dP\neq\dP_0$ the Hecke operator $U_{\dP}: D_{n}^{k^{\tau_0}}(\cO^{\tau_0},\mathcal{M}(\infty,R)) \rightarrow D_{n}^{k^{\tau_0}}(\cO^{\tau_0},\mathcal{M}(\infty,R))$ is given as follows: Let $\mu \in D_{n}^{k^{\tau_0}}(\cO^{\tau_0},\mathcal{M}(\infty,R))$, for each $i \in \kappa_{\dP}$ we denote $g_i\ast\mu$ the distribution given by $\int_{\cO^{\tau_0\times}\times\cO^{\tau_0}}\phi\;d(g_i\ast\mu)=\int_{\cO^{\tau_0\times}\times\cO^{\tau_0}}(\varpi_\dP g_i^{-1}\ast\phi)\;d\mu$. Then we have
\[
U_{\dP}\mu=
\sum_{i\in\kappa_{\dP}}\gamma_i(g_i\ast\mu),
\]
where $\gamma_i:\mathcal{M}(\infty,R)\rightarrow \mathcal{M}(\infty,R)$ is the morphism corresponding to the (bijective) map that sends $(A,\iota,\theta,\alpha^{\dP_0},w)$ to $(A_i,\iota_i,\varpi_\dP\theta_i,\alpha_i^{\dP_0},wg_i)$. 
Recall that 
$$U_p:=\prod_{\dP}U_{\dP}: D_{n}^{k^{\tau_0}}(\cO^{\tau_0},\mathcal{M}(\infty,R)) \rightarrow D_{n}^{k^{\tau_0}}(\cO^{\tau_0},\mathcal{M}(\infty,R)).$$

Define $\bar D_{n}^{k^{\tau_0}}(\cO^{\tau_0},R)$ as the $k^{\tau_0}$-homogeneous distributions of functions on $p\cO^{\tau_0}\times\cO^{\tau_0\times}$.
We define the operator $U_{\dP}$ acting on ${\rm Hom}_{R}\left(\bar D_{n}^{k^{\tau_0}}(\cO^{\tau_0},R),\mathcal{M}(\infty,R)\right)$ by $U_{\dP}\varphi(\mu)=\varphi(U_{\dP}^\ast\mu)$;
\[
U_{\dP}^\ast\mu=
\sum_{i\in\kappa_{\dP}}\gamma_i(\bar g_i\ast\mu);\qquad\int_{p\cO^{\tau_0}\times\cO^{\tau_0\times}}\phi\;d(\bar g_i\ast\mu)=\int_{p\cO^{\tau_0}\times\cO^{\tau_0\times}}(g_i\ast\phi)\;d\mu.
\]

Then as before we put: 
$$U_p:=\prod_{\dP}U_{\dP}: {\rm Hom}_{R}(\bar D_{n}^{k^{\tau_0}}(\cO^{\tau_0}, R), \mathcal{M}(\infty,R)) \rightarrow {\rm Hom}_{R}(\bar D_{n}^{k^{\tau_0}}(\cO^{\tau_0}, R), \mathcal{M}(\infty,R)).$$

One checks that we have a well defined morphism of $R[U_p]$-modules:
\begin{eqnarray}\label{e:distributions vs duals}
D_{n}^{k^{\tau_0}}(\cO^{\tau_0},\mathcal{M}(\infty,R))\longrightarrow{\rm Hom}_{R}\left(\bar D_{n}^{k^{\tau_0}}\left(\cO^{\tau_0},R\right),\mathcal{M}(\infty,R)\right)
\end{eqnarray}
which sends $\mu_1 \in D_{n}^{k^{\tau_0}}(\cO^{\tau_0},\mathcal{M}(\infty,R))$ to the morphism:
$$\mu_2 \mapsto \int_{\cO^{\tau_0\times}\times\cO^{\tau_0}}\int_{p\cO^{\tau_0}\times\cO^{\tau_0\times}}k^{\tau_0}(xY-Xy)d\mu_1(x,y)d\mu_2(X,Y).$$
\begin{defi}
Considering the standard ordinary operator $e_{\rm ord}:=\lim_n U_p^{n!}$ we define: 
\begin{eqnarray*}
D_{n}^{k^{\tau_0}}(\cO^{\tau_0},\mathcal{M}(\infty,R))^{\rm ord}&:=&e_{\rm ord}\left(D_{n}^{k^{\tau_0}}(\cO^{\tau_0},\mathcal{M}(\infty,R))\right),\\ 
{\rm Hom}_{R}\left(\bar D_{n}^{k^{\tau_0}}\left(\cO^{\tau_0},R\right),\mathcal{M}(\infty,R)\right)^{\rm ord}&:=&e_{\rm ord}\left({\rm Hom}_{R}\left(\bar D_{n}^{k^{\tau_0}}\left(\cO^{\tau_0},R\right),\mathcal{M}(\infty,R)\right)\right),
\end{eqnarray*}
the spaces where $U_p$ is invertible.
\end{defi}

\begin{lemma}\label{compDA}
 The morphism (\ref{e:distributions vs duals}) induces an isomorphism of $R[U_p]$-modules:
\[
D_{n}^{{\bf k}^{\tau_0}}(\cO^{\tau_0},\mathcal{M}(\infty,R))^{\rm ord}\longrightarrow{\rm Hom}_{R}\left(\bar D_{n}^{{\bf k}^{\tau_0}}\left(\cO^{\tau_0},R\right),\mathcal{M}(\infty,R)\right)^{\rm ord}.
\]

\end{lemma}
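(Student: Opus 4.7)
The plan is to reduce to a local computation at each prime $\dP\neq\dP_0$ and then exploit the explicit form of $U_\dP$ on the two sides. By Hypothesis \ref{hypothesis 1} and the product decomposition $\cO^{\tau_0}=\prod_{\dP\neq\dP_0}\cO_\dP$, both $D_{n}^{{\bf k}^{\tau_0}}$ and $\bar D_{n}^{{\bf k}^{\tau_0}}$ factor as completed tensor products indexed by $\dP$, and $U_p=\prod_\dP U_\dP$ acts factorwise. Thus it suffices to prove the analogous isomorphism
\[
D_n^{k_\dP}(\cO_\dP,\mathcal{M}(\infty,R))^{\rm ord}\;\longrightarrow\;\Hom_R\bigl(\bar D_n^{k_\dP}(\cO_\dP,R),\mathcal{M}(\infty,R)\bigr)^{\rm ord}
\]
at each prime $\dP\neq\dP_0$.

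At a fixed prime $\dP$, $k_\dP$-homogeneity trivializes the domains: restriction to the slice $\{1\}\times\cO_\dP$ identifies $D_n^{k_\dP}(\cO_\dP,R)$ with locally analytic distributions on $\cO_\dP$, and restriction to $p\cO_\dP\times\{1\}$ identifies $\bar D_n^{k_\dP}(\cO_\dP,R)$ with locally analytic distributions on $p\cO_\dP$. Under these identifications the pairing \eqref{e:distributions vs duals} becomes the explicit integral transform
\[
(\mu_1,\mu_2)\;\longmapsto\;\int_{\cO_\dP}\int_{p\cO_\dP}k_\dP(1-Xy)\,d\mu_1(y)\,d\mu_2(X),
\]
where the kernel is well defined and locally analytic because $1-Xy\in1+p\cO_\dP\subset\cO_\dP^\times$. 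Expanding $k_\dP(1-Xy)$ as a power series in $Xy$ yields a matrix which is upper-triangular with unit diagonal entries with respect to the natural basis of monomials, so the transform gives a perfect pairing between distributions on $p\cO_\dP$ and distributions on $p\cO_\dP$.

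It remains to identify the $U_\dP$-ordinary parts. Using the coset representatives $g_i=(\begin{smallmatrix}\varpi_\dP&i\\&1\end{smallmatrix})$ and homogeneity, one computes that $g_i\ast\phi$ restricted to the slice $\{1\}\times\cO_\dP$ is, up to the factor $k_\dP(\varpi_\dP)$, supported on $-i+\dP$; summing over $i\in\kappa_\dP$ shows that the image of $U_\dP$ acting on $D_n(\cO_\dP,R)$ is supported on $p\cO_\dP$. Passing to the limit, $e_{\rm ord}$ projects $D_n(\cO_\dP,\mathcal{M}(\infty,R))$ onto distributions supported on $p\cO_\dP$, which is canonically $\bar D_n^{k_\dP}$. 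A parallel analysis for $U_\dP$ acting on the $\Hom$-side (via $\bar g_i$) shows that the two actions are intertwined by the integral transform, and one deduces that the transform becomes an isomorphism after localizing at $U_\dP$. The main obstacle is the bookkeeping in Step 3: one must verify carefully that the two definitions of $U_\dP$ on the distribution side and on the $\Hom$ side are compatible under the pairing (essentially, that $\langle U_\dP\mu_1,\mu_2\rangle = \langle\mu_1,U_\dP^\ast\mu_2\rangle$ for the kernel $k_\dP(1-Xy)$), which is a matrix calculation using the explicit action on basis elements $(1+q)^n$, and that the ordinary projector really identifies the two subspaces without parasitic kernel or cokernel.
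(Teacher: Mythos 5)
Your overall strategy --- factor over the primes $\dP\neq\dP_0$, trivialize by homogeneity, and study the explicit kernel $k_\dP(1-Xy)$ --- is a reasonable starting point, but two of the claims it rests on are false, and the idea that actually makes the lemma work is absent. First, the pairing with kernel $k_\dP(1-Xy)=\sum_m c_m (Xy)^m$ is \emph{not} perfect on the full spaces of locally analytic distributions: with respect to the monomial functionals the Gram matrix is diagonal with entries $c_m$, and these coefficients are not units in $\Lambda_n$; at a classical weight $\underline{k}$ they equal $(-1)^m\binom{k}{m}$ and vanish for $m>k$, so the pairing factors through the finite-rank quotients $\Sym^{\underline{k}}$ and has enormous kernel on both sides. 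Second, the image of $U_\dP$ on distributions is not supported on $p\cO_\dP$: since $\varpi_\dP g_i^{-1}\ast\phi(x,y)=\phi(x,-xi+\varpi_\dP y)$, the distribution $g_i\ast\mu$ is the pushforward of $\mu$ under $y\mapsto -i+\varpi_\dP y$ on the slice $x=1$, hence supported on $-i+\dP$, and as $i$ runs over $\kappa_\dP$ these cosets cover all of $\cO_\dP$. Consequently your description of $e_{\rm ord}$ as the projection onto ``distributions supported on $p\cO_\dP$'' is incorrect (and such a space would in any case be $\bar D_n^{k_\dP}$ itself, still of infinite rank, rather than the target ${\rm Hom}_R(\bar D_n^{k_\dP},\mathcal{M}(\infty,R))$ of the map).

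The missing ingredient is classicity. The paper first reduces to classical weights $\underline{k}$ by continuity, using that such weights are dense in the weight space, and then invokes the locally analytic BGG resolution exactly as in the proof of Proposition \ref{propclassty}: because the ordinary (slope $0$) condition satisfies $0<k_\tau+1$, the projector $e_{\rm ord}$ collapses $D_n^{\underline{k}^{\tau_0}}(\cO^{\tau_0},A)$ onto the finite free module $\Sym^{\underline{k}^{\tau_0}}(A^2)^\vee$ and, by the dual argument, collapses ${\rm Hom}_{R}(\bar D_n^{\underline{k}^{\tau_0}}(\cO^{\tau_0},R),A)$ onto $\Sym^{\underline{k}^{\tau_0}}(A^2)$. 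Only after this collapse does the kernel $(xY-Xy)^{\underline{k}}$ induce the standard pairing on symmetric powers, which is an isomorphism in characteristic zero. Your argument needs this classicity step inserted; without it the perfectness and support claims on which it rests do not hold at any single prime $\dP$, let alone after taking the product.
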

\begin{proof} Since the morphism is continuous, it is enough to prove that the specialization of any classical weight $\underline{k}\in \N[\Sigma_F]$ is an isomorphism, since such classical weights form a dense set. Write $A=\mathcal{M}(\infty,R_{\underline{k}})$. 
The same arguments in the proof of proposition \ref{propclassty} show
 that the ordinary parts of $D_{n}^{\underline{k}^{\tau_0}}(\cO^{\tau_0},A)$ and ${\rm Sym}^{\underline{k}^{\tau_0}}(A^2)^\vee$ are naturally isomorphic. Similarly, one can proof that the ordinary parts of ${\rm Hom}_{R_{\underline{k}}}\left(\bar D_{n}^{\underline{k}^{\tau_0}}\left(\cO^{\tau_0},R_{\underline{k}}\right),A\right)$ and ${\rm Sym}^{\underline{k}^{\tau_0}}(A^2)$ also agree.
 Thus taking specialization at $\underline{k}$ we obtain the natural map between ${\rm Sym}^{\underline{k}^{\tau_0}}(A^2)^\vee$ and ${\rm Sym}^{\underline{k}^{\tau_0}}(A^2)$ which is an isomorphism.
\end{proof}

\subsection{$p$-adic families of trilinear products}

We fix integers $r\geq n_3\geq n_1,n_2\geq 1$. For $i= 1, 2, 3$ we denote by $({\bf r}_{n_i},{\bf \nu}_{n_i}):\cO^\times\times\Z_p^\times\rightarrow\Lambda_{n_i}^{G}$ the universal characters of $\dW^G_{n_i}$ and $k:\dW^G_{n_i}\rightarrow\dW_{n_i}$ as introduced in (\ref{e:map between weight spaces}). Then we put ${\bf k}_{n_3}:= k({\bf r}_{n_3},\nu_{n_1}+ \nu_{n_2})$ and ${\bf k}_{n_i}:= k({\bf r}_{n_i},{\bf \nu}_{n_i})$ for $i= 1, 2$. 

 We put $\cR:=\Lambda_{n_1}^G\hat\otimes\Lambda_{n_2}^G\hat\otimes_{{\bf \nu}_{n_3}=\nu_{n_1}+{\bf \nu}_{n_2}} \Lambda_{n_3}^G\simeq \Lambda_{n_1}^G\hat\otimes\Lambda_{n_2}^G\hat\otimes \Lambda_{n_3}$ and consider the characters:
\begin{eqnarray*}
{\bf m}_1^{\tau_0}:={\bf r}_{n_1}^{\tau_0}-{\bf r}_{n_3}^{\tau_0}-{\bf r}^{\tau_0}_{n_2}+{\bf \nu}_{n_2}\circ N:\cO^{\tau_0\times}&\longrightarrow&\cR,\\
{\bf m}_2^{\tau_0}:={\bf r}^{\tau_0}_{n_2}-{\bf r}^{\tau_0}_{n_1}-{\bf r}^{\tau_0}_{n_3}+{\bf \nu}_{n_1}\circ N:\cO^{\tau_0\times}&\longrightarrow&\cR,\\
{\bf m}_3^{\tau_0}:={\bf r}^{\tau_0}_{n_3}-{\bf r}^{\tau_0}_{n_1}-{\bf r}^{\tau_0}_{n_2}:\cO^{\tau_0\times}&\longrightarrow&\cR,\\
{\bf m}_{3,\tau_0}:={\bf r}_{n_1,\tau_0}+{\bf r}_{n_2,\tau_0}-{\bf r}_{n_3,\tau_0}:\Z_p^\times&\longrightarrow&\cR,
\end{eqnarray*}
where $N:\cO^{\tau_0\times}\rightarrow\Z_p^\times$ denotes the norm map.
In the same way as in (\ref{e: important polynomial appearing a lot}) 
we denote $\underline{\Delta}^{\tau_0}\in C_{n_1}^{{\bf k}_{n_1}^{\tau_0}}(\cO^{\tau_0},\cR)\otimes_{\cR} C_{n_2}^{{\bf k}_{n_2}^{\tau_0}}(\cO^{\tau_0},\cR)\otimes_{\cR} \bar C_{n_3}^{{\bf k}_{n_3}^{\tau_0}}(\cO^{\tau_0},\cR)$
the function defined by:
\[
\underline{\Delta}^{\tau_0}((x_1,y_1),(x_2,y_2),(x_3,y_3)):={\bf m}_1^{\tau_0}(x_3y_2-x_2y_3) \cdot {\bf m}_2^{\tau_0}(x_3y_1-x_1y_3)\cdot {\bf m}_3^{\tau_0}(x_1y_2-x_2y_1),
\] 
where $\bar C_{n}^{k^{\tau_0}}(\cO^{\tau_0},\cdot)$ denote the $k^{\tau_0}$-homogeneous locally analytic functions on $p\cO^{\tau_0}\times\cO^{\tau_0\times}$, and the function is
extended by 0 where ${\bf m}_3^{\tau_0}$ is not defined.

Now we take $\mu_1\in M^{r}_{{\bf k}_{n_1}}(\Gamma_1(\cN,p),\Lambda_{n_1}^G)$ 
 and $\mu_2\in M^{r}_{{\bf k}_{n_2}}(\Gamma_1(\cN,p),\Lambda_{n_2}^G)$ 
be overconvergent modular forms. By the definition (see definition \ref{d:overconvergent families for G})  if $i=1, 2$ and $\cC\in\Pic(\cO_F)$, we have the components $\mu_{i, \cC} \in M^r_{{\bf k}_n}(\Gamma_{1,1}^{\mathfrak{c}}(\cN,p),\Lambda_{n_i}^G)^{\Delta}$ and we denote by $\mu_{i,\cC}(q)\in D_{n_i}^{{\bf k}_{n_i}^{\tau_0}}\left(\cO^{\tau_0},\mathcal{M}(\infty,\Lambda_{n_i}^G)\right)$ their $q$-expansions (see \S\ref{ss:q-expansions and serre-tate}). By the proof of proposition \ref{Up-0} (more precisely equation \eqref{q-expU}) and the fact that $U_{\dP_0}\circ V_{\dP_0}={\rm Id}$ one checks that:
\[
\mu_{1,\cC}^{[p]}(q):=(1-V_{\dP_0}U_{\dP_0})\;\mu_{1,\cC}(q)\in D_{n_1}^{{\bf k}_{n_1}^{\tau_0}}\left(\cO^{\tau_0},\mathcal{M}(\infty,\Lambda_{n_1}^G)^{\heartsuit}\right).
\]
Using theorem \ref{t: gauss-manin p-adic powers} and remark \ref{r:result also works for c} 
we obtain $M(\mu_{1,\cC}^{[p]})(q) \in D_{n_1}^{{\bf k}_{n_1}^{\tau_0}}(\cO^{\tau_0},\mathcal{M}(\infty,\Lambda_{n_1}^G\otimes\Lambda_{\tau_0})^{\heartsuit})$ 
and we put $\Theta^{{\bf m}_{3, \tau_0}}\mu_{1,\cC}^{[p]}:=({\bf m}_{3, \tau_0})^\ast M(\mu_{1,\cC}^{[p]})\in D_{n_1}^{{\bf k}_{n_1}^{\tau_0}}(\cO^{\tau_0},\mathcal{M}(\infty,\cR))$. 

We define the \emph{families of trilinear products} $t(\mu_{1,\cC},\mu_{2,\cC})\in {\rm Hom}_{\cR}(\bar D_{n_3}^{{\bf k}_{n_3}^{\tau_0}}(\cO^{\tau_0},\cR),\mathcal{M}(\infty,\cR))$ as follows:
\[
t(\mu_{1,\cC},\mu_{2,\cC})(\mu):=\int_{\cO^{\tau_0\times}\times\cO^{\tau_0}}\int_{\cO^{\tau_0\times}\times\cO^{\tau_0}}\int_{p\cO^{\tau_0}\times\cO^{\tau_0\times}}\underline{\Delta}^{\tau_0}(v_1,v_2,v_3)\;d\left(\Theta^{{\bf m}_{3, \tau_0}}\mu_{1,\cC}^{[p]}\right)(v_1)d\mu_{2,\cC}(v_2)d\mu(v_3).
\]
From lemma \ref{compDA} we obtain:
\[
(e_{\rm ord}t(\mu_{1,\cC},\mu_{2,\cC}))_{\cC}\in \bigoplus_{\cC\in\Pic(\cO_F)}\left(D_{n_3}^{{\bf k}_{n_3}^{\tau_0}}(\cO^{\tau_0},\mathcal{M}(\infty,\cR))\otimes\Q_p\right)^{\rm ord}=:M^{p-\rm{adic}}_{{\bf k}_{n_3}}(\Gamma_1(\cN,p),\cR)^{\rm ord}.
\]
Note that the space $M^{p-\rm{adic}}_{{\bf k}_n}(\Gamma_1(\cN,p),\cR)^{\rm ord}$ is endowed with the action of Hecke operators.

\subsection{Construction} Let $\mu_1\in M^{r}_{{\bf k}_{n_1}}(\Gamma_1(\cN,p),\Lambda_{n_1}^G)$ and $\mu_2\in M^{r}_{{\bf k}_{n_2}}(\Gamma_1(\cN,p),\Lambda_{n_2}^G)$ be as before 
and moreover we take $\mu_3\in M^r_{{\bf k}_{n_3}}(\Gamma_1(\cN,p),\Lambda_{n_3}^G)$ 
such that is eigenvector for the Hecke operators and  such that $U_\dP\mu_3= \alpha_3^\dP\mu_3$ for some $\alpha_3^\dP\in  (\Lambda_{n_3}^G)^\times$ and all $\dP\mid p$. Assume that there exists $\bar\mu_3\in M^r_{{\bf k}_{n_3}}(\Gamma_1(\cN_0,p),\Lambda_{n_3}^G)$ for some $\cN_0\mid\cN$ such that $\mu_3$ is an element of the space:
\[
M^{p-\rm{adic}}_{{\bf k}_{n_3}}(\Gamma_1(\cN,p), \Lambda_{n_3}^G)^{\rm ord}[\bar\mu_3]:=\{\mu\in M^{p-\rm{adic}}_{{\bf k}_{n_3}}(\Gamma_1(\cN,p),\Lambda_{n_3}^G)^{\rm ord};\quad U_\dP\mu=\alpha_3^\dP\mu;\;\; T_{\ell}\mu=a_\ell\mu,\;\ell \nmid \cN\},
\]
where $\ell$ are prime ideals of $F$, $T_\ell=T_g$ for any $g\in G(\A_f^p)$ of norm $\ell$ and $a_\ell$ is the eigenvalue of $\bar\mu_3$. Let $\cR'=\Lambda_{n_1}^G\hat\otimes\Lambda_{n_2}^G\hat\otimes\Lambda_{n_3}'$, where $\Lambda_{n_3}'$ is the fraction field of $\Lambda_{n_3}$, thus $\cR'$ can be viewed as rational functions on $\dW_{n_1}^G\times\dW_{n_2}^G\times\dW_{n_3}$ with poles at finitely many weights in $\dW_{n_3}$.

In the rest we use the following notation: If  $(x, y, z) \in\dW_{n_1}^G\times\dW_{n_2}^G\times\dW_{n_3}$ we denote by $\mu_x$, $\mu_y$, $\mu_z$ and $\bar\mu_z$ the specializations of the families $\mu_1$ at $x$, $\mu_2$ at $y$, and $\mu_3$ and $\bar\mu_3$ at $z$ respectively.  By proposition \ref{propclassty}, if $z\in \dW_{n_3}$ is a classical weight  then the specialization $\mu_{z}$ is a classical modular form in $M_{k_z}(\Gamma_1(\cN,p),\bar\Q_p)^{\rm ord}$, where $k_z$ is the specialization of ${\bf k}_{n_3}$ at $z$. 
Let us denote by $\bar\mu_z^*\in M_{k_z}(\Gamma_1(\cN_0,p),\bar\Q_p)$ the eigenvector for the adjoint of the $U_\dP$ operators associated with $\bar\mu_z$ as in equation \eqref{adjointUp}.
The following result is analogous to \cite[Lemma 2.19]{DR14}.
\begin{lemma}\label{l:construction}
There exists $\mathcal{L}_p(\mu_1,\mu_2,\mu_3)\in\cR'$ such that for each classical point $(x, y, z) \in\dW_{n_1}^G\times\dW_{n_2}^G\times\dW_{n_3}$,  we have:
\[
\mathcal{L}_p(\mu_1,\mu_2,\mu_3)(x, y, z)=\frac{\left\langle\mu_z^*,(e_{\rm ord}t(\mu_{1,\cC},\mu_{2,\cC}))_{\cC(x, y, z)}\right\rangle}{\left\langle\mu_z^*,\mu_z\right\rangle}
\] 
where $\langle\cdot,\cdot\rangle$ is the Petersson inner product defined in \S\ref{Ichino}, and $\mu_z^*\in M_{k_z}(\Gamma_1(\cN,p),\bar\Q_p)[\bar\mu_z^*]$ defines the dual basis of $\mu_z$.
\end{lemma}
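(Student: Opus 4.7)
The strategy is to apply a Hida-style Hecke-eigenspace projection to the family of ordinary trilinear products just constructed, and to read off the coefficient along $\mu_3$ via a natural $\Lambda_{n_3}'$-linear functional. Set
\[
\Phi := (e_{\rm ord}\, t(\mu_{1,\cC},\mu_{2,\cC}))_{\cC\in\Pic(\cO_F)} \in M^{p-\rm{adic}}_{{\bf k}_{n_3}}(\Gamma_1(\cN,p),\cR)^{\rm ord};
\]
this already lives in the ordinary part of the module of $\cR$-families, and is equivariant for all Hecke operators in the third variable. The goal is to project $\Phi$ onto the generalised $\bar\mu_3$-eigenspace and express the result as $\mathcal{L}_p(\mu_1,\mu_2,\mu_3)\cdot\mu_3$, with $\mathcal{L}_p\in\cR'$ the element sought.

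To construct this projection I would proceed as follows. By the finite-slope theory of \S\ref{ss:eigenvarieties}, the ordinary part $M_3:=M^{p-\rm{adic}}_{{\bf k}_{n_3}}(\Gamma_1(\cN,p),\Lambda_{n_3}^G)^{\rm ord}$ is a finitely generated projective $\Lambda_{n_3}^G$-module, and base change to $\cR$ preserves ordinarity since the trilinear product construction respects the Hecke action on the third factor. The Hecke algebra $\mathbb{T}_3$ generated by $\{T_\ell\}_{\ell\nmid\cN}$ and $\{U_\dP\}_{\dP\mid p}$ acting on $M_3$ is a finite $\Lambda_{n_3}^G$-algebra, and the system of eigenvalues of $\bar\mu_3$ defines an algebra map $\chi:\mathbb{T}_3\rightarrow\Lambda_{n_3}^G$ with kernel a height-one prime. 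Inverting the characteristic power series of $\mathbb{T}_3/\ker\chi$-complementary factors, equivalently passing from $\Lambda_{n_3}$ to its fraction field $\Lambda_{n_3}'$, we obtain the idempotent $e_{\bar\mu_3}\in\mathbb{T}_3\otimes_{\Lambda_{n_3}}\Lambda_{n_3}'$ projecting $M_3\otimes\Lambda_{n_3}'$ onto the $\bar\mu_3$-isotypic summand $M_3[\bar\mu_3]\otimes\Lambda_{n_3}'$. By hypothesis $\mu_3$ spans a free rank-one $\Lambda_{n_3}'$-line inside this summand, so after applying the projector to $\Phi$ we may uniquely write
\[
e_{\bar\mu_3}\Phi \;=\; \mathcal{L}_p(\mu_1,\mu_2,\mu_3)\cdot \mu_3, \qquad \mathcal{L}_p(\mu_1,\mu_2,\mu_3)\in\cR'.
\]
Crucially, the denominators introduced live only in $\Lambda_{n_3}$ since $e_{\bar\mu_3}$ involves only operators from $\mathbb{T}_3$ and elements of $\Lambda_{n_3}^G$.

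For the interpolation property at a classical point $(x,y,z)$, both $e_{\rm ord}$ and the Hecke projector commute with specialisation (on ordinary forms, away from the poles of $e_{\bar\mu_3}$), so the specialisation of $e_{\bar\mu_3}\Phi$ at $(x,y,z)$ equals the projection of $(e_{\rm ord}\,t(\mu_{1,\cC},\mu_{2,\cC}))_{\cC(x,y,z)}$ onto the $\bar\mu_z$-isotypic component. By the discussion of Petersson pairings in \S\ref{ss:NA-trilinear products} and the choice of $\mu_z^*$ dual to $\mu_z$, the functional $\phi\mapsto\langle\mu_z^*,\phi\rangle/\langle\mu_z^*,\mu_z\rangle$ realises precisely the $\mu_z$-coordinate in the eigenspace decomposition (the $p$-adic unit $\langle\mu_z^*,\mu_z\rangle$ being guaranteed nonzero by \eqref{Petprod2} in the ordinary setting). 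Equating the two descriptions of the $\mu_z$-coefficient of $e_{\bar\mu_z}\Phi_{(x,y,z)}$ gives the stated formula.

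The main obstacle will be the technical bookkeeping around the projection $e_{\bar\mu_3}$: verifying that the Hecke algebra on the ordinary family is finite over $\Lambda_{n_3}$ (which needs a local-finiteness statement compatible with the $\cR$-extension), and checking that the denominators arising from the idempotent remain confined to $\Lambda_{n_3}$ and do not vanish at the classical points of interest. The compatibility of the projector with classical specialisation, while essentially tautological modulo Hida's control theorem, must be stated precisely to extract the claimed identity with Petersson quotients. Once these are in place, the construction and interpolation both follow formally.
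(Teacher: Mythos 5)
Your overall strategy — project $\Phi=(e_{\rm ord}t(\mu_{1,\cC},\mu_{2,\cC}))_{\cC}$ onto the $\bar\mu_3$-isotypic component via a Hecke idempotent over $\cR'$, then read off the coefficient with the Petersson functional — is the same as the paper's. But there is a genuine gap at the final step. You write that ``$\mu_3$ spans a free rank-one $\Lambda_{n_3}'$-line inside this summand, so after applying the projector we may uniquely write $e_{\bar\mu_3}\Phi=\mathcal{L}_p\cdot\mu_3$.'' This is false in the setting of the lemma: the hypothesis is only that $\bar\mu_3$ is an eigenfamily of some level $\cN_0\mid\cN$ and that $\mu_3$ is a \emph{test vector} lying in the level-$\cN$ isotypic space $M^{p\text{-adic}}_{{\bf k}_{n_3}}(\Gamma_1(\cN,p),\Lambda_{n_3}')^{\rm ord}[\bar\mu_3]$, which is spanned by the oldforms $\bar\mu_3^d$ for $d\mid\cD$, $\cN=\cN_0\cD$. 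When $\cN_0\neq\cN$ this space has dimension greater than one, $e_{\bar\mu_3}\Phi$ is a combination $\sum_d c_d\,\bar\mu_3^d$ with $c_d\in\cR'$ that need not be proportional to $\mu_3$, and the functional $\phi\mapsto\langle\mu_z^*,\phi\rangle/\langle\mu_z^*,\mu_z\rangle$ does \emph{not} pick out a ``$\mu_z$-coordinate,'' because $\langle\mu_z^*,\bar\mu_z^{d}\rangle$ does not vanish for the other oldforms.

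Consequently the real mathematical content of the proof is missing from your argument: one must show that for each $d$ the ratio $\langle\bar\mu_z^*,\bar\mu_z^{d}\rangle/\langle\bar\mu_z^*,\bar\mu_z\rangle$ is the specialisation at $z$ of a fixed element of $\cR\otimes\Q$, so that $\sum_d c_d(x,y,z)\,\langle\mu_z^*,\bar\mu_z^{d}\rangle/\langle\mu_z^*,\mu_z\rangle$ assembles into a single element of $\cR'$. This is exactly what Lemma \ref{lemapetersson} provides: the pairings $\langle v_n,v_m\rangle$ of translates of a spherical vector are universal polynomials in the Hecke eigenvalue $a_d$ and $|\epsilon(\varpi_d)|$, and these interpolate $p$-adically precisely because the norms $q_d={\rm Norm}(d)$ are prime to $p$. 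Your only appeal to the Petersson computations is to the non-vanishing of $\langle\mu_z^*,\mu_z\rangle$ via \eqref{Petprod2}, which is a different and much weaker statement. Without the oldform bookkeeping and the interpolation of these Petersson ratios, the claimed identity does not follow.
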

\begin{proof}
Notice that $M^{p-\rm{adic}}_{{\bf k}_{n_3}}(\Gamma_1(\cN,p),\Lambda_{n_3}')^{\rm ord}[\bar\mu_3]$ is a finite dimensional $\Lambda_{n_3}'$-vector space generated by the oldforms $\bar\mu_3^d$, for any $d\mid\cD$ with $\cN=\cN_0\cD$, where 
\[
(\bar\mu_3)_\cC^d(A,\iota,\theta,\alpha^{\dP_0},w)={\bf r}_n'(\det(\gamma_d))\cdot(\bar\mu_3)_{\cC'}(A^{g_d},\iota^{g_d},\det(\gamma_d)^{-1}\theta^{g_d},k^{-1}(\alpha^{g_d})^{\dP_0},w).
\]
as in equation \eqref{eqoldformKatz} in \S\ref{oldforms} of the Appendix. The family $\bar\mu_3$ corresponds to an idempotent of the Hecke algebra, which induces a projection of $M^{p-\rm{adic}}_{{\bf k}_{n_3}}(\Gamma_1(\cN,p),\cR)^{\rm ord}$ to $M^{p-\rm{adic}}_{{\bf k}_{n_3}}(\Gamma_1(\cN,p),\cR')^{\rm ord}[\bar\mu_3]$. The projection of $e_{\rm ord}t(\mu_{1,\cC},\mu_{2,\cC})$ to the line defined by $\mu_3$ is a $\cR'$-linear combination of the forms $\bar\mu_3^d$. 
It is therefore enough to show that, for all divisors $d_1,d_2$,
\[
\left\langle\bar\mu_z^*,\bar\mu_z^{d_2}\right\rangle=\varrho(z)\cdot\left\langle\bar\mu_z^*,\bar\mu_z\right\rangle;\qquad\mbox{for some }\varrho\in\cR\otimes\Q.
\]
By lemma \ref{lemapetersson}, we have that
\[
\varrho=\varrho\left((-{\bf \nu}_{n_1}-\nu_{n_2})(q_d)_x,{\bf a}_{x}\right)_{d\mid d_1d_2}\qquad \varrho(X_d,Y_d)_{d\mid d_1d_2}\in \bar\Q[X_d,Y_d]_{d\mid d_1d_2},
\]
where $q_d:={\rm Norm}_{K/\Q}(d)$ and ${\bf a}\in\cR$ is the eigenvalue for $T_d$.
Hence the result follows since $q_d$ is prime to $p$.
\end{proof}

\begin{defi}
Let $\mu_1,\mu_2,\mu_3$, where $\mu_i\in M^r_{{\bf k}_{n_i}}(\Gamma_1(\cN,p),\Lambda_{n_i}^G)$, be test vectors for three families of eigenvectors such that $U_\dP\mu_3= \alpha_3^\dP\mu_3$ for some $\alpha_3^\dP\in  (\Lambda_{n_3}^G)^\times$ and all $\dP\mid p$. The functions $\mathcal{L}_p(\mu_1,\mu_2,\mu_3)\in\cR'$ introduced in \ref{l:construction} is called the \emph{triple product $p$-adic L-function} of $\mu_1,\mu_2,\mu_3$. 
\end{defi}

\subsection{Interpolation property}

Let $(\underline{r}_1,\nu_1)\in \dW_{n_1}^G$, $(\underline{r}_2,\nu_2)\in \dW_{n_2}^G$ and $\underline{r}\in \dW_{n_3}$ be classical weights and put $\underline{k}_1=k(\underline{r}_1, \nu_1), \underline{k}_2=k(\underline{r}_2, \nu_2)$ and $\underline{k}_3=k(\underline{r}_3, \nu_1+\nu_2)$ where $k$ is the map (\ref{e:map between weight spaces}). We suppose that $(\underline{k}_1,\underline{k}_2,\underline{k}_3)$ is unbalanced at $\tau_0$ with dominant weight $\underline{k}_3$, and $k_{\tau,i}\in\Z_{>0}$ for each $\tau\in \Sigma_F$ and $i= 1, 2, 3$. 
 
 We write $(x, y, z) \in \dW_{n_1}^G\times\dW_{n_2}^G\times\dW_{n_3}$ for the point corresponding to the triple $(\underline{k}_1,\nu_1)$, $(\underline{k}_2,\nu_2)$ and $(\underline{k}_3, \nu_1+\nu_2)$. As $\mu_3$ is ordinary then from proposition \ref{propclassty} and corollary \ref{c:automorphic forms as sections} we deduce that its specialization at $\mu_z$ correspond to an automorphic form of weight $(\underline{k}_3,\nu_1+\nu_2)$. If  $\underline{k}_1$ and $\underline{k}_2$ are big enough the the same is true for $\mu_x$ and $\mu_y$, obtaining automorphic forms of weights $(\underline{k}_1,\nu_1)$ and $(\underline{k}_2, \nu_2)$ respectively. We denote by $\pi_{x}$, $\pi_y$ and $\pi_z$ the automorphic representations of $(B\otimes\A_F)^{\times}$ generated  by these automorphic forms, and $\Pi_{x}$, $\Pi_y$ and $\Pi_z$ the corresponding cuspidal automorphic representations of $\mathrm{GL}_2(\A_F)$. 

Assume that $\mu_i$ are eigenvectors for all the $U_\dP$ operators, namely $U_\dP\mu_i=\alpha_{i}^{\dP}\cdot\mu_i$, and write $\alpha_{x}^{\dP}$, $\alpha_{y}^{\dP}$ and $\alpha_{z}^{\dP}$ for the corresponding specializations at $x$, $y$ and $z$. 
Moreover, we assume that $\bar\mu_{x}$ is the $\mathfrak{p}$-stabilization of the newform $\bar\mu_{x}^\circ$ for each $\dP\mid p$. Write $\beta_{i}^{\dP}$ for the other eigenvalue of $U_\dP$ as usual, and write $\beta_{x}^{\dP}$, $\beta_{y}^{\dP}$ and $\beta_{z}^{\dP}$ for the corresponding specializations. 

The following result justify the name given to $\mathcal{L}_p(\mu_1,\mu_2,\mu_3)$.
\begin{thm}\label{t:interpolation} With the notations above we have:
$$\mathcal{L}_p(\mu_1,\mu_2,\mu_3)(x, y, z)= K(\mu_{x}^\circ,\mu_{y}^\circ,\mu_{z}^\circ)\cdot\left(\prod_{\dP\mid p}\frac{\mathcal{E}_{\dP}(x,y,z)}{\mathcal{E}_{\dP,1}(z)}\right)\cdot\frac{L\left(\frac{1-\nu_1-\nu_2-\nu_3}{2},\Pi_{x}\otimes\Pi_{y}\otimes\Pi_{z}\right)^{\frac{1}{2}}}{\langle\bar\mu_{z}^\circ,\bar\mu_{z}^\circ\rangle}$$
here  $K(\mu_1^\circ,\mu_2^\circ,\mu_3^\circ)$ is a non-zero constant, $\mathcal{E}_\dP(x,y,z)=$
\[
\left\{\begin{array}{lc}
\mbox{\small$(1-\beta_{x}^{\dP}\beta_{y}^{\dP}\alpha_{z}^{\dP}\varpi_{\dP}^{-\underline{m}_{\dP}-\underline{2}})(1-\alpha_{x}^{\dP}\beta_{y}^{\dP}\beta_{z}^{\dP}\varpi_{\dP}^{-\underline{m}_{\dP}-\underline{2}})(1-\beta_{x}^{\dP}\alpha_{y}^{\dP}\beta_{z}^{\dP}\varpi_{\dP}^{-\underline{m}_{\dP}-\underline{2}})(1-\beta_{x}^{\dP}\beta_{y}^{\dP}\beta_{z}^{\dP}\varpi_{\dP}^{-\underline{m}_{\dP}-\underline{2}})$},&\dP\neq\dP_0\\
\mbox{\small$(1-\alpha_{x}^{\dP_0}\alpha_{y}^{\dP_0}\beta_{z}^{\dP_0}p^{1-m_{0}})(1-\alpha_{x}^{\dP_0}\beta_{y}^{\dP_0}\beta_{z}^{\dP_0}p^{1-m_{0}})(1-\beta_{x}^{\dP_0}\alpha_{y}^{\dP_0}\beta_{z}^{\dP_0}p^{1-m_{0}})(1-\beta_{x}^{\dP_0}\beta_{y}^{\dP_0}\beta_{z}^{\dP_0}p^{1-m_{0}})$},&\dP=\dP_0
\end{array}\right., 
\]
\[
\mathcal{E}_{\dP,1}(z):=\left\{\begin{array}{lc} 
(1- (\beta_z^{\dP})^2\varpi_{\dP}^{-\underline{k}_{3,\dP}-\underline{2}})\cdot (1- (\beta_z^{\dP})^2\varpi_{\dP}^{-\underline{k}_{3,\dP}-\underline{1}}),&\dP\neq \dP_0,\\
(1- (\beta_z^{\dP_0})^{2}p^{-k_{3,\tau_0}})\cdot (1- (\beta_z^{\dP_0})^{2}p^{1-k_{3,\tau_0}}),&\dP= \dP_0,
\end{array}\right. 
\]
$m_{0}=\frac{k_{1,\tau_0}+k_{2,\tau_0}+k_{3,\tau_0}}{2}\geq 0$, and $\underline{m}_{\dP}=\frac{\underline{k}_{1,\dP}+\underline{k}_{2,\dP}+\underline{k}_{3,\dP}}{2}\in\Z[\Sigma_\dP]$.
\end{thm}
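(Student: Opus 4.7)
The starting point is the identity from Lemma \ref{l:construction}:
\[
\mathcal{L}_p(\mu_1,\mu_2,\mu_3)(x,y,z)=\frac{\langle\mu_z^*,(e_{\rm ord}t(\mu_{1,\cC},\mu_{2,\cC}))_{\cC(x,y,z)}\rangle}{\langle\mu_z^*,\mu_z\rangle}.
\]
The plan is to specialize the numerator to classical weight $(x,y,z)$, relate the resulting expression to the automorphic trilinear period of Proposition \ref{IntProp}, and then read off the Euler factors from the local calculations of Proposition \ref{eulerfactors} and Lemma \ref{lemapetersson}. By construction of the trilinear product $t(\mu_{1,\cC},\mu_{2,\cC})$, the specialization involves $\Theta^{\mathbf{m}_{3,\tau_0}}\mu_{1,\cC}^{[p]}$ paired with $\mu_{2,\cC}$ and $\Delta^{\tau_0}$ as in \eqref{e: important polynomial appearing a lot}; by Theorem \ref{t: gauss-manin p-adic powers} this specialization equals the $p$-adic modular form $\pi_{\mathcal{L}_0}\bigl(\nabla_{\underline{k}_1}^{m_{3,\tau_0}}\mu_{x,\cC}^{[p]}\bigr)$, a unit-root projection of a nearly holomorphic form.

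Next, I combine Lemma \ref{l:ordinary projector and overconvergent projection} with the identity $U_{\dP_0}\Theta=p\,\Theta U_{\dP_0}$ (proved in the course of Lemma \ref{l:ordinary projector and overconvergent projection}) to move derivatives across the two factors modulo the ordinary projector. The effect is to replace the single-variable expression $\Theta^{m_{3,\tau_0}}\mu_{x,\cC}^{[p]}\cdot\mu_{y,\cC}$ paired against $\Delta^{\tau_0}$ by the symmetric binomial sum $\sum_j c_j\nabla^j\mu_{x,\cC}^{[p]}\,\nabla^{m_{3,\tau_0}-j}\mu_{y,\cC}$ that appears in the geometric trilinear product $t_\cC(f_1,f_2)$ of Theorem \ref{tripleonsheaves}, up to a harmless nonzero constant. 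Using Corollary \ref{c:automorphic forms as sections} to pass from sections to automorphic forms and the second identity of Theorem \ref{tripleonsheaves} we obtain
\[
(e_{\rm ord}t(\mu_{1,\cC},\mu_{2,\cC}))_{\cC(x,y,z)}=(2i)^{-m_{3,\tau_0}}\cdot e_{\rm ord}\iota_{\underline{k}_3,\nu_1+\nu_2}\bigl(t(\phi_x^{[p]},\phi_y)\bigr)_{\cC},
\]
where $\phi_x,\phi_y,\phi_z$ are the automorphic forms attached to the specialized families.

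The third step handles the Euler factors. Apply Proposition \ref{eulerfactors} place-by-place to each $\dP\mid p$: passing from the $p$-stabilizations $v_{\alpha}$ (i.e.\ $\phi_x,\phi_y$) and the dual vector $v_{\beta}^{\ast}$ (i.e.\ $\mu_z^{\ast}$) to the spherical vectors $v_0$ (i.e.\ $\phi_x^\circ,\phi_y^\circ,\phi_z^\circ$) introduces precisely the factor $\mathcal{E}_\dP(x,y,z)$ at each prime $\dP\neq\dP_0$, and the $\dP_0$-version with the extra $p$-depletion at $\tau_0$ gives the formula for $\mathcal{E}_{\dP_0}(x,y,z)$. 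Combining this with Proposition \ref{IntProp} applied to $\langle\phi_z^\circ,t(\phi_x^\circ,\phi_y^\circ)\rangle^{2}$ produces the central $L$-value to the power $1/2$ together with the archimedean binomial constant $\binom{k_{3,\tau_0}-2}{k_{2,\tau_0}+m_{3,\tau_0}-1}$ and the factor $C(\phi_x^\circ,\phi_y^\circ,\phi_z^\circ)$. Finally, the denominator $\langle\mu_z^{\ast},\mu_z\rangle/\langle\bar\mu_z^\circ,\bar\mu_z^\circ\rangle$ is computed via Lemma \ref{lemapetersson} (using \eqref{Petprod1}--\eqref{Petprod3} and the Hecke relation $\alpha_z^\dP\beta_z^\dP=\varpi_\dP^{\underline k_{3,\dP}+\underline 1}$, respectively $p^{k_{3,\tau_0}-1}$), yielding the local factor $\mathcal{E}_{\dP,1}(z)^{-1}$ at every $\dP\mid p$. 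Gathering every remaining scalar into $K(\mu_x^\circ,\mu_y^\circ,\mu_z^\circ)$ produces the stated formula.

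The main obstacle is the second step: reconciling the asymmetric $p$-adic expression $\Theta^{m_{3,\tau_0}}\mu_{1}^{[p]}\cdot\mu_2$ with the symmetric binomial sum in Theorem \ref{tripleonsheaves} under the ordinary projector. This requires careful control of the overconvergent projection $\cH^r$ on nearly holomorphic forms (the image of $\nabla^j$ on holomorphic factors contributes lower-order terms that must be shown to lie in the kernel of $e_{\rm ord}$), and a clean bookkeeping of the multiplicative constant produced when collapsing the binomial sum into a single top-derivative term, since this constant must ultimately be absorbed into $K(\mu_x^\circ,\mu_y^\circ,\mu_z^\circ)$ rather than into the explicit Euler factors.
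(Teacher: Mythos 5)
Your outline follows the paper's own route almost step for step (Lemma \ref{l:construction}, Theorem \ref{t: gauss-manin p-adic powers}, Lemmas \ref{lemmaNOC} and \ref{l:ordinary projector and overconvergent projection}, Theorem \ref{tripleonsheaves}, then Propositions \ref{IntProp} and \ref{eulerfactors}), and your identification of the hard point --- collapsing $\Theta^{m_{3,\tau_0}}\mu_x^{[p]}\cdot\mu_y$ into the symmetric binomial sum modulo $e_{\rm ord}$ via the overconvergent projection --- is exactly where the paper spends its effort. But there is one genuine gap.

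In your first step you assert that the specialization of $t(\mu_{1,\cC},\mu_{2,\cC})$ pairs the distributions against the polynomial $\Delta^{\tau_0}$ of \eqref{e: important polynomial appearing a lot}. It does not: the construction uses $\underline{\Delta}^{\tau_0}_{(x,y,z)}$, in which the character ${\bf m}_3^{\tau_0}(x_1y_2-x_2y_1)$ is extended by zero off the locus $x_1y_2-x_2y_1\in\cO^{\tau_0\times}$. The difference
\[
\int\underline{\Delta}^{\tau_0}_{(x,y,z)}\,d\mu_{x,y}\,d\mu_z-\int\Delta^{\tau_0}\,d\mu_{x,y}\,d\mu_z
\]
is an integral over the locus where the determinant is a non-unit, and using the $U_\dP$-eigenvector property of $\mu_x,\mu_y,\mu_z$ it evaluates to $\beta_x^{\dP}\beta_y^{\dP}\alpha_z^{\dP}\varpi_\dP^{-\underline{m}_\dP-\underline{2}}$ times the second integral, for each $\dP\neq\dP_0$. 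This is precisely the source of the factor $(1-\beta_{x}^{\dP}\beta_{y}^{\dP}\alpha_{z}^{\dP}\varpi_{\dP}^{-\underline{m}_{\dP}-\underline{2}})$ in $\mathcal{E}_\dP(x,y,z)$. Your step three cannot produce it: the first display of Proposition \ref{eulerfactors} (the undepleted case, which is what applies at $\dP\neq\dP_0$) has only \emph{three} numerator factors, all involving $\alpha_3^{-1}$ and hence, after normalization, $\beta_z^{\dP}$; the missing fourth factor involves $\alpha_z^{\dP}$ and is of a different nature. So as written your argument would yield an interpolation formula off by $\prod_{\dP\neq\dP_0}(1-\beta_{x}^{\dP}\beta_{y}^{\dP}\alpha_{z}^{\dP}\varpi_{\dP}^{-\underline{m}_{\dP}-\underline{2}})$, and the claim that Proposition \ref{eulerfactors} alone ``introduces precisely the factor $\mathcal{E}_\dP(x,y,z)$'' at $\dP\neq\dP_0$ is false. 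You need to insert, before invoking Theorem \ref{tripleonsheaves}, the comparison of $\underline{\Delta}^{\tau_0}$ with $\Delta^{\tau_0}$ and the resulting extra Euler factor at each $\dP\neq\dP_0$. (At $\dP_0$ your account is fine: the depletion puts you in the second display of Proposition \ref{eulerfactors}, which does supply all four factors of $\mathcal{E}_{\dP_0}$.)
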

\begin{proof} By construction we have:
\begin{equation}\label{e:interpolation proof 1}
\mathcal{L}_p(\mu_1,\mu_2,\mu_3)(x, y, z)=\frac{\left\langle\mu_z^*,e_{\rm ord}(\Theta^{m_{3, \tau_0}}\mu_{x, \cC}^{[p]}\cdot\mu_{y, \cC}(\underline{\Delta}_{(x,y,z)}^{\tau_0}))_{\cC}\right\rangle}{\left\langle \mu_z^*,\mu_z\right\rangle}.
\end{equation}
Observe first that $\underline{\Delta}_{(x,y,z)}^{\tau_0}$ differs from $\Delta^{\tau_0}$ of equation \eqref{e: important polynomial appearing a lot}. Indeed, $\underline{\Delta}_{(x,y,z)}^{\tau_0}((x_1,y_1),(x_2,y_2),(x_3,y_3))$ is extended by zero whether $(x_1y_2-x_2y_1)\not\in\cO^{\tau_0\times}$. Thus if we denote $\mu_{x,y}=\Theta^{m_{3, \tau_0}}\mu_{x, \cC}^{[p]}\cdot\mu_{y, \cC}$ then we obtain:
\[
\varepsilon:=
\int\underline{\Delta}_{(x,y,z)}^{\tau_0}d\mu_{x,y}d\mu_z-\int\Delta^{\tau_0}d\mu_{x,y}d\mu_z
=\int_D\int_{p\cO^{\tau_0}\times\cO^{\tau_0\times}}\Delta^{\tau_0}(v_1,v_2,v_3)d\mu_{x,y}(v_1,v_2)d\mu_z(v_3),
\]
where  $D=\prod_{\dP\neq\dP_0}D_\dP$ and
\[
D_\dP=\{((x_1,y_1),(x_2,y_2))\in(\cO_\dP^{\times}\times\cO_\dP)^2:\;(x_1y_2-x_2y_1)\not\in\cO_\dP^{\times}\}=\bigcup_{i\in\kappa_\dP}(D_i\times D_i),
\]
with $D_i=\bigcup_{a\in\kappa_\dP^\times}(a+ p\cO_\dP)\times(ai+ p\cO_\dP)$. Since $\mu_i$ are $U_\dP$-eigenvectors, a calculation similar to that of proof of Proposition \ref{Upcomp} shows that the corresponding $\dP$-component
$\varepsilon_\dP:=\int_{D_\dP}\int_{p\cO_\dP\times\cO_\dP^{\times}}\Delta^{\tau_0}d\mu_{x,y}d\mu_z$ satisfies
\begin{eqnarray*}
\varepsilon_\dP&=&\frac{1}{\alpha_{x}^{\dP}\alpha_{y}^{\dP}}\sum_{i\in\kappa_\dP}\int_{D_i}\int_{D_i}\int_{p\cO_\dP\times\cO_\dP^{\times}}\Delta^{\tau_0}dU_\dP\mu_{x,y}d\mu_z\\
&=&\frac{1}{\alpha_{x}^{\dP}\alpha_{y}^{\dP} }\sum_{i\in\kappa_\dP}\gamma_i\int_{\cO_\dP^\times\times\cO_\dP}\int_{\cO_\dP^\times\times\cO_\dP}\int_{p\cO_\dP\times\cO_\dP^{\times}}\Delta^{\tau_0}(v_1\varpi_\dP g_i^{-1},v_2\varpi_\dP g_i^{-1},v_3)d\mu_{x,y}(v_1,v_2)d\mu_z(v_3)\\
&=&\frac{\varpi_\dP^{\underline{m}_{3,\dP}}}{\alpha_{x}^{\dP}\alpha_{y}^{\dP} }\sum_{i\in\kappa_\dP}\gamma_i\int_{\cO_\dP^\times\times\cO_\dP}\int_{\cO_\dP^\times\times\cO_\dP}\int_{p\cO_\dP\times\cO_\dP^{\times}}\Delta^{\tau_0}(v_1,v_2,v_3 g_i)d\mu_{x,y}(v_1,v_2)d\mu_z(v_3)\\
&=&\frac{\varpi_\dP^{\underline{m}_{3,\dP}}}{\alpha_{x}^{\dP}\alpha_{y}^{\dP} }\int_{\cO_\dP^\times\times\cO_\dP}\int_{\cO_\dP^\times\times\cO_\dP}\int_{p\cO_\dP\times\cO_\dP^{\times}}\Delta^{\tau_0}(v_1,v_2,v_3)d\mu_{x,y}(v_1,v_2)U_\dP^{\ast} d\mu_z(v_3)
\end{eqnarray*}
since $\varpi_\dP g_j^{-1}\ast1_{D_i}=0$ if $i\neq j$ and $\varpi_\dP g_i^{-1}\ast1_{D_i}=1_{\cO_\dP^\times\times\cO_\dP}$. By remark \ref{rmkonU}, $\alpha_x^{\dP}\beta_x^{\dP}=\varpi_\dP^{\underline{k}_{1,\dP}+\underline{1}}$ if $\dP\neq\dP_0$ and $\alpha_x^{\dP_0}\beta_x^{\dP_0}=p^{k_{1,\tau_0}-1}$. This implies that
\[
\mathcal{L}_p(\mu_1,\mu_2,\mu_3)(x, y, z)=\left(\prod_{\dP\neq \dP_0}\left(1-\beta_{x}^{\dP}\beta_{y}^{\dP}\alpha_{z}^{\dP}\varpi_{\dP}^{-\underline{m}_{\dP}-\underline{2}}\right)\right)\cdot\frac{\left\langle\mu_z^*,e_{\rm ord}(\Theta^{m_{3, \tau_0}}\mu_{x, \cC}^{[p]}\cdot\mu_{y, \cC}(\Delta^{\tau_0}))_{\cC}\right\rangle}{\left\langle\mu_z^*,\mu_z\right\rangle}.
\]

As $\bigtriangledown_{\underline{k}_1}^{m_{3, \tau_0}}\mu_{x, \cC}^{[p]}\mu_{y, \cC}(\Delta^{\tau_0})\in  H^0(\cX^{\cC}_r,\cH^{\underline{k}_3}_{m_{3, \tau_0}})$ and $0 \leq 2m_{3, \tau_0}<k_{\tau_0,3}$ from lemmas \ref{lemmaNOC}, \ref{l:ordinary projector and overconvergent projection} and remark \ref{r:result also works for c} we obtain $e_{\rm ord}(\Theta^{m_{3, \tau_0}}\mu_{x,\cC}^{[p]}\mu_{y,\cC}(\Delta^{\tau_0}))= $
\begin{equation}\label{e:interpolation proof 2}
e_{\rm ord}(\gamma_{\underline{k}_3, m_{3, \tau_0}}\bigtriangledown^{m_{3, \tau_0}}\mu_{x,\cC}^{[p]}\mu_{y,\cC}(\Delta^{\tau_0}))=e_{\rm ord}(\cH^r(\bigtriangledown^{m_{3, \tau_0}}\mu_{x,\cC}^{[p]}\mu_{y,\cC}(\Delta^{\tau_0}))).
\end{equation}
A laborious but straightforward computation shows that
\begin{eqnarray*}
&&\bigtriangledown^{m_{3,\tau_0}}\mu_{x,\cC}^{[p]}\mu_{y,\cC}(\Delta^{\tau_0}))=\\
&&=(-1)^{m_{3,\tau_0}}\binom{k_{3,\tau_0}-2}{m_{3,\tau_0}+k_{2,\tau_0}-1}^{-1} t(\mu_{x,\cC}^{[p]},\mu_{y,\cC})+\bigtriangledown\left(\sum_{i=0}^{m_{3,\tau_0}-1}a_i\bigtriangledown^{i}\mu_{x,\cC}^{[p]}\bigtriangledown^{m_{3,\tau_0}-1-i}\mu_{y,\cC}(\Delta^{\tau_0}))\right),
\end{eqnarray*}
where $t(\mu_{x,\cC}^{[p]},\mu_{y,\cC})$ is defined as in Theorem \ref{tripleonsheaves} and
\[
a_i=(-1)^{i+m_{3,\tau_0}+1}\binom{k_{3,\tau_0}-2}{m_{3,\tau_0}+k_{2,\tau_0}-1}^{-1}\left(\sum_{j=0}^i\binom{m_{3,\tau_0}}{j}\binom{m_{3,\tau_0}+k_{1,\tau_0}+k_{2,\tau_0}-2}{k_{1,\tau_0}+j-1}\right).
\]
This  relation above implies:
\begin{equation}\label{e:interpolation proof 3}
\cH^r(\bigtriangledown^{m_{3, \tau_0}}\mu_{x,\cC}^{[p]}\mu_{y,\cC}(\Delta^{\tau_0}))=(-1)^{m_{3, \tau_0}}\binom{k_{3,\tau_0}-2}{m_{3, \tau_0}+k_{2,\tau_0}-1}^{-1}\cdot t(\mu_{x,\cC}^{[p]},\mu_{y,\cC}).
\end{equation}

%

Since $\mu_z$ is ordinary, we obtain from \eqref{e:interpolation proof 1}, \eqref{e:interpolation proof 2} and \eqref{e:interpolation proof 3}:
\[
\mathcal{L}_p(\mu_1,\mu_2,\mu_3)(x, y, z)=(-1)^{m_{3, \tau_0}}\binom{k_{3,\tau_0}-2}{m_{3, \tau_0}+k_{2,\tau_0}-1}^{-1}\cdot\frac{\left\langle\mu_z^*, t(\mu_x^{[p]},\mu_y)\right\rangle}{\left\langle \mu_z^*,\mu_z\right\rangle}\cdot\prod_{\dP\neq \dP_0}\left(1-\beta_{x}^{\dP}\beta_{y}^{\dP}\alpha_{z}^{\dP}\varpi_{\dP}^{-\underline{m}_{\dP}-\underline{2}}\right).
\]
Thus, the result follows by theorem \ref{tripleonsheaves}, proposition \ref{IntProp}, remark \ref{rmkonU} and proposition \ref{eulerfactors}.
Notice that $K(\mu_x^\circ,\mu_y^\circ, \mu_z^\circ)= (C\cdot C(\mu_{x}^\circ,\mu_{y}^\circ,\mu_{z}^\circ))^{1/2}\cdot 2^{-2 +m_{3, \tau_0}}$, where the constants $C$ and $C(\mu_{x}^\circ,\mu_{y}^\circ,\mu_{z}^\circ)$ are as given in proposition \ref{IntProp}.
\end{proof}

\part{Appendix}\label{Delta and Hecke}

\section{$\Delta$-action and Hecke operators}\label{Heckops}
In this section we verify that the two actions of  $\Delta$ introduced in \S \ref{GvsG'} are compatible with the morphism introduced in lemma \ref{l:seccions as complex forms}. Moreover, we describe the action of the Hecke operators on the space of quaternionic automorphic forms in terms of the associated moduli description of the unitary Shimura curves.

\subsection{Compatibility of the $\Delta$-action}
In \S \ref{GvsG'} we defined $M_{\underline{k}}(\Gamma_{1,1}^{\cC}(\cN),\C)$ to be the $\C$-vector space of holomorphic functions $f:\dH\rightarrow\bigotimes_{\tau \neq \tau_0}\cP_\tau(k_\tau)^\vee$ such that $f(\gamma z)=(cz+d)^{k_{\tau_0}}\gamma f(z)$ for all $\gamma\in \Gamma_{1,1}^{\cC}(\cN)$. This space is endowed with an action of $\Delta=(\cO_F)_+^\times /U_\cN^2$ given by 
\[
s\ast_{\nu} f(z):=s^{\frac{-\underline{k}+ 2k_{\tau_0}\tau_0+ \nu\underline{1}}{2}}(cz+d)^{-k_{\tau_0}}\gamma_s^{-1} f(\gamma_sz),\qquad s\in \Delta,
\]
for any $\gamma_s\in\Gamma_1^{\cC}(\cN)$ with $\det\gamma_s=s$.
By Lemma \ref{l:seccions as complex forms} and Remark \ref{rmkonGammac}, we can interpret elements in $M_{\underline{k}}(\Gamma_{1,1}^{\cC}(\cN),\C)$ as sections of $\omega^{\underline{k}}$ at a connected component of certain Shimura curve $X^{\cC}$. Given $f\in M_{\underline{k}}(\Gamma_{1,1}^{\cC}(\cN),\C)$ the corresponding section is 
\[
f(z)\left(\prod_{\tau\neq\tau_0}\left|\begin{array}{cc}dx_\tau&dy_\tau\\x_\tau&Y_\tau\end{array}\right|^{k_\tau}\right)dx_{\tau_0}^{k_{\tau_0}}\in H^0(X^{\cC},\omega^{\underline{k}}), 
\]
where $w=(dx_{\tau_0},(dx_\tau,dy_\tau)_\tau)$ is a basis of $\left(\Omega^1_{{\bf A}/X^{\cC}}\right)^{+,2}$. Seen as a Katz modular form $f(z)=f(A_z,\iota_z,\theta_z,\alpha_z,w)$, where $A_z:=V/\Lambda_z$, where $V:=\left((\C^2\otimes_{F,\tau_0}E)\times\M_2(\C)^{\Sigma_F\setminus\{\tau_0\}}\right)$, $\Lambda_z=\{v_z(\beta);\;\beta\in J_{\cC}:=\hat\cO_\cM b_{\cC}^{-1}\cap D\}$ and
\[
v_z(m\otimes e):=\left(\tau_0(m)\left(\begin{array}{c}z\\1\end{array}\right)\otimes e,(\tilde\tau(m\otimes e))_{\tau\neq\tau_0}\right)\in V,\qquad m\in B,\quad e\in E.
\]
The polarization $\theta_z$ is given by the paring
\[
\Theta_z:\Lambda_z\times\Lambda_z\longrightarrow J_{\cC}\times J_\cC\stackrel{\Theta}{\longrightarrow}\Z,\qquad
(v_z(\beta_1),v_z(\beta_1))\longmapsto \Theta(\beta_1,\beta_2),
\]
and $\alpha_z$ is the isomorphism (see Remark \ref{rmkonGammac}) $\alpha_z:(\Lambda_z\otimes\hat\Z\simeq\hat\cO_{\cM}b_{\cC}^{-1},\Theta_z)\rightarrow (\hat\cO_{\cM},\cC^{-1}\Theta)$ given by $\alpha_z(\beta)=\beta b_\cC$.
\begin{prop}\label{proponDelta}
For any $s\in \Delta$ and any $f\in M_{\underline{k}}(\Gamma_{1,1}^{\cC}(\cN),\C)$,
\[
(s\ast f)(A_z,\iota_z,\theta_z,\alpha_z,w)=s^{\frac{-\underline{k}+ 2k_{\tau_0}\tau_0+ \nu\underline{1}}{2}}\cdot f(A_z,\iota_z,s^{-1}\theta_z,k_s\alpha_z,w),
\]
where $k_s=b_{\cC}^{-1}\gamma_s^{-1}b_\cC$.
\end{prop}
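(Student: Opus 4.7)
The plan is to produce an explicit isomorphism $\tilde\psi\colon A_{\gamma_s z}\xrightarrow{\sim} A_z$ of polarized abelian varieties with extra structures, and to show that under this isomorphism the analytic transformation rule defining $s\ast_\nu f$ becomes precisely the Katz homogeneity rule (A3) applied to the pulled-back basis. Starting from the definition of $\ast_\nu$ in \eqref{eqactDelta}, the left-hand side equals
\[
s^{\frac{-\underline{k}+2k_{\tau_0}\tau_0+\nu\underline{1}}{2}}\,(cz+d)^{-k_{\tau_0}}\,\gamma_s^{-1}\, f(\gamma_s z).
\]
After cancelling the central scalar in front, $f(\gamma_s z)$ is the Katz evaluation $f(A_{\gamma_s z},\iota_{\gamma_s z},\theta_{\gamma_s z},\alpha_{\gamma_s z},w)$, so the task reduces to identifying the cocycle $(cz+d)^{-k_{\tau_0}}\gamma_s^{-1}$ with the Katz transformation obtained by transporting $(A_{\gamma_s z},\ldots,w)$ along $\tilde\psi$ to $(A_z,\iota_z,s^{-1}\theta_z,k_s\alpha_z,w)$.

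The isomorphism $\tilde\psi$ will be induced by the $\C$-linear map $\psi_{\gamma_s}\colon V\to V$ defined by $\psi_{\gamma_s}(w_0,(M_\tau)_{\tau\neq\tau_0}):=((cz+d)w_0,(M_\tau\tilde\tau(\gamma_s))_{\tau\neq\tau_0})$. The identity $\begin{pmatrix}\gamma_s z\\ 1\end{pmatrix}=(cz+d)^{-1}\tau_0(\gamma_s)\begin{pmatrix}z\\ 1\end{pmatrix}$ immediately gives $\psi_{\gamma_s}(v_{\gamma_s z}(\beta))=v_z(\beta\gamma_s)$ for every $\beta\in D$. Since $\gamma_s\in \Gamma_1^\cC(\cN)$ satisfies $b_\cC^{-1}\gamma_s b_\cC=k_s^{-1}\in K_1^B(\cN)\subseteq\hat\cO_\cM^\times$, right multiplication by $\gamma_s$ preserves the lattice $J_\cC=\hat\cO_\cM b_\cC^{-1}\cap D$; hence $\psi_{\gamma_s}$ carries $\Lambda_{\gamma_s z}$ bijectively onto $\Lambda_z$ and descends to $\tilde\psi\colon A_{\gamma_s z}\xrightarrow{\sim} A_z$.

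The transport of the extra structures along $\tilde\psi$ is then a direct check. The $\cO_\cM$-equivariance is automatic since right multiplication by $\gamma_s$ commutes with left multiplication by $\cO_\cM$. For the polarization, the symplectic similitude identity $\Theta(v\gamma,w\gamma)=\det(\gamma)\cdot\Theta(v,w)$ from \eqref{eqdescGG'}, combined with $\det(\gamma_s)=s$, yields $\tilde\psi_{\ast}\theta_{\gamma_s z}=s^{-1}\theta_z$. For the level structure, writing $\beta'=\beta\gamma_s$ and using the key identity $\gamma_s^{-1}b_\cC=b_\cC k_s$ (which is nothing but the definition of $k_s$) gives
\[
\tilde\psi_{\ast}\alpha_{\gamma_s z}(v_z(\beta'))=\alpha_{\gamma_s z}(v_{\gamma_s z}(\beta'\gamma_s^{-1}))=\beta'\gamma_s^{-1}b_\cC=\beta' b_\cC\cdot k_s=\alpha_z(v_z(\beta'))\cdot k_s,
\]
which is exactly the level structure $k_s\alpha_z$ of the statement.

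Finally, the same map $\psi_{\gamma_s}$ acts dually on the basis $w=(dx_{\tau_0},(dx_\tau,dy_\tau)_{\tau\neq\tau_0})$ of $(\Omega^1_{A/X_\C})^{+,2}$: the pull-back $\tilde\psi^{\ast}w$ on $A_{\gamma_s z}$ equals the basis there rescaled by $(t,\underline{g})=((cz+d),(\tilde\tau(\gamma_s))_{\tau\neq\tau_0})$ in the notation of (A3). Invariance (A1) under $\tilde\psi$ combined with the homogeneity relation (A3) then gives
\[
f(A_z,\iota_z,s^{-1}\theta_z,k_s\alpha_z,w)=f(A_{\gamma_s z},\iota_{\gamma_s z},\theta_{\gamma_s z},\alpha_{\gamma_s z},\tilde\psi^{\ast}w)=(cz+d)^{-k_{\tau_0}}\,\gamma_s^{-1}\,f(\gamma_s z),
\]
which yields the desired identity. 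The delicate point in the argument is the transport of the level structure: right multiplication by the global element $\gamma_s^{-1}\in D^\times$ does not preserve $\hat\cO_\cM$ on the nose, and it is the conjugation identity $\gamma_s^{-1}b_\cC=b_\cC k_s$ that brings the result back into the integral structure and produces the element $k_s\in K_1^B(\cN)$ appearing on the right-hand side of the proposition.
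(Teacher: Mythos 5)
Your proof is correct and follows essentially the same route as the paper's: you construct the same isomorphism $A_{\gamma_s z}\xrightarrow{\sim}A_z$ (multiplication by $(cz+d)$ at $\tau_0$ and by $\tilde\tau(\gamma_s)$ elsewhere), transport the polarization, level structure and basis in the same way, and conclude via isomorphism-invariance plus the Katz homogeneity rule. The only differences are cosmetic — you verify the lattice stability $J_\cC\gamma_s=J_\cC$ explicitly (a detail the paper leaves implicit) and you phrase the final step as a pull-back to $A_{\gamma_s z}$ rather than a push-forward to $A_z$, which amounts to the same cancellation of the factor $(cz+d)^{-k_{\tau_0}}\gamma_s^{-1}$.
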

\begin{rmk}
Notice that $\gamma_s^{-1}\alpha_z$ provides an isomorphism between $(\hat T A_z,\theta_z)$ and $(\hat\cO_{\cM},s\cC^{-1}\Theta)$, thus an isomorphism between $(\hat T A_z,s^{-1}\theta_z)$ and $(\hat\cO_{\cM},\cC^{-1}\Theta)$.
\end{rmk}
\begin{proof}
We have an isomorphism, $\tau_0(\gamma_s)=\left(\begin{array}{cc}a&b\\c&d\end{array}\right)$,
\[
\varphi_s: (A_{\gamma_s z},\iota_{\gamma_sz})\longrightarrow (A_z,\iota_z): \qquad V\ni(v,(M_\tau)_{\tau\neq\tau_0})\longmapsto ((cz+d)\cdot v,(M_\tau\tilde\tau(\gamma_s))_{\tau\neq\tau_0}).
\]
Indeed, given $v_{\gamma_s z}(m\otimes s)\in\Lambda_{\gamma_s z}$,
\[
\varphi_s(v_{\gamma_s z}(m\otimes s))=((cz+d)\cdot \tau_0(m)\left(\begin{array}{c}\gamma_s z\\1\end{array}\right)\otimes s,(\tilde\tau(m\otimes s)\tilde\tau(\gamma_s))_{\tau\neq\tau_0})=v_z(m\gamma_s\otimes s)\in\Lambda_z.
\]
The isomorphism $\varphi_s$ sends $\theta_{\gamma_s z}$ to $s^{-1}\theta_z$, since the corresponding pairing is given by
\[
\begin{array}{ccccccc}
\Lambda_z\times\Lambda_z&\longrightarrow& \Lambda_{\gamma_sz}\times\Lambda_{\gamma_s z}&\longrightarrow& J_{\cC}\times J_{\cC}&\stackrel{\Theta}{\longrightarrow}&\Z\\
(v_z(\beta),v_z(\beta'))&\longmapsto&(v_{\gamma_s z}(\beta\gamma_s^{-1}),v_{\gamma_s z}(\beta'\gamma_s^{-1}))&\longmapsto& (\beta\gamma_s^{-1},\beta'\gamma_s^{-1})&&
\end{array}
\]
Moreover, $\varphi_s$ sends $\alpha_{\gamma_s z}$ to $(b_{\cC}^{-1}\gamma_s^{-1}b_\cC)\alpha_z$, and $w$ to $((cz+d)^{-1}dx_{\tau_0},((dx_\tau,dy_\tau)\tilde\tau(\gamma_s)^{-1})_{\tau\neq\tau_0}):=(cz+d)^{-1}w\gamma_s^{-1}$. We conclude that
\begin{eqnarray*}
(s\ast f)(A_z,\iota_z,\theta_z,\alpha,w)&=&(s\ast f)(z)=s^{\frac{-\underline{k}+ 2k_{\tau_0}\tau_0+ \nu\underline{1}}{2}}(cz+d)^{-k_{\tau_0}}\gamma_s^{-1} f(\gamma_sz)\\
&=&s^{\frac{-\underline{k}+ 2k_{\tau_0}\tau_0+ \nu\underline{1}}{2}}\cdot(cz+d)^{-k_{\tau_0}}\gamma_s^{-1} f(A_{\gamma_sz},\iota_{\gamma_sz},\theta_{\gamma_sz},\alpha_z,w)\\
&\stackrel{(B1)}{=}&s^{\frac{-\underline{k}+ 2k_{\tau_0}\tau_0+ \nu\underline{1}}{2}}\cdot(cz+d)^{-k_{\tau_0}}\gamma_s^{-1} f(A_{z},\iota_{z},s^{-1}\theta_{z},k_s\alpha_z,(cz+d)^{-1}w\gamma_s^{-1})\\
&\stackrel{(B4)}{=}&s^{\frac{-\underline{k}+ 2k_{\tau_0}\tau_0+ \nu\underline{1}}{2}}\cdot f(A_z,\iota_z,s^{-1}\theta_z,k_s\alpha_z,w),
\end{eqnarray*}
and the result follows.
\end{proof}

\subsection{Moduli description of Hecke operators}
Let $(A,\iota,\theta,\alpha)\in X^{\cC}(R)$ and fix a representative $\bar\alpha:(\hat T(A),\theta)\rightarrow(\hat\cO_\cM,\cC^{-1}\Theta)$ of $\alpha$. For any $g\in G(\A_f)$, we consider the double coset $K_1^B(\cN)gK_1^B(\cN)=\bigsqcup_i g_iK_1^B(\cN)$. The preimage $\bar\alpha^{-1}(n^{-1}\hat\cO_\cM g_i^{-1})$, where $n\in\Z$ is big enough so that $ng_i\in\hat\cO_\cM$, provides a $\cO_\cM$-submodule $C_i\subset A_{tor}$ isomorphic to $n^{-1}\hat\cO_\cM g_i^{-1}/\hat\cO_\cM$. We define $A^{g_i}:=A/C_i$ together with an isogeny $\psi_{g_i}:(A^{g_i},\iota^{g_i})\rightarrow(A,\iota)$. We have a polarization given by the composition
\[
\theta^{g_i}:A^{g_i}\stackrel{\psi_{g_i}}{\longrightarrow}A\stackrel{\theta}{\longrightarrow}A^\vee\stackrel{\psi_{g_i}^\vee}{\longrightarrow}(A^{g_i})^\vee.
\]
Moreover, if $\det(g_i)=\cC_i$, the abelian variety $A^{g_i}$ comes equipped with an isomorphism
\[
\bar\alpha^{g_i}:(\hat T(A^{g_i}),\theta^{g_i})\stackrel{\bar\alpha}{\longrightarrow}(n^{-1}\hat\cO_\cM g_i^{-1},\cC^{-1}\Theta)\stackrel{\cdot g_i}{\longrightarrow}(\hat\cO_\cM,\cC_i^{-1}\cC^{-1}\Theta),
\]
providing a point $(A^{g_i},\iota^{g_i},\theta^{g_i},\alpha^{g_i})\in X^{\cC\cC_i}(R)$, where $\alpha_{g_i}$ denotes the class of $\bar\alpha^{g_i}$ modulo $K_{1,1}^B(\cN)$. By abuse of notation, write also $w=(dx_{\tau_0},(dx_\tau,dy_\tau)_\tau)\subset\Omega_{A^{g_i}/R}^1$ for the pull-back of the basis $w\subset\Omega_{A/R}^1$ by means of $\psi_{g_i}^\ast$. 

By Proposition \ref{compautshe}, any $f\in H^0(G(\Q),\cA(\underline{k},\nu))^{K_1^B(\cN)}$ can be seen as a vector
	\[
	(f_\cC)_{\cC}\in  \bigoplus_{\cC\in\Pic(\cO_K)}M_{\underline{k}}(\Gamma_{1,1}^\cC(\cN),\C)^\Delta=\bigoplus_{\cC\in\Pic(\cO_K)} H^0(X^{\cC,0},\omega^{\underline{k}})^\Delta.
	\]
Thus, this last space inherits the action of the Hecke operator $T_g$ attached to the double coset $K_1^B(\cN)gK_1^B(\cN)$. Recall that the isomorphism of Proposition \ref{compautshe} depends on the choice of elements $b_\cC\in G(\A_f)$ whose norm is a representative of the class $\cC$.	
\begin{prop}\label{HeckopKatz}
Assume that $b_{\cC}g_i=\gamma_i^{-1}b_{\cC'}k_i$, for some $\gamma_i\in G(\Q)_+$ and $k_i\in K_1^B(\cN)$. Then,
\begin{equation}\label{eqHeckops}
(T_g f)_\cC(A,\iota,\theta,\alpha,w)=\sum_i\det(\gamma_i)^{\frac{-\underline{k}+ 2k_{\tau_0}\tau_0+ \nu\underline{1}}{2}}f_{\cC'}(A^{g_i},\iota^{g_i},\det(\gamma_i)^{-1}\theta^{g_i},k_i^{-1}\alpha^{g_i},w),
\end{equation}
where $\cC'$ is the class $\cC\det(g)$.
\end{prop}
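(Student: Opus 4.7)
The plan is to unwind both sides of \eqref{eqHeckops} and match them via the dictionary set up in Proposition \ref{compautshe} and Proposition \ref{proponDelta}. The key point is that the adelic translation appearing in the Hecke operator can, after using the decomposition $b_\cC g_i=\gamma_i^{-1}b_{\cC'}k_i$ together with the left $G(\Q)$--invariance and right $K_1^B(\cN)$--invariance of the automorphic form, be replaced by an archimedean translation by $\tau_0(\gamma_i)$, which on the geometric side becomes an isogeny compatible with the Hecke modification $A\rightsquigarrow A^{g_i}$.

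First, starting from the standard definition
\[
(T_g\phi)(f)(h)=\sum_i\phi(f)(hg_i),\qquad h\in G(\A),\; f\in\cD(\underline{k},\nu),
\]
I would evaluate at $h=g_{\tau_0}b_\cC$ for $g_{\tau_0}\in G(\R)^+$ and take $f=f_{k_{\tau_0}}\otimes P$ with $P\in\bigotimes_{\tau\neq\tau_0}\cP_\tau(k_\tau,\nu)$. Substituting $b_\cC g_i=\gamma_i^{-1}b_{\cC'}k_i$ and using left invariance under $\gamma_i\in G(\Q)$ (which acts at the archimedean place $\tau_0$ by $\tau_0(\gamma_i)$) together with right invariance under $k_i\in K_1^B(\cN)$ yields
\[
(T_g\phi)(f_{k_{\tau_0}}\otimes P)(g_{\tau_0}b_\cC)=\sum_i\phi(f_{k_{\tau_0}}\otimes P)\bigl(\tau_0(\gamma_i)g_{\tau_0}\,b_{\cC'}\bigr).
\]
Dividing by $f_{k_{\tau_0}}(g_{\tau_0})$ and applying the very definition of $f_\phi^{\cC'}$ from Remark \ref{mod-aut}, the right-hand side becomes
\[
\sum_i\det(\gamma_i)^{\frac{-\nu-k_{\tau_0}}{2}}(c_iz+d_i)^{k_{\tau_0}}\gamma_i\cdot f_\phi^{\cC'}(\tau_0(\gamma_i)z)(P),
\]
where $\tau_0(\gamma_i)=\bigl(\begin{smallmatrix}a_i&b_i\\c_i&d_i\end{smallmatrix}\bigr)$ and $z=g_{\tau_0}\,i$; the factor $\det(\gamma_i)^{(-\nu-k_{\tau_0})/2}(c_iz+d_i)^{k_{\tau_0}}$ records the automorphy of $f_\phi^{\cC'}$ under $\gamma_i\in G(\Q)^+$.

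Second, I would translate the shift $z\mapsto\tau_0(\gamma_i)z$ into a geometric operation on the universal triple. Exactly as in the proof of Proposition \ref{proponDelta}, the formula $v\mapsto (c_iz+d_i)v$ on $\C^2\otimes_{F,\tau_0}E$ (combined with right multiplication by $\tilde\tau(\gamma_i)$ on the other archimedean factors) defines an isomorphism $\varphi_{\gamma_i}:A_{\tau_0(\gamma_i)z}\stackrel{\simeq}{\longrightarrow}A_z$ which at the level of lattices sends $v_{\tau_0(\gamma_i)z}(\beta)$ to $v_z(\beta\gamma_i)$. Under $\varphi_{\gamma_i}$ the polarization transports as $\theta_{\tau_0(\gamma_i)z}\mapsto\det(\gamma_i)^{-1}\theta_z$, and the basis $w\subset\Omega^1_{A_z/R}$ pulls back to $(c_iz+d_i)^{-1}w\cdot\gamma_i^{-1}$. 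The Katz property (B3) then absorbs precisely the factor $\det(\gamma_i)^{(\nu-k_{\tau_0})/2}\cdot(c_iz+d_i)^{-k_{\tau_0}}\gamma_i$ on the polynomial part (using the description \eqref{dual identification polynomials}), converting $f_\phi^{\cC'}(\tau_0(\gamma_i)z)$ into a Katz evaluation at $(A_z,\iota_z,\det(\gamma_i)^{-1}\theta_z,*,w)$ with the expected normalization in front.

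Third, the level structure $*$ produced by $\varphi_{\gamma_i}$ will naturally match the moduli description of $A^{g_i}$: the kernel $C_i=\bar\alpha^{-1}(n^{-1}\hat\cO_\cM g_i^{-1})$ is exactly the lattice quotient realising $A/C_i$ as the abelian variety in whose Tate module we read off level structure through $\bar\alpha\cdot g_i=k_i^{-1}\bar\alpha^{\cC'}$ after conjugation by $b_{\cC'}$. Hence the remaining task is to check that combining the archimedean shift by $\gamma_i^{-1}$ with the adelic shift by $g_i$ (which on $\hat T$ reshuffles the level structure by $k_i^{-1}$) produces exactly $(A^{g_i},\iota^{g_i},\det(\gamma_i)^{-1}\theta^{g_i},k_i^{-1}\alpha^{g_i},w)$ on the nose.

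The main obstacle I anticipate is the bookkeeping in the third step: to glue the archimedean uniformisation used to define $f_\phi^{\cC'}$ with the global moduli interpretation of $X^{\cC\cC_i}$, one has to verify consistently that the $\cO_\cM$-linear symplectic similitude $\bar\alpha^{g_i}$ defined through $\cdot g_i$ coincides (modulo $K_{1,1}^B(\cN)$) with the image of $\bar\alpha_z$ under $\varphi_{\gamma_i}^{-1}$ composed with the isogeny $A_z\to A_z/C_i$. Once this identification is made, the well-definedness of \eqref{eqHeckops} with respect to the choices of coset representatives and of $(\gamma_i,k_i)$ follows from Remark \ref{rmkondefHecke} and the $\Delta$--invariance of $f_{\cC'}$ built into Proposition \ref{compautshe}, and all exponents of $\det(\gamma_i)$ collected in the first and second steps add up to $\frac{-\underline{k}+2k_{\tau_0}\tau_0+\nu\underline{1}}{2}$.
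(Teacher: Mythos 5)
Your proposal follows essentially the same route as the paper's proof: unwind the adelic Hecke sum using $b_{\cC}g_i=\gamma_i^{-1}b_{\cC'}k_i$ together with the automorphy of $f_\phi^{\cC'}$ under $\gamma_i$, then transport the evaluation at $\gamma_i z$ through the complex-uniformization isomorphism onto the Hecke-modified tuple and absorb the automorphy factors via the Katz properties. The one imprecision is that your $\varphi_{\gamma_i}$ lands in $A_z^{g_i}=V/\Lambda_z^{\cC,g_i}$ rather than in $A_z$ (since $J_{\cC'}\gamma_i=\hat\cO_\cM k_ig_i^{-1}b_{\cC}^{-1}\cap D=J_{\cC}^{g_i}$); with that correction the level-structure identification you defer to your third step is exactly the commutative diagram the paper writes down, and the argument closes.
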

\begin{rmk}\label{rmkondefHecke}
We have the following remarks:
\begin{itemize}
\item Note that $\alpha^{g_i}$ provides an isomorphism between $(\hat T A^{g_i},\theta^{g_i})$ and $(\hat \cO_\cM,\cC^{-1}\cC_i^{-1}\Theta)$, hence $k_i^{-1}\alpha^{g_i}$ identifies $(\hat T A^{g_i},\theta^{g_i})$ and $(\hat \cO_\cM,\det(k_i)\cC^{-1}\cC_i^{-1}\Theta)=(\hat \cO_\cM,\det(\gamma_i)(\cC')^{-1}\Theta)$. We conclude that $k_i^{-1}\alpha^{g_i}$ provides an isomorphism between $(\hat T A^{g_i},\det(\gamma_i)^{-1}\theta^{g_i})$ and $(\hat \cO_\cM,(\cC')^{-1}\Theta)$.

\item The construction of $A^{g_i}$ depends on the representative $\bar\alpha$, but the expression in \eqref{eqHeckops} does not. Indeed, for any $\gamma\in K_{1,1}^B(\cN)$, we have $A_{\gamma}^{g_i}=A^{\gamma g_i}$, where $A_{\gamma}^{g_i}$ is the corresponding abelian variety constructed by means of $\gamma\bar\alpha$. But $\gamma g_i=g_j k_0$ for some $j$ and $k_0\in K^B_1(\cN)$. Hence, 
$(A_\gamma^{g_i},\iota_\gamma^{g_i},\det(\gamma_i)^{-1}\theta_\gamma^{g_i},k_i^{-1}(\gamma\bar\alpha)^{g_i})=(A^{g_j},\iota^{g_j},\det(\gamma_i)^{-1}\theta^{g_j},k_i^{-1}k_0\bar\alpha^{g_j})$,
since $(A^{g_jk_0},\iota^{g_jk_0},\theta^{g_jk_0})=(A^{g_j},\iota^{g_j},\theta^{g_j})$ and $(\gamma\bar\alpha)^{g_i}=k_0\bar\alpha^{g_j}$. Notice that we have
\[
\gamma_j^{-1}b_{\cC'}k_jk_0=b_{\cC}g_jk_0=b_{\cC}\gamma g_i=(b_{\cC}\gamma b_{\cC}^{-1}) b_\cC g_i=(b_{\cC}\gamma b_{\cC}^{-1}) \gamma_i^{-1}b_{\cC'}k_i,
\]
hence we can choose $\gamma_i$ and $\gamma_j$ such that $\det(\gamma_i)=\det(\gamma_j)$, and $k_i=k_jk_0$. Thus, we have $(A_\gamma^{g_i},\iota_\gamma^{g_i},\det(\gamma_i)^{-1}\theta_\gamma^{g_i},k_i^{-1}(\gamma\bar\alpha)^{g_i})=(A^{g_j},\iota^{g_j},\det(\gamma_j)^{-1}\theta^{g_j},k_j^{-1}\bar\alpha^{g_j})$ and the claim follows.

\item The expression \eqref{eqHeckops} does not depend on the choice of $\gamma_i$: If we have another decomposition $b_{\cC}g_i=\bar\gamma_i^{-1}b_{\cC'}\bar k_i$, then $\beta=\bar\gamma_i\gamma_i^{-1}\in G(\Q)_+\cap b_{\cC'}K_1(\cN)b_{\cC'}^{-1}=\Gamma_1^{\cC'}(\cN)$, $\bar k_i=b_{\cC'}^{-1}\beta b_{\cC'}k_i$, and, if we write $s=\det(\beta)$ and $k_s=b_{\cC'}^{-1}\beta^{-1} b_{\cC'}$, we obtain
\begin{eqnarray*}
&&\det(\bar\gamma_i)^{\frac{-\underline{k}+ 2k_{\tau_0}\tau_0+ \nu\underline{1}}{2}}f_{\cC'}(A^{g_i},\iota^{g_i},\det(\bar\gamma_i)^{-1}\theta^{g_i},\bar k_i^{-1}\alpha^{g_i},w)\\
&=&(s\det(\gamma_i))^{\frac{-\underline{k}+ 2k_{\tau_0}\tau_0+ \nu\underline{1}}{2}}f_{\cC'}(A^{g_i},\iota^{g_i},\det(\gamma_i)^{-1}s^{-1}\theta^{g_i}, k_i^{-1}k_s\alpha^{g_i},w)\\
&=&\det(\gamma_i)^{\frac{-\underline{k}+ 2k_{\tau_0}\tau_0+ \nu\underline{1}}{2}}f_{\cC'}(A^{g_i},\iota^{g_i},\det(\gamma_i)^{-1}\theta^{g_i}, k_i^{-1}\alpha^{g_i},w),
\end{eqnarray*}
by $\Delta$-invariance of $f_{\cC'}$.

\item The tuple $(A^{g_i},\iota^{g_i},\det(\gamma_i)^{-1}\theta^{g_i}, k_i^{-1}\alpha^{g_i})$ depend only on the class $g_i K^B_1(\cN)$. Indeed, if we replace $g_i$ by $g_ik$, for some $k\in K^B_1(\cN)$, we have trivially $(A^{g_ik},\iota^{g_ik},\theta^{g_ik})=(A^{g_i},\iota^{g_i},\theta^{g_i})$. Moreover, $\alpha^{g_ik}=k\alpha^{g_i}$, hence the claim easily follows.

\item If we assume the claim of the proposition, then $(T_g f)_{\cC}$ is $\Delta$-invariant. Indeed, given $s\in \Delta$ and $k_s\in K_1^B(\cN)$ with $\det(k_s)=s^{-1}$, we have
\begin{eqnarray*}
s\ast (T_gf)_\cC(A,\iota,\theta,\alpha,w)&=&s^{\frac{-\underline{k}+ 2k_{\tau_0}\tau_0+ \nu\underline{1}}{2}}\cdot  (T_gf)_\cC(A,\iota,s^{-1}\theta,k_s\alpha,w)\\
&=&\sum_i(s\det(\gamma_i))^{\frac{-\underline{k}+ 2k_{\tau_0}\tau_0+ \nu\underline{1}}{2}}f_{\cC'}(A^{g_i},\iota^{g_i},s^{-1}\det(\gamma_i)^{-1}\theta^{g_i},k_sk_i^{-1}\alpha^{g_i},w)\\
&=&(T_gf)_\cC(A,\iota,\theta,\alpha,w),
\end{eqnarray*}
by $\Delta$-invariance of $f_{\cC'}$.
\end{itemize}
\end{rmk}

\begin{proof}[Proof of Proposition \ref{HeckopKatz}]
As above $J_{\cC}=\hat\cO_\cM b_{\cC}^{-1}\cap D$, hence we have that 
\begin{eqnarray*}
(T_g f)_{\cC}(A_z,\iota_z,\theta_z,\alpha_z,w)&=&(T_gf)(z,b_\cC)=\sum_i f(z,b_\cC g_i)=\sum_i f(z,\gamma_i^{-1} b_{\cC'})\\
&=&\sum_i \det(\gamma_i)^{\frac{-\underline{k}+ 2k_{\tau_0}\tau_0+ \nu\underline{1}}{2}}(cz+d)^{-k_{\tau_0}}\gamma_i^{-1}f(\gamma_i z,b_{\cC'})\\
&=&\sum_i \det(\gamma_i)^{\frac{-\underline{k}+ 2k_{\tau_0}\tau_0+ \nu\underline{1}}{2}}(cz+d)^{-k_{\tau_0}}\gamma_i^{-1}f_{\cC'}(A_{\gamma_i z},\iota_{\gamma_i z},\theta_{\gamma_i z},\alpha_{\gamma_i z},w).
\end{eqnarray*}
where $A_z=V/\Lambda_{z}^{\cC}$, with $\Lambda_{z}^{\cC}:=\{v_z(\beta);\;\beta\in J_{\cC}\}$. Notice that we have an isomorphism
\[
\psi_i: A_{\gamma_iz}=V/\Lambda_{\gamma_i z}^{\cC'}\longrightarrow A_z^{g_i}=V/\Lambda_z^{\cC,g_i};\qquad \psi_i(v,(M_\tau)_{\tau\neq\tau_0})=\left((cz+d)v,(M_\tau\tilde\tau(\gamma_i))_{\tau\neq\tau_0}\right),
\]
where $\Lambda_z^{\cC,g_i}:=\{v_z(\lambda);\;\lambda\in J_{\cC}^{g_i}:=\hat\cO_\cM g_i^{-1}b_{\cC}^{-1}\cap D\}$. Notice that $\psi_i$ is well defined since $\psi_i(v_{\gamma_i z}(\beta))=v_z(\beta\gamma_i)$, for all $\beta\in J_{\cC'}$, and $\beta\gamma_i\in \hat\cO_\cM b_{\cC'}^{-1}\gamma_i\cap D=\hat\cO_\cM k_ig_i^{-1} b_{\cC}^{-1}\cap D=J_{\cC}^{g_i}$. Similarly as in the proof of Proposition \ref{proponDelta}, we have that $\psi_i$ sends $\theta_{\gamma_i z}$ to $\det(\gamma_i)^{-1}\theta_z^{g_i}$, it sends $\alpha_{\gamma_i z}$ to
\[
\xymatrix{
k_i^{-1}\alpha_z^{g_i}:\hat T(A_z^{g_i})\simeq \hat\cO_\cM g_i^{-1}b_{\cC}^{-1}\ar[rd]^{\qquad b_{\cC}}\ar[r]^{\quad\qquad\qquad\gamma_i^{-1}}&\hat\cO_\cM b_{\cC'}^{-1}\ar[r]^{b_{\cC'}}&\hat\cO_\cM,\\
&\hat\cO_\cM g_i^{-1}\ar[r]^{g_i}&\hat\cO_\cM\ar[u]^{k_i^{-1}}
}
\]
and it sends $w$ to $(cz+d)^{-1}w\gamma_i^{-1}:=((cz+d)^{-1}dx_{\tau_0},(dx_\tau,dy_\tau)\tilde\tau(\gamma_i)^{-1})$. We obtain $(T_g f)_{\cC}(A_z,\iota_z,\theta_z,\alpha_z,w)= $
\begin{eqnarray*}
&\stackrel{(B1)}{=}&\sum_i \det(\gamma_i)^{\frac{-\underline{k}+ 2k_{\tau_0}\tau_0+ \nu\underline{1}}{2}}(cz+d)^{-k_{\tau_0}}\gamma_i^{-1}f_{\cC'}(A_{z}^{g_i},\iota_{z}^{g_i},\det(\gamma_i)^{-1}\theta_{z}^{g_i},k_i^{-1}\alpha_z^{g_i},(cz+d)^{-1}w\gamma_i^{-1})\\
&\stackrel{(B4)}{=}&\sum_i \det(\gamma_i)^{\frac{-\underline{k}+ 2k_{\tau_0}\tau_0+ \nu\underline{1}}{2}}f_{\cC'}(A_{z}^{g_i},\iota_{z}^{g_i},\det(\gamma_i)^{-1}\theta_{z}^{g_i},k_i^{-1}\alpha_z^{g_i},w),
\end{eqnarray*}
hence the result follows.
\end{proof}

\subsection{The $U_\dP$-operator}\label{Upop}
Assume that $f$ has Iwahori level at $\dP\mid p$, and let $g=\left(\begin{array}{cc}\varpi&\\&1\end{array}\right)$, where $\varpi$ is a uniformizer at $\dP$. We denote by $U_\dP$ the Hecke operator $T_g$ as defined above. In this case the corresponding coset has a distinguished decomposition
\[
K_0^B(\dP)gK_0^B(\dP)=\bigsqcup_{i\in\cO_F/\dP}\left(\begin{array}{cc}\varpi&i\\&1\end{array}\right)K_0^B(\dP)\subset G(F_\dP).
\]
For any $\cC\in \Pic(\cO_F)$, assume that $\cC\dP$ lies in the class $\cC'$, this implies that $\cC\dP=\cC'(\gamma_\dP)$, for some $\gamma_\dP\in\cO_F$. Assuming that $\cC$ and $\cC'$ are coprime to $\dP$, $\gamma_\dP$ generates $\dP$ in $\cO_\dP$. Assuming that $b_{\cC}$ and $b_{\cC'}$ have identity components in $G(F_\dP)$, we have that
\[
b_{\cC}\left(\begin{array}{cc}\varpi&i\\&1\end{array}\right)=\left(\begin{array}{cc}\gamma_\dP&i\\&1\end{array}\right)b_{\cC'}k_i, \qquad k_i\in K^B_1(\cN).
\]
By the previous computations,
\[
(U_\dP f)_{\cC}(A,\iota,\theta,\alpha,w)=\gamma_\dP^{\frac{-\underline{k}+ 2k_{\tau_0}\tau_0+ \nu\underline{1}}{2}}\sum_{i\in \cO_F/\dP}\left(\begin{array}{cc}\gamma_\dP&i\\&1\end{array}\right)\ast f_{\cC'}\left(A_i,\iota_i,\gamma_\dP\theta_i, k_i^{-1}\alpha_i,w\left(\begin{array}{cc}\gamma_\dP&i\\&1\end{array}\right)\right),
\]
where $A_i=A/C_i$ (and $\iota_i,\theta_i, \alpha_i$ are also obtained from $C_i$) with 
\[
C_i\stackrel{\alpha}{\simeq}\hat\cO_\cM\left(\begin{array}{cc}\varpi&i\\&1\end{array}\right)^{-1}/\hat\cO_\cM=\left(\M_2(\cO_\dP)\left(\begin{array}{cc}\varpi&i\\&1\end{array}\right)^{-1}/\M_2(\cO_\dP)\right)\times \left(\M_2(\cO_\dP)\left(\begin{array}{cc}\varpi&i\\&1\end{array}\right)^{-1}/\M_2(\cO_\dP)\right).
\]
Thus $C_i$ is characterized by its subgroup $C_i^{-,1}:=C_i\cap A[\dP]^{-,1}$. Moreover, the subgroups $C_i^{-,1}$ correspond to the cyclic subgroups of $A[\dP]^{-,1}$ not intersecting with the canonical subgroup $C\subset A[\dP]^{-,1}$ that characterizes $\alpha_\dP$ after Remark \ref{rmkonpts}.

\begin{rmk}\label{rmkbasisUp}
Notice that, given a basis $B$ of $T_\dP A$, the isogeny $A\rightarrow A_i$ identifies $B\left(\begin{array}{cc}\varpi&i\\&1\end{array}\right)$ with a basis of $T_\dP(A_i)$. Moreover, if ${\rm dlog} (B)=w$, we have ${\rm dlog}\left(B\left(\begin{array}{cc}\varpi&i\\&1\end{array}\right)\right)=w\left(\begin{array}{cc}\varpi&i\\&1\end{array}\right)$.
\end{rmk}

\subsection{Oldforms}\label{oldforms}

Given a newform $\phi\in H^0(G(\Q),\cA(\underline{k},\nu))^{K_1(\cN_0)}$,  we can construct the oldform 
\[
\phi^d\in H^0(G(F),\cA(\underline{k},\nu))^{K_1(\cN)};\qquad \phi^d(f)(g):=\phi(f)(g g_d);\qquad g_d:=\left(\begin{array}{cc}1&\\ &\varpi_d\end{array}\right)\in G(\Q_d),
\]
where $\cN=\cN_0\cD$, $d\mid\cD$,
$\Q_d=\prod_{v\mid {\rm Norm}_{K/\Q}(d)}\Q_v$ and $\varpi_d$ is the product of uniformizers of $v\mid d$. 

We now give a new description of the oldform $\phi^d$ in terms of the Katz modular form interpretation. Recall that $\phi$ can be seen as a vector (Proposition \ref{compautshe}) 
\[
f=(f_\cC)_{\cC}\in  \bigoplus_{\cC\in\Pic(\cO_K)}M_{\underline{k}}(\Gamma_{1,1}^\cC(\cN_0),\C)^{\Delta_{\cN_0}}=\bigoplus_{\cC\in\Pic(\cO_K)} H^0(X_{\cN_0}^{\cC,0},\omega^{\underline{k}})^{\Delta_{\cN_0}},\qquad \Delta_{\cN_0}=(\cO_F)_+^\times/U^2_{\cN_0}.
\]
Hence, if $b_\cC g_d=\gamma_d^{-1}b_{\cC'}k$, for some $\gamma_d\in G(\Q)_+$ and $k\in K^B_1(\cN_0)$,
\begin{eqnarray*}
f_\cC^d(A_z,\iota_z,\theta_z,\alpha_z,w)&=&f^d(z,b_\cC)=f(z,b_\cC g_d)=\det(\gamma_d)^{\frac{-\underline{k}+ 2k_{\tau_0}\tau_0+ \nu\underline{1}}{2}}(cz+d)^{-k_{\tau_0}}\gamma_d^{-1}f(\gamma_d z,b_{\cC'})\\
&=&\det(\gamma_d)^{\frac{-\underline{k}+ 2k_{\tau_0}\tau_0+ \nu\underline{1}}{2}}(cz+d)^{-k_{\tau_0}}\gamma_d^{-1}f_{\cC'}(A_{\gamma_dz},\iota_{\gamma_dz},\theta_{\gamma_dz},\alpha_{\gamma_dz},w).
\end{eqnarray*}
We conclude by an analogous computation as above that 
\begin{equation}\label{eqoldformKatz}
f_\cC^d(A,\iota,\theta,\alpha,w)=\det(\gamma_d)^{\frac{-\underline{k}+ 2k_{\tau_0}\tau_0+ \nu\underline{1}}{2}}f_{\cC'}(A^{g_d},\iota^{g_d},\det(\gamma_d)^{-1}\theta^{g_d},k^{-1}\alpha^{g_d},w).
\end{equation}

	
\Addresses	
	
\bibliographystyle{hsiam}
\bibliography{biblio}	
	
\end{document}